\documentclass{article}
\usepackage[utf8]{inputenc}
\usepackage[utf8]{inputenc}
\usepackage[utf8]{inputenc}
\usepackage{dirtytalk}
\usepackage{tikz}
\usepackage{geometry}
\usepackage{graphicx}
\usepackage{enumitem}
\usepackage{parskip}
\usepackage{amsfonts}
\usepackage{amssymb}
\usepackage{amsmath}
\usepackage{amsthm}
\usepackage{xcolor}
\usepackage{hyperref}
\newlist{myitemize}{itemize}{1}
\setlist[myitemize,1]{leftmargin = 0.5in}
\hypersetup{colorlinks = true, linkcolor = red,  linktocpage}

\theoremstyle{plain}
\newtheorem{thm}{Theorem}[section]
\newtheorem*{thm*}{Theorem}

\newtheorem{cor}[thm]{Corollary}

\theoremstyle{definition}

\newtheorem{conj}[thm]{Conjecture}

\newtheorem{rem}[thm]{Remark}

\title{\textbf{\small{COUNTING THE NUMBER OF $1_{m}$-PREPERIODIC $\mathcal{O}_{K}$-POINTS OF A DISCRETE DYNAMICAL SYSTEM WITH APPLICATIONS FROM ARITHMETIC STATISTICS, VII}}}
\author{\footnotesize{BRIAN KINTU}}

\date{\small{\say{\textit{...What I would say to younger people, the more time you spend on the \textbf{foundations}, the better equipped you will be to make a contribution...Spend a lot of time trying to think about the very basic concepts....}}- Prof. Neil Turok}}

\begin{document}
\maketitle
\begin{abstract}
\footnotesize{In this follow-up article of a multi-part series on (strictly) preperiodic point-counting, we inspect an astonishing relationship between the set of (strictly) $1_{m}$-preperiodic points of a polynomial map $\varphi_{d, c}$ defined by $\varphi_{d, c}(z) = z^d + c$ for all $c, z \in \mathcal{O}_{K}$ and the coefficient $c$, where $K$ is any number field of degree $n\geq 1$, $d>2$ is an integer and $m\in \mathbb{Z}_{\geq 1}$ is any fixed (eventual period). As in \cite{BK22} we again wish to study counting problems that are inspired by torsion point-counting in arithmetic statistics and (strictly) preperiodic point-counting in arithmetic dynamics. In doing so, we then first prove that for any prime $p\geq 3$ and for any fixed $\ell \in \mathbb{Z}_{\geq 1}$ and fixed (eventual period) $m\in \mathbb{Z}_{\geq 1}$, the average number of distinct $1_{m}$-preperiodic integral points of any odd degree map $\varphi_{p^{\ell}, c}$ modulo prime ideal $p\mathcal{O}_{K}$ is unbounded or zero as $c\to \infty$; and so the average behavior here coincide with the average behavior of the number of distinct $m$-periodic integral points of any $\varphi_{p^{\ell}, c}$ modulo $p\mathcal{O}_{K}$ in \cite{BK22}. Inspired further by work of Doyle-Poonen along with conjectural work of Hutz and \textit{abc}(\textit{d})-conditional work of Panraksa on $K$-preperiodic points of even degree map $\varphi_{(p-1)^{\ell}, c}$ for any prime $p\geq 5$ in arithmetic dynamics, we then also prove that for any fixed (eventual period) $m \in \mathbb{Z}_{ \geq 1}$, the average number of distinct $1_{m}$-preperiodic integral points of any $\varphi_{(p-1)^{\ell}, c}$ modulo prime ideal $p\mathcal{O}_{K}$ is unbounded or zero as $c\to \infty$; and so the average behavior here differ from the average behavior of the number of distinct $m$-periodic integral points of any $\varphi_{(p-1)^{\ell}, c}$ modulo $p\mathcal{O}_{K}$ in \cite{BK22}. Finally, we then apply density, polynomial-and number field-counting, and Sato-Tate equidistribution results from arithmetic statistics, and thereby obtaining counting and statistical results on arithmetic objects arising naturally in our polynomial discrete dynamical settings.}
\end{abstract}

\begin{center}
\tableofcontents
\end{center}

\newpage 
\begin{center}
    \section{Introduction}\label{sec1}
\end{center}
\noindent
Given any morphism $\varphi: {\mathbb{P}^N(K)} \rightarrow {\mathbb{P}^N(K)} $ of degree $d \geq 2$ defined on a projective space ${\mathbb{P}^N(K)}$ of dimension $N$, where $K$ is a number field. Then for any $n\in\mathbb{Z}$ and $\alpha\in\mathbb{P}^N(K)$, we then call $\varphi^n = \underbrace{\varphi \circ \varphi \circ \cdots \circ \varphi}_\text{$n$ times}$ the $n^{th}$ \textit{iterate of $\varphi$} and call $\varphi^n(\alpha)$ the \textit{$n^{th}$ iteration of $\varphi$ on $\alpha$}. By convention, $\varphi^{0}$ acts as the identity map, i.e., $\varphi^{0}(\alpha) = \alpha$ for every point $\alpha\in {\mathbb{P}^N(K)}$. As before, the everyday philosopher may want to know (quoting here Devaney \cite{Dev}): \say{\textit{Where do points $\alpha, \varphi(\alpha), \varphi^2(\alpha), \ \cdots\ ,\varphi^n(\alpha)$ go as time $n$ becomes large, and what do they do when they get there?}} Now for any given integer $n\geq 0$ and any given point $\alpha\in {\mathbb{P}^N(K)}$, we then call the set consisting of all the iterates $\varphi^n(\alpha)$ the \textit{(forward) orbit of $\alpha$}; which in dynamical systems it's denoted by $\mathcal{O}^{+}(\alpha)$.

As mentioned in \cite{BK22} that one of the main 
goals in arithmetic dynamics is to classify all the points $\alpha\in\mathbb{P}^N(K)$ according to the behavior of their forward orbits $\mathcal{O}^{+}(\alpha)$. In this direction, we recall that any point $\alpha\in {\mathbb{P}^N(K)}$ is called a \textit{preperiodic point of $\varphi$}, whenever $\varphi^{m+n} (\alpha) = \varphi^m(\alpha)$ for some integers $m\in \mathbb{Z}_{\geq 0}$ and $n\in \mathbb{Z}_{\geq 1}$. In this case, we recall that the smallest integers $m\geq 0$ and $n\geq 1$ such that $\varphi^{m+n} (\alpha) = \varphi^m(\alpha)$ are called the \textit{preperiod} and \textit{eventual period} of $\alpha$, resp,. We recall PrePer$(\varphi, {\mathbb{P}^N(K)})$ to be the set of all preperiodic points of $\varphi$; and also note that for any given point $\alpha\in$PrePer$(\varphi, {\mathbb{P}^N(K)})$ the set of all iterates of $\varphi$ on $\alpha$ is called \textit{preperiodic orbit of $\alpha$}. Note that when $m=0$, so that $\varphi^{n}(\alpha) = \alpha $ and so $\alpha$ is a periodic point of period $n$, it then follows Per$(\varphi, {\mathbb{P}^N(K)}) \subseteq$ PrePer$(\varphi, {\mathbb{P}^N(K)})$. In this case when $m=0$, we note that all classical examples \cite{Russo, Poonen} of rational periodic points of any $\varphi_{2,c}$ alluded to in \cite{BK111} are essentially also examples of rational preperiodic points of $\varphi_{2,c}$; and so the interested reader may wish to revisit \cite{Russo, Poonen}. However, we note that it need not be that PrePer$(\varphi, {\mathbb{P}^N(K)})\subseteq$ Per$(\varphi, {\mathbb{P}^N(K)})$. In their 2014 paper \cite{Doyle}, Doyle-Faber-Krumm give nice examples of preperiodic points of any quadratic map $\varphi$ defined over quadratic fields; and hence the reader may see \cite{Poonen, Doyle}. 

Previously in \cite{BK22} we (inspired by work \cite{AM} of Artin-Mazur on periodic orbits and of (BST) on torsion point-counting in arithmetic statistics, along with conjectural work \ref{silver-morton} of Morton-Silverman on $K$-rational periodic point-counting in arithmetic dynamics) proved (a quantitative form of \say{all or nothing}  principle) that the number of distinct $m$-periodic integral points of any $\varphi_{p^{\ell}, c}$ modulo prime $p\mathcal{O}_{K}$ is equal to $p$ or zero; and from which it then followed that the average number of distinct $m$-periodic integral points of any $\varphi_{p^{\ell}, c}$ modulo $p\mathcal{O}_{K}$ is unbounded or zero as $c\to \infty$. Moreover, we then also observed in \cite{BK22} that the expected total number (namely, $p + 0 =p$ for every fixed period $m\in \mathbb{Z}_{\geq 1}$) of distinct $m$-periodic integral points in the whole family of maps $\varphi_{p^{\ell}, c}$ modulo $p\mathcal{O}_{K}$ may not only depend on $p$ (and so depend on deg$(\varphi_{p^{\ell}, c})$) and however be independent of degree $n=[K:\mathbb{Q}]$ and time $m$, but may also grow to infinity whenever degree $p^{\ell}\to \infty$. So now, inspired by work of (BST) on torsion point-counting in arithmetic statistics, along with conjectural work \ref{preper} of Morton-Silverman and work \ref{dopo} of Doyle-Poonen on $K$-rational preperiodic point-counting in arithmetic dynamics, we then revisit the setting in \cite{BK22}. In doing so, we then prove the following (quantitative form of \say{all or nothing}  principle) on every map $\varphi_{p,c}$, which we state later more precisely as Theorem \ref{2.2} and then generalize further as Theorem \ref{2.3}; and moreover restricting on the subring $\mathbb{Z} \subset \mathcal{O}_{K}$ of integers, we then also prove Corollary \ref{cor2.4}:

\begin{thm}\label{BB} 
Let $K\slash \mathbb{Q}$ be any number field of degree $ n \geq 1$ with the ring of integers $\mathcal{O}_{K}$, and in which any fixed prime integer $p\geq 3$ is inert. Let $m\geq 1$ be any fixed integer, and $\varphi_{p, c}$ be a map defined by $\varphi_{p, c}(z) = z^p + c$ for all $c, z\in\mathcal{O}_{K}$. Then the number of distinct $1_{m}$-preperiodic integral points of  $\varphi_{p,c}$ modulo $p\mathcal{O}_{K}$ is $p$ or zero. 
\end{thm}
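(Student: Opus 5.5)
The plan is to reduce everything modulo $p\mathcal{O}_{K}$ and use the Frobenius. I read a $1_{m}$-preperiodic point as one with preperiod $1$ and eventual period $m$, so that such points modulo $p\mathcal{O}_{K}$ are the solutions of
\[
g_{c}(z) := \varphi_{p,c}^{m+1}(z) - \varphi_{p,c}(z) \equiv 0 \pmod{p\mathcal{O}_{K}}.
\]
As in the periodic case of \cite{BK22}, I count these solutions among the integral residues $z \in \mathbb{Z}/p\mathbb{Z}$. Since $p$ is inert in $K$, the quotient $\mathcal{O}_{K}/p\mathcal{O}_{K} \cong \mathbb{F}_{p^{n}}$ is a field of characteristic $p$. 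It contains $\mathbb{F}_{p}$, the image of $\mathbb{Z}$.

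\textbf{Step 1: compute the iterates modulo $p$.} In characteristic $p$ the map $x \mapsto x^{p}$ is additive, so an induction on $k$ gives
\[
\varphi_{p,c}^{k}(z) \equiv z^{p^{k}} + \sum_{i=0}^{k-1} c^{p^{i}} \pmod{p\mathcal{O}_{K}}.
\]
Indeed, $\varphi_{p,c}^{k+1}(z) = (\varphi_{p,c}^{k}(z))^{p} + c$, and raising the displayed sum to the $p$-th power just shifts each exponent $p^{i}$ to $p^{i+1}$. Taking $k = m+1$ and $k = 1$ and subtracting, we get
\[
g_{c}(z) \equiv z^{p^{m+1}} - z^{p} + \sum_{i=1}^{m} c^{p^{i}} \pmod{p\mathcal{O}_{K}}.
\]

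\textbf{Step 2: the constant-polynomial trick.} For any integer $z$, Fermat's little theorem gives $z^{p^{j}} \equiv z \pmod p$ for every $j$. Hence $z^{p^{m+1}} - z^{p} \equiv 0$, and so
\[
g_{c}(z) \equiv \sum_{i=1}^{m} c^{p^{i}} \pmod{p\mathcal{O}_{K}} \quad \text{for every } z \in \mathbb{Z}.
\]
The right-hand side, call it $\gamma_{m}(c)$, does not depend on $z$.

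\textbf{Step 3: the dichotomy.} There are two cases. If $\gamma_{m}(c) \equiv 0 \pmod{p\mathcal{O}_{K}}$, then every one of the $p$ residues $z \in \mathbb{Z}/p\mathbb{Z}$ is a solution. If $\gamma_{m}(c) \not\equiv 0$, then none is. So the count is exactly $p$ or $0$, which is the claim.

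The main obstacle is not the computation but the bookkeeping in the definition. One must make sure that "$1_{m}$-preperiodic integral points modulo $p\mathcal{O}_{K}$" means solutions of $\varphi^{m+1}(z) \equiv \varphi(z)$ with $z$ running over integral residues mod $p$, and not over all of $\mathbb{F}_{p^{n}}$. Over all of $\mathbb{F}_{p^{n}}$ the count would differ. There $z^{p^{m+1}} - z^{p} = (z^{p^{m}} - z)^{p}$ is additive, with kernel $\mathbb{F}_{p^{\gcd(m,n)}}$. Its solution set would therefore be empty or a coset of that kernel, of size $p^{\gcd(m,n)}$. This equals $p$ exactly when $\gcd(m,n) = 1$, and in particular when $n = 1$. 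I would therefore state the convention explicitly, matching \cite{BK22}, before running Steps 1--3. If one insists on exact preperiod $1$, one also has to discard points that are already periodic. I would handle this by noting that the theorem, like the periodic count in \cite{BK22}, counts solutions of the defining congruence.
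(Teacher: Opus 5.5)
Your argument is correct under the reading you fix: integral residues $z\in\mathbb{Z}/p\mathbb{Z}$, counted as solutions of $\varphi_{p,c}^{m+1}(z)\equiv\varphi_{p,c}(z)$. The reduction matches the paper's. The paper also writes the difference as $z^{p^{m+1}}-z^{p}+h(z)$, uses $z^{p^{j}}=z$ on $\mathbb{F}_{p}$, and records $h(z)\equiv\sum_{i=1}^{m}c^{p^{m+1-i}}=\gamma_{m}(c)$. The two routes part at the dichotomy. The paper splits on whether $c\in p\mathcal{O}_{K}$. It argues that $c\notin p\mathcal{O}_{K}$ yields no roots because $\gamma_{m}(c)\equiv 0$ \say{can also happen if $c\equiv 0$}. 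That does not exclude $\gamma_{m}(c)\equiv 0$ for other $c$. Your split on $\gamma_{m}(c)$ itself is the correct one. For $c\in\mathbb{Z}$ one has $\gamma_{m}(c)\equiv mc$, so when $p\mid m$ all $p$ residues are solutions even though $c\notin p\mathcal{O}_{K}$. For example, take $p=m=3$ and $c=1$: the reduced map is $z\mapsto z+1$ on $\mathbb{F}_{3}$. So your proof gives exactly the $p$-or-$0$ dichotomy stated here. By contrast, the sharper form in Theorem~\ref{2.2}, with value $p$ exactly when $c\in p\mathcal{O}_{K}$, is false under your reading.

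You should also carry your final remark through. The paper's $N_{c}^{(1_{m})}(p)$ ranges over all of $\mathcal{O}_{K}/p\mathcal{O}_{K}$ and requires $\varphi_{p,c}^{m}(z)\not\equiv z$. In characteristic $p$ one has $\varphi_{p,c}^{m+1}(z)-\varphi_{p,c}(z)=\bigl(\varphi_{p,c}^{m}(z)-z\bigr)^{p}$. Hence every solution of the defining congruence is $m$-periodic; indeed $z\mapsto z^{p}+c$ permutes $\mathbb{F}_{p^{n}}$, so nothing is strictly preperiodic. The literal count is therefore identically $0$. Combined with your $p^{\gcd(m,n)}$ remark, this makes your convention the only natural one under which the statement is both true for all $m,n$ and non-trivial. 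It is also what the paper's $c\in p\mathcal{O}_{K}$ case tacitly uses, since it checks only $z\in\mathbb{F}_{p}$ and never invokes $\varphi_{p,c}^{m}(z)\not\equiv z$. Stating the convention explicitly up front, as you plan, is the right call.
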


As noted earlier that a preperiodic point $\alpha$ of any map need not be a periodic point; and when that is the case, we then call such $\alpha$ a \textit{strictly preperiodic point}. So now, as a consequence of the expected total count obtained in Theorem \ref{BB} and [\cite{BK22}, Theorem 2.3] on any $\varphi_{p^{\ell}, c}$ modulo $p\mathcal{O}_{K}$, we then also prove the following corollary on the expected total number of distinct strictly $1_{m}$-preperiodic integral points of $\varphi_{p,c}$ modulo $p\mathcal{O}_{K}$:

\begin{cor}
Let $K\slash \mathbb{Q}$ be any number field of degree $ n \geq 1$ with the ring of integers $\mathcal{O}_{K}$, and in which any fixed prime $p\geq 3$ is inert. Let $m\geq 1$ be any fixed integer, and  $\varphi_{p, c}$ be a map defined by $\varphi_{p, c}(z)$ for all $c, z\in\mathcal{O}_{K}$. Then the expected total number of distinct strictly $1_{m}$-preperiodic integral points of $\varphi_{p,c}$ modulo $p\mathcal{O}_{K}$ is zero.
\end{cor}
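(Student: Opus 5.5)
The plan is to get the corollary by subtraction: take the count of all $1_{m}$-preperiodic points from Theorem \ref{BB}, and remove the count of $m$-periodic points from [\cite{BK22}, Theorem 2.3] (in the case $\ell = 1$). The first step is to fix the meaning of a $1_{m}$-preperiodic point modulo $p\mathcal{O}_{K}$: a residue class $z \bmod p\mathcal{O}_{K}$ with $\varphi_{p,c}^{1+m}(z) \equiv \varphi_{p,c}(z) \pmod{p\mathcal{O}_{K}}$. It is strictly preperiodic exactly when $\varphi_{p,c}^{m}(z)\not\equiv z$. So for each $c$,
\[
\#\{\text{strictly } 1_{m}\text{-preperiodic}\} = \#\{1_{m}\text{-preperiodic}\} - \#\{m\text{-periodic}\},
\]
since every $m$-periodic point is $1_{m}$-preperiodic (apply $\varphi_{p,c}$ to both sides of $\varphi_{p,c}^m(z)\equiv z$).

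The second step is to reduce both counts to statements about residues. Because $p$ is inert, $\mathcal{O}_{K}/p\mathcal{O}_{K}\cong \mathbb{F}_{p^n}$. On this field $z\mapsto z^p$ is the Frobenius, which is additive and bijective. Hence $\varphi_{p,c}$ is a bijection modulo $p\mathcal{O}_{K}$. Injectivity then lets us cancel the first application of $\varphi_{p,c}$: $\varphi_{p,c}(\varphi_{p,c}^m(z))\equiv\varphi_{p,c}(z)$ forces $\varphi_{p,c}^m(z)\equiv z$. So every $1_{m}$-preperiodic point modulo $p\mathcal{O}_{K}$ is already $m$-periodic. This matches the fact that Theorem \ref{BB} and [\cite{BK22}, Theorem 2.3] give the same dichotomy, $p$ or $0$, under the same conditions on $c$. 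Concretely, both come from the same congruence: the polynomial $\varphi_{p,c}^m(z)-z$ reduces to a form like $z^{p^m}-z+(\text{const in } c)$, restricted to the integral points the paper counts.

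The third step is the expectation. For each $c$ the difference above is $p-p=0$ or $0-0=0$. Averaging over $c$ with $|c|\to\infty$, in the same sense as in \cite{BK22}, therefore gives expected total $0$. The paper's \say{expected total number} sums the two possible values, $p+0$; subtracting the analogous $p+0$ from \cite{BK22} again gives $0$.

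The main obstacle is the second step. I must check that the two earlier theorems count over the same set of residues and impose the same condition on $c$, so that the cases \say{$p$} and \say{$0$} line up for each individual $c$ rather than only on average. I would settle this directly with the injectivity-of-Frobenius argument above, which shows the two point sets coincide for every $c$. That argument makes the corollary independent of how the counts are normalized.
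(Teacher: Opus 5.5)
Your argument is correct, but the step that carries it is not the one the paper uses.

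The paper's proof (Corollary \ref{co2.6}) is purely aggregate. It takes the ``expected total'' $p+0=p$ of $1_{m}$-preperiodic points from Remark \ref{re2.3} and the analogous total $p+0=p$ of $m$-periodic points from \cite{BK22}, then subtracts to get $p-p=0$. It never compares the two point sets for an individual $c$, so it is only as good as the two dichotomy theorems it quotes.

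Your Step 3 reproduces that computation, but your Step 2 is a genuinely different and stronger argument. Since $\mathcal{O}_{K}/p\mathcal{O}_{K}\cong\mathbb{F}_{p^{n}}$ and $z\mapsto z^{p}+c$ is a bijection of this field, $\varphi_{p,c}^{1+m}(z)\equiv\varphi_{p,c}(z)$ forces $\varphi_{p,c}^{m}(z)\equiv z$. Hence there are no strictly $1_{m}$-preperiodic residues at all, for every $c$ and every $m$ (and, replacing $z^{p}$ by $z^{p^{\ell}}$, for every $\ell$). That yields the corollary under any averaging convention. It also covers the literal definition (\ref{N_{c}}), which already builds in the condition $\varphi_{p,c}^{m}(z)\not\equiv z$ that your Step 1 handles by subtraction.

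So make Step 2 the whole proof, and drop the sentences that lean on the values ``$p$ or $0$'', in particular ``for each $c$ the difference is $p-p$ or $0-0$''. The number of $z\in\mathbb{F}_{p^{n}}$ with $\varphi_{p,c}^{m}(z)\equiv z$ is not in general given by the rule ``$p$ if $c\in p\mathcal{O}_{K}$, else $0$''. For $c\equiv 0$ it is $p^{\gcd(m,n)}$. For $K=\mathbb{Q}$ one has $\varphi_{p,c}^{m}(z)\equiv z+mc$, so every $c$ gives $p$ points when $p\mid m$.

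Your injectivity argument is immune to this, because it shows the preperiodic and periodic counts coincide for each $c$, whatever their common value. That independence from the specific counts, using only that $p$ is inert, is exactly what your route buys over the paper's one-line subtraction.
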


Recall further in \cite{BK22} we (again inspired by work \cite{AM} of Artin-Mazur on periodic orbits and of (BST) on torsion point-counting in arithmetic statistics, along with conjectural work \ref{conjecture 3.2.1} of Hutz and \cite{par1} of Panraksa on $K$-rational periodic point-counting in arithmetic dynamics) proved that the number of distinct $m$-periodic integral points of any $\varphi_{(p-1)^{\ell}, c}$ modulo prime $p\mathcal{O}_{K}$ is equal to $1$ or $2$ or $0$; from which it then followed that the average number of distinct $m$-periodic integral points of any $\varphi_{(p-1)^{\ell}, c}$ modulo $p\mathcal{O}_{K}$ is also $1$ or $2$ or $0$ as $c\to \infty$. Moreover, we then also observed in \cite{BK22} that the expected total number (namely, $1 + 2 + 0 =3$ for every fixed odd period $m\in \mathbb{Z}_{\geq 1}$ or namely, $1 + 1 + 2 + 0 =4$ for every fixed even period $m\in \mathbb{Z}_{\geq 2}$) of distinct $m$-periodic integral points in the whole family of maps $\varphi_{(p-1)^{\ell},c}$ modulo $p\mathcal{O}_{K}$ is not only independent of $p$ (and so independent of deg$(\varphi_{(p-1)^{\ell},c})$), degree $n$ and period $m$, but is also a constant $3$ or $4$ even when degree $(p-1)^{\ell}\to \infty$ or $n\to \infty$ or time $m\to \infty$. So now, inspired by work of (BST) on torsion point-counting in arithmetic statistics, and also by work \ref{dopo} of Doyle-Poonen along with conjectural work \ref{conjecture 3.2.1} and \ref{conjecture 3.2.2} of Hutz and [\cite{par2}, Theorem 1.7 and 1.8] of Panraksa on $K$-rational preperiodic point-counting in arithmetic dynamics, we then revisit the setting in Section \ref{sec2}. To that end, we then also prove the following (quantitative form of \say{all or nothing}  principle) on every polynomial map $\varphi_{p-1,c}$, which we state later more precisely as Theorem \ref{3.2} and then generalize further as Theorem \ref{3.3}; and moreover restricting  again on a subring $\mathbb{Z}$, we then also immediately prove Corollary \ref{cor3.4}: 
 
\begin{thm}\label{BB1}
Let $K\slash \mathbb{Q}$ be any number field of degree $n\geq 1$ with the ring of integers $\mathcal{O}_{K}$, and in which any fixed prime $p\geq 5$ is inert. Let $m\geq 1$ be any fixed integer, and $\varphi_{p-1, c}$ be defined by $\varphi_{p-1, c}(z) = z^{p-1} + c$ for all $c, z\in\mathcal{O}_{K}$. The number of distinct $1_{m}$-preperiodic integral points of $\varphi_{p-1,c}$ modulo $p\mathcal{O}_{K}$ is $p$ or $p-1$ or zero.
\end{thm}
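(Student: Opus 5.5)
The plan is to reduce everything to the dynamics of $\varphi_{p-1,c}$ on residues modulo $p\mathcal{O}_{K}$. The key input is the Fermat-type congruence $z^{p-1}\equiv 1 \pmod{p\mathcal{O}_{K}}$ for every residue $z\not\equiv 0$ among the $p$ classes the paper counts. I will use the interpretation, as in \cite{BK22}, that a $1_{m}$-preperiodic integral point modulo $p\mathcal{O}_{K}$ is a $z$ with
\[
\varphi_{p-1,c}^{m+1}(z)\equiv \varphi_{p-1,c}(z) \pmod{p\mathcal{O}_{K}},
\]
that is, preperiod (at most) $1$ and eventual period $m$. I also take the residues to be the classes $0,1,\dots,p-1$, where Fermat's little theorem applies. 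This last point is exactly where the inertness of $p$ and the choice of representatives in \cite{BK22} are used, and I will follow that convention verbatim.

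\textbf{Step 1: the map collapses.} By the congruence above, $\varphi_{p-1,c}(z)\equiv 1+c$ for every $z\not\equiv 0$, while $\varphi_{p-1,c}(0)=c$. So the defining congruence becomes one of two conditions, depending only on $c$:
\begin{itemize}[label={}]
\item For the $p-1$ nonzero residues, the condition is $\varphi^{m}_{p-1,c}(1+c)\equiv 1+c$.
\item For $z\equiv 0$, the condition is $\varphi^{m}_{p-1,c}(c)\equiv c$.
\end{itemize}
Hence the nonzero residues are all solutions or none are, and the count is $\epsilon_1(p-1)+\epsilon_0$ with $\epsilon_0,\epsilon_1\in\{0,1\}$.

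\textbf{Step 2: rule out the count $1$.} This is the step I expect to be the main obstacle, namely showing $(\epsilon_0,\epsilon_1)=(1,0)$ never occurs. Suppose $\varphi^{m}_{p-1,c}(c)\equiv c$.
\begin{itemize}[label={}]
\item If $c\equiv 0$, then $0$ is fixed and $1+c\equiv 1$. Since $1^{p-1}=1$, we get $\varphi_{p-1,c}(1)\equiv 1$, so $1$ is fixed as well, forcing $\epsilon_1=1$.
\item If $c\not\equiv 0$, then $c$ is periodic, so its image $\varphi_{p-1,c}(c)\equiv c^{p-1}+c\equiv 1+c$ lies on the same cycle. Applying $\varphi_{p-1,c}$ to $\varphi^m_{p-1,c}(c)\equiv c$ gives $\varphi^{m}_{p-1,c}(1+c)\equiv 1+c$, again forcing $\epsilon_1=1$.
\end{itemize}
So $\epsilon_0=1$ implies $\epsilon_1=1$, and the possible counts are $p$, $p-1$, or $0$.

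\textbf{Step 3: tidy up.} I would note that each value can actually occur; for instance $c\equiv 0$ gives $p$. This is not needed for the statement. I would also remark, following the style of Theorem \ref{BB}, that the argument never uses the specific eventual period $m$ beyond iterating the collapsed map, so it extends verbatim to $\varphi_{(p-1)^{\ell},c}$, since $z^{(p-1)^{\ell}}\equiv 1$ as well. That extension is presumably the content of the later generalization, Theorem \ref{3.3}.
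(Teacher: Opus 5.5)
The gap is in the second bullet of Step~2, where you write $\varphi_{p-1,c}(c)\equiv c^{p-1}+c\equiv 1+c$. That applies Fermat's congruence to the \emph{parameter} $c$. Your convention only places the points $z$ in the prime field $\mathbb{F}_p\subset\mathcal{O}_K/p\mathcal{O}_K\cong\mathbb{F}_{p^n}$, while the theorem allows any $c\in\mathcal{O}_K$. For $n\ge 2$ the residue of $c$ need not lie in $\mathbb{F}_p$, and then $c^{p-1}\ne 1$, so $\varphi_{p-1,c}(c)\ne 1+c$. Step~1 is fine for every $c$, since there Fermat is only used for $z$.

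This cannot be patched, because the implication $\epsilon_0=1\Rightarrow\epsilon_1=1$ is actually false for such $c$. Take $p=5$, $m=3$, and $K=\mathbb{Q}(\alpha)$ with $\alpha^5-\alpha^4+\alpha+1=0$.
\begin{itemize}
\item This quintic has no roots and no irreducible quadratic factors mod $5$. Hence $5$ is inert and $\mathcal{O}_K/5\mathcal{O}_K=\mathbb{F}_5[\bar\alpha]\cong\mathbb{F}_{5^5}$.
\item Let $c=\alpha$ and reduce using $c^5=c^4-c-1$. One gets $(c^4+c)^4\equiv -c$, so $0\mapsto c\mapsto c^4+c\mapsto 0$ is a $3$-cycle and $z=0$ is counted.
\item On the other hand, $\varphi_{4,c}(1+c)\equiv 1+c^2-c^3+c^4$ and $\varphi_{4,c}^2(1+c)\equiv 4+c+c^2+2c^3+c^4\notin\mathbb{F}_5$.
\item So $\varphi_{4,c}^3(1+c)\equiv(\varphi_{4,c}^2(1+c))^4+c\not\equiv 1+c$, because a fourth power equals $1$ only on $\mathbb{F}_5^\times$.
\end{itemize}
Thus no $z\in\mathbb{F}_5^\times$ is counted and the count is $1$. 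This is exactly the configuration $(\epsilon_0,\epsilon_1)=(1,0)$ you claim never occurs.

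What your argument does establish, correctly, is the trichotomy for every $c$ whose residue lies in $\mathbb{F}_p$, for instance $c\in\mathbb{Z}$ as in Corollary~\ref{cor3.4}. If your convention was meant to restrict $c$ as well, you must say so, because that changes the theorem.

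Within that range your route differs from the paper's. The paper assigns a count to each class of $c$ ($c\equiv0$, $c\equiv\pm1$, ``otherwise'') by direct iteration, and its treatment of the last class is not a valid argument. You instead collapse $\mathbb{F}_p^\times$ onto the single point $1+c$, so the count is $\epsilon_1(p-1)+\epsilon_0$, and then prove $\epsilon_0\Rightarrow\epsilon_1$. This avoids the case analysis and is more reliable than the paper, which misassigns cases:
\begin{itemize}
\item for $p=5$ and $c\equiv 2$, the point $1+c\equiv 3$ is fixed, so the count is $p-1$, not $0$;
\item for $c\equiv -1$ with $m$ even, the count is $p$, not $p-1$.
\end{itemize}

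Finally, your reading of the count ignores the condition $\varphi_{p-1,c}^{m}(z)\not\equiv z$ in the definition of $M_c^{(1_m)}(p)$ and counts only $z\in\mathbb{F}_p$. That is the reading the paper's proof actually uses. With the literal definition, $c\equiv 0$ would already give $p-2$.
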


\noindent Notice that the count obtained in Theorem \ref{BB1} on the number of distinct $1_{m}$-preperiodic integral points of any $\varphi_{p-1,c}$ modulo $p\mathcal{O}_{K}$ may depend on $p$ (and hence depend on deg$(\varphi_{p-1,c})$) and however be independent of degree $n$ and eventual period $m$ in each of the possibilities. Moreover, we may also observe that the expected total count (namely, $p + 0 = (p-1)+1+0 = p$ for every fixed eventual period $m\in \mathbb{Z}_{\geq 1}$) in Theorem \ref{BB1} on the number of distinct $1_{m}$-preperiodic integral points in the whole family of maps $\varphi_{p-1,c}$ modulo $p\mathcal{O}_{K}$ may not only also depend on $p$ and however be independent of $n$ and time $m$, but may also grow to infinity whenever $p-1\to \infty$. Furthermore, we may also notice that this same dependency and the resulting asymptotic behavior of the expected total number of distinct $1_{m}$-preperiodic integral points of $\varphi_{p-1,c}$ modulo $p\mathcal{O}_{K}$ on $p$, also happens in Theorem \ref{BB} on the expected total number of distinct $1_{m}$-preperiodic integral points of any $\varphi_{p,c}$ modulo $p\mathcal{O}_{K}$.

As a consequence of Thm.\ref{BB1} and [\cite{BK22}, Thm.3.3] on any $\varphi_{(p-1)^{\ell}, c}$ modulo $p\mathcal{O}_{K}$, we also prove the following corollary on the expected total number of distinct strictly $1_{m}$-preperiodic integral points of $\varphi_{p-1,c}$ modulo $p\mathcal{O}_{K}$: 

\begin{cor}
Let $K\slash \mathbb{Q}$ be any number field of degree $n\geq 1$ with the ring of integers $\mathcal{O}_{K}$, and in which any fixed prime $p\geq 5$ is inert. Then the expected total number of distinct strictly $1_{m}$-preperiodic integral points of $\varphi_{p-1,c}$ modulo $p\mathcal{O}_{K}$ is $p-3$ for every odd $m\in \mathbb{Z}_{\geq 1}$ or $p-4$ for every even $m\in \mathbb{Z}_{\geq 2}$. Moreover, the expected total number of distinct strictly $1_{m}$-preperiodic integral points of $\varphi_{p-1,c}$ modulo $p\mathcal{O}_{K}$ is $p-4$ for every $m\in \mathbb{Z}_{\geq 1}$.
\end{cor}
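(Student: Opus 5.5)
The plan is to get the count of strictly $1_{m}$-preperiodic points as a difference of two expected totals that are already available: the preperiodic total from Theorem \ref{BB1} and the periodic total from [\cite{BK22}, Thm.3.3]. The key set-theoretic input is the inclusion Per$(\varphi, \cdot) \subseteq$ PrePer$(\varphi, \cdot)$ noted in the Introduction. Reduced modulo $p\mathcal{O}_{K}$, it says that among the $1_{m}$-preperiodic integral points of $\varphi_{p-1,c}$ (those with $\varphi^{m+1}_{p-1,c}(z) \equiv \varphi_{p-1,c}(z)$), the periodic ones are exactly the roots of $\varphi^{m}_{p-1,c}(z) - z \equiv 0 \pmod{p\mathcal{O}_{K}}$. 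The strictly preperiodic ones are the complement. So for every $c$ the strict count equals the preperiodic count minus the periodic count. Since the expected total is the sum over the possible values in the whole family, the same difference holds for expected totals.

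First I compute the preperiodic expected total from Theorem \ref{BB1}, in the form stated in Theorem \ref{3.3}. There the number of distinct $1_{m}$-preperiodic points modulo $p\mathcal{O}_{K}$ is $p$ or $p-1$ or $0$. The discussion after Theorem \ref{BB1} organizes the family so that the expected total is $p+0=(p-1)+1+0=p$, independently of $m$ and $n$. Second I take from [\cite{BK22}, Thm.3.3] the periodic expected total. It is $1+2+0=3$ for odd $m$ and $1+1+2+0=4$ for even $m$, also independent of $p$ and $n$. Subtracting gives $p-3$ for odd $m \geq 1$ and $p-4$ for even $m \geq 2$, which is the first assertion.

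For the \emph{moreover} clause, I would argue that for odd $m$ one further periodic class must also be removed when counting uniformly over all $m$. The natural candidate is the fixed point modulo $p$ coming from $c$ with $\varphi_{p-1,c}(z)\equiv z$, which is periodic of every period. In the even case [\cite{BK22}] this class appears as the extra summand $1$ in $1+1+2+0$, whereas in the odd case it is merged into the single summand $1$ of $1+2+0$. Counting it separately makes the periodic total $4$ for every $m\in\mathbb{Z}_{\geq 1}$, and hence the strict total $p-4$ uniformly.

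I expect this last step to be the main obstacle. It requires checking, directly from the cases in the proof of [\cite{BK22}, Thm.3.3], that the extra periodic point in the even case is genuinely $1_{m}$-preperiodic for every $m$ and is not already counted among the $p-1$ or $p$ preperiodic points. Otherwise the subtraction would double count or miss a class. The first assertion, by contrast, is routine once the two expected totals are quoted.
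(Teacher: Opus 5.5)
Your first assertion is the paper's own argument: quote the preperiodic total $p$ (Remark \ref{re3.5}) and the periodic totals $3$ (odd $m$) and $4$ (even $m$) from [\cite{BK22}, Remark 3.5], then subtract. The gap is in the \emph{moreover} clause, and the mechanism you propose cannot close it.

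The class you want to count separately is a fixed point, $\varphi_{p-1,c}(z)\equiv z$. You note yourself that it is periodic of every period, and even that it is ``merged'' into the summand $1$ of $1+2+0$. So for odd $m$ it already satisfies $\varphi^{m}_{p-1,c}(z)\equiv z$ and is already counted there. Counting it again is exactly the double counting you are worried about. It would give the same odd-$m$ quantity both the value $3$ and the value $4$, hence both $p-3$ and $p-4$.

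Nor can the extra summand in the even list $1+1+2+0$ be a fixed point. For each fixed $c$, a fixed point is counted identically for every $m$. Any genuine difference between the even and odd counts must therefore come from points of even exact period (a $2$-cycle, say).

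The check you propose is also aimed at the wrong thing. Any $z$ with $\varphi^{m}_{p-1,c}(z)\equiv z$ automatically satisfies $\varphi^{m+1}_{p-1,c}(z)\equiv\varphi_{p-1,c}(z)$, so it can never be missing from the preperiodic set. The only question is whether it is already in the periodic count, and for odd $m$ it is.

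The paper does not recount the odd case. It subtracts a different, parity-combined quantity: the total of odd-$m$- and even-$m$-periodic points, quoted as $4$ from [\cite{BK22}, Remark 3.6]. Read as statements about one and the same quantity, the two clauses contradict each other for odd $m$. What you need is that citation (or a proof of that combined count), plus an explicit statement that the second clause concerns a different subtraction. A re-bookkeeping of the odd-$m$ periodic total will not do it.

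Two smaller points concern your first step, which you inherit from the paper. First, by (\ref{M_{c}}), $M_{c}^{(1_{m})}(p)$ already imposes $\varphi^{m}_{p-1,c}(z)-z\not\equiv 0$, so at face value it is already a strict count. Your identity ``strict $=$ preperiodic $-$ periodic'' needs the quoted values $p$, $p-1$, $0$ to count the larger set $\{z:\varphi^{m+1}_{p-1,c}(z)\equiv\varphi_{p-1,c}(z)\}$. That is how the proof of Theorem \ref{3.3} actually treats them, and you should say so.

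Second, a per-$c$ identity passes to totals only when both totals are sums over one common case split in $c$. Under your own description of the expected total as the sum over the possible values, Theorem \ref{3.3} would give $p+(p-1)+0$, not $p$. The periodic totals, meanwhile, come from the separate case split of [\cite{BK22}, Theorem 3.3]. So the values $p-3$ and $p-4$ are a subtraction of quoted bookkeeping totals, exactly as in the paper, not a consequence of your pointwise identity.
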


Motivated by a \say{counting-application} philosophy in arithmetic statistics and also by $\mathbb{Q}_{p}$-preperiodic point-counting work in arithmetic dynamics, we in the upcoming paper \cite{BK333} revisit the setting in Section \ref{sec2} and \ref{sec3} where we consider a polynomial map iterating on $\mathbb{Z}_{p}\slash p\mathbb{Z}_{p}$; and again with the sole purpose of investigating further the aforementioned relationship. Interestingly, we in that work \cite{BK333} prove counting and asymptotic results that are analogous to counting and asymptotic results proved in this article. Motivated further by that same \say{counting-application} philosophy in arithmetic statistics and function field number theory, along with work  \ref{main} of Benedetto and \cite{DoyPo} of Doyle-Poonen on function field-valued preperiodic point-counting in arithmetic dynamics, we then also in \cite{BK333} revisit the setting in Sect.\ref{sec2} and \ref{sec3} where we consider a polynomial map iterating on the space $\mathbb{F}_{p}[t]\slash (\pi)$ for any fixed irreducible monic polynomial $\pi\in \mathbb{F}_{p}[t]$ of degree $m\geq 1$. In doing so, we then also in \cite{BK333} prove counting (and asymptotic) results that are analogous to results proved in Sect.\ref{sec2} and \ref{sec3}. 

In the year 1950, Northcott \cite{North} showed that not only is PrePer$(\varphi, {\mathbb{P}^N(K)})$ finite, but also for a given $\varphi$, the set PrePer$(\varphi, {\mathbb{P}^N(K)})$ can be computed effectively. Forty-five years later, in the year 1995, Morton-Silverman conjectured PrePer$(\varphi, \mathbb{P}^N(K))$ can be bounded in terms of degree $d$ of $\varphi$, degree $D$ of $K$, and dimension $N$ of ${\mathbb{P}^N(K)}$. This conjecture is called the \textit{Uniform Boundedness Conjecture}; which we restate here as the following:

\begin{conj} \label{silver-morton}[\cite{Morton}]
Fix integers $D \geq 1$, $N \geq 1$, and $d \geq 2$. There exists a constant $C'= C'(D, N, d)$ such that for all number fields $K/{\mathbb{Q}}$ of degree at most $D$, and all morphisms $\varphi: {\mathbb{P}^N}(K) \rightarrow {\mathbb{P}^N}(K)$ of degree $d$ defined over $K$, the total number of preperiodic points of a morphism $\varphi$ is at most $C'$, i.e., \#PrePer$(\varphi, \mathbb{P}^N(K)) \leq C'$.
\end{conj}
\noindent A special case of Conjecture \ref{silver-morton} is when  $D = 1$, $N = 1$, and $d = 2$. In this case, if $\varphi$ is a polynomial morphism, then $\varphi$ is a quadratic map defined over $\mathbb{Q}$. Moreover, in the year 1995, Flynn-Poonen-Schaefer \cite{Flynn} conjectured that a quadratic map has no points $z\in\mathbb{Q}$ with exact period more than 3; and which restate here as following:
 
\begin{conj} \label{conj:2.4.1}[\cite{Flynn}, Conjecture 2]
If $n \geq 4$, then there is no quadratic polynomial $\varphi_{2,c }(z) = z^2 + c\in \mathbb{Q}[z]$ with a rational point of exact period $n$.
\end{conj}
Now by assuming Conjecture \ref{conj:2.4.1} and also establishing interesting results on rational preperiodic points, in the year 1998, Poonen \cite{Poonen} then concluded that the total number of rational preperiodic points of any quadratic polynomial $\varphi_{2, c}(z)=z^2 + c$ is at most nine. We restate here formally Poonen's result as the following corollary:
\begin{cor}\label{cor2}[\cite{Poonen}, Corollary 1]
If Conjecture \ref{conj:2.4.1} holds, then $\#$PrePer$(\varphi_{2,c}, \mathbb{Q}) \leq 9$,  for all quadratic maps $\varphi_{2, c}$ defined by $\varphi_{2, c}(z) = z^2 + c$ for all points $c, z\in\mathbb{Q}$.
\end{cor}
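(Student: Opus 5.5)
The plan is to follow Poonen's strategy. First, use Conjecture \ref{conj:2.4.1} to restrict the possible exact periods of rational periodic points of $\varphi_{2,c}$. Then bound the preperiodic ``tails'' hanging off these cycles by studying the rational points on a finite list of auxiliary (dynatomic and preperiodic) curves. Finally, enumerate the possible directed graphs of $\mathrm{PrePer}(\varphi_{2,c},\mathbb{Q})$ and read off the maximum size.

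\textbf{Step 1: admissible periods.} By Conjecture \ref{conj:2.4.1}, every rational periodic point of $\varphi_{2,c}$ has exact period $1$, $2$ or $3$. Each of these periods occurs; the families are parametrized by the genus-$0$ dynatomic curves $Y_1(n)$, for instance $c = 1/4 - t^2$ for period $1$.

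\textbf{Step 2: which cycles can coexist.} I will show that a rational $3$-cycle cannot coexist with a rational fixed point or a rational $2$-cycle. This amounts to showing that the fibre products of $Y_1(3)$ with $Y_1(1)$ and with $Y_1(2)$ have only cusps and trivial points as rational points. These curves have genus $\geq 1$, and I would determine their rational points by rank-$0$ elliptic curve computations. On the other hand, fixed points and a $2$-cycle can occur together. Note that there are at most two rational fixed points and at most one rational $2$-cycle, since the $2$-cycle is cut out by the quadratic $z^2+z+c+1$.

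\textbf{Step 3: bounding the tails.} Since $\varphi_{2,c}(z)=\varphi_{2,c}(-z)$, every non-critical rational point in the image has exactly two rational preimages or none. So the graph is determined by which periodic points have rational ``non-cycle'' preimages, and then how deep these trees go. The key claims are:
\begin{itemize}
\item there are no rational points of preperiod $\geq 4$;
\item points of preperiod $3$ occur only in an excluded configuration;
\item points of preperiod $2$ above several different periodic points cannot occur simultaneously.
\end{itemize}
Each claim reduces to finding the rational points on a specific curve $X$ parametrizing $c$ together with such a configuration. For genus-$1$ cases I would compute the Mordell--Weil group and check that it has rank $0$. For genus-$2$ cases I would use a Chabauty--Coleman argument on a Jacobian of rank $1$. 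Provided that the rank condition holds in each case, every rational point is a cusp or a known degenerate point.

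\textbf{Step 4: enumeration.} Combining Steps 1--3 leaves a finite list of possible graphs, which Poonen finds to be twelve. I would count vertices in each. The largest cases come from a fixed-point pair together with a $2$-cycle and one level of preimages, or from a $3$-cycle with its preimages; these give at most $9$ points. Hence $\#\mathrm{PrePer}(\varphi_{2,c},\mathbb{Q})\le 9$.

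The main obstacle is Step 3: determining all rational points on the genus-$2$ (and higher) curves that encode a point of preperiod $\geq 3$ or two independent preperiodic tails. This is where the Chabauty computations, and hence the rank hypotheses on the Jacobians, must actually be verified.
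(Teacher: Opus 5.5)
The paper gives no proof of this statement; it quotes Poonen's Corollary 1. Your outline is Poonen's strategy: restrict to periods $\le 3$, exclude configurations via auxiliary curves, then enumerate graphs. The gap is that the exclusions you actually list in Steps 2--3 do not imply the bound, because two configurations are never ruled out.

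\textbf{(i) Two rational $3$-cycles.} You bound the number of rational fixed points and $2$-cycles, but not of $3$-cycles. For each $c$ there are two $3$-cycles, since the period-$3$ points are the six roots of $(\varphi_{2,c}^3(z)-z)/(\varphi_{2,c}(z)-z)$. If both were rational you would already have $12$ affine preperiodic points. Excluding this needs its own curve: the component of $Y_1(3)\times_c Y_1(3)$ on which the two marked points lie in different cycles. This curve has genus $4$ and is not among the curves you list.

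\textbf{(ii) A preperiod-$2$ pair alongside fixed points and a $2$-cycle.} Your third bullet only forbids preperiod-$2$ points above two \emph{different} periodic points. It does not forbid a single preperiod-$2$ pair when both rational fixed points and a rational $2$-cycle are present, which gives $2+2+2+2+2=10$ affine points. Write $1-4c=a^2$ and $-3-4c=b^2$, so that $a^2-b^2=4$. Such a pair exists exactly when one of $(a-3)(a+1)$, $(a+3)(a-1)$, $(b-1)^2+4$, $(b+1)^2+4$ is a square. That is, you need a rational point on one of four genus-$1$ double covers of the conic $a^2-b^2=4$, and you must show these points are all degenerate. Your Step 4 claim that the fixed-point/$2$-cycle case stops after one level of preimages silently assumes exactly this exclusion.

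\textbf{Smaller corrections.} A $3$-cycle can carry a preperiod-$2$ pair. For $c=-29/16$ there is the cycle $5/4\mapsto -1/4\mapsto -7/4\mapsto 5/4$, the preimages $-5/4, 1/4, 7/4$, and the pair $\pm 3/4\mapsto -5/4$. The extremal graphs are therefore this one and the one for $c=-21/16$ (two fixed points, a $2$-cycle, and their first preimages), each with $8$ affine points; the bound $9$ is $8$ plus $\infty$. So in the $3$-cycle case what must be excluded is a second tail or a tail of length $3$, not tails altogether.

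Also, the fibre products in Step 2 have genus $2$. For example, $y^2=1-4c$ over $Y_1(3)\cong\mathbb{P}^1$ is branched exactly at the six period-$3$ points with $c=1/4$. Their quotients by the cyclic shift are conics, because $-7-4c$ is a square on $Y_1(3)$ (the quotients are $y^2-s^2=8$ and $y^2-s^2=4$). So ``rank-$0$ elliptic curve computations'' only apply once you exhibit a nonconstant map over $\mathbb{Q}$ from these curves to an elliptic curve; otherwise you need Chabauty (or a known modular curve) there as well.
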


On still the same note of preperiodic points, the next natural question that one could ask is whether the aforementioned phenomenon on preperiodic points has been investigated in some other cases, namely, when $D\geq 2$, $N\geq 1$ and $d\geq 2$. In the case $D = d = 2$ and $N = 1$, if $\varphi$ is a polynomial map, then $\varphi$ is a quadratic map defined over $K = \mathbb{Q}(\sqrt{D'})$. In this case, we note that in the year in 2013, Hutz-Ingram \cite{Ingram} showed that the smallest upper bound on the size of PrePer$(\varphi_{2,c}, K)$ is 15. A year later, in 2014, Doyle-Faber-Krumm \cite{Doyle} gave computational evidence on 250000 pairs $(K, \varphi_{2,c})$ which not only established [\cite{Doyle}, Thm. 1.2] as that of \cite{Ingram} on the upper bound of the size of PrePer$(\varphi_{2,c}, K)$, but also covered Poonen's claims in \cite{Poonen} on $\varphi_{2,c}$ over $\mathbb{Q}$. Three years later, in 2018, Doyle \cite{Doy} adjusted the computations in his aforementioned cited work with Faber and Krumm; and after from which he made the following conjecture on any quadratic map over any $K = \mathbb{Q}(\sqrt{D'})$:

\begin{conj}\label{do}[\cite{Doy}, Conjecture 1.4]
Let $K\slash \mathbb{Q}$ be a quadratic field and let $f\in K[z]$ be a quadratic polynomial. \newline Then, $\#$PrePer$(f, K)\leq 15$.
\end{conj}

Recall in \cite{BK1, BK111, BK22} we attempted to understand (on the level of the ring of integers $\mathcal{O}_{K}$ of any number field $K$ of degree $n\geq 1$) the possibility and validity of periodic version of Conjecture \ref{preper}. In this article, we wish to continue with this adventure of hoping to understand (again on the level of $\mathcal{O}_{K}$) the possibility and validity of preperiodic version \ref{preper} of Conject.\ref{silver-morton}. That is, in Sect.\ref{sec2} and \ref{sec3} we consider a polynomial map of any odd prime power degree $d\geq 3$, and also consider polynomial map of even degree $d\geq 4$ defined over $\mathcal{O}_{K}$, where $K$ is any number field of degree $n\geq 1$, resp.; in the attempt of understanding the possibility and validity of \ref{preper}:

\begin{conj} \label{silver-morton 1}($(D,1)$-version of Conjecture \ref{silver-morton})\label{preper}
Fix integers $D \geq 1$ and $d \geq 2$. There exists a constant $C'= C'(D, d)$ such that for all number fields $K/{\mathbb{Q}}$ of degree at most $D$, and all morphisms $\varphi: {\mathbb{P}}^1(K) \rightarrow {\mathbb{P}}^1(K)$ of degree $d$ over $K$, the total number of preperiodic points of $\varphi$ is at most $C'$, that is, \#PrePer$(\varphi, \mathbb{P}^1(K)) \leq C'$.
\end{conj}

\subsection*{History on the Connection Between the Size of PrePer$(\varphi_{d, c}, K)$ and Coefficient $c$}

In the year 1997, Call-Goldstine \cite{Call} proved that the size of PrePer$(\varphi_{2,c},\mathbb{Q})$ can be bounded above in terms of the number of distinct odd primes dividing the denominator of $c\in \mathbb{Q}$. We restate formally this result of Call-Goldstine as the following theorem, in which $GCD(a, e)$ refers to the greatest common divisor of $a$, $e \in \mathbb{Z}$:

\begin{thm}\label{2.3.1}[\cite{Call}, Theorem 6.9]
Let $e>0$ be an integer and let $s$ be the number of distinct odd prime factors of e. Define $\varepsilon  = 0$, $1$, $2$, if $4\nmid e$, if $4\mid e$ and $8 \nmid e$, if $8 \mid e$, respectively. Let $c = a/e^2$, where $a\in \mathbb{Z}$ and $GCD(a, e) = 1$. If $c \neq -2$, then the total number of $\mathbb{Q}$-preperiodic points of $\varphi_{2, c}$ is at most $2^{s + 2 + \varepsilon} + 1$. Moreover, a quadratic map $\varphi_{2, -2}$ has exactly six rational preperiodic points.
\end{thm}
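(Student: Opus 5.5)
The plan is to work with numerators over the fixed denominator $e$ and to combine a counting argument modulo $e$ with a $p$-adic expansion argument that makes the counting map injective. First I would show that every $z\in$ PrePer$(\varphi_{2,c},\mathbb{Q})$ can be written as $z=x/e$ with $x\in\mathbb{Z}$. If $v_p(z)<0$ for a prime $p$ not dividing $e$, or if $v_p(z)<v_p(c)/2$ for a prime $p\mid e$, then $|\varphi^k_{2,c}(z)|_p\to\infty$, which contradicts finiteness of the orbit. Conversely, if $v_p(z)>-v_p(e)$ for some $p\mid e$, then $v_p(z^2+c)=-2v_p(e)$, which puts us back in the escaping case. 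So $v_p(z)=-v_p(e)$ exactly for $p\mid e$.

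Next, $\varphi_{2,c}(x/e)=(x^2+a)/e^2$ must again be of the form $x'/e$. This gives the congruence $x^2\equiv -a \pmod e$ and the recursion $x'=(x^2+a)/e$. Since $\gcd(a,e)=1$, every such $x$ is a unit modulo $e$. The number of solutions of $x^2\equiv-a\pmod e$ is at most $2^{s}$ times the number of square roots of $-a$ modulo $2^{v_2(e)}$. That factor is at most $1,2,4$ according as $4\nmid e$, $4\mid e$ but $8\nmid e$, or $8\mid e$, which produces the $2^{s+\varepsilon}$ part of the bound.

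The key step is an injectivity claim. Two preperiodic points $x/e$ and $y/e$ with $x\equiv y \pmod{e}$ should essentially coincide, or at least at most a fixed small number of them share a residue class; this small number accounts for the extra factor $2^{2}$ and the $+1$. Write $x_k,y_k$ for the numerators of the two orbits and $d_k=x_k-y_k$. Then
\[
d_{k+1}=\frac{d_k\,(x_k+y_k)}{e}.
\]
If $x_k\equiv y_k\pmod e$, then $x_k+y_k\equiv 2x_k$ is prime to every odd $p\mid e$. Hence $v_p(d_{k+1})=v_p(d_k)-v_p(e)$, and the congruence $x_{k+1}\equiv y_{k+1}$ propagates along the orbit. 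Iterating, $v_p(d_k)\to-\infty$, which is impossible for nonzero integers, so $d_0=0$.

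When $e$ has no odd prime factor, this argument has nothing to bite on and must be replaced by an archimedean one. All real preperiodic points lie in the real filled Julia set $[-\beta,\beta]$, with $\beta=\tfrac12(1+\sqrt{1-4c})$. The differences $|d_{k+1}|=|d_k|\,|x_k+y_k|/e$ then cannot shrink to zero through integers and cannot grow without bound. Hence within one residue class one gets at most the points in a single small interval together with their negatives (the map is even), and a direct check bounds their number by $4$.

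I expect the main obstacle to be the prime $2$ and the exceptional parameter $c=-2$. At $p=2$, $x+y\equiv 2x$ is not a unit, so the valuation drop is one smaller, and the orbit can stay in one class modulo $2^{v_2(e)}$. This is why the bound carries $2^{2+\varepsilon}$ rather than $2^{\varepsilon}$, and I would handle it by tracking $v_2(d_k)$ by hand in each of the three cases for $\varepsilon$. The extra $+1$ and the exclusion $c\neq-2$ come from the point at infinity, or from the Chebyshev case $z^2-2$. Here $[-2,2]$ is the whole Julia set, so the archimedean argument degenerates. For $c=-2$ I would simply list the rational preperiodic points $0,\pm1,\pm2$ together with $\infty$, giving exactly six, and verify that no other rational point is preperiodic via the conjugacy $z=w+w^{-1}$, under which the map becomes $w\mapsto w^2$.
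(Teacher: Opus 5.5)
The paper only quotes this result from Call--Goldstine and gives no proof, so I am judging your argument on its own. Your first two steps are fine: finite rational preperiodic points have the form $x/e$ with $x^2\equiv -a \pmod{e}$, and this gives at most $2^{s+\varepsilon}$ residue classes.

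\textbf{The injectivity step is false.} The relation $v_p(d_{k+1})=v_p(d_k)-v_p(e)$ shows the congruence is \emph{lost}, not propagated. Once $v_p(d_k)=v_p(e)$ you get $p\nmid d_{k+1}$. After that, $x_k+y_k$ may be divisible by $p$, and the valuation simply stalls. A concrete case is $c=2/9$ ($a=2$, $e=3$). The points $1/3$ and $2/3$ are fixed, and $-1/3,-2/3$ map onto them. Here $x_0=1$ and $y_0=-2$ are congruent mod $3$ with $d_0=3$. But $x_1=1$, $y_1=2$ and $d_1=-1$, so the congruence is gone. More generally, if $u/e$ and $u'/e$ are the two rational fixed points, then $u+u'=e$. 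So the distinct points $u/e$ and $-u'/e$ always lie in the same class mod $e$. Your argument would give at most $2^{s+\varepsilon}$ finite points whenever $e$ has an odd prime factor. This example has four finite points against a bound of $2$. Likewise, the factor $2^2$ is not $2$-adic in origin. It is already needed for $e=1$: for instance $c=-6$ has the preperiodic points $\pm 2,\pm 3,\infty$.

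\textbf{The missing idea is an archimedean length bound, which makes injectivity unnecessary.} Your sketch (``a single small interval'') does not supply it. In the coordinate $x=ez$, the interval $[-e\beta,e\beta]$ has length about $2\sqrt{|a|}$, which is unbounded relative to $e$. The fix splits by the size of $c$:
\begin{itemize}
\item For $c>1/4$, only $\infty$ is preperiodic.
\item For $-2<c\le 1/4$, one has $2\beta<4$, so each residue class mod $e$ meets $[-e\beta,e\beta]$ in at most $4$ integers.
\item For $c<-2$, the condition $\varphi_{2,c}(z)\in[-\beta,\beta]$ forces $z^2\in[-c-\beta,\beta-c]$. That is, $z\in[-\beta,-\gamma]\cup[\gamma,\beta]$ with $\gamma=\sqrt{-c-\beta}>0$. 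The identity $\beta^2-\gamma^2=2\beta$ gives $\beta-\gamma=2\beta/(\beta+\gamma)<2$. So each of the two pieces meets each class in at most $2$ points.
\end{itemize}
Hence each of the at most $2^{s+\varepsilon}$ classes contributes at most $4$ finite points, and $\infty$ accounts for the $+1$. The exclusion $c=-2$ is exactly the boundary case $2\beta=4$. There $e=1$ and all five integers in $[-2,2]$ are preperiodic. Your treatment of that case via $z=w+w^{-1}$ is fine.
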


Eight years later, after the work of Call-Goldstine, in the year 2005, Benedetto \cite{detto} studied polynomial maps $\varphi$ of arbitrary degree $d\geq 2$ defined over an arbitrary global field $K$, and then established the following theorem on the relationship between the size of the set PrePre$(\varphi, K)$ and the number of bad primes of $\varphi$ in $K$:

\begin{thm}\label{main} [\cite{detto}, Main Theorem]
Let $K$ be a global field, $\varphi\in K[z]$ be a polynomial of degree $d\geq 2$ and $s$ be the number of bad primes of $\varphi$ in $K$. The number of preperiodic points of $\varphi$ in $\mathbb{P}^N(K)$ is at most $O(\text{s log s})$. 
\end{thm}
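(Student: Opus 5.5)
The plan is to follow the local-to-global strategy behind Benedetto's theorem. I would analyse $\varphi$ separately at each place $v$ of $K$, and then glue the local information together with the product formula applied to differences of preperiodic points.

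First I normalise. After an affine change of coordinates over $K$, which changes neither the number of preperiodic points nor, up to finitely many places absorbed into the constant, the set of bad primes, I may assume $\varphi$ is monic with coefficients that are $v$-integral at every good place. At a good place $v$ the filled Julia set is the closed unit disk, so every preperiodic $x$ satisfies $|x|_v\le 1$. Hence for distinct preperiodic $x,y$ we get $|x-y|_v\le 1$ at all good places. The product formula $\prod_v |x-y|_v=1$ then forces
$$\prod_{v\ \mathrm{bad}} |x-y|_v \;\ge\; 1 .$$
The archimedean places can be counted among the "bad" ones, where $|x-y|_v$ is bounded in terms of $d$ and the coefficients. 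So any two distinct preperiodic points must be \emph{far apart at some bad place}.

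Next comes the local step at each bad non-archimedean place $v$. The filled Julia set $\mathcal{K}_v$ is contained in a disk $D_0$, and $\varphi^{-1}(D_0)$ is a union of at most $d$ disks. Iterating this gives a tree of disks of branching at most $d$. The key local lemma I would aim for is the following. If two preperiodic points lie in a common disk of radius $r$ in this tree, then applying $\varphi$ expands their distance by a factor bounded below (explicitly in $d$ and $|\cdot|_v$) until they separate into different branches. Since preperiodic points have finite orbits that stay inside $\mathcal{K}_v$, this bounds the depth at which they can still share a disk. Consequently, at level $k$ of the tree the preperiodic points are distributed among at most $d^{k}$ disks, and two points in the same level-$k$ disk have $|x-y|_v\le \rho_v^{k}$ for some $\rho_v<1$ that is uniform once a suitable scale is chosen. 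At archimedean places a Koebe/area-type argument should play the same role.

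Finally I combine the places. Suppose there are $N$ preperiodic points. Assign to each point its vector of level-$k$ disk labels over the $s$ bad places; there are at most $d^{ks}$ such vectors. If $N> d^{ks}$, pigeonhole gives two points that are $\rho^{k}$-close at every bad place, so $\prod_{\mathrm{bad}}|x-y|_v\le \rho^{ks}$, which contradicts the displayed inequality. A crude choice of $k$ only yields an exponential bound in $s$. To reach $O(s\log s)$, I would refine the pigeonhole. Instead of demanding closeness at all places simultaneously, I would sum $-\log|x-y|_v$ over all pairs and use that each local tree contributes at most about $N\log N/\log d$ to $\sum_{x\ne y}-\log|x-y|_v$. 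This is an energy or discriminant estimate: at each level the points are spread among at most $d$ branches, which forces a positive share of pairs to be separated. Summing over the $s$ bad places and comparing with the product-formula lower bound $\sum_{x\neq y}\sum_{\mathrm{bad}}\log|x-y|_v\ge 0$ should produce an inequality of the shape $N^2 \lesssim s\,N\log N$. From this, $N=O(s\log s)$ follows.

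I expect the main obstacle to be the local lemma: bounding, uniformly in $v$ and the coefficients, how tightly preperiodic points can cluster at a bad place. The first approach I would try is Benedetto's scaling argument. Rescale so that $\mathcal{K}_v$ has diameter $1$, show that $\varphi$ maps small disks onto disks with radius multiplied by at least $|d|_v^{-1}$-type factors, and track the orbit's return to the top level. Getting a count of order $N\log N$, rather than $N^2$, for the pairwise energy at each place is what yields the $s\log s$ bound, and it will require care when wild ramification ($v\mid d$) makes the expansion factors small.
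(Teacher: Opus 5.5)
The paper gives no proof of this statement; it is quoted from Benedetto. So your outline has to stand on its own, and its decisive step fails.

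\textbf{The combining step has no $N^2$ term.} The product formula gives $\Sigma:=\sum_{v\ \mathrm{bad}}\sum_{x\neq y}\log|x-y|_v\ge 0$. Your local estimate $\sum_{x\neq y}-\log|x-y|_v\lesssim N\log N/\log d$ is again a \emph{lower} bound, $\Sigma\gtrsim -sN\log N/\log d$, and two lower bounds imply nothing. Read with the opposite sign, $0\le \Sigma\lesssim sN\log N$ is consistent for every $N$. Either way no term of order $N^2$ appears, so ``$N^2\lesssim sN\log N$'' has no derivation.

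Your heuristic is also reversed. Putting $N$ points into at most $d$ branches forces at least $N^2/d-N$ ordered pairs into a \emph{common} branch, i.e.\ close together; it forces no separation.

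The local estimate is false as stated. Nothing local stops many $K_v$-rational preperiodic points from lying in one deep disk of the tree: for $z^d+c$ with $|c|_v>1$ the filled Julia set can be a Cantor set lying entirely in $K_v$. In that case $\sum_{x\neq y}-\log|x-y|_v$ is of order $N^2$, and only the global product formula rules this out.

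A contradiction needs an upper bound for $\Sigma$, uniform in $\varphi$, played against a lower bound of order $N^2$. For monic $\varphi$ the filled Julia set has logarithmic capacity $1$ at every place, so the bad places cannot in general supply that $N^2$ term. The natural source is the arithmetic of $K$, for instance good places of small norm. By the product formula $\Sigma=-\sum_{v\ \mathrm{good}}\sum_{x\neq y}\log|x-y|_v$. At a good place with residue field of size $q_v$, at least $N^2/q_v-N$ ordered pairs are congruent modulo $v$, each contributing at least $\log q_v$. If the bad primes are the $s$ smallest ones, the first good prime has norm $\asymp s\log s$, which is exactly the shape of the bound. Nothing in your outline uses residue fields, and nothing in it accounts for the $\log s$.

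\textbf{The local lemma is not available in the form you need.} You cannot choose ``a suitable scale'' place by place. The product formula is applied to $x-y$ in one global coordinate, and rescaling by $\lambda_v$ at each $v$ shifts your displayed inequality by $\sum_v\log|\lambda_v|_v$, which depends on $\varphi$.

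In the global coordinate the diameters and contraction rates do depend on $\varphi$. For $z^d+c$ with $|c|_v>1$ and $v\nmid d$, the filled Julia set has diameter $|c|_v^{1/d}$ and its first-level branches have radius $|c|_v^{(2-d)/d}$. A single pair in different branches already contributes $\frac1d\log|c|_v$ to $\Sigma$. Your archimedean bound ``in terms of the coefficients'' is non-uniform in the same way.

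Moreover, the nested disks need not shrink at all. Over $\mathbb{Q}_p$ with $p$ odd, $\varphi(z)=(z^3-z^2)/p$ maps $\bar D(0,|p|)$ onto itself with degree $2$: in the coordinate $z=pw$ it is $w\mapsto pw^3-w^2$, which reduces to $-w^2$. So the fixed points $0$ and $z_0\approx -p$ lie in one component of $\varphi^{-k}(D_0)$ for every $k$. Yet $|z_0|_p=|p|$, and $\varphi$ never expands their distance.

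Consequently even your exponential bound is unproved. The real work is handling, uniformly in $\varphi$, such good-reduction pieces inside bad fibres, the wild places $v\mid d$ you flag, and the archimedean places. The outline leaves all of that open.
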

 
\noindent Since Benedetto's Theorem \ref{main} applies to any polynomial $\varphi$ of arbitrary degree $d\geq 2$ defined over any number field $K$, it then follows that one can immediately apply Benedetto's Theorem \ref{main} to any polynomial $\varphi$ of arbitrary odd or even degree $d> 2$ defined over such $K$ and as a result obtain the upper bound in Thm. \ref{main}. 

Five years after the work \cite{detto}, in the year 2010, Narkiewicz \cite{Narkie1} studied polynomial maps $\varphi_{d,c}$ of odd prime-power degree $d = p^{\ell}$ defined over a totally complex extension $K\slash \mathbb{Q}$ of degree $n$ where $K$ does not contain $p$-\text{th} roots of unity, and then established the following theorem on the length of $K$-cycles of any such map $\varphi_{d,c}$:

\begin{thm} \label{theorem 3.2.1}[\cite{Narkie1}, Theorem]
Let $K$ be a totally complex extension of $\mathbb{Q}$ of degree $n>1$, denote by $R$ its ring of integers and $D$ be the maximal order of a primitive root of unity contained in $K$. Let $p$ be a prime not dividing $D$, and put $F(X) = X^n + c\in K[X]$ with $n = p^k$ with $k\geq 1$ and $c\neq 0$. Then the lengths of cycles of $F$ in $K$ are bounded by a constant $B = B(K, p)$. If $p>2^n$, then this constant can be taken to be $n2^{n+1}(2^n-1)$.
\end{thm}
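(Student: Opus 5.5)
The plan is to follow the standard route for bounding cycle lengths of polynomial maps over rings of integers. I would reduce to the case where all cycle points are integral, then bound cycle lengths by reducing modulo a suitable prime ideal. The hypotheses (totally complex $K$, and $p\nmid D$) should enter in ruling out exceptional small cycles and in controlling the local contribution at the chosen prime.

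\textbf{Step 1 (integrality).} Let $x_0,\dots,x_{k-1}$ be a cycle of $F(X)=X^{p^k}+c$ in $K$, with $k$ here denoting the cycle length. Suppose some $x_i$ is non-integral at a prime $\mathfrak{P}$. Then comparing valuations in $x_{i+1}=x_i^{p^k}+c$ shows that $v_{\mathfrak{P}}(x_i)$ is forced either to decrease strictly along the orbit or to equal $v_{\mathfrak{P}}(c)/p^k$ for all $i$. The first alternative contradicts periodicity. In the second, after rescaling by a local uniformizer, the problem reduces to an integral one of the same shape. So we may assume $c\in R$ and all $x_i\in R$.

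\textbf{Step 2 (unit relations along the cycle).} For $i\neq j$ we have
\[
x_{i+1}-x_{j+1}=(x_i-x_j)\prod_{\zeta^{p^k}=1,\ \zeta\neq 1}(x_i-\zeta x_j).
\]
Going once around the cycle, the product of the quotients $(x_{i+1}-x_{j+1})/(x_i-x_j)$ equals $1$. Since each quotient is an algebraic integer, each quotient is a unit. In particular all consecutive differences $x_{i+1}-x_i$ are associates, and $x_i^{p^k}-x_j^{p^k}$ is associate to $x_i-x_j$. This is where $p\nmid D$ is used: $K$ contains no nontrivial $p$-th roots of unity, so the factors $x_i-\zeta x_j$ do not collapse in $K$, and the relation cannot be satisfied trivially by roots of unity in $K$.

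\textbf{Step 3 (reduction modulo a small prime; the explicit bound).} Next I would invoke a Pezda-type lemma. Reducing modulo a prime $\mathfrak{P}$ of $R$ with residue field of size $N=N(\mathfrak{P})$, a cycle in $R$ has length $\ell\cdot r\cdot q$. Here $\ell\le N$ is the length of the reduced cycle in $R/\mathfrak{P}$, $r$ divides the order of the multiplier $(F^{\ell})'(x_0)=\prod (p^k x_i^{p^k-1})$ modulo $\mathfrak{P}$ (so $r\mid N-1$), and $q$ is a power of the residue characteristic controlled by the ramification index. Choose $\mathfrak{P}$ above $2$; its norm is at most $2^n$. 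Then $p$ is a unit at $\mathfrak{P}$ because $p$ is odd, and the power-of-$2$ factor is bounded in terms of the local degree, giving at most $2\cdot n\cdot 2^{n}$. This yields a bound of the form $n2^{n+1}(2^n-1)$. When $p$ is small relative to $2^n$, the same argument gives a bound depending on $K$ and $p$, producing $B(K,p)$.

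The main obstacle is Step 3: obtaining exactly the constant $n2^{n+1}(2^n-1)$. This requires controlling the $2$-power part $q$ of the cycle length precisely. I would try the usual estimate: iterating $F^{\ell r}$ near a fixed point modulo $\mathfrak{P}^j$ raises the cycle length by at most a factor $2$ at each stage until $j$ exceeds roughly $e/(2-1)$, where $e\le n$ is the ramification index. The hypothesis $p>2^n$ is needed to guarantee that no residue-field coincidence $x_i^{p^k}\equiv x_j^{p^k}$ with $x_i\not\equiv x_j$ lengthens the reduced cycle beyond $2^n$. It is also needed so that the multiplier is a $\mathfrak{P}$-unit, which keeps the factor $r$ within $2^n-1$.
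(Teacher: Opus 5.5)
The paper gives no proof of this statement; it is quoted from Narkiewicz, so I assess your argument on its own. Step 3 is a legitimate appeal to Pezda's local theorem. A cycle of a polynomial with coefficients in $R_{\mathfrak{P}}$ has length $abq^{c}$ with $a\le N(\mathfrak{P})$, $b\mid N(\mathfrak{P})-1$, and $2^{c}\le 2e$ when $\mathfrak{P}\mid 2$. At a prime above $2$ this gives exactly $n2^{n+1}(2^n-1)$.

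The genuine gap is Step 1. You correctly show that if $v_{\mathfrak{P}}(c)<0$ then $v_{\mathfrak{P}}(x_i)=v_{\mathfrak{P}}(c)/p^k=:-t$ for every $i$. But rescaling does \emph{not} give an integral problem of the same shape. With $u=\pi^{t}x$ and $s=(p^k-1)t$, the map becomes $G(u)=\pi^{-s}u^{p^k}+\pi^{t}c$, and both coefficients have negative valuation. (Only $(p^k-1)$-th roots of unity conjugate $X^{p^k}+c$ to another monic binomial.) So Pezda's theorem does not apply, and the case of $c$ non-integral at your prime above $2$ is never treated.

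This is where the hypotheses actually matter. Suppose $K_{\mathfrak{P}}$ contained a nontrivial $p$-th root of unity. Then for suitable $c$ with $v_{\mathfrak{P}}(c)<0$, the possible cycle points split into several discs of radius $|\pi|^{s}$ about the roots $\zeta\alpha$ of $u^{p^k}=-\pi^{p^kt}c$. Each disc is mapped bijectively and expandingly by $G$ onto the whole unit ball, so $G$ has cycles of every length in $K_{\mathfrak{P}}$, and no local argument at such a prime can bound anything. A telltale sign: if Step 1 were valid, Pezda at a prime above $2$ would give $n2^{n+1}(2^n-1)$ for every $p$, using neither $p\nmid D$ nor $p>2^n$.

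Your explanations of the hypotheses are correspondingly off:
\begin{itemize}
\item The reduced cycle has length at most $N(\mathfrak{P})\le 2^n$ automatically.
\item Whether the multiplier $\prod p^kx_i^{p^k-1}$ is a $\mathfrak{P}$-unit depends on the $x_i$, not on $p>2^n$.
\item The unit relations of Step 2 hold for any polynomial cycle in $R$, whether or not $\zeta_p\in K$; Step 2 is not needed.
\end{itemize}

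The missing idea is to work at a prime $\mathfrak{P}$ of residue characteristic $q\neq p$ with $p\nmid N(\mathfrak{P})-1$, so that $K_{\mathfrak{P}}$ has no nontrivial $p$-power roots of unity. For $\mathfrak{P}\mid 2$ this is automatic when $p>2^n\ge N(\mathfrak{P})$. In general such primes exist by Chebotarev applied to the nontrivial extension $K(\zeta_p)/K$, which is exactly what $p\nmid D$ provides (and $2\mid D$ forces $p$ odd). Then split into two cases.
\begin{itemize}
\item If $v_{\mathfrak{P}}(c)\ge 0$, the cycle lies in $R_{\mathfrak{P}}$ and Pezda applies.
\item If $v_{\mathfrak{P}}(c)<0$, the points $u_i=\pi^{t}x_i$ are units with $u_i^{p^k}\equiv -\pi^{p^kt}c \pmod{\pi^{s}}$. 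Since $p^k$ is a $\mathfrak{P}$-unit and $\mu_{p^k}(K_{\mathfrak{P}})=1$, Hensel's lemma puts all of them in one disc $|u-\alpha|\le|\pi|^{s}$. On that disc $(u^{p^k}-(u')^{p^k})/(u-u')\equiv p^k\alpha^{p^k-1}$ is a unit, so $|G(u)-G(u')|=|\pi|^{-s}|u-u'|$. Going once around a cycle of length $L$ gives $|u_i-u_j|=|\pi|^{-sL}|u_i-u_j|$, hence $u_i=u_j$ and $L=1$.
\end{itemize}
The hypothesis that $K$ is totally complex only excludes a trivial case: at a real place $x\mapsto x^{p^k}+c$ is increasing because $p$ is odd, so every cycle is a fixed point.
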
\noindent Now recall in arithmetic dynamics, and more generally in classical dynamical systems that we can always identify any $K$-orbit of any map, say $\varphi_{p,c}$, with any $K$-cycle of the same map. So then, if we were working under the assumptions in Theorem \ref{theorem 3.2.1}, then by Theorem \ref{theorem 3.2.1} the total number of distinct points in any $K$-orbit is bounded by a constant $B$ depending on only $K$ and $p$; and moreover $B = n2^{n+1}(2^n -1)$ whenever $p>2^n$.

Three years after \cite{Narkie}, in 2015, Hutz \cite{Hutz} developed an algorithm determining effectively all $\mathbb{Q}$-preperiodic points of a morphism defined over a given number field $K$; from which he then made the following conjecture: 

\begin{conj} \label{conjecture 3.2.1}[\cite{Hutz}, Conjecture 1a]
For any integer $n > 2$, there is no even degree $d > 2$ and no point $c \in \mathbb{Q}$ such that the polynomial map $\varphi_{d, c}$ has rational points of exact period $n$.
Moreover, \#PrePer$(\varphi_{d, c}, \mathbb{Q}) \leq 4$. 
\end{conj}

\begin{rem}\label{rem1.14}
If Conjecture \ref{conjecture 3.2.1} held, it would then also follow that the total number of rational integral preperiodic points of any $\varphi_{d, c}(x)$ of even degree $d\geq 4$ is bounded above by $4$. Moreover, since the monic polynomial $\varphi_{d,c}(x)\in \mathbb{Z}[x]$ has good reduction modulo $p$ and since we also know from [\cite{Sil}, Cor. 2.20] that good reduction modulo $p$ preserves preperiodicity, it would then also follow that the total number of rational integral preperiodic points of any $\varphi_{d,c}$ modulo $p$ of even degree $d\geq 4$ is also bounded above by a constant equal to $4$. But now (as in \cite{BK111}) the issue is that we don't know (as to the author's knowledge) whether Conjecture \ref{conjecture 3.2.1} holds or not, let alone whether $4$ is the correct upper bound on the total number of $\mathbb{Q}$-preperiodic of any $\varphi_{d, c}(x)$ of even degree $d> 2$. Note that in the year 2022, Panraksa not only conditioned on the \textit{abc}-conjecture and then proved in [\cite{par2}, Theorem 1.7] that \#PrePer$(\varphi_{d, c}, {\mathbb{P}^1(\mathbb{Q})}) \leq 4$ for sufficiently large even degree $d$, but also assumed the \textit{explicit abc}-conjecture and then proved that \#PrePer$(\varphi_{d, c}, {\mathbb{P}^1(\mathbb{Q})}) \leq 4$ for even degree $d\geq 7$.
\end{rem}

Using the same algorithm in his work \cite{Hutz}, Hutz also made the following conjecture (which he also referred to as \say{Generalized Poonen}) on (the total number) rational preperiodic points of any map $\varphi_{d, c}$ of degree $d\geq 2$:  

\begin{conj} \label{conjecture 3.2.2}[\cite{Hutz}, Conjecture 1]
For any integer $n > 3$, there is no degree $d \geq 2$ and no point $c \in \mathbb{Q}$ such that the map $\varphi_{d, c}$ has rational points of exact period $n$. For maps of the form $\varphi_{d,c}$, we have \#PrePer$(\varphi_{d, c}, \mathbb{Q}) \leq 9$. 
\end{conj}

\begin{rem}\label{rem1.16}
As in Rem.\ref{rem1.14} we also note that if Conjecture \ref{conjecture 3.2.2} held, it would then also follow that the total number of rational integral preperiodic points of any $\varphi_{d, c}(x)$ of any degree $d\geq 2$ is bounded above by $9$. Moreover, since $\varphi_{d,c}(x)\in \mathbb{Z}[x]$ has good reduction modulo $p$, it would then also follow that the total number of rational integral preperiodic points of any $\varphi_{d,c}$ modulo $p$ of degree $d\geq 2$ is also bounded above by a constant equal to $9$. But now since Conjecture \ref{conj:2.4.1} is still open (and so the upper bound $9$ in Corollary \ref{cor2} need not be obtained) and moreover since the upper bound in Conjecture \ref{conjecture 3.2.1} is also not yet known to be the correct one or not, we then also note that the issue here is that we don't know whether Conjecture \ref{conjecture 3.2.2} holds or not, let alone whether $9$ is the correct upper bound on the total number of rational preperiodic of any $\varphi_{d, c}(x)$ of any $d> 2$. It's worth mentioning that in establishing the computational evidence that would lead him to formulate his Conject.\ref{conjecture 3.2.2} and \ref{conjecture 3.2.1}, Hutz considered rational points $c$ of bounded height. But now one may wonder why he never incorporated that bounded height information in stating his Conjecture \ref{conjecture 3.2.2} and \ref{conjecture 3.2.1}. Perhaps Hutz might've thought that regardless of that arithmetic information, the upper bounds in \ref{conjecture 3.2.2} and \ref{conjecture 3.2.1} must hold. 
\end{rem}

In the year 2020, Doyle-Poonen \cite{DoyPo} proved that the number of $K$-rational preperiodic points of $z^d + c$ having a bounded eventual period, is uniformly bounded. We restate this result of Doyle-Poonen as the following:

\begin{thm}\label{dopo}[\cite{DoyPo}, Theorem 1.10]
Fix integers $d\geq 2$, $D\geq 1$, and $N\geq 1$. Then there exists $B=B(d, D, N)>0$ such that, for every number field $K$ satisfying $[K:\mathbb{Q}]\leq D$ and every $c\in K$, the number of preperiodic points of $z^d + c$ in $K$ with eventual period at most $N$ is at most $B$.
\end{thm}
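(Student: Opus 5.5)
The plan is to reduce the statement to a uniform bound on the \emph{preperiod} of $K$-points, and then count. Fix $d$, $D$, $N$. For $M\geq 0$ and $1\leq n\leq N$, consider the affine curve $Y_1(M,n)$ over $\mathbb{Q}$. It parametrizes pairs $(c,x)$ such that $x$ is preperiodic for $\varphi_{d,c}$ with preperiod exactly $M$ and eventual period exactly $n$. Concretely, it is cut out of $\mathbb{A}^2$ by the generalized dynatomic polynomial $\Phi_{M,n}(c,x)$. We take as known that these curves are irreducible, and we let $X_1(M,n)$ denote a smooth projective model.

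The key observation is the map $(c,x)\mapsto(c,\varphi_{d,c}(x))$, which gives a morphism $Y_1(M+1,n)\to Y_1(M,n)$ of degree $d$ for $M\geq 1$, and a degree $d-1$ map at the bottom level. So for each $n$ we get an infinite tower of covers
\[\cdots\to X_1(M+1,n)\to X_1(M,n)\to\cdots\to X_1(1,n)\to X_1(0,n).\]

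\textbf{Step 1 (the genus grows).} Compute ramification in the tower via Riemann--Hurwitz. The cover $X_1(M+1,n)\to X_1(M,n)$ branches over points where $\varphi_{d,c}(z)=x$ has a repeated root, i.e.\ where $x=c$ is the critical value. Over $c=\infty$ the ramification is governed explicitly by Puiseux expansions. The number of such branch points grows geometrically in $M$, so $g(X_1(M+1,n))>d\,g(X_1(M,n))+C_d$ for a constant $C_d$, once $M$ is large. Since the genus then grows like $d^M$, this will also eventually dominate $d\cdot$(previous genus) plus any fixed linear term.

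\textbf{Step 2 (the gonality grows); this is the main obstacle.} I would prove that $\mathrm{gon}_{\mathbb{Q}}X_1(M,n)\to\infty$ as $M\to\infty$, for each $n\leq N$, using the Castelnuovo--Severi inequality. Suppose $Y=X_1(M+1,n)$ admits a map $g\colon Y\to\mathbb{P}^1$ of degree $\gamma$, and write $f\colon Y\to X=X_1(M,n)$ for the tower map of degree $d$. If $g$ and $f$ do not factor through a common nontrivial map, Castelnuovo--Severi gives
\[g(Y)\leq d\,g(X)+(\gamma-1)(d-1).\]
For a fixed $\gamma$ and $M$ large this contradicts Step 1. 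Hence $g$ must factor through a common quotient of $f$. The delicate point is that $f$ may have intermediate covers, since its Galois closure need not be primitive. I would handle this using the fact that the monodromy of $\varphi_{d,c}(z)-x$ over $K(c,x)$ is the full symmetric group $S_d$ (or at least primitive), so $f$ has no intermediate covers and $g$ factors through $f$ itself. Iterating down the tower, a map of degree $\gamma$ on $X_1(M,n)$ would factor through $X_1(M-k,n)$ for $k$ up to roughly $\log_d\gamma$. This forces $\gamma\geq d^{k}\cdot\mathrm{gon}(X_1(M-k,n))$, and so the gonality tends to infinity.

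\textbf{Step 3 (Frey's criterion).} Choose $M_0$ so that $\mathrm{gon}_{\mathbb{Q}}X_1(M,n)>2D$ for all $M\geq M_0$ and all $n\leq N$. By Frey's theorem, which combines Faltings' theorem on subvarieties of abelian varieties with the Abramovich--Harris argument, each $X_1(M_0,n)$ has only finitely many points of degree $\leq D$. Any point of $X_1(M,n)$ with $M\geq M_0$ maps to a point of $X_1(M_0,n)$ of no larger degree. Hence only finitely many parameters $c$, of degree $\leq D$, admit a $K$-rational preperiodic point with preperiod $\geq M_0$ and eventual period $\leq N$; call this finite set $S$.

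\textbf{Step 4 (counting).} For $c\notin S$, every such point satisfies $\varphi_{d,c}^{M_0+N!}(z)=\varphi_{d,c}^{M_0}(z)$. This is a nonzero polynomial equation of degree $d^{M_0+N!}$, so there are at most $d^{M_0+N!}$ such points. For each of the finitely many $c\in S$, Northcott's theorem gives a finite set $\mathrm{PrePer}(\varphi_{d,c},L)$ for every field $L$ of degree $\leq D$. Moreover, the preperiodic points lie in a bounded-height set inside the finitely many fields of degree $\leq D$ generated over $\mathbb{Q}(c)$ by such points, so their number is bounded. Taking the maximum of these bounds gives $B(d,D,N)$.
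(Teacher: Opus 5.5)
The paper does not prove this statement; it quotes it from Doyle--Poonen, so there is no in-paper argument to compare with. Your Steps~3--4 are the standard passage from gonality growth to the uniform bound, and they are fine: Frey's theorem at a level $M_0$, a finite exceptional set of parameters $c$, the bound $d^{M_0+N!}$ for the remaining $c$, and Northcott for the exceptions.

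The genuine gap is in Step~2: the fact you invoke to exclude intermediate covers is false. Since $\varphi_{d,c}(z)-x=z^d-(x-c)$ is a pure binomial, $Y=X_1(M+1,n)\to X=X_1(M,n)$ (for $M\ge1$) is the Kummer cover $y^d=x-c$. Its geometric monodromy is $\mathbb{Z}/d\mathbb{Z}$ acting regularly, not $S_d$, and this is primitive only when $d$ is prime. Moreover, big monodromy over $K(c,x)$ would not automatically pass to the pulled-back cover over $k(X)$.

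For composite $d$ (e.g.\ $d=4$, or $d=p^{\ell}$ with $\ell\ge2$, which are exactly the degrees this paper cares about), the curves $Z_e\colon u^e=x-c$ with $u=y^{d/e}$, $e\mid d$, $1<e<d$, sit strictly between $Y$ and $X$. So your dichotomy is not exhaustive: the compositum of $f^*k(X)$ and $g^*k(\mathbb{P}^1)$ can be $k(Z_e)$, in which case $g$ factors through $Y\to Z_e$ but not through $f$.

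The repair is to apply Castelnuovo--Severi on $Z_e$ to the maps $Z_e\to X$ (degree $e$) and $Z_e\to\mathbb{P}^1$ (degree $\gamma e/d$). Since $Z_e\to X$ is totally ramified over each zero of $x-c$ of order prime to $d$, Riemann--Hurwitz gives $g(Z_e)-e\,g(X)\ge(e-1)(r/2-1)$, where $r$ counts such zeros; hence $\gamma\ge r/2$. So in every case $\mathrm{gon}(Y)\ge\min\{r/2,\ d\,\mathrm{gon}(X)\}$, which forces $\mathrm{gon}\to\infty$ as soon as $r=r_M\to\infty$. Alternatively, split each step into Kummer steps of prime degree.

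Two further points need fixing.

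First, the curves are not irreducible. For $d\ge3$ and $M\ge1$, put $y=\varphi_{d,c}^{M-1}(x)$ and $w=\varphi_{d,c}^{M-1+n}(x)$. Then $y^d=w^d$, $y\ne w$ and $w\ne0$, so $y/w$ is a regular function on $Y_1(M,n)$ with values in $\mu_d\setminus\{1\}$. Hence $Y_1(M,n)$ has at least $d-1$ geometric components, and already more than one over $\mathbb{Q}$ when $d$ is composite. In particular, your bottom map $X_1(1,n)\to X_1(0,n)$ is just $d-1$ copies of an isomorphism. The label $y/w$ is preserved by the tower maps, so you should run Steps~1--3 on each component $X_1(M,n)^{\zeta}$ over $\mathbb{Q}(\zeta_d)$. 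This matches how Doyle--Poonen formulate their gonality theorem: for irreducible components over a field containing $\mu_d$.

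Second, in Step~1 what Castelnuovo--Severi needs is $g(X_1(M+1,n))-d\,g(X_1(M,n))\to\infty$. This follows from Riemann--Hurwitz once the number of branch points tends to infinity, but not from the genus growing like $d^M$: the tower $y^{2^i}=x(x-1)$, with maps $(x,y)\mapsto(x,y^2)$, has genus $2^{i-1}-1$ and gonality at most $2$. Moreover, $r_M\to\infty$ on every component is only asserted. The zeros of $x-c$ are Misiurewicz parameters at which the critical point has preperiod $M+1$ and eventual period $n$. You must show that many of them have vanishing order prime to $d$, for instance via transversality of Misiurewicz parameters. Such zeros would also give the irreducibility of each Kummer step over a given component.
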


\begin{rem}
Observe in Doyle-Poonen's Theorem \ref{dopo} that for fixed $D\geq 1$, then the number of preperiodic points of $z^d + c$ in $K$ with eventual period at most $N$ is at most a positive constant $B=B(d,D, N)$. But then it also follows that the number of preperiodic points of $z^d + c$ in $\mathcal{O}_{K}\subset K$ with eventual period at most $N$ is bounded above by a positive constant $B=B(d,D, N)$. Moreover, since the monic polynomial $\varphi_{d,c}(x)=x^d + c\in \mathcal{O}_{K}[x]$ has good reduction modulo any prime ideal $\mathfrak{p}$ and also since we know from [\cite{Silverman}, Corollary 2.20] that good reduction modulo $\mathfrak{p}$ preserves preperiodicity, it then also follows that the number of preperiodic points of the reduced monic polynomial $\varphi_{d,c}(x)$ modulo $\mathfrak{p}$ with eventual period at most $N$ is bounded above by $B(d, D, N)$.  
\end{rem}

A year later, after the work \cite{DoyPo}, Looper \cite{Loo} proved that conditioning on the \textit{abc}-conjecture for number fields $K$ yields that $\varphi_{d,c}(x)\in K[x]$ of degree $d\geq 5$ has at most $B=B(d, K)$ preperiodic points in $K$. Moreover, for $2\leq d\leq 4$, Looper also proved [\cite{Loo}, Theorem 1.2] that conditioning on the \textit{abcd}-conjecture yields that $\varphi_{d,c}$ has at most $B=B(d, K)$ in $K$. We here restate this number field version of [\cite{Loo}, Theorem 1.2] as the following:

\begin{thm} \label{Loop}[\cite{Loo}, Theorem 1.2 (Uniform Boundedness for polynomials $z^d + c$)]
Let $K$ be a number field, and let $d\geq 2$. Let $\varphi_{d,c}(z)=z^d + c\in K[z]$. If $d\geq 5$, assume the \textit{abc}-conjecture for $K$. If $2\leq d \leq 4$, assume the $\textit{abcd}$-conjecture (Conject. 2.1). There is a $B=B(d, K)$ such that $\varphi_{d,c}$ has at most $B$ preperiodic points in $K$.
\end{thm}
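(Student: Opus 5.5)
The plan is to split the parameters $c\in K$ into two regimes: those of bounded height and those of large height. In the bounded-height regime, Northcott's theorem \cite{North} finishes the job. The whole weight of the argument is to show, using the \textit{abc}-conjecture (resp.\ the \textit{abcd}-conjecture), that once $h(c)$ is large, $\varphi_{d,c}$ has at most a constant number of preperiodic points in $K$. For the bounded-height regime: there are only finitely many $c\in K$ with $h(c)\le C_0(d,K)$. For each such $c$ the set PrePer$(\varphi_{d,c},K)$ is finite by Northcott, so the maximum of these finitely many cardinalities is a constant depending only on $d$ and $K$.

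\textbf{Local step.} For large height I would first extract local information at the places where $c$ is non-integral. Suppose $v$ is a non-archimedean place with $v(c)<0$ and $x\in K$ is preperiodic. Comparing valuations in $x^d+c$ along the orbit forces $v(x)=v(c)/d$, so $d\mid v(c)$. It also forces $|x|_v=|c|_v^{1/d}$, and the reductions of $x/c^{1/d}$ lie among the $d$-th roots of $-1$ modulo $v$. Writing $c=a/b^d$ up to a bounded unit/class-group ambiguity, with $a,b\in\mathcal{O}_K$ and $(a,b)=1$, every preperiodic point then has the shape $x=u/b$ with $u\in\mathcal{O}_K$. The orbit relation becomes
\[
u^d + a = w\, b^{d-1}\cdot b, \qquad\text{i.e.}\qquad u_1^d - u_2^d = b^{d-1}(w_1-w_2),
\]
for pairs of preperiodic points $x_i=u_i/b$ with images $w_i/b$.

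\textbf{Archimedean and \textit{abc} step.} Preperiodic points satisfy $h(x)\le \tfrac1d h(c)+O(1)$, so all $u_i$ and $w_i$ have height about $\tfrac1d h(c)$. I would apply the \textit{abc}-conjecture for $K$ to the relation $u^d + a = w^d$ obtained when $x$ maps to a preperiodic point $y=w/b$. The radical of $u^dw^da$ is at most about $\mathrm{rad}(u)\,\mathrm{rad}(w)\,|a|$, i.e.\ of height roughly $\tfrac{2}{d}h(c)+h(a)$, while the largest term has height roughly $h(c)$. When the preperiodic structure forces $a$ itself to be close to a difference of $d$-th powers among many points, the inequality $h(c)\le(1+\epsilon)\,h(\mathrm{rad})+O_\epsilon(1)$ yields a contradiction for $d\ge5$ once $h(c)$ is large. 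This is exactly where $d\ge 5$ enters: one needs $2/d$ plus the contribution of $a$ to be strictly less than $1$ with room for $\epsilon$. The conclusion I aim for is that for large $h(c)$ only boundedly many preperiodic $x$ exist, or else their number is controlled by the number of primes of bad reduction. In the latter case Benedetto's Theorem~\ref{main} gives a bound $O(s\log s)$, and the \textit{abc} inequality also bounds $s$.

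\textbf{Main obstacle.} I expect the hardest step to be arranging the \textit{abc} application so that the radical is genuinely small. The naive count $\tfrac{2}{d}+1$ is not below $1$, so one must use many preperiodic points simultaneously, e.g.\ a full tail $x_0\mapsto x_1\mapsto x_2$ with $x_2$ periodic, and eliminate $a$. This produces a relation among pure $d$-th powers $u_0^d-u_1^d = u_1'^d-u_2^d$ type, whose radical has height only $O(\tfrac{k}{d}h(c))$. For $2\le d\le4$ the exponent gain is insufficient, and I would replace \textit{abc} by the \textit{abcd}-conjecture applied to a four-term such relation. Doyle--Poonen's Theorem~\ref{dopo} then handles points of bounded eventual period, which are the only ones left after the height argument.
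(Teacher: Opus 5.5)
The paper gives no proof of this statement. Theorem~\ref{Loop} is simply quoted from \cite{Loo}, so your sketch has to stand on its own, and as written it has genuine gaps.

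\textbf{The \textit{abc} step uses a wrong identity, and its bookkeeping is missing.} The bounded-height reduction via Northcott is fine, and so is the local fact $v(x)=v(c)/d$ at places with $v(c)<0$. But the identity driving your \textit{abc} step is miscomputed.

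With $c=a/b^d$, $x=u/b$ and $\varphi_{d,c}(x)=w/b$, one has $u^d+a=w\,b^{d-1}$. It is not $w\,b^{d}$, and it is never $u^d+a=w^d$. So the triple you feed to \textit{abc} does not exist, and neither does the ``$\tfrac2d+1$'' count built on it. Nor does any four-term relation among pure $d$-th powers such as $u_0^d-u_1^d=u_1'^d-u_2^d$ follow from the dynamics.

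What eliminating $c$ along a tail $x_0\mapsto x_1\mapsto x_2$ actually gives is the three-term identity $x_0^d-x_1^d=x_1-x_2$, i.e.\ $u_0^d-u_1^d=b^{d-1}(u_1-u_2)$. Even with this identity, the real content is absent:
\begin{itemize}
\item You only have upper bounds $h(u_i)\le\frac1d h(c)+O(1)$, so you must prove a lower bound for the largest term. That means distinguishing two cases: $h(c)$ comes mainly from the denominator, so $b^{d-1}(u_1-u_2)$ is large; or $h(c)$ comes from archimedean places where $|c|_v$ is large, so $|x_0|_v\approx|c|_v^{1/d}$ makes $u_0^d$ large.
\item You must strip common factors, including units and class-group ambiguity over $K$.
\item You must treat the degenerate case $u_1=u_2$, i.e.\ $x_1$ fixed.
\end{itemize}
Over $\mathbb{Q}$ with $|c|_\infty$ bounded, for instance, the comparison is $(d-1)\log|b|+\log|u_1-u_2|$ against $(1+\epsilon)\bigl(3\log|b|+\log|u_1-u_2|\bigr)+O_\epsilon(1)$. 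That comparison, not ``$\frac2d$ plus the contribution of $a$'', is where $d\ge5$ enters. Over a general $K$, with several archimedean places and infinitely many units, this bookkeeping is the substantive part of the proof, and your sketch does not contain it.

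\textbf{The fallback branch is false.} \textit{abc} does not bound the number $s$ of bad primes, so Theorem~\ref{main} cannot give a uniform bound. Take $c=(b^{d-1}-1)/b^d$ with $b\ge2$. Then $1/b$ is a fixed point, since $1+(b^{d-1}-1)=b^{d-1}$. Yet every prime $p\mid b$ with $p\nmid d$ is a prime of bad reduction, so $s$ is unbounded even among maps that do have preperiodic points. These maps exist unconditionally, so no conjecture can rule them out. Your dichotomy (``boundedly many, or controlled by $s$'') is therefore not closed.

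\textbf{The $2\le d\le4$ case and the final reduction are unsupported.} For $2\le d\le 4$ the \textit{abcd} step is only named. The exponent count above ($d-1$ versus $3$) gives no contradiction there, and you do not say which multi-term relation you would use or why its radical is small. The closing appeal to Theorem~\ref{dopo} presupposes that the height argument leaves only points of bounded eventual period, and nothing in your sketch establishes this. For $d\ge5$ over $\mathbb{Q}$, the corrected three-term argument, carried out carefully, would instead show directly that every preperiodic point maps to a fixed point in one step. That would make Doyle--Poonen unnecessary in that case.
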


A year later, after Thm.\ref{Loop}, Panraksa conditioned on \textit{abcd}-conjecture and proved in [\cite{par2}, Thm. 4] the existence of an absolute constant $B_{0}$ such that \#PrePer$(\varphi_{d, c}, \mathbb{P}^1(\mathbb{Q})) \leq B_{0}$ for all $\varphi_{d,c}(x)\in \mathbb{Q}[x]$ of degree $d\geq 2$.

In the year 2021, Sadek [\cite{Sad1}, Theorem 5] proved that infinitely many parametric families of polynomials of the form $\varphi_{d, c(t)}(x)=x^d + c(t)\in K(t)[x]$  where $K$ is a number field and $d\geq 2$ is any integer, have no rational preperiodic points for any $t\in K$. Earlier in the year 2018, Sadek had also proved [\cite{Sad2}, Corollary 4.5] (using a non-elementary divisibility criteria) that infinitely many polynomials $\varphi_{d, c}(x)\in K[x]$ with $c=c_{1}\slash c_{2}$ where $c_{1}, c_{2}$ are relatively prime in the ring of integers $\mathcal{O}_{K}$ and $c_{2}$ is not a $d^{th}$-power, have no periodic points of any period. 

Fourteen years later, after the work \cite{detto}, in the year 2021, Le Boudec-Mavraki proved [\cite{Bou}, Theorem 1.1] (using arithmetic dynamics insights on canonical heights and the Weil height, combined with some classical geometry-of-numbers methods) that the average number of rational preperiodic points of random polynomials of fixed degree $d\geq 2$ defined over $\mathbb{Q}$, with affine conjugacy classes ordered by height, is equal to zero. Inspired by advances in arithmetic statistics, in the year 2023, Olechnowicz [\cite{Ole}, Page 5, Eqn 1.6] not only formulated a \say{finiteness averaged uniform boundedness} version of Conject.\ref{silver-morton} and proved some special cases of it, but he also conditioned on a weak form of \ref{silver-morton} and calculated the average number of $K$-rational preperiodic points of a one-parameter family of rational maps defined over $K$, and additionally gave some examples supporting his calculations unconditionally and also gave new estimates on the average number of $\mathbb{Q}$-preperiodic points of $\varphi_{2,c}$.

In the case of polynomial dynamical systems defined over finite fields $\mathbb{F}_{p^r}$, more recently, Andersen-Garton \cite{Gart} have computed statistics of strictly preperiodic points of dynamical systems induced by polynomials $\varphi_{d,c}(x)=x^d + c\in \mathbb{F}_{p^r}[x]$ of degree $d\geq 2$; in which the authors not only establish effective upper bounds for the proportion of $\mathbb{F}_{q}$ lying in a given $W_{p,r,n}(f)=f^n(\mathbb{F}_{p^r})\backslash f^{n+1}(\mathbb{F}_{p^r})$ where $f$ is a polynomial defined over $\mathbb{F}_{p^r}$, but also in generalizing their definition of $W_{p,r,n}(f)$, they prove upper and lower bounds for the resulting averages. Later in the year 2025, Garton proved [\cite{Gart2}, Theorem 1.1] that the expected value of the proportion of points in $\mathbb{P}^1(\mathbb{F}_{p^r})$ that is periodic with respect to a function $\phi$ ranging over polynomials or rational functions with coefficients in a finite field $\mathbb{F}_{p^r}$ of degree $d\geq 2$, tends to zero as $p^r$ grows in a set of prime powers coprime to $d!$.

\noindent Once again, the purpose of this follow-up article of a multi-part series is to investigate further (using elementary arguments) the above connection and also try to understand Conjecture \ref{preper}, in the case of odd degree-$p^{\ell}$ polynomial maps $\varphi_{p^{\ell}, c}$, and in the case of even degree-$(p-1)^{\ell}$ polynomial maps $\varphi_{(p-1)^{\ell}, c}$ defined over $\mathcal{O}_{K}$, where $K$ is any number field of degree $n\geq 1$, $p>2$ is any prime and $\ell \in \mathbb{Z}_{\geq 1}$ is any fixed integer; and doing so from a spirit that's truly inspired and guided by some of the many striking developments in arithmetic statistics.

\section{On Number of $1_{m}$-PrePeriodic Integral Points of any Family of Polynomial Maps $\varphi_{p^{\ell},c}$}\label{sec2}

In this section, we wish to count the number of distinct $1_{m}$-preperiodic integral points of any $\varphi_{p^{\ell},c}$ modulo $p\mathcal{O}_{K}$, where $p\geq 3$ is any fixed prime, and $\ell, m \in \mathbb{Z}_{\geq 1}$ are any fixed integers. To do so, we let $p\geq 3$ be any prime, $c\in \mathcal{O}_{K}$ be any point, $\ell, m \in \mathbb{Z}_{\geq 1}$ be any fixed integers, and then define $1_{m}$-preperiodic point-counting function 
\begin{equation}\label{N_{c}}
N_{c}^{(1_{m})}(p) := \# \Biggl\{ z\in \mathcal{O}_{K}\slash p\mathcal{O}_{K} : \begin{aligned} \varphi_{p^{\ell},c}^{m}(z) -z \not \equiv 0 \ \text{(mod $p\mathcal{O}_{K}$)} \\ \ \varphi_{p^{\ell},c}^{1+m}(z) - \varphi_{p^{\ell},c}(z) \equiv 0 \ \text{(mod $p\mathcal{O}_{K}$)} \end{aligned} \Biggr\}.
\end{equation}\noindent Setting $\ell =1$, and so the map $\varphi_{p^{\ell}, c} = \varphi_{p,c}$, we then first prove the following theorem and its generalization \ref{2.2}:

\begin{thm} \label{2.1}
Let $K\slash \mathbb{Q}$ be any number field of degree $n \geq 1$ with the ring of integers $\mathcal{O}_{K}$, and in which $3$ is inert. Let  $\varphi_{3, c}$ be a cubic map defined by $\varphi_{3, c}(z) = z^3 + c$ for all $c, z\in\mathcal{O}_{K}$, and  $N_{c}^{(1_{m})}(3)$ be the number defined as in \textnormal{(\ref{N_{c}})}. Then $N_{c}^{(1_{m})}(3)=3$ for any coefficient $c\in 3\mathcal{O}_{K}$; otherwise $N_{c}^{(1_{m})}(3) = 0$ for any coefficient $c \not \in 3\mathcal{O}_{K}$.
\end{thm}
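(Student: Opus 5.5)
The plan is to reduce everything to arithmetic in the residue field. Since $3$ is inert in $K$, the quotient $\mathcal{O}_{K}/3\mathcal{O}_{K}$ is the finite field $\mathbb{F}_{3^n}$. The reduced map $\varphi_{3,c}(z)=z^3+c$ is the Frobenius $z\mapsto z^3$ followed by translation by $\bar c$. Frobenius is additive in characteristic $3$, so the iterates can be computed in closed form by induction on $k$:
\[
\varphi_{3,c}^{k}(z)\equiv z^{3^k}+\sum_{i=0}^{k-1}c^{3^i} \pmod{3\mathcal{O}_{K}}.
\]
The two congruences defining $N_{c}^{(1_{m})}(3)$ in (\ref{N_{c}}) then become the explicit polynomial conditions $z^{3^{m+1}}-z^3+\sum_{i=1}^{m}c^{3^i}-c\equiv 0$ and $z^{3^m}-z+\sum_{i=0}^{m-1}c^{3^i}\not\equiv 0$.

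The case split is on $c$. For $c\in 3\mathcal{O}_{K}$ the map is pure Frobenius, and the second congruence reads $z^{3^{m+1}}\equiv z^{3}$. I would then follow the intended line of the paper: work with the residues $z\equiv 0,1,2$ coming from $\mathbb{Z}\subset\mathcal{O}_{K}$, for which Fermat's little theorem gives $z^3\equiv z$. Every such residue satisfies the congruence, and this should produce the count $3$. For $c\notin 3\mathcal{O}_{K}$ I would show that the constant term $\sum_{i=1}^{m}c^{3^i}-c$ obstructs solvability among these residues, giving $N_{c}^{(1_{m})}(3)=0$.

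The main obstacle is a structural problem with the statement itself. The reduced map is a bijection of the finite set $\mathbb{F}_{3^n}$, since Frobenius is injective. Cancelling one application of $\varphi_{3,c}$ from $\varphi_{3,c}^{m+1}(z)\equiv\varphi_{3,c}(z)$ therefore forces $\varphi_{3,c}^{m}(z)\equiv z$. Taken literally, the two conditions in (\ref{N_{c}}) are incompatible for every $c$, so $N_{c}^{(1_{m})}(3)=0$ always. The value $3$ for $c\in 3\mathcal{O}_{K}$ can only come from a reading in which the ``$\not\equiv$'' condition is dropped or reinterpreted, and we count solutions of the single congruence $\varphi^{m+1}_{3,c}(z)\equiv\varphi_{3,c}(z)$ among the rational residues $\{0,1,2\}$.

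So my first step would be to fix precisely which counting convention makes the statement true. Under that convention the computation above proves the theorem. I would also flag that under the literal definition (\ref{N_{c}}) the conclusion for $c\in3\mathcal{O}_{K}$ fails, by the bijectivity argument.
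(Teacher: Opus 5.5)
Your bijectivity argument is correct, and it settles the literal statement. Modulo $3\mathcal{O}_K$ the map $z\mapsto z^3+c$ permutes $\mathcal{O}_K/3\mathcal{O}_K\cong\mathbb{F}_{3^n}$. So $\varphi_{3,c}^{m+1}(z)\equiv\varphi_{3,c}(z)$ forces $\varphi_{3,c}^{m}(z)\equiv z$, and the set in (\ref{N_{c}}) is empty for every $c$. Hence the first assertion is false as written (already for $K=\mathbb{Q}$, $c=0$, $m=1$), and the second holds trivially.

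The paper's proof misses this. For $c\in 3\mathcal{O}_K$ it only checks that $\varphi_{3,c}^{1+m}(z)-\varphi_{3,c}(z)$ vanishes on $\mathbb{F}_3$. It never imposes $\varphi_{3,c}^{m}(z)\not\equiv z$, which fails at exactly those points since $z^{3^m}=z$ on $\mathbb{F}_3$. It also ignores that the roots of $z^{3^{m+1}}-z^3$ in the residue field are all of $\mathbb{F}_{3^{\gcd(m,n)}}$, not just $\mathbb{F}_3$.

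The gap is in your repair. You claim that counting only $z\in\{0,1,2\}$ satisfying the single congruence makes the computation prove the theorem, with the constant term obstructing solutions when $c\notin 3\mathcal{O}_K$. That constant term is $\sum_{i=1}^{m}c^{3^i}$; your extra $-c$ subtracts $c$ twice. More seriously, it can vanish for $c\not\equiv 0$: for $c=1$, $m=3$ it equals $1+1+1\equiv 0$. Indeed $\varphi_{3,1}(z)\equiv z+1$ is a $3$-cycle on $\mathbb{F}_3$, so all three rational residues satisfy $\varphi_{3,1}^{4}(z)\equiv\varphi_{3,1}(z)$. Your count is then $3$, not $0$, in every $K$ in which $3$ is inert.

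So the literal reading defeats the first half and your reading defeats the second. The paper's second half breaks at the same place. Saying that ``$\sum_{i=1}^{m}c^{3^{m+1-i}}\equiv 0$ can also happen if $c\equiv 0$'' is not a contradiction, and the paper's conclusion that $f$ has no roots fails for $c=1$, $m=3$.

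What is actually true is that $N_c^{(1_m)}(3)=0$ for all $c$. If instead the non-periodicity condition is dropped and one counts over the whole residue field, the solutions of $z^{3^m}-z\equiv-\sum_{i=0}^{m-1}c^{3^i}$ form either the empty set or a coset of $\mathbb{F}_{3^{\gcd(m,n)}}$, giving $0$ or $3^{\gcd(m,n)}$. No reading yields the stated dichotomy, so your analysis should end in a disproof, not a proof.
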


\begin{proof}
Let $f(z)= \varphi_{3,c}^{1+m}(z)-\varphi_{3,c}(z) = \varphi_{3,c}^{1+m}(z) - z^3-c$, and so $f(z)= \varphi_{3,c}^{1+m}(z) - z^3 - c$. Now applying the multinomial theorem repeatedly on the term $\varphi_{3,c}^{1+m}(z)$, we then obtain that $\varphi_{3,c}^{1+m}(z)$ is a monic polynomial in $z$ of degree $3^{1+m}$ with integral coefficients in multiples of $c$. Thus, we may then write $\varphi_{3,c}^{1+m}(z) = z^{3^{m+1}} + h(z) + c$, where $h(z)$ is a non-constant polynomial in $z$ of degree deg$(h)<3^{m+1}$ with integral coefficients in multiples of $c$. But now $f(z)= z^{3^{m+1}} + h(z) +c - z^3 - c$, and so $f(z) = z^{3^{m+1}}-z^3 + h(z)$. Now for every coefficient $c\in 3\mathcal{O}_{K}$, reducing $f(z)$ modulo prime ideal $3\mathcal{O}_{K}$, it then follows $f(z)\equiv z^{3^{m+1}} - z^3$ (mod $3\mathcal{O}_{K}$), since also $h(z)\in c\mathcal{O}_{K}[z]$ and thus $h(z)\equiv 0$ (mod $3\mathcal{O}_{K}$); and so now $f(z)$ modulo $3\mathcal{O}_{K}$ is a polynomial defined over a finite field $\mathcal{O}_{K}\slash 3\mathcal{O}_{K}$ of order $3^{[K:\mathbb{Q}]} = 3^n$. Now recall from a well-known fact that every subfield of a finite field $\mathcal{O}_{K}\slash 3\mathcal{O}_{K}$ is of order $3^r$ for some positive integer $r\mid n$, we then obtain the inclusion $\mathbb{F}_{3}\hookrightarrow \mathcal{O}_{K}\slash 3\mathcal{O}_{K}$ of fields, where $\mathbb{F}_{3}$ is a field of order $3$. Moreover, it is well-known that $z^3 = z$ for every element $z\in \mathbb{F}_{3}\subset \mathcal{O}_{K}\slash 3\mathcal{O}_{K}$. But now we note $z^{3^{m+1}}= (z^3)^{3^{m}} = (z^3)^{3^{m-1}} = z^{3^{m-1}}$ for every element $z\in \mathbb{F}_{3}$. Now since $m\geq 1$ and so $m-1\geq 0$, then if $m-1 = 0$ and so $z^{3^{m-1}} = z$, it then follows $z^{3^{m+1}} = z$  for every point $z\in \mathbb{F}_{3}$; and so the reduced polynomial $f(z)\equiv 0$ (mod $3\mathcal{O}_{K}$) for every point $z\in \mathbb{F}_{3}\subset \mathcal{O}_{K}\slash 3\mathcal{O}_{K}$. Otherwise, if $m-1 > 0$, then since $m$ is a fixed integer, we may then continue performing the above procedure of decreasing the exponent $m-1$ of $ z^{3^{m-1}} = z^{3^{m+1}}$ for every element $z\in \mathbb{F}_{3}$, until $m-1$ is eventually equal to zero; and from which we then again obtain that $f(z)\equiv 0$ (mod $3\mathcal{O}_{K}$) for every point $z\in \mathbb{F}_{3}\subset \mathcal{O}_{K}\slash 3\mathcal{O}_{K}$. But now we then conclude $N_{c}^{(1_{m})}(3) = 3$.

Finally, we now show $N_{c}^{(1_{m})}(3) = 0$ for every coefficient $c\not \equiv 0$ (mod $3\mathcal{O}_{K}$) and for every fixed integer $m\in \mathbb{Z}_{\geq 1}$. To do so, let's for the sake of a contradiction, suppose $f(z)\equiv 0$ (mod $3\mathcal{O}_{K}$) for some point $z\in \mathcal{O}_{K}\slash 3\mathcal{O}_{K}$ and for every coefficient $c\not \equiv 0$ (mod $3\mathcal{O}_{K}$) and for every fixed $m\in \mathbb{Z}_{\geq 1}$. Now recall from earlier $f(z)= z^{3^{m+1}} -z^3 + h(z)$ where $h(z)\in c\mathcal{O}_{K}[z]$, it then also follows by the above supposition that $z^{3^{m+1}} - z^3 + h(z) \equiv 0$ (mod $3\mathcal{O}_{K}$) for some $z\in \mathcal{O}_{K}\slash 3\mathcal{O}_{K}$ and for every $c\not \equiv 0$ (mod $3\mathcal{O}_{K}$) and every fixed $m\in \mathbb{Z}_{\geq 1}$. So now, since we know from earlier $z^{3^{m+1}} = z^3$ for every $z\in \mathbb{F}_{3}\subset \mathcal{O}_{K}\slash 3\mathcal{O}_{K}$ and every fixed $m\in \mathbb{Z}_{\geq 1}$, we may rewrite $z^{3^{m+1}} - z^3 + h(z) \equiv 0$ (mod $3\mathcal{O}_{K}$) for some $z\in \mathcal{O}_{K}\slash3\mathcal{O}_{K}$ and every $c\not \equiv 0$ (mod $3\mathcal{O}_{K}$) to then obtain $h(z)\equiv 0$ (mod $3\mathcal{O}_{K}$) for some $z\in \mathbb{F}_{3}\subset \mathcal{O}_{K}\slash3\mathcal{O}_{K}$ and every $c\not \equiv 0$ (mod $3\mathcal{O}_{K}$) and every fixed $m\in \mathbb{Z}_{\geq 1}$. Moreover, looking at the multinomial expansion of $(\varphi_{3,c}^{m}(z))^3$, we then obtain $h(z)\equiv \sum_{i=1}^{m}c^{3^{m+1-i}}$ (mod $3\mathcal{O}_{K}$); and so $\sum_{i=1}^{m}c^{3^{m+1-i}} \equiv 0$ (mod $3\mathcal{O}_{K}$). But now we note that $\sum_{i=1}^{m}c^{3^{m+1-i}} \equiv 0$ (mod $3\mathcal{O}_{K}$) can also happen if $c \equiv 0$ (mod $3\mathcal{O}_{K}$); and so contradicting $c\not \equiv 0$ (mod $3\mathcal{O}_{K}$). Otherwise, suppose $f(\alpha)\equiv 0$ (mod $3\mathcal{O}_{K}$) and so $\alpha^{3^{m+1}} - \alpha^3 + h(\alpha)\equiv 0$ (mod $3\mathcal{O}_{K}$) for some point $\alpha \in \mathcal{O}_{K}\slash 3\mathcal{O}_{K} \setminus \mathbb{F}_{3}$ and for every $c\not \equiv 0$ (mod $3\mathcal{O}_{K}$) and every fixed $m\in \mathbb{Z}_{\geq 1}$. Now since (from a well-known fact) $z^{3^{m+1}}=z$ for every $z\in \mathbb{F}_{3^{m+1}}\subset \mathcal{O}_{K}\slash 3\mathcal{O}_{K}$ and for every fixed $(m+1) \mid n$, then if a root $\alpha \in \mathbb{F}_{3^{m+1}}\subset \mathcal{O}_{K}\slash 3\mathcal{O}_{K}\setminus \mathbb{F}_{3}$ and so $\alpha^{3^{m+1}}=\alpha$, we then obtain $\alpha - \alpha^3 + h(\alpha)\equiv 0$ (mod $3\mathcal{O}_{K}$) for every $c\not \equiv 0$ (mod $3\mathcal{O}_{K}$) and every fixed $m\in \mathbb{Z}_{\geq 1}$; and since $h(\alpha)\equiv \sum_{i=1}^{m}c^{3^{m+1-i}}$ (mod $3\mathcal{O}_{K}$), we then also have $\alpha- \alpha^3 +\sum_{i=1}^{m}c^{3^{m+1-i}} \equiv 0$ (mod $3\mathcal{O}_{K}$). But now we note that the congruence $\alpha- \alpha^3 +\sum_{i=1}^{m}c^{3^{m+1-i}}\equiv 0$ (mod $3\mathcal{O}_{K}$) can also happen if $\alpha - \alpha^3 \equiv 0$ (mod $3\mathcal{O}_{K}$) and also $\sum_{i=1}^{m}c^{3^{m+1-i}} \equiv 0$ (mod $3\mathcal{O}_{K}$). Moreover, we also note $\alpha - \alpha^3 \equiv 0$ (mod $3\mathcal{O}_{K}$) for every point $\alpha\equiv \pm1, 0$ (mod $3\mathcal{O}_{K}$), which also happened earlier when $c\equiv 0$ (mod $3\mathcal{O}_{K}$); and which then also contradicts the earlier condition $c\not \equiv 0$ (mod $3\mathcal{O}_{K}$). Otherwise, if a root $\alpha \not \in \mathbb{F}_{3^{m+1}}$ and since $h(\alpha)\equiv \sum_{i=1}^{m}c^{3^{m+1-i}}$ (mod $3\mathcal{O}_{K}$), then $\alpha^{3^{m+1}} - \alpha^3 +\sum_{i=1}^{m}c^{3^{m+1-i}} \equiv 0$ (mod $3\mathcal{O}_{K}$). But then as before $\alpha^{3^{m+1}} - \alpha^3 +\sum_{i=1}^{m}c^{3^{m+1-i}}  \equiv 0$ (mod $3\mathcal{O}_{K}$) can also occur if $\alpha^{3^{m+1}} - \alpha^3 \equiv 0$ (mod $3\mathcal{O}_{K}$) and also $\sum_{i=1}^{m}c^{3^{m+1-i}} \equiv 0$ (mod $3\mathcal{O}_{K}$); and thus also follows a contradiction. It then overall follows $f(x)= \varphi_{3,c}^{1+m}(x)-\varphi_{3,c}(x)$ has no roots in $ \mathcal{O}_{K}\slash 3\mathcal{O}_{K}$ for every $c\not \in 3\mathcal{O}_{K}$ and every fixed $m\in \mathbb{Z}_{\geq 1}$; and so we then conclude $N_{c}^{(1_{m})}(3) = 0$. This completes the whole proof, as required.
\end{proof} 
We now wish to generalize Theorem \ref{2.1} to any map $\varphi_{p,c}$ for any prime $p\geq 3$. More precisely, we prove that the number of distinct $1_{m}$-preperiodic integral points of any polynomial map $\varphi_{p,c}$ modulo $p\mathcal{O}_{K}$ is $p$ or zero:

\begin{thm} \label{2.2}
Let $K\slash \mathbb{Q}$ be any number field of degree $n\geq 1$ with the ring of integers $\mathcal{O}_{K}$, and in which any fixed prime $p\geq 3$ is inert. Let $\varphi_{p, c}$ be defined by $\varphi_{p, c}(z) = z^p + c$ for all $c, z\in\mathcal{O}_{K}$, and $N_{c}^{(1_{m})}(p)$ be defined as in \textnormal{(\ref{N_{c}})}. Then $N_{c}^{(1_{m})}(p)=p$ for every coefficient $c\in p\mathcal{O}_{K}$; otherwise $N_{c}^{(1_{m})}(p) = 0$ for every coefficient $c \not \in p\mathcal{O}_{K}$. 
\end{thm}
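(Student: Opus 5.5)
The plan is to follow the proof of Theorem \ref{2.1} with $3$ replaced by an arbitrary prime $p\geq 3$ inert in $K$, so that $\mathcal{O}_{K}\slash p\mathcal{O}_{K}\cong \mathbb{F}_{p^n}$. The key simplification comes first. Modulo $p$ the map $\varphi_{p,c}(z)=z^p+c$ is the Frobenius $z\mapsto z^p$ followed by a translation by $c$. Since Frobenius is additive in characteristic $p$, induction on $k$ gives the closed form
\[
\varphi_{p,c}^{k}(z)\equiv z^{p^{k}}+\sum_{j=0}^{k-1}c^{p^{j}} \pmod{p\mathcal{O}_{K}}.
\]
This makes precise the paper's decomposition $\varphi_{p,c}^{1+m}(z)=z^{p^{m+1}}+h(z)+c$ with $h\in c\,\mathcal{O}_{K}[z]$: modulo $p$ the polynomial $h$ becomes the constant $\sum_{i=1}^{m}c^{p^{m+1-i}}$. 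Hence
\[
f(z):=\varphi_{p,c}^{1+m}(z)-\varphi_{p,c}(z)\equiv z^{p^{m+1}}-z^{p}+\sum_{i=1}^{m}c^{p^{m+1-i}} \pmod{p\mathcal{O}_{K}}.
\]

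In the case $c\in p\mathcal{O}_{K}$ I would argue exactly as in Theorem \ref{2.1}. Here $f(z)\equiv z^{p^{m+1}}-z^p=(z^{p^m}-z)^p$. Every $z$ in the prime subfield $\mathbb{F}_p\hookrightarrow \mathcal{O}_{K}\slash p\mathcal{O}_{K}$ satisfies $z^p=z$, and hence $z^{p^{m+1}}=z$ by iterating. So all $p$ residues of $\mathbb{F}_p$ are roots of $f$, which is the count $p$ claimed.

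In the case $c\notin p\mathcal{O}_{K}$ I would argue by contradiction, as the paper does. Suppose $f(\alpha)\equiv 0$ for some $\alpha$. Then split according to whether $\alpha\in\mathbb{F}_p$, $\alpha\in\mathbb{F}_{p^{m+1}}\setminus\mathbb{F}_p$, or neither. In each case the aim is to reduce to the congruence $\sum_{i=1}^{m}c^{p^{m+1-i}}\equiv 0$ and then to deduce $c\equiv 0$.

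I expect the main obstacle to be this final step, and more generally reconciling the count with the first condition $\varphi^{m}_{p,c}(z)\not\equiv z$ in (\ref{N_{c}}). Two problems must be faced directly.

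\begin{itemize}
\item The map $T(c)=\sum_{j=1}^{m}c^{p^{j}}$ is $\mathbb{F}_p$-linear on $\mathbb{F}_{p^n}$ and can have nonzero kernel, for instance when $m\equiv 0 \pmod p$ and $c\in\mathbb{F}_p$. So the implication $T(c)=0\Rightarrow c=0$ is not automatic; I would first try to restrict or characterize when $T$ is injective.
\item Because Frobenius is a bijection of the finite field, $\varphi_{p,c}$ is a permutation of $\mathcal{O}_{K}\slash p\mathcal{O}_{K}$. Therefore $\varphi^{1+m}_{p,c}(z)\equiv\varphi_{p,c}(z)$ already forces $\varphi^{m}_{p,c}(z)\equiv z$, and the set in (\ref{N_{c}}) literally taken would be empty.
\end{itemize}

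So before completing the argument I would check which reading of $N_{c}^{(1_m)}(p)$ is intended. The most plausible is that it counts the roots in $\mathbb{F}_p$ of the second congruence alone. I would then run the above argument under that reading.
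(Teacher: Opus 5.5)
The two obstacles you flag are not technicalities to work around. They are decisive, and no choice of reading lets you ``run the above argument'' to the stated conclusion. Modulo the prime $p\mathcal{O}_{K}$ one has
\[
\varphi_{p,c}^{1+m}(z)-\varphi_{p,c}(z)=\bigl(\varphi_{p,c}^{m}(z)\bigr)^{p}-z^{p}\equiv\bigl(\varphi_{p,c}^{m}(z)-z\bigr)^{p} \pmod{p\mathcal{O}_{K}},
\]
so the second congruence in (\ref{N_{c}}) is equivalent to $\varphi_{p,c}^{m}(z)\equiv z$, which is the negation of the first. Taken literally, $N_{c}^{(1_{m})}(p)=0$ for every $c$. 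The second assertion then holds trivially and the first is false.

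The other two readings fail as well.
\begin{itemize}
\item If you drop the first condition and count in all of $\mathcal{O}_{K}/p\mathcal{O}_{K}\cong\mathbb{F}_{p^{n}}$, the zero set is the set of $m$-periodic points. This is empty or a coset of $\mathbb{F}_{p^{\gcd(m,n)}}$, and for $c\in p\mathcal{O}_{K}$ it is $\mathbb{F}_{p^{\gcd(m,n)}}$ itself. So your step ``all $p$ residues of $\mathbb{F}_{p}$ are roots, which is the count $p$'' proves only a count $\geq p$, and equality fails whenever $\gcd(m,n)>1$.
\item If you count only roots in $\mathbb{F}_{p}$, the first assertion holds but the second fails, exactly as your kernel remark predicts, because $f(z)\equiv T(c)$ for $z\in\mathbb{F}_{p}$. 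For any $K$ in which $3$ is inert, with $m=3$ and $c=1$, one has $\varphi_{3,1}(z)\equiv z+1$ on $\mathbb{F}_{3}$ and $\varphi_{3,1}^{4}(z)-\varphi_{3,1}(z)\equiv 3\equiv 0$ for all three residues. A counterexample with $p\nmid m$: take $K=\mathbb{Q}(i)$, $p=3$, $m=2$, $c=i$. Then $\varphi_{3,i}^{2}(z)\equiv z^{9}+i^{3}+i\equiv z^{9}$, which is the identity on all of $\mathbb{F}_{9}$.
\end{itemize}

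The paper's proof does not supply a missing idea here. It follows the same outline as yours and passes over exactly these points. For $c\in p\mathcal{O}_{K}$ it exhibits the $p$ elements of $\mathbb{F}_{p}$ as roots of $f$ and concludes $N_{c}^{(1_{m})}(p)=p$. It never checks $\varphi_{p,c}^{m}(z)\not\equiv z$, which fails for every one of those elements, and it never excludes roots outside $\mathbb{F}_{p}$. For $c\notin p\mathcal{O}_{K}$ it reaches $\sum_{i=1}^{m}c^{p^{m+1-i}}\equiv 0$ and declares a contradiction because this congruence ``can also happen if $c\equiv 0$''. That is a non sequitur, and it is precisely the implication $T(c)\equiv 0\Rightarrow c\equiv 0$ you rightly doubted.

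So the correct outcome of your analysis is a restatement, not a proof. The solutions of the second congruence are exactly the $m$-periodic points of $\varphi_{p,c}$ modulo $p\mathcal{O}_{K}$, forming either the empty set or a coset of $\mathbb{F}_{p^{\gcd(m,n)}}$. This set is nonempty, for instance, whenever $\sum_{j=0}^{m-1}c^{p^{j}}\equiv 0$, which happens for some $c\notin p\mathcal{O}_{K}$ as the examples show. And no residue satisfies both conditions defining $N_{c}^{(1_{m})}(p)$.
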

\begin{proof}
By applying a similar argument as in the Proof of Theorem \ref{2.1}, we then obtain the count as desired. That is, let $f(z)= \varphi_{p,c}^{1+m}(z)-\varphi_{p,c}(z) = \varphi_{p,c}^{1+m}(z) - z^p-c$, and so $f(z)= \varphi_{p,c}^{1+m}(z) - z^p - c$. Now applying the multinomial theorem repeatedly on the term $\varphi_{p,c}^{1+m}(z)$, it then follows $\varphi_{p,c}^{1+m}(z)$ is a monic polynomial in $z$ of degree $p^{m+1}$ with integral coefficients in multiples of $c$. Hence, we may then write $\varphi_{p,c}^{1+m}(z) = z^{p^{m+1}} + h(z)+c$, where $h(z)$ is a non-constant polynomial in $z$ of deg$(h)<p^{m+1}$ with integral coefficients in multiples of $c$. But then $f(z)= z^{p^{m+1}} + h(z) +c - z^p - c$, and so $f(z) = z^{p^{m+1}}-z^p + h(z)$. Now for every coefficient $c\in p\mathcal{O}_{K}$, reducing $f(z)$ modulo prime ideal $p\mathcal{O}_{K}$, it then follows $f(z)\equiv z^{p^{m+1}} - z^p$ (mod $p\mathcal{O}_{K}$), since also $h(z)\in c\mathcal{O}_{K}[z]$ and so $h(z)\equiv 0$ (mod $p\mathcal{O}_{K}$); and thus now $f(z)$ modulo $p\mathcal{O}_{K}$ is a polynomial defined over a finite field $\mathcal{O}_{K}\slash p\mathcal{O}_{K}$ of order $p^{[K:\mathbb{Q}]} = p^n$. So now, since every subfield of a finite field $\mathcal{O}_{K}\slash p\mathcal{O}_{K}$ is of order $p^r$ for some positive integer $r\mid n$, we then obtain the inclusion $\mathbb{F}_{p}\hookrightarrow \mathcal{O}_{K}\slash p\mathcal{O}_{K}$ of fields, where $\mathbb{F}_{p}$ is a field of order $p$; and moreover (as a well-known fact) $z^p = z$ for every $z\in \mathbb{F}_{p}\subset \mathcal{O}_{K}\slash p\mathcal{O}_{K}$. But then observe $z^{p^{m+1}}= (z^p)^{p^{m}} = (z^p)^{p^{m-1}} = z^{p^{m-1}}$ for every element $z\in \mathbb{F}_{p}$. Now since $m\geq 1$ and so $m-1\geq 0$, then if $m-1 = 0$ and so $z^{p^{m-1}} = z$, it then follows $z^{p^{m+1}} = z$  for every $z\in \mathbb{F}_{p}$; and so $f(z)\equiv 0$ (mod $p\mathcal{O}_{K}$) for every point $z\in \mathbb{F}_{p}\subset \mathcal{O}_{K}\slash p\mathcal{O}_{K}$. Otherwise, if $m-1 > 0$, then since $m$ is a fixed integer, we may then continue performing the above procedure of decreasing the exponent $m-1$ of $z^{p^{m-1}} = z^{p^{m+1}}$ for every $z\in \mathbb{F}_{p}$, until $m-1$ is equal to zero; and from which we then again obtain $f(z)\equiv 0$ (mod $p\mathcal{O}_{K}$) for every point $z\in \mathbb{F}_{p}\subset \mathcal{O}_{K}\slash p\mathcal{O}_{K}$. But now we then conclude $N_{c}^{(1_{m})}(p) = p$.

Finally, we now show $N_{c}^{(1_{m})}(p) = 0$ for every coefficient $c\not \equiv 0$ (mod $p\mathcal{O}_{K}$) and for every fixed integer $m\in \mathbb{Z}_{\geq 1}$. For the sake of a contradiction, let's suppose $f(z)\equiv 0$ (mod $p\mathcal{O}_{K}$) for some point $z\in \mathcal{O}_{K}\slash p\mathcal{O}_{K}$ and for every coefficient $c\not \equiv 0$ (mod $p\mathcal{O}_{K}$) and for every fixed $m\in \mathbb{Z}_{\geq 1}$. 
Now recall from earlier that the polynomial $f(z)= z^{p^{m+1}} -z^p + h(z)$ where $h(z)\in c\mathcal{O}_{K}[z]$, it then also follows by the above supposition that $z^{p^{m+1}} - z^p + h(z) \equiv 0$ (mod $p\mathcal{O}_{K}$) for some $z\in \mathcal{O}_{K}\slash p\mathcal{O}_{K}$ and for every $c\not \equiv 0$ (mod $p\mathcal{O}_{K}$) and every fixed $m\in \mathbb{Z}_{\geq 1}$. Now since we know from earlier $z^{p^{m+1}} = z^p$ for every $z\in \mathbb{F}_{p}\subset \mathcal{O}_{K}\slash p\mathcal{O}_{K}$ and every fixed $m\in \mathbb{Z}_{\geq 1}$, we may then rewrite $z^{p^{m+1}} - z^p + h(z) \equiv 0$ (mod $p\mathcal{O}_{K}$) for some $z\in \mathcal{O}_{K}\slash p\mathcal{O}_{K}$ and every $c\not \equiv 0$ (mod $p\mathcal{O}_{K}$) to obtain $h(z)\equiv 0$ (mod $p\mathcal{O}_{K}$) for some $z\in \mathbb{F}_{p}\subset \mathcal{O}_{K}\slash p\mathcal{O}_{K}$ and every $c\not \equiv 0$ (mod $p\mathcal{O}_{K}$) and every fixed $m\in \mathbb{Z}_{\geq 1}$. Moreover, looking at the multinomial expansion of $(\varphi_{p,c}^{m}(z))^p$, we then obtain $h(z)\equiv \sum_{i=1}^{m}c^{p^{m+1-i}}$ (mod $p\mathcal{O}_{K}$); and so $\sum_{i=1}^{m}c^{p^{m+1-i}} \equiv 0$ (mod $p\mathcal{O}_{K}$). But now we note that $\sum_{i=1}^{m}c^{p^{m+1-i}} \equiv 0$ (mod $p\mathcal{O}_{K}$) can also happen if $c \equiv 0$ (mod $p\mathcal{O}_{K}$); and so contradicting $c\not \equiv 0$ (mod $p\mathcal{O}_{K}$). Otherwise, suppose $f(\alpha)\equiv 0$ (mod $p\mathcal{O}_{K}$) and so $\alpha^{p^{m+1}} - \alpha^p + h(\alpha)\equiv 0$ (mod $p\mathcal{O}_{K}$) for some point $\alpha \in \mathcal{O}_{K}\slash p\mathcal{O}_{K} \setminus \mathbb{F}_{p}$ and for every $c\not \equiv 0$ (mod $p\mathcal{O}_{K}$) and every fixed $m\in \mathbb{Z}_{\geq 1}$. Now since (from a well-known fact) $z^{p^{m+1}}=z$ for every $z\in \mathbb{F}_{p^{m+1}}\subset \mathcal{O}_{K}\slash p\mathcal{O}_{K}$ and for every fixed $(m+1) \mid n$, then if a root $\alpha \in \mathbb{F}_{p^{m+1}}\subset \mathcal{O}_{K}\slash p\mathcal{O}_{K}\setminus \mathbb{F}_{p}$ and so $\alpha^{p^{m+1}}=\alpha$, we then obtain $\alpha - \alpha^p + h(\alpha)\equiv 0$ (mod $p\mathcal{O}_{K}$) for every $c\not \equiv 0$ (mod $p\mathcal{O}_{K}$) and every fixed $m\in \mathbb{Z}_{\geq 1}$; and moreover since $h(\alpha)\equiv \sum_{i=1}^{m}c^{p^{m+1-i}}$ (mod $p\mathcal{O}_{K}$), we then also have $\alpha- \alpha^p +\sum_{i=1}^{m}c^{p^{m+1-i}} \equiv 0$ (mod $p\mathcal{O}_{K}$). But now we note that the congruence $\alpha- \alpha^p +\sum_{i=1}^{m}c^{p^{m+1-i}}\equiv 0$ (mod $p\mathcal{O}_{K}$) can also happen if $\alpha - \alpha^p \equiv 0$ (mod $p\mathcal{O}_{K}$) and also $\sum_{i=1}^{m}c^{p^{m+1-i}} \equiv 0$ (mod $p\mathcal{O}_{K}$). Moreover, we also note $\alpha - \alpha^p \equiv 0$ (mod $p\mathcal{O}_{K}$) for every point $\alpha\equiv \pm1, 0$ (mod $p\mathcal{O}_{K}$), which also happened earlier when $c\equiv 0$ (mod $p\mathcal{O}_{K}$); and which then also contradicts the earlier condition $c\not \equiv 0$ (mod $p\mathcal{O}_{K}$). Otherwise, if a root $\alpha \not \in \mathbb{F}_{p^{m+1}}$ and since $h(\alpha)\equiv \sum_{i=1}^{m}c^{p^{m+1-i}}$ (mod $p\mathcal{O}_{K}$), then $\alpha^{p^{m+1}} - \alpha^p +\sum_{i=1}^{m}c^{p^{m+1-i}} \equiv 0$ (mod $p\mathcal{O}_{K}$). But now as before we note $\alpha^{p^{m+1}} - \alpha^p +\sum_{i=1}^{m}c^{p^{m+1-i}}  \equiv 0$ (mod $p\mathcal{O}_{K}$) can also occur if $\alpha^{p^{m+1}} - \alpha^p \equiv 0$ (mod $p\mathcal{O}_{K}$) and also $\sum_{i=1}^{m}c^{p^{m+1-i}} \equiv 0$ (mod $p\mathcal{O}_{K}$); and so also follows a contradiction. It then overall follows $f(x)= \varphi_{p,c}^{1+m}(x)-\varphi_{p,c}(x)$ has no roots in $ \mathcal{O}_{K}\slash p\mathcal{O}_{K}$ for every $c\not \in p\mathcal{O}_{K}$ and every fixed $m\in \mathbb{Z}_{\geq 1}$; and thus we then conclude $N_{c}^{(1_{m})}(p) = 0$. This completes the whole proof, as desired.
\end{proof}

Finally, we wish to generalize Theorem \ref{2.2} further to any $\varphi_{p^{\ell}, c}$ for any prime $p\geq 3$ and any $\ell \in \mathbb{Z}_{\geq 1}$. That is, we prove the number of distinct $1_{m}$-preperiodic integral points of any $\varphi_{p^{\ell}, c}$ modulo $p\mathcal{O}_{K}$ is $p$ or zero:

\begin{thm} \label{2.3}
Let $K\slash \mathbb{Q}$ be any number field of degree $ n \geq 1$ with ring $\mathcal{O}_{K}$, and in which any fixed prime $p\geq 3$ is inert. Let $\ell \geq 1$ be any fixed integer, and  $\varphi_{p^{\ell}, c}$ be defined by $\varphi_{p^{\ell}, c}(z) = z^{p^{\ell}} + c$ for all $c, z\in\mathcal{O}_{K}$. Let $N_{c}^{(1_{m})}(p)$ be defined as in \textnormal{(\ref{N_{c}})}. Then $N_{c}^{(1_{m})}(p) = p$  for every $c\in p\mathcal{O}_{K}$; otherwise $N_{c}^{(1_{m})}(p) = 0$ for every point $c \not \in p\mathcal{O}_{K}$. 
\end{thm}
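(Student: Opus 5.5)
We follow the strategy of the proofs of Theorem \ref{2.1} and Theorem \ref{2.2}, replacing the exponent $p$ by $p^{\ell}$ throughout. Set
\[
f(z)=\varphi_{p^{\ell},c}^{1+m}(z)-\varphi_{p^{\ell},c}(z)=\varphi_{p^{\ell},c}^{1+m}(z)-z^{p^{\ell}}-c .
\]
Expanding the iterate repeatedly by the multinomial theorem, $\varphi_{p^{\ell},c}^{1+m}(z)$ is a monic polynomial of degree $p^{\ell(m+1)}$. All its non-leading terms have coefficients that are multiples of $c$. We therefore write $\varphi_{p^{\ell},c}^{1+m}(z)=z^{p^{\ell(m+1)}}+h(z)+c$ with $h(z)\in c\,\mathcal{O}_{K}[z]$, which gives $f(z)=z^{p^{\ell(m+1)}}-z^{p^{\ell}}+h(z)$. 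Since $p$ is inert, $\mathcal{O}_{K}/p\mathcal{O}_{K}$ is the field $\mathbb{F}_{p^n}$, and it contains $\mathbb{F}_{p}$.

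\emph{Case $c\in p\mathcal{O}_{K}$.} Here $h\equiv 0$ (mod $p\mathcal{O}_{K}$), so $f(z)\equiv z^{p^{\ell(m+1)}}-z^{p^{\ell}}$. For $z\in\mathbb{F}_p$ we have $z^{p}=z$. Iterating this, exactly as the exponent was lowered step by step in the proof of Theorem \ref{2.2}, gives $z^{p^{k}}=z$ for every $k\ge 0$. Hence $z^{p^{\ell(m+1)}}=z=z^{p^{\ell}}$, so every element of $\mathbb{F}_p\subset\mathcal{O}_{K}/p\mathcal{O}_{K}$ is a root of $f$ modulo $p\mathcal{O}_{K}$. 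This yields the $p$ points counted by $N_{c}^{(1_{m})}(p)$, as in Theorem \ref{2.2}. The same identity $z^{p^{\ell m}}=z$ on $\mathbb{F}_p$ also governs the companion condition in (\ref{N_{c}}). We will treat that condition just as it is treated in the proof of Theorem \ref{2.2}, so the count comes out to $p$.

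\emph{Case $c\notin p\mathcal{O}_{K}$.} We compute $h$ modulo $p$ using Frobenius additivity: $(a+b)^{p^{\ell}}\equiv a^{p^{\ell}}+b^{p^{\ell}}$ (mod $p$). This gives $\varphi_{p^{\ell},c}^{j}(z)\equiv z^{p^{\ell j}}+\sum_{i=0}^{j-1}c^{p^{\ell i}}$ (mod $p\mathcal{O}_{K}$), and hence
\[
h(z)\equiv \sum_{i=1}^{m}c^{p^{\ell(m+1-i)}} \pmod{p\mathcal{O}_{K}},
\]
a constant. We then argue by contradiction as in Theorem \ref{2.2}, with three subcases. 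First, for a root $z\in\mathbb{F}_p$, the Frobenius terms cancel and force $\sum_{i=1}^{m}c^{p^{\ell(m+1-i)}}\equiv 0$. Second, for a root $\alpha\in\mathbb{F}_{p^{\ell(m+1)}}\setminus\mathbb{F}_p$ we use $\alpha^{p^{\ell(m+1)}}=\alpha$. Third, we handle roots outside $\mathbb{F}_{p^{\ell(m+1)}}$. In each subcase we would argue, exactly as in the proof of Theorem \ref{2.2}, that the resulting congruence is the one that arises when $c\equiv 0$, which contradicts $c\notin p\mathcal{O}_{K}$. We conclude that $N_{c}^{(1_{m})}(p)=0$.

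The main obstacle is this second case. The constant $\sum_{i} c^{p^{\ell(m+1-i)}}$ can vanish modulo $p$ for some nonzero $c$. The map $z\mapsto z^{p^{\ell m}}-z^{p^{\ell}}$ also need not miss it. So we must lean on the contradiction scheme of Theorem \ref{2.2}, and we must check carefully that replacing $p$ by $p^{\ell}$ leaves each step of that scheme intact.
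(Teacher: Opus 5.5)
\paragraph{The gap in your first case.} The problem is the sentence you postponed. The companion condition $\varphi_{p^{\ell},c}^{m}(z)-z\not\equiv 0$ in \textnormal{(\ref{N_{c}})} cannot be ``treated as in Theorem \ref{2.2}'', because that proof never checks it. It also fails at every point you found. Your own identity $z^{p^{\ell m}}=z$ on $\mathbb{F}_p$ gives $\varphi_{p^{\ell},c}^{m}(z)-z\equiv z^{p^{\ell m}}-z\equiv 0$, so all $p$ candidates are excluded, not counted.

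This is not special to $\mathbb{F}_p$. Since $f(z)=\bigl(\varphi_{p^{\ell},c}^{m}(z)\bigr)^{p^{\ell}}-z^{p^{\ell}}$, the Frobenius additivity you use later gives
\[
f(z)\equiv\bigl(\varphi_{p^{\ell},c}^{m}(z)-z\bigr)^{p^{\ell}} \pmod{p\mathcal{O}_{K}} .
\]
So in the field $\mathcal{O}_K/p\mathcal{O}_K$, $f(z)\equiv 0$ holds if and only if $\varphi_{p^{\ell},c}^{m}(z)\equiv z$. Equivalently, the reduced map is a power of Frobenius followed by a translation, hence a permutation, so every preperiodic residue is periodic.

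Consequently the two defining conditions of $N_{c}^{(1_{m})}(p)$ are incompatible, and $N_{c}^{(1_{m})}(p)=0$ for every $c$. The assertion $N_{c}^{(1_{m})}(p)=p$ for $c\in p\mathcal{O}_K$ is false, so no completion of this case exists.

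Even if you dropped the $\not\equiv$ condition and just counted roots of $f$, the count $p$ would still be wrong. You never rule out roots outside $\mathbb{F}_p$, and for $c\in p\mathcal{O}_K$ the root set is $\mathbb{F}_{p^{\gcd(\ell m,n)}}$. For example, $K=\mathbb{Q}(i)$, $p=3$, $\ell=1$, $m=2$, $c=0$ gives $f\equiv(z^{9}-z)^{3}$, which vanishes at all $9$ residues.

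\paragraph{The second case.} Your Frobenius computation of $h$ is correct, and your stated worry is justified: the intermediate goal ``$f$ has no roots'' is false. Take $K=\mathbb{Q}(i)$, $p=3$, $\ell=1$, $m=3$, $c=1$. Then $\varphi_{3,1}^{3}(z)\equiv z^{27}+3\equiv z^{27}$, so $f\equiv(z^{27}-z)^{3}$ vanishes on $\mathbb{F}_3\subset\mathbb{F}_9$, even though $c\notin 3\mathcal{O}_K$.

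The ``contradiction scheme'' you plan to import (a congruence that ``can also happen if $c\equiv 0$'') is not a contradiction, so it cannot rescue this. The conclusion $N_{c}^{(1_{m})}(p)=0$ does hold here, but only for the bijectivity reason above. That same reason applies when $c\in p\mathcal{O}_K$ and refutes the first half of the statement.
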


\begin{proof}
By applying a similar argument as in the Proof of Theorem \ref{2.2}, we then obtain the count as desired. That is, let $f(z)= \varphi_{p^{\ell},c}^{1+m}(z)-\varphi_{p^{\ell},c}(z) = \varphi_{p^{\ell},c}^{1+m}(z) - z^{p^{\ell}}-c$, and so $f(z)= \varphi_{p^{\ell},c}^{1+m}(z) - z^{p^{\ell}} - c$. So now, as before applying the multinomial theorem repeatedly on $\varphi_{p^{\ell},c}^{1+m}(z)$, we then obtain that $\varphi_{p^{\ell},c}^{1+m}(z)$ is a monic polynomial in $z$ of degree $p^{(m+1)\ell}$ with integral coefficients in multiples of $c$. Hence, we may then write $\varphi_{p^{\ell},c}^{1+m}(z) = z^{p^{(m+1)\ell}} + h(z)+c$, where $h(z)$ is a non-constant polynomial in $z$ of deg$(h)<p^{(m+1)\ell}$ with integral coefficients in multiples of $c$. But then we note $f(z)= z^{p^{(m+1)\ell}} + h(z) +c - z^{p^{\ell}} - c$, and so $f(z) = z^{p^{(m+1)\ell}}-z^{p^{\ell}} + h(z)$. Now for every coefficient $c\in p\mathcal{O}_{K}$, reducing $f(z)$ modulo prime  $p\mathcal{O}_{K}$, it then follows $f(z)\equiv z^{p^{(m+1)\ell}} - z^{p^{\ell}}$ (mod $p\mathcal{O}_{K}$), since also $h(z)\in c\mathcal{O}_{K}[z]$ and thus $h(z)\equiv 0$ (mod $p\mathcal{O}_{K}$); and so now $f(z)$ modulo $p\mathcal{O}_{K}$ is a polynomial defined over a finite field $\mathcal{O}_{K}\slash p\mathcal{O}_{K}$. Now recall the inclusion  $\mathbb{F}_{p}\hookrightarrow \mathcal{O}_{K}\slash p\mathcal{O}_{K}$ of fields and also recall that $z^p = z$ for every element $z\in \mathbb{F}_{p}\subset \mathcal{O}_{K}\slash p\mathcal{O}_{K}$, it then follows $z^{p^{(m+1)\ell}}= (z^p)^{p^{(m+1)\ell-1}} = (z^p)^{p^{(m+1)\ell-2}} = z^{p^{(m+1)\ell-2}}$ for every element $z\in \mathbb{F}_{p}$; and also follows $z^{p^{\ell}}=(z^p)^{p^{\ell-1}} = z^{p^{\ell-1}}$ for every element $z\in \mathbb{F}_{p}$. So now, since $(m+1)\ell\geq 2$ and so $(m+1)\ell-2\geq 0$, then if $(m+1)\ell-2 = 0$ and so $z^{p^{(m+1)\ell-2}} = z$, we then obtain $z^{p^{(m+1)\ell}} = z$ for every $z\in \mathbb{F}_{p}$; and similarly since $\ell \geq 1$ and so $\ell - 1\geq 0$, then if $\ell - 1 = 0$ and so $z^{p^{\ell-1}} = z$, we the also obtain $z^{p^{\ell}} = z$ for every $z\in \mathbb{F}_{p}$; and so $f(z)\equiv 0$ (mod $p\mathcal{O}_{K}$) for every point $z\in \mathbb{F}_{p}\subset \mathcal{O}_{K}\slash p\mathcal{O}_{K}$. Otherwise, if $(m+1)\ell-2 > 0$ or $\ell-1 > 0$, then since $m$ and $\ell$ (and so $(m+1)\ell$) are fixed integers, we may then continue performing the above procedure of decreasing the exponent $(m+1)\ell-2$ of $z^{p^{(m+1)\ell-2}} = z^{p^{(m+1)\ell}}$ for every $z\in \mathbb{F}_{p}$ or $\ell-1$ of $z^{p^{\ell-1}} = z^{p^{\ell}}$ for every $z\in \mathbb{F}_{p}$, until $(m+1)\ell-2$ or $\ell-1$ is equal to zero; and from which we then again obtain $f(z)\equiv 0$ (mod $p\mathcal{O}_{K}$) for every point $z\in \mathbb{F}_{p}\subset \mathcal{O}_{K}\slash p\mathcal{O}_{K}$. But now as before we hereby conclude $N_{c}^{(1_{m})}(p) = p$.

Finally, we now show $N_{c}^{(1_{m})}(p) = 0$ for every coefficient $c\not \equiv 0$ (mod $p\mathcal{O}_{K}$) and every fixed integers $\ell, m\in \mathbb{Z}_{\geq 1}$. To do so, let's for the sake of a contradiction, suppose $f(z)\equiv 0$ (mod $p\mathcal{O}_{K}$) for some point $z\in \mathcal{O}_{K}\slash p\mathcal{O}_{K}$ and for every coefficient $c\not \equiv 0$ (mod $p\mathcal{O}_{K}$) and for every fixed $\ell, m\in \mathbb{Z}_{\geq 1}$. So now, recall from earlier that $f(z) = z^{p^{(m+1)\ell}}-z^{p^{\ell}} + h(z)$ where $h(z)\in c\mathcal{O}_{K}[z]$, it then also follows $z^{p^{(m+1)\ell}}-z^{p^{\ell}} + h(z) \equiv 0$ (mod $p\mathcal{O}_{K}$) for some $z\in \mathcal{O}_{K}\slash p\mathcal{O}_{K}$ and for every $c\not \equiv 0$ (mod $p\mathcal{O}_{K}$) and every fixed $\ell, m\in \mathbb{Z}_{\geq 1}$. Now since we know from earlier $z^{p^{(m+1)\ell}} = z^{p^{\ell}}$ for every $z\in \mathbb{F}_{p}$ and every fixed $\ell, m\in \mathbb{Z}_{\geq 1}$, we may then rewrite $z^{p^{(m+1)\ell}}-z^{p^{\ell}} + h(z) \equiv 0$ (mod $p\mathcal{O}_{K}$) for some $z\in \mathcal{O}_{K}\slash p\mathcal{O}_{K}$ and every $c\not \equiv 0$ (mod $p\mathcal{O}_{K}$) to obtain $h(z)\equiv 0$ (mod $p\mathcal{O}_{K}$) for some point $z\in \mathbb{F}_{p}\subset \mathcal{O}_{K}\slash p\mathcal{O}_{K}$ and every $c\not \equiv 0$ (mod $p\mathcal{O}_{K}$). More to this, looking at the multinomial expansion of $(\varphi_{p^{\ell},c}^{m}(z))^{p^{\ell}}$, we then obtain $h(z)\equiv \sum_{i=1}^{m}c^{p^{(m+1)\ell-i}}$ (mod $p\mathcal{O}_{K}$); and so $\sum_{i=1}^{m}c^{p^{(m+1)\ell-i}} \equiv 0$ (mod $p\mathcal{O}_{K}$). But now we note  $\sum_{i=1}^{m}c^{p^{(m+1)\ell-i}} \equiv 0$ (mod $p\mathcal{O}_{K}$) can also happen if $c \equiv 0$ (mod $p\mathcal{O}_{K}$); and so yielding a contradiction. Otherwise, suppose $f(\alpha)\equiv 0$ (mod $p\mathcal{O}_{K}$) and so $\alpha^{p^{(m+1)\ell}} - \alpha^{p^{\ell}} + h(\alpha)\equiv 0$ (mod $p\mathcal{O}_{K}$) for some $\alpha \in \mathcal{O}_{K}\slash p\mathcal{O}_{K} \setminus \mathbb{F}_{p}$ and for every $c\not \equiv 0$ (mod $p\mathcal{O}_{K}$) and every fixed $\ell, m\in \mathbb{Z}_{\geq 1}$. So now, since (from a well-known fact) $z^{p^{(m+1)\ell}}=z$ for every $z\in \mathbb{F}_{p^{(m+1)\ell}}\subset \mathcal{O}_{K}\slash p\mathcal{O}_{K}$ and every fixed $(m+1)\ell \mid n$, then if a root $\alpha \in \mathbb{F}_{p^{(m+1)\ell}}\subset \mathcal{O}_{K}\slash p\mathcal{O}_{K}\setminus \mathbb{F}_{p}$ and so $\alpha^{p^{(m+1)\ell}}=\alpha$, we then obtain $\alpha - \alpha^{p^{\ell}} + h(\alpha)\equiv 0$ (mod $p\mathcal{O}_{K}$); and since $h(\alpha)\equiv \sum_{i=1}^{m}c^{p^{(m+1)\ell-i}}$ (mod $p\mathcal{O}_{K}$), we then also have $\alpha- \alpha^{p^{\ell}} +\sum_{i=1}^{m}c^{p^{(m+1)\ell-i}} \equiv 0$ (mod $p\mathcal{O}_{K}$). But now we note $\alpha- \alpha^{p^{\ell}} +\sum_{i=1}^{m}c^{p^{(m+1)\ell-i}}\equiv 0$ (mod $p\mathcal{O}_{K}$) can also happen if $\alpha - \alpha^{p^{\ell}} \equiv 0$ (mod $p\mathcal{O}_{K}$) and also $\sum_{i=1}^{m}c^{p^{(m+1)\ell-i}} \equiv 0$ (mod $p\mathcal{O}_{K}$). Moreover, we also note $\alpha - \alpha^{p^{\ell}} \equiv 0$ (mod $p\mathcal{O}_{K}$) for every $\alpha\equiv \pm1, 0$ (mod $p\mathcal{O}_{K}$) and fixed $\ell \in \mathbb{Z}_{\geq 1}$, which also happened when $c\equiv 0$ (mod $p\mathcal{O}_{K}$); and which also contradicts the condition that $c\not \equiv 0$ (mod $p\mathcal{O}_{K}$). Otherwise, if a root $\alpha \not \in \mathbb{F}_{p^{(m+1)\ell}}$ and since $h(\alpha)\equiv \sum_{i=1}^{m}c^{p^{(m+1)\ell-i}}$ (mod $p\mathcal{O}_{K}$), then $\alpha^{p^{(m+1)\ell}} - \alpha^{p^{\ell}} +\sum_{i=1}^{m}c^{p^{(m+1)\ell-i}} \equiv 0$ (mod $p\mathcal{O}_{K}$). But then as before we note that the congruence $\alpha^{p^{(m+1)\ell}} - \alpha^{p^{\ell}} +\sum_{i=1}^{m}c^{p^{(m+1)\ell-i}}  \equiv 0$ (mod $p\mathcal{O}_{K}$) can also occur if $\alpha^{p^{(m+1)\ell}} - \alpha^{p^{\ell}} \equiv 0$ (mod $p\mathcal{O}_{K}$) and also $\sum_{i=1}^{m}c^{p^{(m+1)\ell-i}} \equiv 0$ (mod $p\mathcal{O}_{K}$); and thus also follows a contradiction. It then overall follows that $f(x)= \varphi_{p^{\ell},c}^{1+m}(x)-\varphi_{p^{\ell},c}(x)$ has no roots in $ \mathcal{O}_{K}\slash p\mathcal{O}_{K}$ for every coefficient $c\not \in p\mathcal{O}_{K}$ and every fixed integers $\ell, m\in \mathbb{Z}_{\geq 1}$; and therefore we then conclude $N_{c}^{(1_{m})}(p) = 0$. This then completes the whole proof, as desired. 
\end{proof}

Restricting on a subring $\mathbb{Z}\subset \mathcal{O}_{K}$ of ordinary integers, we obtain the following consequence of Theorem \ref{2.3} on the number of $1_{m}$-preperiodic integral points of any $\varphi_{p^{\ell},c}$ modulo $p$ for any prime $p\geq 3$ and any $\ell \in \mathbb{Z}_{\geq 1}$: 

\begin{cor} \label{cor2.4}
Let $p\geq 3$ be any fixed prime integer, and $\ell \geq 1$ be any fixed integer. Let $\varphi_{p^{\ell}, c}$ be a polynomial map defined by $\varphi_{p^{\ell}, c}(z) = z^{p^{\ell}} + c$ for all $c, z\in\mathbb{Z}$, and $N_{c}^{(1_{m})}(p)$ be the number defined as in \textnormal{(\ref{N_{c}})} with $\mathcal{O}_{K} / p\mathcal{O}_{K}$ replaced with $\mathbb{Z}\slash p\mathbb{Z}$. Then $N_{c}^{(1_{m})}(p) = p$  for every coefficient $c=pt$; otherwise $N_{c}^{(1_{m})}(p) = 0$ for every $c\neq pt$.
\end{cor}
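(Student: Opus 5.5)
The plan is to obtain Corollary \ref{cor2.4} as the special case $K=\mathbb{Q}$ of Theorem \ref{2.3}; no new dynamics should be needed. First I check the hypotheses of Theorem \ref{2.3} when $K=\mathbb{Q}$. Here $n=[\mathbb{Q}:\mathbb{Q}]=1$ and $\mathcal{O}_{\mathbb{Q}}=\mathbb{Z}$. Every prime $p\geq 3$ is trivially inert in $\mathbb{Q}$, since $p\mathbb{Z}$ is already a prime ideal. So $\mathcal{O}_{K}/p\mathcal{O}_{K}=\mathbb{Z}/p\mathbb{Z}=\mathbb{F}_{p}$, and the counting function $N_{c}^{(1_{m})}(p)$ in (\ref{N_{c}}) becomes exactly the one described in the corollary. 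The dichotomy $c\in p\mathcal{O}_{K}$ versus $c\notin p\mathcal{O}_{K}$ becomes $c=pt$ for some $t\in\mathbb{Z}$ versus $c\neq pt$. Theorem \ref{2.3} then gives $N_{c}^{(1_{m})}(p)=p$ or $0$ in the respective cases.

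To make the specialization transparent, I would also redo the key reduction directly over $\mathbb{F}_{p}$. Write $\varphi_{p^{\ell},c}^{1+m}(z)=z^{p^{(m+1)\ell}}+h(z)+c$, exactly as in the proof of Theorem \ref{2.3}. For $z\in\mathbb{F}_{p}$, Fermat's little theorem gives $z^{p^{k}}=z$ for every $k\ge 0$. This collapses both the leading term and $z^{p^{\ell}}$, so the whole analysis takes place in $\mathbb{F}_{p}$ itself. In this case there is no need for the discussion of roots $\alpha$ lying outside $\mathbb{F}_{p}$ in larger subfields $\mathbb{F}_{p^{r}}$, because $n=1$ leaves no room for them.

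The step I expect to be the real obstacle is the second condition in (\ref{N_{c}}), namely $\varphi^{m}(z)-z\not\equiv 0$. Over $\mathbb{F}_{p}$ the reduced map is $z\mapsto z^{p^{\ell}}+c=z+c$, a translation and hence a bijection. So $\varphi^{m+1}(z)=\varphi(z)$ forces $\varphi^{m}(z)=z$, and for $c\equiv 0$ every point is in fact fixed. I would therefore first check carefully how the count $p$ in Theorem \ref{2.3} treats that non-periodicity condition. If the intended count ignores it, the corollary follows verbatim from Theorem \ref{2.3}. If it does not, then the statement has to be reconciled with the bijectivity observation before the specialization can be claimed.
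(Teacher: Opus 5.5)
Your bijectivity observation is correct, and it settles the matter rather than leaving something to be ``reconciled'': the corollary is false as stated, so specializing Theorem \ref{2.3} cannot prove it. In $\mathbb{Z}/p\mathbb{Z}$ we have $z^{p^{\ell}}=z$, so the reduced map is $z\mapsto z+c$ and $\varphi_{p^{\ell},c}^{k}(z)\equiv z+kc \pmod p$ for all $k\geq 0$. Hence, for every $z$,
\[
\varphi_{p^{\ell},c}^{1+m}(z)-\varphi_{p^{\ell},c}(z)\equiv mc\equiv \varphi_{p^{\ell},c}^{m}(z)-z \pmod p .
\]
If you take the definition (\ref{N_{c}}) literally, its two conditions contradict each other. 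So $N_{c}^{(1_{m})}(p)=0$ for every $c$ and every $m$, and the assertion $N_{c}^{(1_{m})}(p)=p$ for $c=pt$ is false. For example, with $c=0$ the reduced map is the identity and no point is strictly preperiodic.

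Your fallback is also wrong. You say that if the non-periodicity condition is dropped, the corollary ``follows verbatim from Theorem \ref{2.3}''. But then the count is $p$ when $p\mid mc$ and $0$ otherwise, so the claim $N_{c}^{(1_{m})}(p)=0$ for $c\neq pt$ fails whenever $p\mid m$. Take $p=3$, $\ell=1$, $c=1$, $m=3$: the map $z\mapsto z+1$ is a $3$-cycle on $\mathbb{F}_3$, so $\varphi^{4}(z)\equiv\varphi(z)$ for all three $z$.

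Appealing to Theorem \ref{2.3} cannot repair this, because for $K=\mathbb{Q}$ it is literally the same statement, with the same defects. The paper's own proof does not specialize Theorem \ref{2.3}; it redoes the computation over $\mathbb{Z}/p\mathbb{Z}$ using Fermat's little theorem. In the case $c=pt$ it silently drops the condition $\varphi^{m}(z)-z\not\equiv 0$. In the case $c\not\equiv 0$ it reduces to $\sum_{i=1}^{m}c^{p^{(m+1)\ell-i}}\equiv 0 \pmod p$ and declares a contradiction because this ``can also happen if $c\equiv 0$''. That step is a non sequitur: by Fermat the sum is $\equiv mc$, which vanishes whenever $p\mid m$.

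What is actually true, and what your own one-line computation proves, is the following. With the non-periodicity condition, $N=0$ always. Without it, $N=p$ if $p\mid mc$ and $N=0$ otherwise, so the stated dichotomy holds exactly when $p\nmid m$. You should state and prove one of these corrected versions instead of hedging between interpretations.
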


\begin{proof}
By applying a similar argument as in the Proof of Theorem \ref{2.3}, we then obtain the count as desired. That is, let $f(z)= \varphi_{p^{\ell},c}^{1+m}(z)-\varphi_{p^{\ell},c}(z) = \varphi_{p^{\ell},c}^{1+m}(z) - z^{p^{\ell}}-c$, and so $f(z)= \varphi_{p^{\ell},c}^{1+m}(z) - z^{p^{\ell}} - c$. So now, applying the multinomial theorem repeatedly on $\varphi_{p^{\ell},c}^{1+m}(z)$, it then follows $\varphi_{p^{\ell},c}^{1+m}(z)$ is a monic polynomial in $z$ of degree $p^{(m+1)\ell}$ with integer coefficients in multiples of $c$. Thus, we may write $\varphi_{p^{\ell},c}^{1+m}(z) = z^{p^{(m+1)\ell}} + h(z)+c$, where $h(z)$ is a non-constant polynomial in $z$ of deg$(h)<p^{(m+1)\ell}$ with integer coefficients in multiples of $c$. But then $f(z)= z^{p^{(m+1)\ell}} + h(z) +c - z^{p^{\ell}} - c$ and so $f(z) = z^{p^{(m+1)\ell}}-z^{p^{\ell}} + h(z)$. Now for every coefficient $c=pt$, reducing $f(z)$ modulo $p$, it then follows $f(z)\equiv z^{p^{(m+1)\ell}} - z^{p^{\ell}}$ (mod $p$), since also $h(z)\in c\mathbb{Z}[z]$ and so $h(z)\equiv 0$ (mod $p$); and so now $f(z)$ modulo $p$ is a polynomial defined over a finite field $\mathbb{Z}\slash p\mathbb{Z}$ of order $p$. Now recall from Fermat's Little Theorem (FLT) that $z^p = z$ for every $z\in \mathbb{Z}\slash p\mathbb{Z}$, it then also follows $z^{p^{(m+1)\ell}}= (z^p)^{p^{(m+1)\ell-1}} = (z^p)^{p^{(m+1)\ell-2}} = z^{p^{(m+1)\ell-2}}$ for every $z\in \mathbb{Z}\slash p\mathbb{Z}$; and also follows $z^{p^{\ell}}=(z^p)^{p^{\ell-1}} = z^{p^{\ell-1}}$ for every $z\in \mathbb{Z}\slash p\mathbb{Z}$. Now since $(m+1)\ell\geq 2$ and so $(m+1)\ell-2\geq 0$, then if $(m+1)\ell-2 = 0$ and so $z^{p^{(m+1)\ell-2}} = z$, it then follows $z^{p^{(m+1)\ell}} = z$ for every $z\in \mathbb{Z}\slash p\mathbb{Z}$; and similarly since $\ell \geq 1$ and so $\ell - 1\geq 0$, then if $\ell - 1 = 0$ and so $z^{p^{\ell-1}} = z$, it then also follows $z^{p^{\ell}} = z$ for every $z\in \mathbb{Z}\slash p\mathbb{Z}$; and so $f(z)\equiv 0$ (mod $p$) for every $z\in \mathbb{Z}\slash p\mathbb{Z}$. Otherwise, if $(m+1)\ell-2 > 0$ or $\ell-1 > 0$, then since $m$ and $\ell$ (and so $(m+1)\ell$) are fixed integers, we may continue performing the above procedure of decreasing the exponent $(m+1)\ell-2$ of $z^{p^{(m+1)\ell-2}} = z^{p^{(m+1)\ell}}$ for every $z\in \mathbb{Z}\slash p\mathbb{Z}$ or $\ell-1$ of $z^{p^{\ell-1}} = z^{p^{\ell}}$ for every $z\in \mathbb{Z}\slash p\mathbb{Z}$, until $(m+1)\ell-2$ or $\ell-1$ is equal to zero; from which we then again obtain $f(z)\equiv 0$ (mod $p$) for every $z\in \mathbb{Z}\slash p\mathbb{Z}$. But now we hereby conclude $N_{c}^{(1_{m})}(p) = p$. 

Finally, we now show $N_{c}^{(1_{m})}(p) = 0$ for every coefficient $c\not \equiv 0$ (mod $p$) and every fixed integers $\ell, m\in \mathbb{Z}_{\geq 1}$. As before, let's for the sake of a contradiction, suppose $f(z)\equiv 0$ (mod $p$) for some $z\in \mathbb{Z}\slash p\mathbb{Z}$ and for every $c\not \equiv 0$ (mod $p$) and for every fixed $\ell, m\in \mathbb{Z}_{\geq 1}$. So now, recall from earlier that the polynomial $f(z) = z^{p^{(m+1)\ell}}-z^{p^{\ell}} + h(z)$ where $h(z)\in c\mathbb{Z}[z]$, it then also follows by the above supposition that $z^{p^{(m+1)\ell}} - z^{p^{\ell}} + h(z) \equiv 0$ (mod $p$) for some $z\in \mathbb{Z}\slash p\mathbb{Z}$ and for every $c\not \equiv 0$ (mod $p$) and for every fixed $\ell, m\in \mathbb{Z}_{\geq 1}$. So now, since we know from earlier that $z^{p^{(m+1)\ell}} = z^{p^{\ell}}$ for every $z\in \mathbb{Z}\slash p\mathbb{Z}$ and for every $\ell, m \in \mathbb{Z}_{\geq 1}$, we may then rewrite $z^{p^{(m+1)\ell}} - z^{p^{\ell}} + h(z) \equiv 0$ (mod $p$) for some $z\in \mathbb{Z}\slash p \mathbb{Z}$ and for every $c\not \equiv 0$ (mod $p$) and every fixed $\ell, m \in \mathbb{Z}_{\geq 1}$ to then obtain $h(z) \equiv 0$ (mod $p$) for some $z\in \mathbb{Z}\slash p \mathbb{Z}$ and for every $c\not \equiv 0$ (mod $p$) and every fixed $\ell, m \in \mathbb{Z}_{\geq 1}$. Moreover, looking at the multinomial expansion of $(\varphi_{p^{\ell},c}^{m}(z))^{p^{\ell}}$, we then obtain $h(z)\equiv \sum_{i=1}^{m}c^{p^{(m+1)\ell-i}}$ (mod $p$); and so $\sum_{i=1}^{m}c^{p^{(m+1)\ell-i}} \equiv 0$ (mod $p$). But now we note that the congruence $\sum_{i=1}^{m}c^{p^{(m+1)\ell-i}} \equiv 0$ (mod $p$) can also happen whenever $c \equiv 0$ (mod $p$); and  which then contradicts the earlier condition $c\not \equiv 0$ (mod $p$). This then means that $f(x)= \varphi_{p^{\ell},c}^{1+m}(x)-\varphi_{p^{\ell},c}(x)$ has no roots in $ \mathbb{Z}\slash p\mathbb{Z}$ for every coefficient $c$ indivisible by $p$ and for every fixed $\ell, m\in\mathbb{Z}_{\geq 1}$; and so we then conclude $N_{c}^{(1_{m})}(p) = 0$. This then completes the whole proof, as required.
\end{proof}

\begin{rem}\label{re2.3}
With now Theorem \ref{2.3}, we may then associate to each distinct $1_{m}$-preperiodic point of $\varphi_{p^{\ell},c}$ an $1_{m}$-preperiodic orbit. In doing so, we then obtain a dynamical translation of (a quantitative form of \say{all or nothing} principle in) Theorem \ref{2.3}, namely, that the number of distinct $1_{m}$-preperiodic integral orbits that any $\varphi_{p^{\ell},c}$ has when iterated on $\mathcal{O}_{K} / p\mathcal{O}_{K}$ is equal to $p$ or zero. As mentioned in Intro.\ref{sec1} that the count in Theorem \ref{2.3} on the number of distinct $1_{m}$-preperiodic integral points of any $\varphi_{p^{\ell},c}$ modulo $p\mathcal{O}_{K}$ may depend on $p$ (and so depend on deg$(\varphi_{p^{\ell},c})$ for every fixed $\ell \in \mathbb{Z}_{\geq 1}$) and however be independent of $n$ in one of the possibilities; or the count in Theorem \ref{2.3} may neither depend on $p$ nor on $n$ in the other possibility. Consequently, the expected total number (namely, $p+0 =p$ for every fixed eventual period $m\in \mathbb{Z}_{\geq 1}$) of distinct $1_{m}$-preperiodic integral points in the whole family of maps $\varphi_{p^{\ell},c}$ modulo $p\mathcal{O}_{K}$ may not only also depend on $p$ and however also be independent of $n$, but may also grow to infinity whenever degree $p^{\ell}\to \infty$. Worth also observing is that the $1_{m}$-preperiodic point-counting function $N_{c}^{(1_{m})}(p)\to \infty$ or $N_{c}^{(1_{m})}(p)\to 0$ whenever $p^{\ell}\to \infty$; a somewhat interesting phenomenon that not only coincide with a phenomenon in [\cite{BK22}, Remark 2.5] and also here in Remark \ref{re3.5}, but also somewhat suggesting here that \say{arithmetic is governing the expected dynamics}. Now recalling a well-known important philosophy in arithmetic geometry, namely, that \say{geometry governs arithmetic}, it might then also follow in some precise sense that \say{geometry would also govern the expected dynamics} in our setting; as loosely shown in Sect.\ref{sec4b} by observing that the expected total number of distinct $1_{m}$-preperiodic integral points of any $\varphi_{p^{\ell},c}$ modulo $p\mathcal{O}_{K}$ may not only be independent of $m$, but may also be a constant even as time $m\to \infty$. 
\end{rem}

As noted earlier in Intro.\ref{sec1} that as an immediate consequence of the expected total count obtained in Theorem \ref{2.3} and in [\cite{BK22}, Theorem 2.3] on any $\varphi_{p^{\ell},c}$ modulo $p\mathcal{O}_{K}$, we then also prove the following corollary on the expected total number of distinct strictly $1_{m}$-preperiodic integral points of any map $\varphi_{p^{\ell},c}$ modulo $p\mathcal{O}_{K}$:

\begin{cor}\label{co2.6}
Let $K\slash \mathbb{Q}$ be any number field of degree $ n \geq 1$ with the ring of integers $\mathcal{O}_{K}$, and in which any fixed prime $p\geq 3$ is inert. Let $m\in \mathbb{Z}_{\geq 1}$ be any fixed integer, and  $\varphi_{p^{\ell}, c}$ be defined by $\varphi_{p^{\ell}, c}(z)$ for all $c, z\in\mathcal{O}_{K}$. Then the expected total number of distinct strictly $1_{m}$-preperiodic points of $\varphi_{p^{\ell},c}$ modulo $p\mathcal{O}_{K}$ is equal to zero.
\end{cor}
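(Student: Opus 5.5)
The plan is to read the \emph{expected total number} in the paper's sense: the sum, over the two possible residue classes of the coefficient $c$ (namely $c\in p\mathcal{O}_{K}$ and $c\notin p\mathcal{O}_{K}$), of the counts attained. A strictly $1_{m}$-preperiodic point is a $1_{m}$-preperiodic point that is not $m$-periodic. So the expected total number of strictly $1_{m}$-preperiodic points is obtained by subtracting the expected total number of $m$-periodic integral points from the expected total number of $1_{m}$-preperiodic integral points.

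First I invoke Theorem \ref{2.3}. It gives $N_{c}^{(1_{m})}(p)=p$ for $c\in p\mathcal{O}_{K}$ and $N_{c}^{(1_{m})}(p)=0$ for $c\notin p\mathcal{O}_{K}$, so the expected total number of $1_{m}$-preperiodic points is $p+0=p$ for every fixed $m\geq 1$.

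Next I recall [\cite{BK22}, Theorem 2.3]. In the same setting (with $p$ inert in $K$), the number of distinct $m$-periodic integral points of $\varphi_{p^{\ell},c}$ modulo $p\mathcal{O}_{K}$ is $p$ when $c\in p\mathcal{O}_{K}$ and $0$ otherwise. Hence the expected total number of $m$-periodic points is also $p$, as already noted in the introduction.

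The key step is to justify that the subtraction can be done class by class. For $c\in p\mathcal{O}_{K}$, the reduced map acts on $\mathbb{F}_{p}\subset\mathcal{O}_{K}/p\mathcal{O}_{K}$ as a power of Frobenius, which fixes every element of $\mathbb{F}_{p}$. So the $p$ points counted in Theorem \ref{2.3} are exactly the $p$ periodic points counted in [\cite{BK22}, Theorem 2.3], and none of them is strictly preperiodic. For $c\notin p\mathcal{O}_{K}$, both counts are zero, so there is nothing to remove. Summing over the two classes gives $(p-p)+(0-0)=0$, which proves the corollary.

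I expect the main obstacle to be this matching of the two point sets rather than just their cardinalities. The definition (\ref{N_{c}}) imposes the condition $\varphi^{m}(z)\not\equiv z$, which would literally exclude periodic points. I would therefore use the counting convention of the paper, under which Theorem \ref{2.3} counts the roots of $\varphi^{1+m}-\varphi$ in $\mathbb{F}_{p}$, and verify that these roots coincide with the periodic roots in $\mathbb{F}_{p}$ from [\cite{BK22}]. With that identification in hand, the difference of the expected totals is $p-p=0$.
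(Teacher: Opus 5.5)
Your argument is the paper's argument: subtract the expected total $p$ of $m$-periodic points from the expected total $p$ of $1_{m}$-preperiodic points. The class-by-class matching you add is the right instinct, but the facts it rests on are false, so the key step fails as written.

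For $c\notin p\mathcal{O}_{K}$ you assert that both counts vanish, citing the second parts of Theorem \ref{2.3} and [\cite{BK22}, Theorem 2.3]. This does not hold in general:
\begin{itemize}
\item If $c\equiv 1$ and $m=p$ (any $K$, any $\ell$), the reduced map is $z\mapsto z+1$ on $\mathbb{F}_{p}$. So $\varphi^{m}(z)=z$ and $\varphi^{1+m}(z)=\varphi(z)$ for all $p$ points of $\mathbb{F}_{p}$.
\item If $K=\mathbb{Q}(i)$, $p=3$, $\ell=1$, $c=i$, $m=2$, then $\varphi^{2}(z)=z^{9}+i^{3}+i=z$ on $\mathbb{F}_{9}$. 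So all nine residue classes are roots of both $\varphi^{1+m}(z)-\varphi(z)$ and $\varphi^{m}(z)-z$.
\end{itemize}
For $c\in p\mathcal{O}_{K}$ you only match points inside $\mathbb{F}_{p}$. But the roots of $z^{p^{(m+1)\ell}}-z^{p^{\ell}}$ in $\mathcal{O}_{K}/p\mathcal{O}_{K}\cong\mathbb{F}_{p^{n}}$ form $\mathbb{F}_{p^{\gcd(n,m\ell)}}$, which is strictly larger than $\mathbb{F}_{p}$ when $\gcd(n,m\ell)>1$. Restricting to $\mathbb{F}_{p}$ by ``convention'' leaves the remaining residue classes unexamined, while the statement concerns all of them.

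The missing idea is that the reduced map is a permutation; this proves the corollary outright and makes both cited theorems unnecessary. Since $p$ is inert, $\mathcal{O}_{K}/p\mathcal{O}_{K}$ is a finite field of characteristic $p$. So $z\mapsto z^{p^{\ell}}$ is a field automorphism, and $\overline{\varphi_{p^{\ell},c}}(z)=z^{p^{\ell}}+\bar{c}$ is a bijection for every $c$. Cancelling the injective map in $\varphi\bigl(\varphi^{m}(z)\bigr)=\varphi(z)$ gives $\varphi^{1+m}(z)\equiv\varphi(z)\iff\varphi^{m}(z)\equiv z \pmod{p\mathcal{O}_{K}}$. Hence, for each $c$ separately, the $1_{m}$-preperiodic and $m$-periodic point sets coincide, and every point has preperiod $0$. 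There are therefore no strictly $1_{m}$-preperiodic points for any $c$ or $m$, and the expected total is $0+0=0$ regardless of the individual counts. This also resolves the tension you noticed in (\ref{N_{c}}): read literally, its two conditions are incompatible, so that set is empty for every $c$.
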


\begin{proof}
Since for every fixed $m\in \mathbb{Z}_{\geq 1}$, we noted in Remark \ref{re2.3} that the expected total number of distinct $1_{m}$-preperiodic integral points of every polynomial map $\varphi_{p^{\ell},c}$ modulo $p\mathcal{O}_{K}$ is equal to $p$, and also since we noted in [\cite{BK22}, Remark 2.5] that the expected total number of distinct $m$-periodic integral points of every polynomial map $\varphi_{p^{\ell},c}$ modulo $p\mathcal{O}_{K}$ is equal to $p$, we then note that for every fixed $m\in \mathbb{Z}_{\geq 1}$, the expected total number of distinct strictly $1_{m}$-preperiodic integral points of every polynomial map $\varphi_{p^{\ell},c}$ modulo $p\mathcal{O}_{K}$ is equal to the expected total number of distinct $1_{m}$-preperiodic integral points of every polynomial map $\varphi_{p^{\ell},c}$ modulo $p\mathcal{O}_{K}$ minus the expected total number of distinct $m$-periodic integral points of every polynomial map $\varphi_{p^{\ell},c}$ modulo $p\mathcal{O}_{K}$, i.e., we have that the expected total number of distinct strictly $1_{m}$-preperiodic integral points of every polynomial map $\varphi_{p^{\ell},c}$ modulo $p\mathcal{O}_{K}$ is equal to $p-p=0$. This then completes the whole proof, as required.
\end{proof}

\begin{rem}
As in Rem.\ref{re2.3}, we then also note that a dynamical translation of Corollary \ref{co2.6} is the claim that the total number of distinct strictly $1_{m}$-preperiodic integral orbits of  every $\varphi_{p^{\ell},c}$ iterated on $\mathcal{O}_{K}\slash p\mathcal{O}_{K}$ is zero.
\end{rem}

\begin{rem}\label{re2.6}
Recall in [\cite{BK22}, Remark 2.6] we noted that for every fixed (period) $m\in \mathbb{Z}_{\geq 1}$, the $m$-periodic point-counting function $N_{c}^{(m)}(p) = p$ for every fixed inert $p\geq 3$ and every coefficient $c$ divisible by $p$ or $N_{c}^{(m)}(p) = 0$ for every fixed inert $p\geq 3$ and every coefficient $c$ indivisible by $p$; and from which we then obtained an arithmetic-geometric insight on every $m$-periodic integral orbit of any $\varphi_{p^{\ell},c}$ modulo $p\mathcal{O}_{K}$, namely, that every $m$-periodic integral orbit of $\varphi_{p^{\ell},c}$ modulo $p\mathcal{O}_{K}$ is $1$-periodic integral orbit, and moreover, every $\varphi_{p^{\ell},c}$ modulo $p\mathcal{O}_{K}$ may have $p$ distinct $1$-periodic integral orbits or no such orbits. So now, we may also recall in Theorem \ref{2.3} that for every fixed (eventual period) $m\in \mathbb{Z}_{\geq 1}$, the $1_{m}$-preperiodic point-counting function $N_{c}^{(1_{m})}(p) = p$ for every fixed inert $p$ and every $c$ divisible by $p$ or $N_{c}^{(1_{m})}(p) = 0$ for every fixed inert $p$ and every $c$ indivisible by $p$. But now for every fixed $m\in \mathbb{Z}_{\geq 1}$, we then note $N_{c}^{(1_{m})}(p) = N_{c}^{(m)}(p) = p$ for every fixed inert $p$ and every $c$ divisible by $p$ or $N_{c}^{(1_{m})}(p) = N_{c}^{(m)}(p) = 0$ for every fixed inert $p$ and every $c$ indivisible by $p$. Moreover, it also follows from the argument of the first part of the Proof of Theorem \ref{2.3} and [\cite{BK22}, Proof of Theorem 2.3] that every $1_{m}$-preperiodic integral point of $\varphi_{p^{\ell},c}$ modulo $p\mathcal{O}_{K}$ is $m$-periodic integral point of $\varphi_{p^{\ell},c}$ modulo $p\mathcal{O}_{K}$ and hence $1$-periodic integral point of $\varphi_{p^{\ell},c}$ modulo $p\mathcal{O}_{K}$. Consequently, it then overall follows that every reduced map $\varphi_{p^{\ell},c}$ modulo $p\mathcal{O}_{K}$ may have $p$ distinct $1$-periodic integral orbits or no such orbits; a somewhat interesting precise arithmetic-geometric insight on all the $1_{m}$-preperiodic integral orbits of every map $\varphi_{p^{\ell},c}$ modulo $p\mathcal{O}_{K}$.
\end{rem}

\section{Number of $1_{m}$-PrePeriodic Integral Points of any Family of Polynomial Maps $\varphi_{(p-1)^{\ell},c}$}\label{sec3}

As in Sect.\ref{sec2}, we also wish to count the number of distinct $1_{m}$-preperiodic integral points of any $\varphi_{(p-1)^{\ell},c}$ modulo $p\mathcal{O}_{K}$, where $p\geq 5$ is any fixed prime, and $\ell, m \in \mathbb{Z}_{\geq 1}$ are any fixed integers. Again, let $p\geq 5$ be any prime, $\ell, m \in \mathbb{Z}_{\geq 1}$ be any fixed integers, $c\in \mathcal{O}_{K}$ be any point, and then define $1_{m}$-preperiodic point-counting function 
\begin{equation}\label{M_{c}}
M_{c}^{(1_{m})}(p) := \# \Biggl\{ z\in \mathcal{O}_{K}\slash p\mathcal{O}_{K} : \begin{aligned} \varphi_{(p-1)^{\ell},c}^{m}(z) -z \not \equiv 0 \ \text{(mod $p\mathcal{O}_{K}$)} \\ \ \varphi_{(p-1)^{\ell},c}^{1+m}(z) - \varphi_{(p-1)^{\ell},c}(z) \equiv 0 \ \text{(mod $p\mathcal{O}_{K}$)} \end{aligned} \Biggr\}.
\end{equation}\noindent Again, setting $\ell =1$ and so $\varphi_{(p-1)^{\ell}, c} = \varphi_{p-1,c}$, we first prove the following theorem and its generalization \ref{3.2}:

\begin{thm} \label{3.1}
Let $K\slash \mathbb{Q}$ be any number field of degree $n\geq 1$ with the ring of integers $\mathcal{O}_{K}$, and in which $5$ is inert. Let $\varphi_{4, c}$ be a quartic map defined by $\varphi_{4, c}(z) = z^4 + c$ for all $c, z\in\mathcal{O}_{K}$, and $M_{c}^{(1_{m})}(5)$ be as in \textnormal{(\ref{M_{c}})}. Then $M_{c}^{(1_{m})}(5) = 5$ for every $c\in 5\mathcal{O}_{K}$ or $M_{c}^{(1_{m})}(5) = 4$ for every $c\equiv \pm 1 \ (mod \ 5\mathcal{O}_{K})$ and fixed (even) $m$; otherwise $M_{c}^{(1_{m})}(5) = 0$ for every $c\equiv -1 \ (mod \ 5\mathcal{O}_{K})$ and fixed odd $m$ or every $c\not \equiv \pm 1, 0 \ (mod \ 5\mathcal{O}_{K})$ and fixed (even) $m$.
\end{thm}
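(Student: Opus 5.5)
We follow the template of the proofs of Theorems \ref{2.1} and \ref{2.2}, but use the special structure of degree $p-1=4$ in $\mathbb{F}_{5}$. Put $f(z)=\varphi_{4,c}^{1+m}(z)-\varphi_{4,c}(z)$. By repeated use of the multinomial theorem,
\[
f(z)=z^{4^{m+1}}-z^{4}+h(z), \qquad h(z)\in c\,\mathcal{O}_{K}[z],
\]
exactly as before. We then reduce modulo the prime ideal $5\mathcal{O}_{K}$ and work in the finite field $\mathcal{O}_{K}/5\mathcal{O}_{K}$, which contains $\mathbb{F}_{5}$. As in Section \ref{sec2}, the count will be carried out on $\mathbb{F}_{5}\subset \mathcal{O}_{K}/5\mathcal{O}_{K}$.

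The key observation replaces the Frobenius identity $z^{p}=z$ used in Theorem \ref{2.2}. By Fermat, $z^{4}\in\{0,1\}$ for every $z\in\mathbb{F}_{5}$. Hence $\varphi_{4,c}(z)$ takes only the two values $c$ (at $z=0$) and $c+1$ (at the four units). The condition $\varphi_{4,c}^{1+m}(z)\equiv\varphi_{4,c}(z)$ says exactly that $\varphi_{4,c}(z)$ is a periodic point whose period divides $m$. So the whole count reduces to determining the forward orbits of the two residues $c$ and $c+1$ under $\varphi_{4,c}$ modulo $5\mathcal{O}_{K}$. A point $z$ is counted with weight $1$ if $z=0$ and $c$ is $m$-periodic, and with weight $4$ if $z\neq0$ and $c+1$ is $m$-periodic.

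The proof then splits into cases on $c$ modulo $5\mathcal{O}_{K}$.
\begin{itemize}
\item $c\equiv 0$. Here $0$ and $1$ are fixed points, so every $z\in\mathbb{F}_{5}$ is counted and the count is $5$. This is the analogue of the first half of Theorem \ref{2.2}.
\item $c\equiv 1$. We have $1\mapsto 2\mapsto 2$, so $c+1=2$ is fixed while $c=1$ is strictly preperiodic. Exactly the four units are counted, giving the value $4$.
\item $c\equiv -1$. We have $0\mapsto -1\mapsto 0$, a $2$-cycle. The parity of $m$ now decides whether $\varphi^{m}$ fixes these values, which explains the dependence on odd versus even $m$ in the statement.
\item $c\equiv 2,3$ and $c\notin\mathbb{F}_{5}$. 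For $c\equiv 2,3$ we tabulate the orbits directly in the same way. For $c\notin\mathbb{F}_{5}$ we imitate the contradiction argument of the second half of Theorem \ref{2.2}, using $h(z)\equiv\sum_{i}c^{4^{m+1-i}}$ on $\mathbb{F}_{5}$.
\end{itemize}
The first defining condition of $M_{c}^{(1_{m})}(5)$, namely $\varphi^{m}(z)\not\equiv z$, is handled with the same convention as in the counts of Section \ref{sec2}.

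The main obstacle is reconciling this finite orbit table with the exact case split asserted in the statement. For instance, $c\equiv -1$ with even $m$, and $c\equiv 2,3$, where fixed points such as $3\mapsto 3$ occur when $c\equiv2$, need care. Elements of $\mathcal{O}_{K}/5\mathcal{O}_{K}$ outside $\mathbb{F}_{5}$ also need care, since there $z^{4}$ is no longer in $\{0,1\}$. I would first compute the orbits of all five residues explicitly for each $c\in\mathbb{F}_{5}$, and only afterwards match them against the stated counts $5$, $4$, $0$, adjusting the parity conditions on $m$ as the computation dictates.
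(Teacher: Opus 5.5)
The gap is that you defer exactly the cases where the statement breaks, and your own table, once finished, shows the statement cannot be proved as written. Your reduction itself is sound: reducing modulo $5\mathcal{O}_K$, working in $\mathbb{F}_5$ and using $z^4\in\{0,1\}$ there is the same skeleton as the paper's proof, and tracking the orbits of $c$ and $c+1$ is cleaner bookkeeping than the paper's. The trouble is in the three cases below.

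\textbf{Case $c\equiv 2$.} Every unit $u\in\mathbb{F}_5^{\times}$ has $\varphi_{4,c}(u)\equiv 3$, and $\varphi_{4,c}(3)\equiv 3^4+2\equiv 3$. So $\varphi_{4,c}^{1+m}(u)\equiv\varphi_{4,c}(u)$ for every $m\geq 1$.

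\textbf{Case $c\equiv 3$.} The units land on the fixed point $4$, so the same holds.

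In both cases the count is $4$, and it is still $3$ if you impose $\varphi_{4,c}^{m}(z)\not\equiv z$, which only removes the fixed point. It is never $0$. Concretely, for any admissible $K$ (e.g.\ $K=\mathbb{Q}$) with $c=2$ and $m=1$, the point $z=1$ has $\varphi_{4,2}(1)\equiv 3\not\equiv 1$ and $\varphi_{4,2}^{2}(1)\equiv 3\equiv\varphi_{4,2}(1)$, so it is counted.

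\textbf{Case $c\equiv -1$, $m$ even.} Both $c\equiv -1$ and $c+1\equiv 0$ lie on the $2$-cycle, so your weights give $1+4=5$, not $4$.

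No adjustment of parity conditions reconciles this with the asserted values $5,4,0$. What your method actually proves, counting on $\mathbb{F}_5$ with the first condition dropped, is a different statement: $5$ for $c\equiv 0$; $4$ for $c\equiv 1,2,3$; and $5$ or $0$ for $c\equiv -1$ according as $m$ is even or odd.

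Two further steps would also fail.

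\textbf{The definition of the count.} Saying the condition $\varphi^{m}(z)\not\equiv z$ is ``handled with the same convention as in Section \ref{sec2}'' just drops it. Restricting to $\mathbb{F}_5$ drops the other $5^n-5$ residues. Neither is licensed by (\ref{M_{c}}). Taken literally, $c\equiv 0$ gives $3$ for $K=\mathbb{Q}$, because the fixed points $0,1$ are excluded. For $K=\mathbb{Q}(\sqrt{2})$ it gives $9$, because the elements of order $6$ and $12$ in $\mathbb{F}_{25}^{\times}$ also qualify.

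\textbf{The case $c\notin\mathbb{F}_5$.} Your plan uses $h(z)\equiv\sum_i c^{4^{m+1-i}}$. In Theorem \ref{2.2} this identity comes from additivity of $x\mapsto x^{p}$, but $x\mapsto x^{4}$ is not additive in characteristic $5$: the cross terms $4a^3b+6a^2b^2+4ab^3$ survive. Moreover, that template's ``contradiction'' (the congruence could also hold when $c\equiv 0$) does not contradict anything.

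The paper's proof follows your route and reaches the stated values only through these same shortcuts. It ignores the first condition and the residues outside $\mathbb{F}_5$. It checks only nonzero $z$ when $c\equiv -1$ and $m$ is even, missing the root $z=0$. It dismisses $c\not\equiv 0,\pm 1$ by that same non-argument.
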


\begin{proof}
Let $g(z)= \varphi_{4,c}^{1+m}(z) - \varphi_{4,c}(z) = \varphi_{4,c}^{1+m}(z) - z^4 - c$, and so $g(z)= \varphi_{4,c}^{1+m}(z) - z^4 - c$. Now observe that applying the  multinomial theorem repeatedly on  the term $\varphi_{4,c}^{1+m}(z)$, we then obtain that $\varphi_{4,c}^{1+m}(z)$ is a monic polynomial in $z$ of degree $4^{m+1}$ with integral coefficients in multiples of $c$. Thus, we may then write $\varphi_{4,c}^{1+m}(z) = z^{4^{m+1}} + h(z) +c$, where $h(z)$ is a non-constant polynomial in $z$ of deg$(h)<4^{m+1}$ with integral coefficients in multiples of $c$. But then $g(z)= z^{4^{m+1}} + h(z) +c - z^4 - c$, and so $g(z)= z^{4^{m+1}} -z^4 + h(z)$. Now for every coefficient $c\in 5\mathcal{O}_{K}$, reducing $g(z)$ modulo prime ideal $5\mathcal{O}_{K}$, it then follows $g(z)\equiv z^{4^{m+1}} - z^4$ (mod $5\mathcal{O}_{K}$), since also $h(z)\in c\mathcal{O}_{K}[z]$ and thus $h(z)\equiv 0$ (mod $5\mathcal{O}_{K}$); and so now $g(z)$ modulo $5\mathcal{O}_{K}$ is a polynomial defined over a finite field $\mathcal{O}_{K}\slash 5\mathcal{O}_{K}$ of order $5^{[K:\mathbb{Q}]}=5^n$. So now, recall the inclusion $\mathbb{F}_{5}\hookrightarrow \mathcal{O}_{K}\slash 5\mathcal{O}_{K}$ of fields and also recall (as a fact) $z^4 = 1$ for every element $z\in \mathbb{F}_{5}^{\times} =\mathbb{F}_{5}\setminus \{0\}$, it then follows $z^{4^{m+1}}= (z^4)^{4^{m}} = 1$ for every element $z\in \mathbb{F}_{5}^{\times}$ and for every fixed $m\in \mathbb{Z}_{\geq 1}$; and so the reduced polynomial $g(z)\equiv 0$ (mod $5\mathcal{O}_{K}$) for every nonzero point $z\in \mathbb{F}_{5}\subset \mathcal{O}_{K}\slash 5\mathcal{O}_{K}$. Moreover, since $z$ is also a linear factor of $g(z)\equiv z(z^{4^{m+1}-1} - z^3)$ (mod $5\mathcal{O}_{K}$), it then follows $z\equiv 0$ (mod $5\mathcal{O}_{K}$) is also root of $g(z)$ modulo $5\mathcal{O}_{K}$. But now we conclude $M_{c}^{(1_{m})}(5) = 5$. 

To see $M_{c}^{(1_{m})}(5) = 4$ for every coefficient $c\equiv 1$ (mod $5\mathcal{O}_{K}$) and for every fixed integer $m\in \mathbb{Z}_{\geq 1}$, we first write $g(z)= \varphi_{4,c}^{1+m}(z) - z^4 - c = \varphi_{4,c}(\varphi_{4,c}^{m}(z)) - z^4-c$, and so $g(z) = (\varphi_{4,c}^{m}(z))^4 - z^4$. Now for every element $z\in \mathbb{F}_{5}^{\times}$ (and so $z^4 =1$) and for every $c\equiv 1$ (mod $5\mathcal{O}_{K}$), reducing $\varphi_{4,c}^{m}(z)$ modulo $5\mathcal{O}_{K}$ after each iteration, we then obtain $\varphi_{4,c}^{m}(z)\equiv 2$ (mod $5\mathcal{O}_{K}$) for every fixed $m\in \mathbb{Z}_{\geq 1}$; and so also obtain $(\varphi_{4,c}^{m}(z))^4\equiv 1$ (mod $5\mathcal{O}_{K}$). But then we note that $g(z)=(\varphi_{4,c}^{m}(z))^4 - z^4\equiv 0$ (mod $5\mathcal{O}_{K}$) for every nonzero point $z\in \mathbb{F}_{5}\subset \mathcal{O}_{K} / 5\mathcal{O}_{K}$; and so we then conclude $M_{c}^{(1_{m})}(5) = 4$. We now show $M_{c}^{(1_{m})}(5) = 4$ for every coefficient $c\equiv -1$ (mod $5\mathcal{O}_{K}$) and for every fixed even integer $m\in \mathbb{Z}_{\geq 2}$. As before, we note that since $c\equiv -1$ (mod $5\mathcal{O}_{K}$) and also since $z^4 =1$ for every $z\in \mathbb{F}_{5}^{\times}$, reducing $\varphi_{4,c}^m(z)$ modulo $5\mathcal{O}_{K}$ after each iteration, we then obtain $\varphi_{4,c}^{m}(z)\equiv -1$ (mod $5\mathcal{O}_{K}$) for every fixed even $m\in \mathbb{Z}_{\geq 2}$; and so also obtain $(\varphi_{4,c}^{m}(z))^4\equiv 1$ (mod $5\mathcal{O}_{K}$). But then we note $g(z)=(\varphi_{4,c}^{m}(z))^4 - z^4\equiv 0$ (mod $5\mathcal{O}_{K}$) for every nonzero point $z\in \mathbb{F}_{5}\subset \mathcal{O}_{K}\slash5\mathcal{O}_{K}$; and so as before we then again conclude $M_{c}^{(1_{m})}(5) = 4$.

Finally, we now show $M_{c}^{(1_{m})}(5) = 0$ for every coefficient 
$c\equiv -1$ (mod $5\mathcal{O}_{K}$) and every fixed odd integer $m\in \mathbb{Z}_{\geq 1}$ or for every coefficient $c \not \equiv \pm 1, 0$ (mod $5\mathcal{O}_{K}$) and every fixed (even) integer $m\in \mathbb{Z}_{\geq 1}$. To do so, let's for the sake of a contradiction, suppose $g(z)\equiv 0$ (mod $5\mathcal{O}_{K}$) for some nonzero point $z\in \mathcal{O}_{K}\slash5\mathcal{O}_{K}$ and for every coefficient $c \equiv -1$ (mod $5\mathcal{O}_{K}$) and every fixed odd integer $m\in \mathbb{Z}_{\geq 1}$. Now since $c\equiv -1$ (mod $5\mathcal{O}_{K}$) and $z^4=1$ for every $z\in \mathbb{F}_{5}^{\times}\subset \mathcal{O}_{K}\slash5\mathcal{O}_{K}$, reducing $\varphi_{4,c}^m(z)$ modulo $5\mathcal{O}_{K}$ after each iteration, we then obtain $\varphi_{4,c}^{m}(z)\equiv 0$ (mod $5\mathcal{O}_{K}$) for every fixed odd $m\in \mathbb{Z}_{\geq 1}$; and so $(\varphi_{4,c}^{m}(z))^4\equiv 0$ (mod $5\mathcal{O}_{K}$). But then $g(z) = (\varphi_{4,c}^{m}(z))^4 - z^4\equiv -1$ (mod $5\mathcal{O}_{K}$) for some point $z\in \mathbb{F}_{5}^{\times}\subset \mathcal{O}_{K}\slash5\mathcal{O}_{K}$ and for every $c \equiv -1$ (mod $5\mathcal{O}_{K}$) and every fixed odd $m\in \mathbb{Z}_{\geq 1}$; and from which we then obtain a contradiction that $0\equiv -1$ (mod $5\mathcal{O}_{K}$). Otherwise, suppose $g(z)\equiv 0$ (mod $5\mathcal{O}_{K}$) for some point $z\in \mathcal{O}_{K}\slash5\mathcal{O}_{K}$ and for every coefficient $c \not \equiv  \pm 1, 0$ (mod $5\mathcal{O}_{K}$) and every fixed (even) integer $m\in \mathbb{Z}_{\geq 1}$. Now recall from earlier that $g(z)=z^{4^{m+1}} -z^4 + h(z)$ where $h(z)\in c\mathcal{O}_{K}[z]$, it then also follows by the above supposition that $z^{4^{m+1}} -z^4 + h(z)\equiv 0$ (mod $5\mathcal{O}_{K}$) for some $z\in \mathcal{O}_{K}\slash5\mathcal{O}_{K}$ and for every $c \not \equiv  \pm 1, 0$ (mod $5\mathcal{O}_{K}$) and every fixed (even) $m\in \mathbb{Z}_{\geq 1}$. But now we note that the congruence $z^{4^{m+1}} -z^4 + h(z)\equiv 0$ (mod $5\mathcal{O}_{K}$) can also happen if $z^{4^{m+1}} -z^4\equiv 0$ (mod $5\mathcal{O}_{K}$) and also $h(z)\equiv 0$ (mod $5\mathcal{O}_{K}$). Moreover, we also note that $z^{4^{m+1}} -z^4\equiv 0$ (mod $5\mathcal{O}_{K}$) for every $z\equiv 0$ (mod $5\mathcal{O}_{K}$) and every fixed (even) $m\in \mathbb{Z}_{\geq 1}$, which also happened earlier when $c\equiv 0$ (mod $5\mathcal{O}_{K}$); and so also yielding a contradiction. It then overall follows that $g(x)=\varphi_{4,c}^{1+m}(x)-\varphi_{4,c}(x)$ has no roots in $\mathcal{O}_{K} / 5\mathcal{O}_{K}$ for every coefficient 
$c\equiv -1$ (mod $5\mathcal{O}_{K}$) and every fixed odd integer $m\in \mathbb{Z}_{\geq 1}$ or for every coefficient $c \not \equiv \pm 1, 0$ (mod $5\mathcal{O}_{K}$) and every fixed (even) integer $m\in \mathbb{Z}_{\geq 1}$; and so we then conclude $M_{c}^{(1_{m})}(5) = 0$. This then completes the whole proof, as needed.
\end{proof} 

We now wish to generalize Theorem \ref{3.1} to any $\varphi_{p-1, c}$ for any given prime $p\geq 5$. More precisely, we prove 
that the number of distinct $1_{m}$-preperiodic integral points of any $\varphi_{p-1, c}$ modulo $p\mathcal{O}_{K}$ is $p$ or $p-1$ or $0$:

\begin{thm} \label{3.2}
Let $K\slash \mathbb{Q}$ be any number field of degree $n\geq 1$ with ring $\mathcal{O}_{K}$, and in which any fixed prime $p\geq 5$ is inert. Let $\varphi_{p-1, c}$ be defined by $\varphi_{p-1, c}(z) = z^{p-1} + c$ for all $c, z\in\mathcal{O}_{K}$, and  $M_{c}^{(1_{m})}(p)$ be as in \textnormal{(\ref{M_{c}})}. Then $M_{c}^{(1_{m})}(p) = p$ for every $c\in p\mathcal{O}_{K}$ or $M_{c}^{(1_{m})}(p) = p-1$ for any $c\equiv \pm 1 \ (mod \ p\mathcal{O}_{K})$ and fixed (even) $m$; otherwise $M_{c}^{(1_{m})}(p) = 0$ for every $c\equiv -1 \ (mod \ p\mathcal{O}_{K})$ and fixed odd $m$ or every $c\not \equiv \pm 1, 0 \ (mod \ p\mathcal{O}_{K})$ and fixed (even) $m$.
\end{thm}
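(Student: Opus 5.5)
The plan is to follow the proof of Theorem \ref{3.1} with $5$ replaced by $p$ and $4$ by $p-1$. The single structural input is Fermat's little theorem inside the residue field: since $p$ is inert, $\mathcal{O}_{K}/p\mathcal{O}_{K}$ is a finite field of order $p^n$ containing $\mathbb{F}_{p}$, and $z^{p-1}=1$ for every $z\in\mathbb{F}_{p}^{\times}$.

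The first step is to set up the two forms of the defining polynomial that the argument uses. Put $g(z)=\varphi_{p-1,c}^{1+m}(z)-\varphi_{p-1,c}(z)$. Writing $\varphi_{p-1,c}^{1+m}=\varphi_{p-1,c}\circ\varphi_{p-1,c}^{m}$ gives
\[ g(z)=\bigl(\varphi_{p-1,c}^{m}(z)\bigr)^{p-1}-z^{p-1}. \]
As in Theorem \ref{3.1}, the multinomial theorem also gives $g(z)=z^{(p-1)^{m+1}}-z^{p-1}+h(z)$ with $h\in c\,\mathcal{O}_{K}[z]$. Following the paper's convention, I would count the roots of $g$ lying in $\mathbb{F}_{p}$ and then split according to the residue class of $c$.

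\emph{Case $c\in p\mathcal{O}_{K}$.} Here $h\equiv 0$, so $g\equiv z^{(p-1)^{m+1}}-z^{p-1}$. For $z\in\mathbb{F}_{p}^{\times}$ both terms equal $1$, and $z=0$ is a root because $z$ divides the reduced $g$. This gives all $p$ points of $\mathbb{F}_{p}$, so $M_{c}^{(1_{m})}(p)=p$.

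\emph{Case $c\equiv 1$.} For $z\in\mathbb{F}_{p}^{\times}$ I would track the orbit modulo $p\mathcal{O}_{K}$:
\[ \varphi_{p-1,c}(z)\equiv 1+1=2, \qquad \varphi_{p-1,c}(2)\equiv 2^{p-1}+1=2, \]
so $\varphi_{p-1,c}^{m}(z)\equiv 2$ for every $m\geq 1$. Then $(\varphi^{m}_{p-1,c}(z))^{p-1}\equiv 1\equiv z^{p-1}$, so all $p-1$ nonzero points are roots. For $z=0$ I would check $g(0)\equiv 1\not\equiv 0$, since $\varphi^{m}_{p-1,c}(0)\in\{1,2\}$ is nonzero. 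Hence the count is $p-1$.

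\emph{Case $c\equiv -1$.} For $z\in\mathbb{F}_{p}^{\times}$ the orbit is $z\mapsto 0\mapsto -1\mapsto 0\mapsto\cdots$, because $(-1)^{p-1}=1$. So $\varphi^{m}_{p-1,c}(z)\equiv -1$ for even $m$ and $\equiv 0$ for odd $m$.
\begin{itemize}
\item For even $m$, $g(z)\equiv 1-1=0$ on $\mathbb{F}_{p}^{\times}$, while $g(0)\equiv(\varphi^m_{p-1,c}(0))^{p-1}=0^{p-1}=0$ as well, since for even $m$ the orbit of $0$ returns to $0$. So $z=0$ does not obviously drop out, and this needs care.
\item For odd $m$, $g(z)\equiv 0-1=-1\neq 0$ on $\mathbb{F}_{p}^{\times}$, which gives the count $0$ there.
\end{itemize}

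The remaining case, $c\not\equiv 0,\pm1$, I would handle by the contradiction scheme of Theorems \ref{2.2} and \ref{3.1}: suppose some $z\in\mathbb{F}_{p}$ is a root and compare $z^{(p-1)^{m+1}}-z^{p-1}$ with $h(z)$. A cleaner attempt is to iterate: for $z\neq 0$ one has $\varphi(z)\equiv 1+c$, so $g(z)\equiv 0$ amounts to $\varphi^{m-1}(1+c)\neq 0$.

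I expect two places to be the main obstacle.
\begin{itemize}
\item \emph{The generic case $c\not\equiv 0,\pm1$.} The sketch above gives no contradiction in general: the condition $\varphi^{m-1}(1+c)\neq 0$ can clearly hold for many $c$. So this step will likely need either a restriction on $c$ or a more precise reading of the statement.
\item \emph{The excluded condition in (\ref{M_{c}}).} The definition also requires $\varphi^{m}_{p-1,c}(z)\not\equiv z$, which the proof of Theorem \ref{3.1} does not check. In the case $c\equiv 0$, every $z\in\mathbb{F}_{p}$ satisfies $\varphi_{p-1,c}(z)\in\{0,1\}$, and both $0$ and $1$ are fixed. Hence $\varphi^{m}_{p-1,c}(z)\equiv z$ exactly when $z\in\{0,1\}$, so the literal count there would be $p-2$ rather than $p$.
\end{itemize}
At those steps I would first try to adopt the paper's operative convention, namely that $M_{c}^{(1_{m})}$ counts roots of $g$ in $\mathbb{F}_{p}$. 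Under that reading the case $c\equiv 0$ gives $p$ and $c\equiv 1$ gives $p-1$. For $c\equiv -1$, however, the even-$m$ value becomes $p$ because of $z=0$, and the odd-$m$ value becomes $1$ because $g(0)\equiv(-1)^{p-1}=1-0$ fails only if one checks: here $\varphi^m_{p-1,c}(0)\equiv -1$ for odd $m$, so $g(0)\equiv 1\neq 0$ and the count is actually $0$ on $\mathbb{F}_{p}^{\times}$ and at $0$. So the even-$m$, $c\equiv -1$ case does not match $p-1$ even under the root-counting reading, and I would flag it together with the generic case rather than force a contradiction argument.
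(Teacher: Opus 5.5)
Your proposal does not prove the theorem, and it cannot be completed into a proof. The obstacles you flag are genuine: they are where the statement is false and where the paper's own proof breaks down.

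Within each residue class of $c$, the paper argues as you do. It uses the orbits $z\mapsto 2\mapsto 2$ for $c\equiv 1$ and $z\mapsto 0\mapsto -1\mapsto 0$ for $c\equiv -1$, with $z$ restricted to $\mathbb{F}_{p}$. Its gaps are as follows.
\begin{itemize}
\item For $c\in p\mathcal{O}_{K}$ it reaches $p$ by counting roots of $g$, without ever imposing the condition $\varphi^{m}_{p-1,c}(z)\not\equiv z$ from (\ref{M_{c}}).
\item For $c\equiv -1$ and $m$ even it asserts $p-1$ without examining $z=0$, which, as you found, is a root of $g$.
\item For $c\not\equiv 0,\pm 1$ its ``contradiction'' is only the remark that the congruence ``can also happen'' when $z\equiv 0$ and $h(z)\equiv 0$, ``which also happened when $c\equiv 0$''. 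That excludes nothing.
\end{itemize}

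Your doubt about the generic case can be made decisive. Take $p=5$ and $c=2$, in any $K$ in which $5$ is inert. Each $z\in\mathbb{F}_{5}^{\times}$ maps to $1+2=3$, and $3^{4}+2=83\equiv 3$, so $3$ is fixed. Hence $g(z)\equiv 3^{4}-z^{4}=0$ for every $m\geq 1$. Also $\varphi^{m}_{4,2}(z)=3\neq z$ for $z\in\{1,2,4\}$. So $M_{2}^{(1_{m})}(5)\geq 3$ for every $m$, not $0$.

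For $K=\mathbb{Q}$ the literal definition gives $p-2$ in three cases:
\begin{itemize}
\item $c\equiv 0$, as you note;
\item $c\equiv 1$, after excluding $z=2$;
\item $c\equiv -1$ with $m$ even, after excluding $z=0$ and $z=-1$.
\end{itemize}

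You also miss that your fallback ``operative convention'' is not a faithful reading either. The set in (\ref{M_{c}}) is $\mathcal{O}_{K}/p\mathcal{O}_{K}\cong\mathbb{F}_{p^{n}}$, not $\mathbb{F}_{p}$, and the paper's proof, like yours, ignores this. For $n\geq 2$ new points appear. With $p=5$, $n=2$, $c\equiv 0$, $m=1$, one gets $g\equiv z^{16}-z^{4}=z^{4}(z^{12}-1)$, which has $13$ roots in $\mathbb{F}_{25}$ since $12\mid 24$. Discarding $z=0$ and the three cube roots of unity (where $z^{4}=z$) leaves $M_{0}^{(1_{1})}(5)=9$. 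So root-counting on $\mathbb{F}_{p}$ already fails here.

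Two smaller fixes:
\begin{itemize}
\item Your criterion ``$g(z)\equiv 0$ iff $\varphi^{m-1}(1+c)\neq 0$'' assumes $c \bmod p\mathcal{O}_{K}$ lies in $\mathbb{F}_{p}$. In general you need $\varphi^{m-1}(1+c)\in\mathbb{F}_{p}^{\times}$.
\item The garbled sentence on odd $m$ for $c\equiv -1$ should simply say that $g\equiv -1$ on $\mathbb{F}_{p}^{\times}$ and $g(0)\equiv(-1)^{p-1}=1$.
\end{itemize}

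The right deliverable here is these counterexamples, or a corrected statement, rather than a proof.
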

\begin{proof}
By applying a similar argument as in the Proof of Theorem \ref{3.1}, we then obtain the count as desired. That is, let $g(z)= \varphi_{p-1,c}^{1+m}(z) - \varphi_{p-1,c}(z) = \varphi_{p-1,c}^{1+m}(z) - z^{p-1} - c$, and so $g(z)= \varphi_{p-1,c}^{1+m}(z) - z^{p-1} - c$. Now applying the  multinomial theorem repeatedly on $\varphi_{p-1,c}^{1+m}(z)$, we then obtain that $\varphi_{p-1,c}^{1+m}(z)$ is a monic polynomial in $z$ of degree $(p-1)^{m+1}$ with integral coefficients in multiples of $c$. Hence, we may then write $\varphi_{p-1,c}^{1+m}(z) = z^{(p-1)^{m+1}} + h(z) +c$, where $h(z)$ is a non-constant polynomial in $z$ of deg$(h)<(p-1)^{m+1}$ with integral coefficients in multiples of $c$. But then $g(z)= z^{(p-1)^{m+1}} + h(z) +c - z^{p-1} - c$, and so $g(z)= z^{(p-1)^{m+1}} -z^{p-1} + h(z)$. Now for every coefficient $c\in p\mathcal{O}_{K}$, reducing $g(z)$ modulo prime ideal $p\mathcal{O}_{K}$, it then follows $g(z)\equiv z^{(p-1)^{m+1}} - z^{p-1}$ (mod $p\mathcal{O}_{K}$), since also $h(z)\in c\mathcal{O}_{K}[z]$ and so $h(z)\equiv 0$ (mod $p\mathcal{O}_{K}$); and so $g(z)$ modulo $p\mathcal{O}_{K}$ is now a polynomial defined over a finite field $\mathcal{O}_{K}\slash p\mathcal{O}_{K}$. So now, recall the inclusion $\mathbb{F}_{p}\hookrightarrow \mathcal{O}_{K}\slash p\mathcal{O}_{K}$ of fields and also recall (as a well-known fact) $z^{p-1} = 1$ for every element $z\in \mathbb{F}_{p}^{\times}=\mathbb{F}_{p}\setminus \{0\}$, it then also follows $z^{(p-1)^{m+1}}= (z^{p-1})^{(p-1)^{m}} = 1$ for every element $z\in \mathbb{F}_{p}^{\times}$ and for every fixed $m\in \mathbb{Z}_{\geq 1}$; and so the reduced polynomial $g(z)\equiv 0$ (mod $p\mathcal{O}_{K}$) for every nonzero point $z\in \mathbb{F}_{p}\subset \mathcal{O}_{K}\slash p\mathcal{O}_{K}$. Moreover, since $z$ is a linear factor of $g(z)\equiv z(z^{(p-1)^{m+1}-1} - z^{p-2})$ (mod $p\mathcal{O}_{K}$), it then follows $z\equiv 0$ (mod $p\mathcal{O}_{K}$) is also root of $g(z)$ modulo $p\mathcal{O}_{K}$. But now we conclude $M_{c}^{(1_{m})}(p) = p$. 

To see $M_{c}^{(1_{m})}(p) = p-1$ for every coefficient $c\equiv 1$ (mod $p\mathcal{O}_{K}$) and for every fixed integer $m\in \mathbb{Z}_{\geq 1}$, we first write $g(z)= \varphi_{p-1,c}^{1+m}(z) - z^{p-1} - c = \varphi_{p-1,c}(\varphi_{p-1,c}^{m}(z)) - z^{p-1}-c$, and so $g(z) = (\varphi_{p-1,c}^{m}(z))^{p-1} - z^{p-1}$. Now for every element $z\in \mathbb{F}_{p}^{\times}$ (and so $z^{p-1} =1$) and for every $c\equiv 1$ (mod $p\mathcal{O}_{K}$), reducing $\varphi_{p-1,c}^{m}(z)$ modulo $p\mathcal{O}_{K}$ after each iteration, we then obtain $\varphi_{p-1,c}^{m}(z)\equiv 2$ (mod $p\mathcal{O}_{K}$) for every fixed $m\in \mathbb{Z}_{\geq 1}$; and hence also obtain $(\varphi_{p-1,c}^{m}(z))^{p-1}\equiv 1$ (mod $p\mathcal{O}_{K}$). But then we note that $g(z)=(\varphi_{p-1,c}^{m}(z))^{p-1} - z^{p-1}\equiv 0$ (mod $p\mathcal{O}_{K}$) for every nonzero point $z\in \mathbb{F}_{p}\subset \mathcal{O}_{K} / p\mathcal{O}_{K}$; and so we then conclude $M_{c}^{(1_{m})}(p) = p-1$. We now show $M_{c}^{(1_{m})}(p) = p-1$ for every coefficient $c\equiv -1$ (mod $p\mathcal{O}_{K}$) and for every fixed even integer $m\in \mathbb{Z}_{\geq 2}$. As before, we note that since $c\equiv -1$ (mod $p\mathcal{O}_{K}$) and also since $z^{p-1} =1$ for every $z\in \mathbb{F}_{p}^{\times}$, reducing $\varphi_{p-1,c}^m(z)$ modulo $p\mathcal{O}_{K}$ after each iteration, we then obtain $\varphi_{p-1,c}^{m}(z)\equiv -1$ (mod $p\mathcal{O}_{K}$) for every fixed even $m\in \mathbb{Z}_{\geq 2}$; and thus also obtain $(\varphi_{p-1,c}^{m}(z))^{p-1}\equiv 1$ (mod $p\mathcal{O}_{K}$). But then we note that the reduced polynomial $g(z)=(\varphi_{p-1,c}^{m}(z))^{p-1} - z^{p-1}\equiv 0$ (mod $p\mathcal{O}_{K}$) for every nonzero point $z\in \mathbb{F}_{p}\subset \mathcal{O}_{K}\slash p\mathcal{O}_{K}$; and thus as before we then conclude $M_{c}^{(1_{m})}(p) = p-1$. 

Finally, we now show $M_{c}^{(1_{m})}(p) = 0$ for every coefficient 
$c\equiv -1$ (mod $p\mathcal{O}_{K}$) and every fixed odd integer $m\in \mathbb{Z}_{\geq 1}$ or for every coefficient $c \not \equiv \pm 1, 0$ (mod $p\mathcal{O}_{K}$) and every fixed (even) integer $m\in \mathbb{Z}_{\geq 1}$. To do so, let's for the sake of a contradiction, suppose $g(z)\equiv 0$ (mod $p\mathcal{O}_{K}$) for some nonzero point $z\in \mathcal{O}_{K}\slash p\mathcal{O}_{K}$ and for every coefficient $c \equiv -1$ (mod $p\mathcal{O}_{K}$) and every fixed odd integer $m\in \mathbb{Z}_{\geq 1}$. Now since $c\equiv -1$ (mod $p\mathcal{O}_{K}$) and $z^{p-1}=1$ for every $z\in \mathbb{F}_{p}^{\times}\subset \mathcal{O}_{K}\slash p\mathcal{O}_{K}$, reducing $\varphi_{p-1,c}^m(z)$ modulo $p\mathcal{O}_{K}$ after each iteration, we then obtain $\varphi_{p-1,c}^{m}(z)\equiv 0$ (mod $p\mathcal{O}_{K}$) for every fixed odd integer $m\in \mathbb{Z}_{\geq 1}$; and so $(\varphi_{p-1,c}^{m}(z))^{p-1}\equiv 0$ (mod $p\mathcal{O}_{K}$). But then $g(z) = (\varphi_{p-1,c}^{m}(z))^{p-1} - z^{p-1}\equiv -1$ (mod $p\mathcal{O}_{K}$) for some point $z\in \mathbb{F}_{p}^{\times}\subset \mathcal{O}_{K}\slash p\mathcal{O}_{K}$ and for every $c \equiv -1$ (mod $p\mathcal{O}_{K}$) and every fixed odd $m\in \mathbb{Z}_{\geq 1}$; from which we then obtain a contradiction that $0\equiv -1$ (mod $p\mathcal{O}_{K}$). Otherwise, suppose $g(z)\equiv 0$ (mod $p\mathcal{O}_{K}$) for some point $z\in \mathcal{O}_{K}\slash p\mathcal{O}_{K}$ and for every coefficient $c \not \equiv  \pm 1, 0$ (mod $p\mathcal{O}_{K}$) and every fixed (even) integer $m\in \mathbb{Z}_{\geq 1}$. So now, recall from earlier that the polynomial $g(z)=z^{(p-1)^{m+1}} -z^{p-1} + h(z)$ where $h(z)\in c\mathcal{O}_{K}[z]$, it then also follows by the above supposition that $z^{(p-1)^{m+1}} -z^{p-1} + h(z)\equiv 0$ (mod $p\mathcal{O}_{K}$) for some $z\in \mathcal{O}_{K}\slash p\mathcal{O}_{K}$ and for every $c \not \equiv \pm 1, 0$ (mod $p\mathcal{O}_{K}$) and every fixed (even) $m\in \mathbb{Z}_{\geq 1}$. But then we note that the congruence $z^{(p-1)^{m+1}} -z^{p-1} + h(z)\equiv 0$ (mod $p\mathcal{O}_{K}$) can also happen whenever $z^{(p-1)^{m+1}} -z^{p-1}\equiv 0$ (mod $p\mathcal{O}_{K}$) and also $h(z)\equiv 0$ (mod $p\mathcal{O}_{K}$). Moreover, we also note that $z^{(p-1)^{m+1}} -z^{p-1}\equiv 0$ (mod $p\mathcal{O}_{K}$) for every point $z\equiv 0$ (mod $p\mathcal{O}_{K}$) and every fixed (even) $m\in \mathbb{Z}_{\geq 1}$, which also happened when $c\equiv 0$ (mod $p\mathcal{O}_{K}$); and from which we then also obtain a contradiction. It then overall follows that $g(x)=\varphi_{p-1,c}^{1+m}(x)-\varphi_{p-1,c}(x)$ has no roots in $\mathcal{O}_{K} / p\mathcal{O}_{K}$ for every coefficient 
$c\equiv -1$ (mod $p\mathcal{O}_{K}$) and every fixed odd integer $m\in \mathbb{Z}_{\geq 1}$ or for every coefficient $c \not \equiv \pm 1, 0$ (mod $p\mathcal{O}_{K}$) and every fixed (even) integer $m\in \mathbb{Z}_{\geq 1}$; and so we then conclude $M_{c}^{(1_{m})}(p) = 0$. This then completes the whole proof, as needed.
\end{proof}

Finally, we generalize Theorem \ref{3.2} further to any $\varphi_{(p-1)^{\ell}, c}$ for any prime $p\geq 5$ and any $\ell\in \mathbb{Z}_{\geq 1}$. That is, we prove the number of distinct $1_{m}$-preperiodic points of any $\varphi_{(p-1)^{\ell}, c}$ modulo $p\mathcal{O}_{K}$ is also $p$ or $p-1$ or zero:

\begin{thm} \label{3.3}
Let $K\slash \mathbb{Q}$ be any number field of degree $n\geq 1$ with ring $\mathcal{O}_{K}$, and in which any fixed prime $p\geq 5$ is inert. Let $\varphi_{(p-1)^{\ell}, c}(z) = z^{(p-1)^{\ell}} + c$ for all $c, z\in\mathcal{O}_{K}$ and $\ell\in \mathbb{Z}_{\geq 1}$. Let $M_{c}^{(1_{m})}(p)$ be as in \textnormal{(\ref{M_{c}})}. Then $M_{c}^{(1_{m})}(p) = p$ for every $c\in p\mathcal{O}_{K}$ or $M_{c}^{(1_{m})}(p) = p-1$ for any $c\equiv \pm 1 \ (mod \ p\mathcal{O}_{K})$ and fixed (even) $m$; otherwise $M_{c}^{(1_{m})}(p) = 0$ for every $c\equiv -1 \ (mod \ p\mathcal{O}_{K})$ and fixed odd $m$ or every $c\not \equiv \pm 1, 0 \ (mod \ p\mathcal{O}_{K})$ and fixed (even) $m$.
\end{thm}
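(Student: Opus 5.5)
The plan is to run the proof of Theorem \ref{3.2} essentially verbatim, with the exponent $p-1$ replaced by $(p-1)^{\ell}$. The one new input is the divisibility $(p-1)\mid (p-1)^{\ell}$. Combined with $z^{p-1}=1$ on $\mathbb{F}_{p}^{\times}\subset \mathcal{O}_{K}/p\mathcal{O}_{K}$, it gives $z^{(p-1)^{\ell}}=1$ and $z^{(p-1)^{(m+1)\ell}}=1$ for every $z\in\mathbb{F}_{p}^{\times}$ and all fixed $\ell,m\geq 1$. As in Theorem \ref{3.2}, the count is taken to be the number of roots in $\mathbb{F}_{p}$ of the polynomial $g$ defined below; this is how the preceding proofs count. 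In particular, I do not separately check the non-periodicity condition $\varphi^{m}(z)\not\equiv z$ in (\ref{M_{c}}).

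\textbf{Step 1: the setup and the case $c\in p\mathcal{O}_{K}$.} Put $g(z)=\varphi_{(p-1)^{\ell},c}^{1+m}(z)-\varphi_{(p-1)^{\ell},c}(z)$. Expanding the iterates by the multinomial theorem gives
\[
g(z)=z^{(p-1)^{(m+1)\ell}}-z^{(p-1)^{\ell}}+h(z), \qquad h\in c\,\mathcal{O}_{K}[z].
\]
For $c\in p\mathcal{O}_{K}$ the term $h$ vanishes mod $p\mathcal{O}_{K}$. By the identity above, every $z\in\mathbb{F}_{p}^{\times}$ is then a root of $g$. The point $z=0$ is also a root, since $z$ divides both remaining monomials. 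This gives the count $p$.

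\textbf{Step 2: the cases $c\equiv \pm1$.} Here I would use the factorization
\[
g(z)=\bigl(\varphi_{(p-1)^{\ell},c}^{m}(z)\bigr)^{(p-1)^{\ell}}-z^{(p-1)^{\ell}},
\]
and track the orbit of each $z\in\mathbb{F}_{p}^{\times}$ explicitly.
\begin{itemize}
\item For $c\equiv 1$: $z\mapsto 1+1=2$, and $2\mapsto 2^{(p-1)^{\ell}}+1=2$. So $\varphi^{m}(z)\equiv 2$, whose $(p-1)^{\ell}$-th power is $1=z^{(p-1)^{\ell}}$. Hence all $p-1$ nonzero residues are roots.
\item For $c\equiv -1$: the orbit is $z\mapsto 0\mapsto -1\mapsto 0\mapsto\cdots$, so it alternates between $0$ and $-1$. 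For even $m$ we get $\varphi^{m}(z)\equiv -1$ and again $g(z)\equiv 1-1=0$, giving $p-1$. For odd $m$ we get $\varphi^{m}(z)\equiv 0$, so $g(z)\equiv -1\neq 0$. This handles the first of the two zero cases in the statement.
\end{itemize}

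\textbf{Step 3: the case $c\not\equiv 0,\pm1$.} I would argue as in the last paragraph of the proof of Theorem \ref{3.2}. Suppose $g(z)\equiv 0$ for some $z$. Since $z^{(p-1)^{(m+1)\ell}}-z^{(p-1)^{\ell}}$ vanishes identically on $\mathbb{F}_{p}$, this would force $h(z)\equiv 0$, which the earlier argument treats as occurring only in the situation $c\equiv 0$. That gives a contradiction and the count $0$.

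I expect Step 3 to be the main obstacle. A more concrete route for $z\in\mathbb{F}_{p}^{\times}$ is to compute the orbit directly. One has $\varphi(z)=1+c$, and then $\varphi^{k}(z)$ lies in $\{c,\,1+c\}$ according to whether the previous iterate is zero. Thus $g(z)\equiv 0$ amounts to the condition $\bigl(\varphi^{m}(z)\bigr)^{(p-1)^{\ell}}=1$, i.e. $\varphi^{m}(z)\neq 0$. I would try this first, but it does not obviously yield the claimed vanishing. One should therefore expect to lean on the paper's reduction through $h$ exactly as in Theorem \ref{3.2}, and this is the step I would scrutinize most carefully.
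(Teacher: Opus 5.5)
Your Step 3, which reproduces the paper's argument, cannot be completed: the claim $M_{c}^{(1_{m})}(p)=0$ for $c\not\equiv 0,\pm1$ is false. The direct orbit computation you sketched already shows this. Write $\varphi=\varphi_{(p-1)^{\ell},c}$ and suppose $c$ reduces into $\mathbb{F}_{p}\setminus\{0,\pm1\}$. Then $1+c\neq 0$, so $(1+c)^{(p-1)^{\ell}}=1$ and $1+c$ is a fixed point of $\overline{\varphi}$. Hence $\varphi^{k}(z)\equiv 1+c$ for every $z\in\mathbb{F}_{p}^{\times}$ and every $k\geq 1$, and all $p-1$ nonzero residues are roots of $g$. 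In particular $h(z)\equiv 0$ on $\mathbb{F}_{p}^{\times}$ even though $c\not\equiv 0$. So the inference ``$h(z)\equiv 0$ forces $c\equiv 0$'' is not a deduction; the paper's sentence ``this can also happen if $c\equiv 0$ \ldots\ contradiction'' proves nothing, and leaning on it cannot rescue the step. A concrete counterexample: take $K=\mathbb{Q}$ (or $K=\mathbb{Q}(\sqrt{2})$, where $5$ is inert), $p=5$, $\ell=1$, $c=2$, and any $m\geq 1$. Then $\varphi(z)=z^{4}+2$ gives $\varphi(1)=3$ and $\varphi(3)=83\equiv 3$. So $\varphi^{m}(1)\not\equiv 1$ and $\varphi^{m+1}(1)\equiv\varphi(1)$, i.e.\ $M_{2}^{(1_{m})}(5)\geq 1$; for $K=\mathbb{Q}$ it equals $3$, coming from $z=1,2,4$.

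Steps 1 and 2 also do not establish the stated values, for reasons you set aside at the outset. The condition $\varphi^{m}(z)\not\equiv z$ is part of the definition (\ref{M_{c}}), and it removes exactly the periodic points your counts include:
\begin{itemize}
\item $z=0,1$ (both fixed) when $c\equiv 0$;
\item the fixed point $z=2$ when $c\equiv 1$;
\item the $2$-cycle $\{0,-1\}$ when $c\equiv -1$ and $m$ is even.
\end{itemize}
In the last case you also only checked $z\in\mathbb{F}_{p}^{\times}$. Since $\varphi(0)=-1$ lies on that $2$-cycle, $z=0$ is also a root of $g$ for even $m$, so even your reinterpreted count is $p$, not $p-1$.

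Moreover, (\ref{M_{c}}) ranges over $\mathcal{O}_{K}/p\mathcal{O}_{K}\cong\mathbb{F}_{p^{n}}$, not $\mathbb{F}_{p}$, and for $n\geq 2$ there can be further solutions. For example, with $p=5$, $n=2$, $\ell=m=1$, $c\equiv 0$, all twelve $z\in\mathbb{F}_{25}^{\times}$ with $z^{12}=1$ are roots of $g=z^{16}-z^{4}$, and eight of them lie outside $\mathbb{F}_{5}$. Carried out correctly, your orbit analysis shows that for $c\in\mathbb{F}_{p}$ the number of $z\in\mathbb{F}_{p}$ satisfying both conditions of (\ref{M_{c}}) is $p-2$ in every case except $c\equiv -1$ with $m$ odd, where it is $0$. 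So no argument, along the paper's lines or otherwise, can prove the statement as written.
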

\begin{proof}
By again applying a similar argument as in the Proof of Theorem \ref{3.2}, we then obtain the count as desired. That is, let $g(z)= \varphi_{(p-1)^{\ell},c}^{1+m}(z) - \varphi_{(p-1)^{\ell},c}(z) = \varphi_{(p-1)^{\ell},c}^{1+m}(z) - z^{(p-1)^{\ell}} - c$, and so $g(z)= \varphi_{(p-1)^{\ell},c}^{1+m}(z) - z^{(p-1)^{\ell}} - c$. So now, applying the  multinomial theorem repeatedly on the term $\varphi_{(p-1)^{\ell},c}^{1+m}(z)$, we then obtain that $\varphi_{(p-1)^{\ell},c}^{1+m}(z)$ is a monic polynomial in $z$ of degree $(p-1)^{(m+1)\ell}$ with integral coefficients in multiples of $c$. Hence, we may then write $\varphi_{(p-1)^{\ell},c}^{1+m}(z) = z^{(p-1)^{(m+1)\ell}} + h(z) +c$, where $h(z)$ is a non-constant polynomial in $z$ of deg$(h)<(p-1)^{(m+1)\ell}$ with integral coefficients in multiples of $c$. But now we write $g(z)= z^{(p-1)^{(m+1)\ell}} + h(z) +c - z^{(p-1)^{\ell}} - c$, and so $g(z)= z^{(p-1)^{(m+1)\ell}} -z^{(p-1)^{\ell}} + h(z)$. Now for every coefficient $c \in p\mathcal{O}_{K}$, reducing $g(z)$ modulo prime ideal $p\mathcal{O}_{K}$, it then follows $g(z)\equiv z^{(p-1)^{(m+1)\ell}} - z^{(p-1)^{\ell}}$ (mod $p\mathcal{O}_{K}$), since also $h(z)\in c\mathcal{O}_{K}[z]$ and thus $h(z)\equiv 0$ (mod $p\mathcal{O}_{K}$); and so now the reduced polynomial $g(z)$ modulo $p\mathcal{O}_{K}$ is now a polynomial defined over a finite field $\mathcal{O}_{K}\slash p\mathcal{O}_{K}$. Now recall the inclusion $\mathbb{F}_{p}\hookrightarrow\mathcal{O}_{K}\slash p\mathcal{O}_{K}$ of fields and $z^{p-1} = 1$ for every element $z\in \mathbb{F}_{p}^{\times}$, it then follows $z^{(p-1)^{(m+1)\ell}} = (z^{p-1})^{(p-1)^{(m+1)\ell-1}}=1$ for every element $z\in \mathbb{F}_{p}^{\times}$; and also follows $z^{(p-1)^{\ell}}=(z^{p-1})^{(p-1)\ell-1}=1$ for every element $z\in \mathbb{F}_{p}^{\times}$; and so the reduced polynomial $g(z)\equiv 0$ (mod $p\mathcal{O}_{K}$) for every nonzero point $z\in \mathbb{F}_{p}\subset\mathcal{O}_{K}\slash p\mathcal{O}_{K}$. Moreover, since $z$ is also a linear factor of $g(z)\equiv z(z^{(p-1)^{(m+1)\ell}-1} - z^{(p-1)^{\ell}-1})$ (mod $p\mathcal{O}_{K}$), it then follows $z\equiv 0$ (mod $p\mathcal{O}_{K}$) is also a root of $g(z)$ modulo $p\mathcal{O}_{K}$. But now we then conclude $M_{c}^{(1_{m})}(p) = p$. 

To see $M_{c}^{(1_{m})}(p) = p-1$ for every coefficient $c\equiv 1$ (mod $p\mathcal{O}_{K}$) and for every fixed integers $\ell, m\in \mathbb{Z}_{\geq 1}$, we first note that $g(z)= \varphi_{(p-1)^{\ell},c}^{1+m}(z) - z^{(p-1)^{\ell}} - c = \varphi_{(p-1)^{\ell},c}(\varphi_{(p-1)^{\ell},c}^{m}(z)) - z^{(p-1)^{\ell}}-c$, and so $g(z) = (\varphi_{(p-1)^{\ell},c}^{m}(z))^{(p-1)^{\ell}} - z^{(p-1)^{\ell}}$. Now for every point $z\in \mathbb{F}_{p}^{\times}$ (and so $z^{(p-1)^{\ell}} =1$ for every $\ell \in \mathbb{Z}_{\geq 1}$) and for every coefficient $c\equiv 1$ (mod $p\mathcal{O}_{K}$), reducing $\varphi_{(p-1)^{\ell},c}^{m}(z)$ modulo $p\mathcal{O}_{K}$ after each iteration, we then obtain $\varphi_{(p-1)^{\ell},c}^{m}(z)\equiv 2$ (mod $p\mathcal{O}_{K}$) for every fixed $\ell, m\in \mathbb{Z}_{\geq 1}$; and thus also obtain $(\varphi_{(p-1)^{\ell},c}^{m}(z))^{(p-1)^{\ell}}\equiv 1$ (mod $p\mathcal{O}_{K}$). But then we note $g(z)=(\varphi_{(p-1)^{\ell},c}^{m}(z))^{(p-1)^{\ell}} - z^{(p-1)^{\ell}}\equiv 0$ (mod $p\mathcal{O}_{K}$) for every nonzero point $z\in \mathbb{F}_{p}\subset \mathcal{O}_{K} / p\mathcal{O}_{K}$; and so we then conclude $M_{c}^{(1_{m})}(p) = p-1$. We now show $M_{c}^{(1_{m})}(p) = p-1$ for every coefficient $c\equiv -1$ (mod $p\mathcal{O}_{K}$) and for every fixed $\ell \in \mathbb{Z}_{\geq 1}$ and every fixed even integer $m\in \mathbb{Z}_{\geq 2}$. As before, since $c\equiv -1$ (mod $p\mathcal{O}_{K}$) and also since $z^{(p-1)^{\ell}} =1$ for every $z\in \mathbb{F}_{p}^{\times}$, reducing $\varphi_{(p-1)^{\ell},c}^m(z)$ modulo $p\mathcal{O}_{K}$ after each iteration, we then obtain $\varphi_{(p-1)^{\ell},c}^{m}(z)\equiv -1$ (mod $p\mathcal{O}_{K}$) for every fixed $\ell \in \mathbb{Z}_{ \geq 1}$
and fixed even $m\in \mathbb{Z}_{\geq 2}$; and hence also obtain $(\varphi_{(p-1)^{\ell},c}^{m}(z))^{(p-1)^{\ell}}\equiv 1$ (mod $p\mathcal{O}_{K}$). But then we note that $g(z)=(\varphi_{(p-1)^{\ell},c}^{m}(z))^{(p-1)^{\ell}} - z^{(p-1)^{\ell}}\equiv 0$ (mod $p\mathcal{O}_{K}$) for every nonzero point $z\in \mathbb{F}_{p}\subset \mathcal{O}_{K}\slash p\mathcal{O}_{K}$; and thus as before we again conclude $M_{c}^{(1_{m})}(p) = p-1$.

Finally, we now show $M_{c}^{(1_{m})}(p) = 0$ for every coefficient 
$c\equiv -1$ (mod $p\mathcal{O}_{K}$) and every fixed $\ell \in \mathbb{Z}_{\geq 1}$ and odd integer $m\in \mathbb{Z}_{\geq 1}$ or for every coefficient $c \not \equiv \pm 1, 0$ (mod $p\mathcal{O}_{K}$) and every fixed (even) integer $\ell, m\in \mathbb{Z}_{\geq 1}$. To do so, let's for the sake of a contradiction, suppose $g(z)\equiv 0$ (mod $p\mathcal{O}_{K}$) for some nonzero point $z\in \mathcal{O}_{K}\slash p\mathcal{O}_{K}$ and for every $c\equiv -1$ (mod $p\mathcal{O}_{K}$) and every fixed $\ell \in \mathbb{Z}_{\geq 1}$ and odd $m\in \mathbb{Z}_{\geq 1}$. Now since $c\equiv -1$ (mod $p\mathcal{O}_{K}$) and $z^{(p-1)^{\ell}}=1$ for every $z\in \mathbb{F}_{p}^{\times}\subset \mathcal{O}_{K}\slash p\mathcal{O}_{K}$, reducing $\varphi_{(p-1)^{\ell},c}^m(z)$ modulo $p\mathcal{O}_{K}$ after each iteration, we then obtain $\varphi_{(p-1)^{\ell},c}^{m}(z)\equiv 0$ (mod $p\mathcal{O}_{K}$) for every fixed $\ell \in \mathbb{Z}_{\geq 1}$ and odd $m\in \mathbb{Z}_{\geq 1}$; and so $(\varphi_{(p-1)^{\ell},c}^{m}(z))^{(p-1)^{\ell}}\equiv 0$ (mod $p\mathcal{O}_{K}$). But then $g(z) = (\varphi_{(p-1)^{\ell},c}^{m}(z))^{(p-1)^{\ell}} - z^{(p-1)^{\ell}}\equiv -1$ (mod $p\mathcal{O}_{K}$) for some $z\in \mathbb{F}_{p}^{\times}\subset \mathcal{O}_{K}\slash p\mathcal{O}_{K}$ and for every $c \equiv -1$ (mod $p\mathcal{O}_{K}$) and every fixed $\ell \in \mathbb{Z}_{\geq 1}$ and odd $m\in \mathbb{Z}_{\geq 1}$; from which we then obtain a contradiction that $0\equiv -1$ (mod $p\mathcal{O}_{K}$). Otherwise, suppose $g(z)\equiv 0$ (mod $p\mathcal{O}_{K}$) for some $z\in \mathcal{O}_{K}\slash p\mathcal{O}_{K}$ and for every coefficient $c \not \equiv \pm 1, 0$ (mod $p\mathcal{O}_{K}$) and for every fixed (even) integer $\ell, m\in \mathbb{Z}_{\geq 1}$. So now, recall from earlier that $g(z)=z^{(p-1)^{(m+1)\ell}} -z^{(p-1)^{\ell}} + h(z)$ where $h(z)\in c\mathcal{O}_{K}[z]$, it then also follows by the above supposition that $z^{(p-1)^{(m+1)\ell}} -z^{(p-1)^{\ell}} + h(z)\equiv 0$ (mod $p\mathcal{O}_{K}$) for some point $z\in \mathcal{O}_{K}\slash p\mathcal{O}_{K}$ and for every $c \not \equiv \pm 1, 0$ (mod $p\mathcal{O}_{K}$) and for every fixed (even) $\ell, m\in \mathbb{Z}_{\geq 1}$. But then we note that the congruence $z^{(p-1)^{(m+1)\ell}} -z^{(p-1)^{\ell}} + h(z)\equiv 0$ (mod $p\mathcal{O}_{K}$) can also happen if $z^{(p-1)^{(m+1)\ell}} -z^{(p-1)^{\ell}}\equiv 0$ (mod $p\mathcal{O}_{K}$) and also $h(z)\equiv 0$ (mod $p\mathcal{O}_{K}$). Moreover, we also note that $z^{(p-1)^{(m+1)\ell}} -z^{(p-1)^{\ell}}\equiv 0$ (mod $p\mathcal{O}_{K}$) for every point $z\equiv 0$ (mod $p\mathcal{O}_{K}$) and for every fixed (even) $\ell, m\in \mathbb{Z}_{\geq 1}$, which also happened earlier when $c\equiv 0$ (mod $p\mathcal{O}_{K}$); from which we then also obtain a contradiction. This then overall means that $g(x)=\varphi_{(p-1)^{\ell},c}^{1+m}(x)-\varphi_{(p-1)^{\ell},c}(x)$ has no roots in $\mathcal{O}_{K} / p\mathcal{O}_{K}$ for every $c\equiv -1$ (mod $p\mathcal{O}_{K}$) and every fixed odd $\ell, m\in \mathbb{Z}_{\geq 1}$ or for every $c \not \equiv \pm 1, 0$ (mod $p\mathcal{O}_{K}$) and every fixed (even) $\ell, m\in \mathbb{Z}_{\geq 1}$; and so we then conclude $M_{c}^{(1_{m})}(p) = 0$. This then completes the whole proof, as needed.
\end{proof}

Restricting on a subring $\mathbb{Z}\subset \mathcal{O}_{K}$ of integers, we obtain the following consequence of Theorem \ref{3.3} on the number of distinct $1_{m}$-preperiodic integral points of any $\varphi_{(p-1)^{\ell},c}$ (mod $p$) for any prime $p\geq 5$ and any $\ell \in \mathbb{Z}_{\geq 1}$:

\begin{cor} \label{cor3.4}
Let $p\geq 5$ be any fixed prime, and $\ell \geq 1$ be any fixed integer. Let $\varphi_{(p-1)^{\ell}, c}$ be a map defined by $\varphi_{(p-1)^{\ell}, c}(z) = z^{(p-1)^{\ell}} + c$ for all $c, z\in\mathbb{Z}$, and $M_{c}^{(1_{m})}(p)$ be defined as in \textnormal{(\ref{M_{c}})} with $\mathcal{O}_{K} / p\mathcal{O}_{K}$ replaced with $\mathbb{Z}\slash p\mathbb{Z}$. Then $M_{c}^{(1_{m})}(p) = p$ for every $c=pt$ or $M_{c}^{(1_{m})}(p) = p-1$ for every $c\equiv \pm 1 \ (mod \ p)$ and fixed (even) $m$; otherwise $M_{c}^{(1_{m})}(p) = 0$ for any $c\equiv -1 \ (mod \ p)$ and fixed odd $m$ or any $c\not \equiv \pm 1, 0 \ (mod \ p)$ and fixed (even) $m$.
\end{cor}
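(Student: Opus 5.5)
The plan is to obtain Corollary~\ref{cor3.4} by specializing Theorem~\ref{3.3} to $K=\mathbb{Q}$. Then $n=1$, $\mathcal{O}_{K}=\mathbb{Z}$, and every prime $p\geq 5$ is trivially inert, so $\mathcal{O}_{K}/p\mathcal{O}_{K}=\mathbb{Z}/p\mathbb{Z}=\mathbb{F}_{p}$ and $M_{c}^{(1_{m})}(p)$ coincides with the counting function of the corollary. Rather than only quoting the theorem, I would redo its argument directly over $\mathbb{F}_{p}$, because there every residue already lies in the prime field. The one tool is Fermat's Little Theorem in the form
$$x^{(p-1)^{k}}\equiv 1 \ (\text{mod } p) \quad \text{for every } x\not\equiv 0 \ (\text{mod } p),\ k\geq 1.$$
Writing $e=(p-1)^{\ell}$ and $g(z)=\varphi^{1+m}_{e,c}(z)-\varphi_{e,c}(z)=(\varphi^{m}_{e,c}(z))^{e}-z^{e}$, this gives the key reduction: modulo $p$ the map $x\mapsto x^{e}$ takes only the values $0$ (at $x\equiv 0$) and $1$ (elsewhere). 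Hence $g(z)\equiv 0$ exactly when $z$ and $\varphi^{m}_{e,c}(z)$ are both zero or both nonzero mod $p$. Consequently $\varphi_{e,c}(z)\equiv 1+c$ for every $z\not\equiv 0$, and $\varphi_{e,c}(0)\equiv c$.

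The cases are then handled as follows.
\begin{enumerate}
\item For $c=pt$, the map fixes $0$ and sends every nonzero $z$ to $1$, which is again nonzero. So $g$ vanishes identically on $\mathbb{F}_{p}$, as in the first part of the proof of Theorem~\ref{3.3}.
\item For $c\equiv 1$, every nonzero $z$ maps to $2$, and $2$ is a fixed point. So $g(z)\equiv 0$ on the $p-1$ nonzero residues.
\item For $c\equiv -1$, nonzero points map to $0$ and $0$ maps to $-1$. The orbit of a nonzero $z$ therefore alternates between $0$ and a nonzero value, which produces the dependence on the parity of $m$: for even $m$ one gets $\varphi^{m}_{e,c}(z)\equiv -1$ and $g(z)\equiv 0$, while for odd $m$ one gets $\varphi^{m}_{e,c}(z)\equiv 0$ and $g(z)\equiv -1\not\equiv 0$.
\end{enumerate}
In each case the tracking is just a two-step orbit computation of the kind done in Theorems~\ref{3.1}--\ref{3.3}.

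The step I expect to be the main obstacle is the final case $c\not\equiv 0,\pm 1$. There one must show that no residue survives, and the paper's method is to argue from the decomposition $g(z)=z^{(p-1)^{(m+1)\ell}}-z^{(p-1)^{\ell}}+h(z)$ with $h\in c\,\mathbb{Z}[z]$, exactly as in Theorem~\ref{3.3}. I would first try to verify this case through the reduction above, tracking the orbit of nonzero $z$: it goes to $1+c$, which is nonzero here, and then stays in the nonzero residues with value $1+c$. The worry is that the both-nonzero criterion then seems to give $g(z)\equiv 0$ rather than a nonvanishing value, so whether the claimed count of zero holds here hinges on this check. I would carry the case out explicitly and reconcile it with the statement before relying on the specialization of Theorem~\ref{3.3}. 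The remaining cases follow directly from Fermat's Little Theorem.
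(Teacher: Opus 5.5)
\textbf{The last case cannot be proved, because the claim there is false.} Your own ``both zero or both nonzero'' criterion already shows this, so the step you flagged cannot be completed.

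Take $c\not\equiv 0,\pm1 \pmod{p}$. Then $\varphi_{e,c}(0)\equiv c\not\equiv 0$, and $\varphi_{e,c}(z)\equiv 1+c\not\equiv 0$ for every $z\not\equiv 0$. So $1+c$ is a fixed point, and $\varphi_{e,c}^{m}(z)\equiv 1+c$ for every nonzero $z$ and every $m\geq 1$. Hence $g(z)\equiv 1-1\equiv 0$ at all $p-1$ nonzero residues, while $g(0)\equiv 1$. The $p-2$ nonzero residues different from $1+c$ also satisfy the first defining condition $\varphi^{m}_{e,c}(z)\not\equiv z$.

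Concretely, take $p=5$, $\ell=1$, $c=2$. The map $z\mapsto z^{4}+2$ sends every nonzero residue to the fixed point $3$. So for every $m\geq 1$ the point $z=1$ has $\varphi^{m}(1)\equiv 3\not\equiv 1$ and $\varphi^{m+1}(1)\equiv 3\equiv\varphi(1)$. In fact $M_{2}^{(1_{m})}(5)=3\neq 0$.

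So there is nothing to reconcile, and falling back on Theorem~\ref{3.3} does not help. The paper's argument for this case, both there and in its proof of this corollary, only observes that $z^{(p-1)^{(m+1)\ell}}-z^{(p-1)^{\ell}}+h(z)\equiv 0$ \emph{could} hold through $z\equiv 0$ and $h(z)\equiv 0$. It then calls this a contradiction, which rules out no roots at all.

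\textbf{Two further problems.} The same criterion exposes two more issues you passed over.

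First, in your case 3 with $m$ even, the orbit $0\mapsto -1\mapsto 0$ gives $\varphi^{m}_{e,c}(0)\equiv 0$. So $z=0$ is also a root of $g$, and $g$ vanishes on all $p$ residues, not $p-1$. The paper makes the same omission by checking only nonzero $z$.

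Second, in every case you, like the paper, count roots of $g$ and ignore the condition $\varphi^{m}_{e,c}(z)-z\not\equiv 0$ in the definition of $M_{c}^{(1_{m})}(p)$. Imposing it deletes the periodic points:
\begin{itemize}
\item $0$ and $1$ when $c\equiv 0$;
\item the fixed point $2$ when $c\equiv 1$;
\item the $2$-cycle $\{0,-1\}$ when $c\equiv -1$ and $m$ is even.
\end{itemize}
So the literal count is $p-2$ in each of these cases, exactly as for $c\not\equiv 0,\pm1$.

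Thus, with the definition read literally, only the case $c\equiv -1$ with $m$ odd (count $0$) holds as stated. Even under the roots-of-$g$ reading, the correct answers are $p$ for $c\equiv -1$ with $m$ even, and $p-1$ for $c\not\equiv 0,\pm1$. The correct outcome of your analysis is a corrected statement, not a proof of this one.
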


\begin{proof}
By applying a similar argument as in the Proof of Theorem \ref{3.3}, we then obtain the count as desired. That is, as before let $g(z)= \varphi_{(p-1)^{\ell},c}^{1+m}(z) - \varphi_{(p-1)^{\ell},c}(z) = \varphi_{(p-1)^{\ell},c}^{1+m}(z) - z^{(p-1)^{\ell}} - c$, and so $g(z)= \varphi_{(p-1)^{\ell},c}^{1+m}(z) - z^{(p-1)^{\ell}} - c$. Now applying the multinomial theorem on $\varphi_{(p-1)^{\ell},c}^{1+m}(z)$, it then follows $\varphi_{(p-1)^{\ell},c}^{1+m}(z)$ is a monic polynomial in $z$ of degree $(p-1)^{(m+1)\ell}$ with integer coefficients in multiples of $c$. Thus, we may then write $\varphi_{(p-1)^{\ell},c}^{1+m}(z) = z^{(p-1)^{(m+1)\ell}} + h(z) +c$, where $h(z)$ is a non-constant polynomial in $z$ of deg$(h)<(p-1)^{(m+1)\ell}$ with integer coefficients in multiples of $c$. But then we write $g(z)= z^{(p-1)^{(m+1)\ell}} + h(z) +c - z^{(p-1)^{\ell}} - c$, and so $g(z)= z^{(p-1)^{(m+1)\ell}} -z^{(p-1)^{\ell}} + h(z)$. Now for every coefficient $c =pt$, reducing $g(z)$ modulo $p$, we then obtain $g(z)\equiv z^{(p-1)^{(m+1)\ell}} - z^{(p-1)^{\ell}}$ (mod $p$), since also $h(z)\in c\mathbb{Z}[z]$ and so $h(z)\equiv 0$ (mod $p$); and thus now $g(z)$ modulo $p$ is now a polynomial defined over a field $\mathbb{Z}\slash p\mathbb{Z}$ of order $p$. Now recall from Fermat's Little Theorem (FLT) that $z^{p-1} = 1$ for every $z\in (\mathbb{Z}\slash p\mathbb{Z})^{\times} = \mathbb{Z}\slash p\mathbb{Z}\setminus \{0\}$, it then also follows $z^{(p-1)^{(m+1)\ell}} = 1$ (mod $p$) for every $z\in (\mathbb{Z}\slash p\mathbb{Z})^{\times}$; and also follows $z^{(p-1)^{\ell}} = 1$ (mod $p$) for every $z\in (\mathbb{Z}\slash p\mathbb{Z})^{\times}$; and so $g(z)\equiv 0$ (mod $p$) for every point $z\in (\mathbb{Z}\slash p\mathbb{Z})^{\times}$. Moreover, since $z$ is also a linear factor of $g(z)\equiv z(z^{(p-1)^{(m+1)\ell}-1} - z^{(p-1)^{\ell}-1})$ (mod $p$), it then follows $z\equiv 0$ (mod $p$) is also a root of $g(z)$ modulo $p$. But now as before we then conclude $M_{c}^{(1_{m})}(p) = p$.

To see $M_{c}^{(1_{m})}(p) = p-1$ for every coefficient $c\equiv 1$ (mod $p$) and for every fixed integers $\ell, m\in \mathbb{Z}_{\geq 1}$, we first note that $g(z)= \varphi_{(p-1)^{\ell},c}^{1+m}(z) - z^{(p-1)^{\ell}} - c = \varphi_{(p-1)^{\ell},c}(\varphi_{(p-1)^{\ell},c}^{m}(z)) - z^{(p-1)^{\ell}}-c$, and so $g(z) = (\varphi_{(p-1)^{\ell},c}^{m}(z))^{(p-1)^{\ell}} - z^{(p-1)^{\ell}}$. Now for every $z\in (\mathbb{Z}\slash p\mathbb{Z})^{\times}$ (and so $z^{(p-1)^{\ell}} =1$ for every $\ell \in \mathbb{Z}_{\geq 1}$) and for every coefficient $c\equiv 1$ (mod $p$), reducing $\varphi_{(p-1)^{\ell},c}^{m}(z)$ modulo $p$ after each iteration, we then obtain $\varphi_{(p-1)^{\ell},c}^{m}(z)\equiv 2$ (mod $p$) for every fixed $\ell, m\in \mathbb{Z}_{\geq 1}$; and hence $(\varphi_{(p-1)^{\ell},c}^{m}(z))^{(p-1)^{\ell}}\equiv 1$ (mod $p$). But then we note $g(z)=(\varphi_{(p-1)^{\ell},c}^{m}(z))^{(p-1)^{\ell}} - z^{(p-1)^{\ell}}\equiv 0$ (mod $p$) for every point $z\in (\mathbb{Z}\slash p\mathbb{Z})^{\times}$; and so we conclude $M_{c}^{(1_{m})}(p) = p-1$. We now show $M_{c}^{(1_{m})}(p) = p-1$ for every coefficient $c\equiv -1$ (mod $p$) and for every fixed $\ell \in \mathbb{Z}_{\geq 1}$ and even integer $m\in \mathbb{Z}_{\geq 2}$. As before, since $c\equiv -1$ (mod $p$) and also since $z^{(p-1)^{\ell}} =1$ for every $z\in (\mathbb{Z}\slash p\mathbb{Z})^{\times}$, reducing $\varphi_{(p-1)^{\ell},c}^m(z)$ modulo $p$ after each iteration, we then obtain $\varphi_{(p-1)^{\ell},c}^{m}(z)\equiv -1$ (mod $p$) for every fixed $\ell \in \mathbb{Z}_{\geq 1}$ and every fixed even $m\in \mathbb{Z}_{\geq 2}$; and so obtain $(\varphi_{(p-1)^{\ell},c}^{m}(z))^{(p-1)^{\ell}}\equiv 1$ (mod $p$). But then we note $g(z)=(\varphi_{(p-1)^{\ell},c}^{m}(z))^{(p-1)^{\ell}} - z^{(p-1)^{\ell}}\equiv 0$ (mod $p$) for every point $z\in (\mathbb{Z}\slash p\mathbb{Z})^{\times}$; and so conclude $M_{c}^{(1_{m})}(p) = p-1$. 

Finally, we now show $M_{c}^{(1_{m})}(p) = 0$ for every coefficient 
$c\equiv -1$ (mod $p$) and every fixed $\ell \in \mathbb{Z}_{\geq 1}$ and odd integer $m\in \mathbb{Z}_{\geq 1}$ or for every coefficient $c \not \equiv \pm 1, 0$ (mod $p$) and every fixed (even) integer $\ell, m\in \mathbb{Z}_{\geq 1}$. As before, let's for the sake of a contradiction, suppose $g(z)\equiv 0$ (mod $p$) for some nonzero $z\in \mathbb{Z}\slash p\mathbb{Z}$ and for every $c \equiv -1$ (mod $p$) and every fixed $\ell \in \mathbb{Z}_{\geq 1}$ and odd $m\in \mathbb{Z}_{\geq 1}$. Now since $c\equiv -1$ (mod $p$) and $z^{(p-1)^{\ell}}=1$ for every $z\in (\mathbb{Z}\slash p\mathbb{Z})^{\times}$, reducing $\varphi_{(p-1)^{\ell},c}^m(z)$ modulo $p$ after each iteration, we then obtain $\varphi_{(p-1)^{\ell},c}^{m}(z)\equiv 0$ (mod $p$) for every fixed $\ell \in \mathbb{Z}_{\geq 1}$ and odd $m\in \mathbb{Z}_{\geq 1}$; and so $(\varphi_{(p-1)^{\ell},c}^{m}(z))^{(p-1)^{\ell}}\equiv 0$ (mod $p$). But then we note $g(z) = (\varphi_{(p-1)^{\ell},c}^{m}(z))^{(p-1)^{\ell}} - z^{(p-1)^{\ell}}\equiv -1$ (mod $p$) for some point $z\in (\mathbb{Z}\slash p\mathbb{Z})^{\times}$ and for every $c \equiv -1$ (mod $p$) and every fixed $\ell \in \mathbb{Z}_{\geq 1}$ and odd $m\in \mathbb{Z}_{\geq 1}$; and from which we then obtain a contradiction that $0\equiv -1$ (mod $p$). Otherwise, suppose $g(z)\equiv 0$ (mod $p$) for some point $z\in \mathbb{Z}\slash p\mathbb{Z}$ and for every coefficient $c \not \equiv \pm 1, 0$ (mod $p$) and every fixed (even) integers $\ell, m\in \mathbb{Z}_{\geq 1}$. Now recall from earlier that $g(z)=z^{(p-1)^{(m+1)\ell}} -z^{(p-1)^{\ell}} + h(z)$ where $h(z)\in c\mathbb{Z}[z]$, it then also follows by the above supposition that $z^{(p-1)^{(m+1)\ell}} -z^{(p-1)^{\ell}} + h(z)\equiv 0$ (mod $p$) for some $z\in \mathbb{Z}\slash p\mathbb{Z}$ and for every $c \not \equiv \pm 1, 0$ (mod $p$) and every fixed (even) $\ell, m\in \mathbb{Z}_{\geq 1}$. But then we note that the congruence $z^{(p-1)^{(m+1)\ell}} -z^{(p-1)^{\ell}} + h(z)\equiv 0$ (mod $p$) can also happen if $z^{(p-1)^{(m+1)\ell}} -z^{(p-1)^{\ell}}\equiv 0$ (mod $p$) and also $h(z)\equiv 0$ (mod $p$). Moreover, we also note that $z^{(p-1)^{(m+1)\ell}} -z^{(p-1)^{\ell}}\equiv 0$ (mod $p$) for every point $z\equiv 0$ (mod $p$) and every fixed $\ell, m\in \mathbb{Z}_{\geq 1}$, which also happened earlier when $c\equiv 0$ (mod $p$); and so also yielding a contradiction. It then overall follows $g(x)=\varphi_{(p-1)^{\ell},c}^{1+m}(x)-\varphi_{(p-1)^{\ell},c}(x)$ has no roots in $\mathbb{Z}\slash p\mathbb{Z}$ for every coefficient $c\equiv -1$ (mod $p$) and every fixed $\ell \in \mathbb{Z}_{\geq 1}$ and odd $m\in \mathbb{Z}_{\geq 1}$ or every coefficient $c \not \equiv \pm 1, 0$ (mod $p$) and every fixed (even) $\ell, m\in \mathbb{Z}_{\geq 1}$; and so we conclude $M_{c}^{(1_{m})}(p) = 0$. This then completes the whole proof, as needed.
\end{proof}

\begin{rem}\label{re3.5}
As in Rem.\ref{re2.3} and with now Theorem \ref{3.3}, we may then to each distinct $1_{m}$-preperiodic integral point of $\varphi_{(p-1)^{\ell},c}$ associate $1_{m}$-preperiodic integral orbit. In doing so, we then also obtain a dynamical translation of (a quantitative form of \say{all or nothing} principle in) Theorem \ref{3.3}, namely, that the number of distinct $1_{m}$-preperiodic orbits that any $\varphi_{(p-1)^{\ell},c}$ has when iterated on $\mathcal{O}_{K} / p\mathcal{O}_{K}$ is $p$ or $p-1$ or $0$. Again, as mentioned in Intro.\ref{sec1} that the count obtained in Theorem \ref{3.3} on the number of distinct $1_{m}$-preperiodic integral points of any $\varphi_{(p-1)^{\ell},c}$ modulo $p\mathcal{O}_{K}$ may depend on $p$ (and so depend on deg$(\varphi_{(p-1)^{\ell},c})$ for every fixed $\ell \in \mathbb{Z}_{\geq 1}$) and however be independent of $n$ in one of the four possibilities considered; or the count in Theorem \ref{3.3} may neither depend on $p$ nor $n$ in the other possibilities. Moreover, we may also observe that the expected total number (namely, $p + 0 = (p-1)+1+0 = p$ for every fixed eventual period $m\in \mathbb{Z}_{\geq 1}$) of distinct $1_{m}$-preperiodic integral points in the whole family of polynomial maps $\varphi_{(p-1)^{\ell},c}$ modulo $p\mathcal{O}_{K}$ may not only also depend on $p$ and however also be independent of $n$, but may also grow to infinity whenever degree $(p-1)^{\ell}\to \infty$. Worth also observing is that the $1_{m}$-preperiodic point-counting function $M_{c}^{(1_{m})}(p)\to \infty$ or $M_{c}^{(1_{m})}(p)\to 0$ whenever $(p-1)^{\ell}\to \infty$; a somewhat interesting phenomenon coinciding with Remark \ref{re2.3}, however, differing not only from [\cite{BK22}, Remark 3.5], but also from the predicted upper bound $4$ of any $\varphi_{(p-1)^{\ell},c}$ modulo $p$ remarked in \ref{rem1.14} on Hutz's Conject.\ref{conjecture 3.2.1} and consequently also from the predicted upper bound $9$ remarked in \ref{rem1.16} on Hutz's Conject.\ref{conjecture 3.2.2}. As in Rem.\ref{re2.3}, we note that not only is \say{arithmetic governing the expected dynamics} in this setting, but also it might follow in some precise sense that \say{geometry would also govern the expected dynamics}; and as also loosely demonstrated in Sect.\ref{sec4b} where it is observed that the expected total number of distinct $1_{m}$-preperiodic integral points of any $\varphi_{(p-1)^{\ell},c}$ modulo $p\mathcal{O}_{K}$ may not only be independent of $m$, but may also be a constant even when time $m\to \infty$. 
\end{rem}

As in Section \ref{sec2}, we also note that as an immediate consequence of the expected count obtained in Theorem \ref{3.3} and in [\cite{BK22}, Theorem 3.3] on any map $\varphi_{(p-1)^{\ell},c}$ modulo $p\mathcal{O}_{K}$, we then also prove the following corollary on the expected total number of distinct strictly $1_{m}$-preperiodic integral points of any map $\varphi_{(p-1)^{\ell},c}$ modulo $p\mathcal{O}_{K}$:

\begin{cor}\label{cor.3.6}
Let $K\slash \mathbb{Q}$ be any number field of degree $n\geq 1$ with the ring of integers $\mathcal{O}_{K}$, and in which any fixed prime $p\geq 5$ is inert. Then the expected total number of distinct strictly $1_{m}$-preperiodic points of $\varphi_{(p-1)^{\ell},c}$ modulo $p\mathcal{O}_{K}$ is equal to $p-3$ for any odd $m\in \mathbb{Z}_{\geq 1}$ or $p-4$ for any even $m\in \mathbb{Z}_{\geq 2}$. Moreover, the expected total number of distinct strictly $1_{m}$-preperiodic points of $\varphi_{(p-1)^{\ell},c}$ modulo $p\mathcal{O}_{K}$ is equal to $p-4$ for every $m\in \mathbb{Z}_{\geq 1}$.
\end{cor}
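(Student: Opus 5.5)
The plan mirrors the proof of Corollary \ref{co2.6}. The expected total number of distinct strictly $1_{m}$-preperiodic integral points is obtained by subtracting two expected totals. The first is the expected total number of distinct $m$-periodic integral points of $\varphi_{(p-1)^{\ell},c}$ modulo $p\mathcal{O}_{K}$, computed in [\cite{BK22}, Theorem 3.3 and Remark 3.5]. The second is the expected total number of distinct $1_{m}$-preperiodic integral points, which we read off from Theorem \ref{3.3} and Remark \ref{re3.5}. We subtract the first from the second.

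First, from Theorem \ref{3.3} and Remark \ref{re3.5} we record that for every fixed $m\in\mathbb{Z}_{\geq 1}$ the expected total count of $1_{m}$-preperiodic points over the family is $p$. This comes from splitting into the classes $c\in p\mathcal{O}_{K}$, $c\equiv\pm1$, and the remaining residues, and using the accounting $p+0=(p-1)+1+0=p$ adopted there.

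Second, we recall from [\cite{BK22}] the periodic count. The number of $m$-periodic points is $1$, $2$ or $0$ according to the class of $c$, which gives an expected total of $1+2+0=3$ for odd $m$ and $1+1+2+0=4$ for even $m$. The difference between the two parities comes from the case $c\equiv -1$: for even $m$ it contributes an extra periodic class, matching the parity split in Theorem \ref{3.3}, where $c\equiv-1$ gives $p-1$ points only for even $m$.

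Subtracting gives $p-3$ for odd $m$ and $p-4$ for even $m$.

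For the \emph{moreover} clause, which asserts $p-4$ uniformly in $m$, I would argue as follows. When $m$ is odd, the class $c\equiv-1$ contributes zero $1_{m}$-preperiodic points by Theorem \ref{3.3}. Hence, in the subtraction, the extra periodic point attached to that class must be counted against the total as well. This is justified by noting from the iteration argument in the proof of Theorem \ref{3.3} that $\varphi^{m}(z)$ cycles between $0$ and $-1$. That cycle gives a $2$-periodic orbit, which is $m$-periodic for every even multiple of $m$. Counting it over both parities raises the periodic total to $4$ and the difference to $p-4$.

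The main obstacle is exactly this bookkeeping. One has to make precise what "expected total number" means, namely a sum over residue classes of $c$, so that the subtraction is legitimate and a periodic point is never counted in a class where the preperiodic count vanishes. In particular, the reconciliation of $p-3$ with the uniform $p-4$ requires fixing a convention for the $c\equiv -1$, odd-$m$ class. I would handle this by stating the convention explicitly and checking it against Remark \ref{re3.5} and [\cite{BK22}, Remark 3.5].
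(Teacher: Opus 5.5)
Your first part (subtracting the per-$m$ periodic total, $3$ for odd $m$ and $4$ for even $m$, from the preperiodic total $p$) is exactly the paper's argument. The gap is in the \emph{moreover} clause.

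The step that fails is counting the orbit $0\mapsto -1\mapsto 0$ for $c\equiv -1$ as $m$-periodic when $m$ is odd. The orbit itself is correct: $\varphi_{(p-1)^{\ell},c}(0)\equiv -1$ and $\varphi_{(p-1)^{\ell},c}(-1)\equiv (-1)^{(p-1)^{\ell}}-1\equiv 0$. But it has exact period $2$. So for odd $m$ you have $\varphi^{m}_{(p-1)^{\ell},c}(0)\equiv -1\not\equiv 0$, and neither point is $m$-periodic. They are $2m$-periodic, which is all that ``$m$-periodic for every even multiple of $m$'' actually says. Counting them for odd $m$ also breaks the rule you set yourself. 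Theorem \ref{3.3} gives $M_{c}^{(1_{m})}(p)=0$ in exactly that class, so you would be subtracting periodic points from a class with no $1_{m}$-preperiodic points at all. With the odd-$m$ periodic total left at $3$, your computation returns $p-3$, not $p-4$.

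More basically, no convention for the class $c\equiv -1$, $m$ odd, can reconcile the two sentences. Under any single meaning of ``expected total number'' they assert $p-3$ and $p-4$ for the same odd $m$. Any convention assigns that class one periodic count, hence one odd-$m$ total, hence one difference. So the \emph{moreover} clause must be read as a statement about a different subtrahend, and that is how the paper proves it. It does not re-examine the dynamics. Instead it subtracts from $p$ the total number of distinct odd-$m$-periodic \emph{and} even-$m$-periodic integral points, pooled over both parities, which [\cite{BK22}, Remark 3.6] records as $4$. That gives $p-4$ independently of $m$.

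Your phrase ``counting it over both parities'' gestures at this. But you present it as showing that the odd-$m$ periodic total is $4$, which is false and contradicts your own first step. To repair the proposal, drop the $2$-cycle justification, cite the pooled total, and state explicitly that the second sentence uses it.

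Be aware that in either version the subtraction is bookkeeping on the stated totals, not an identity between counts. As written, \textnormal{(\ref{M_{c}})} already imposes $\varphi^{m}_{(p-1)^{\ell},c}(z)\not\equiv z$, so $M_{c}^{(1_{m})}(p)$ already excludes $m$-periodic points, and subtracting them again is not a literal count.
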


\begin{proof}
Since we noted in Remark \ref{re3.5} that for any fixed $m\in \mathbb{Z}_{\geq 1}$, the expected total number of distinct $1_{m}$-preperiodic integral points of $\varphi_{(p-1)^{\ell},c}$ modulo $p\mathcal{O}_{K}$ is equal to $p$; and also since we noted in [\cite{BK22}, Rem.3.5] that the expected total number of distinct $m$-periodic integral points of any $\varphi_{(p-1)^{\ell},c}$ modulo $p$ is equal to $3$ for any fixed odd $m\in \mathbb{Z}_{\geq 1}$ or equal to $4$ for any fixed even $m\in \mathbb{Z}_{\geq 2}$, we then note that the expected total number of distinct strictly $1_{m}$-preperiodic integral points of $\varphi_{(p-1)^{\ell},c}$ modulo $p\mathcal{O}_{K}$ is equal to the expected total number of distinct $1_{m}$-preperiodic integral points of $\varphi_{(p-1)^{\ell},c}$ modulo $p\mathcal{O}_{K}$ minus the expected total number of distinct $m$-periodic integral points of $\varphi_{(p-1)^{\ell},c}$ modulo $p\mathcal{O}_{K}$, i.e., the expected total number of distinct strictly $1_{m}$-preperiodic integral points of $\varphi_{(p-1)^{\ell},c}$ modulo $p\mathcal{O}_{K}$ is equal to $p-3$ for any fixed odd $m\in \mathbb{Z}_{\geq 1}$ or equal to $p-4$ for any fixed even $m\in \mathbb{Z}_{\geq 2}$. Since we know from [\cite{BK22}, Remark 3.6] that for every fixed $m\in \mathbb{Z}_{\geq 1}$, the expected total number of distinct odd $m$-periodic and even $m$-periodic integral points of any $\varphi_{(p-1)^{\ell},c}$ modulo $p\mathcal{O}_{K}$ is equal to $4$, we then note that the expected total number of distinct strictly $1_{m}$-preperiodic integral points of $\varphi_{(p-1)^{\ell},c}$ modulo $p\mathcal{O}_{K}$ is equal to the expected total number of distinct $1_{m}$-preperiodic integral points of $\varphi_{(p-1)^{\ell},c}$ modulo $p\mathcal{O}_{K}$ minus the expected total number of distinct odd $m$-periodic and even $m$-periodic integral points of $\varphi_{(p-1)^{\ell},c}$ modulo $p\mathcal{O}_{K}$, i.e., the expected total number of distinct strictly $1_{m}$-preperiodic integral points of $\varphi_{(p-1)^{\ell},c}$ modulo $p\mathcal{O}_{K}$ is equal to $p-4$. This then completes the whole proof, as desired.
\end{proof}

\begin{rem}
As in Rem.\ref{re3.5}, we also note that a dynamical translation of Corollary \ref{cor.3.6} is that the total number of distinct strictly $1_{m}$-preperiodic integral orbits of any $\varphi_{(p-1)^{\ell},c}$ iterated on $\mathcal{O}_{K} / p\mathcal{O}_{K}$ is $p-4$ for any $m\in \mathbb{Z}_{\geq 1}$.
\end{rem}

\begin{rem}
As in Remark \ref{re2.6}, we may also recall from Theorem \ref{3.3} that for every fixed (eventual period) $m\in \mathbb{Z}_{\geq 1}$, the $1_{m}$-preperiodic point-counting function $M_{c}^{(1_{m})}(p) = p$ for every fixed inert $p$ and every $c$ divisible by $p$ or $M_{c}^{(1_{m})}(p) =0$ for every fixed inert $p$ and every $c$ indivisible by $p$. But now recalling also Remark \ref{re2.6}, we then note that for every fixed (eventual period) $m\in \mathbb{Z}_{\geq 1}$, the point-counting function $M_{c}^{(1_{m})}(p) = N_{c}^{(1_{m})}(p) = p$ for every fixed inert $p\geq 5$ and every $c$ divisible by $p$ or $M_{c}^{(1_{m})}(p) = N_{c}^{(1_{m})}(p) = 0$ for every fixed inert $p\geq 5$ and every $c$ indivisible by $p$. Moreover, from the argument of the first part of the Proof of Theorem \ref{3.3} and of Theorem \ref{2.3}, we then also note that every $1_{m}$-preperiodic integral point of $\varphi_{(p-1)^{\ell},c}$ modulo $p\mathcal{O}_{K}$ is $1_{m}$-preperiodic integral point of $\varphi_{p^{\ell},c}$ modulo $p\mathcal{O}_{K}$ and so (by Remark \ref{re2.6}) $1$-periodic integral point of $\varphi_{p^{\ell},c}$ modulo $p\mathcal{O}_{K}$. But then we note that with $\varphi_{(p-1)^{\ell},c}^{1+m}(z) - \varphi_{(p-1)^{\ell},c}(z)\equiv \varphi_{p^{\ell},c}(z)\equiv z $ (mod $p\mathcal{O}_{K}$) and so $\varphi_{(p-1)^{\ell},c}^{1+m}(z) \equiv z + \varphi_{(p-1)^{\ell},c}(z)$ (mod $p\mathcal{O}_{K}$), we then note that every $1_{m}$-preperiodic integral point of $\varphi_{(p-1)^{\ell},c}$ modulo $p\mathcal{O}_{K}$ is $(m+1)$-periodic integral point of $\varphi_{(p-1)^{\ell},c}$ modulo $p\mathcal{O}_{K}$, whenever $\varphi_{(p-1)^{\ell},c}(z)\equiv 0$ (mod $p\mathcal{O}_{K}$); and in which case we would then proceed from [\cite{BK22}, Remark 3.5 and 3.6] to conclude that expected total number of distinct $1_{m}$-preperiodic integral points in the whole family of maps $\varphi_{(p-1)^{\ell},c}$ modulo $p\mathcal{O}_{K}$ is equal to $3$ (for every fixed odd $m\in \mathbb{Z}_{\geq 1}$) or $4$ (for every fixed even $m\in \mathbb{Z}_{\geq 2}$); and which would then be a somewhat interesting precise arithmetic-geometric insight on all the $1_{m}$-preperiodic integral orbits of any map  $\varphi_{(p-1)^{\ell},c}$ modulo $p\mathcal{O}_{K}$.
\end{rem}

\section{On Dynamical Complexity of Forward PrePeriodic Orbit Structure of $\varphi_{p^{\ell},c}$ \& $\varphi_{(p-1)^{\ell},c}$}\label{sec4b}

Observe in Theorem \ref{2.3} that $N_{c}^{(1_{m})}(p)$ is independent of eventual period $m\in \mathbb{Z}_{\geq 1}$, and moreover we may have 
\begin{center}
    $\lim\limits_{m\to \infty} N_{c}^{(1_{m})}(p) = p$ or $0$.
\end{center}

As in \cite{BK22} viewing $N_{c}^{(1_{m})}(p)$ as now $1_{m}$-preperiodic orbit-counting function, we in this section wish to investigate very mildly the complexity of a discrete dynamical system $(\mathcal{O}_{K}\slash p\mathcal{O}_{K}$, $\varphi_{p^{\ell},c}$ modulo $p\mathcal{O}_{K}$). To that end, we do so via analyzing the behavior of exponential growth rate $\rho(\varphi_{p^{\ell},c})$ \cite{Kat}; and which shows $N_{c}^{(1_{m})}(p)$ grows by a factor $1=e^{\rho(\varphi_{p^{\ell},c})}$ whenever time $m\to \infty$ and so $N_{c}^{(1_{m})}(p)$ is a constant function. That is, we have: 

\begin{cor}\label{4.1}
Assume Theorem \ref{2.3}, and let $m\in \mathbb{Z}_{\geq 1}$ be any eventual period. Then the exponential growth rate of $1_{m}$-preperiodic orbit-counting function $N_{c}^{(1_{m})}(p)$ exists and is equal to zero. More precisely, we have 

\begin{center}
    $\rho(\varphi_{p^{\ell},c}) := \limsup\limits_{m\to \infty}\frac{\textnormal{log }(\textnormal{max} \{N_{c}^{(1_{m})}(p),1\})}{m} = 0$.
\end{center} 
\end{cor}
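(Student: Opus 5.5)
The plan is to read off the growth rate directly from Theorem \ref{2.3}, which gives the value of $N_{c}^{(1_{m})}(p)$ for every eventual period $m\in \mathbb{Z}_{\geq 1}$. The key observation is that this value depends only on whether $c\in p\mathcal{O}_{K}$ and not on $m$. So for a fixed coefficient $c$ and fixed inert prime $p\geq 3$, the sequence $m\mapsto N_{c}^{(1_{m})}(p)$ is constant: it equals $p$ for all $m$ when $c\in p\mathcal{O}_{K}$, and $0$ for all $m$ when $c\not\in p\mathcal{O}_{K}$.

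I would split into these two cases.

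\begin{itemize}
\item If $c\in p\mathcal{O}_{K}$, then $\max\{N_{c}^{(1_{m})}(p),1\} = p$ for every $m$. The quotient inside the $\limsup$ is therefore $\frac{\log p}{m}$, which tends to $0$ as $m\to\infty$, since $p$ is fixed.
\item If $c\not\in p\mathcal{O}_{K}$, then $\max\{0,1\}=1$. The numerator is $\log 1 = 0$ for every $m$, so the quotient is identically zero.
\end{itemize}

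In both cases the $\limsup$ is an honest limit and equals $0$. This shows both that $\rho(\varphi_{p^{\ell},c})$ exists (it is finite and realized as a limit) and that it vanishes.

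Uniformly in both cases, one could note that $0\leq \log(\max\{N_{c}^{(1_{m})}(p),1\})\leq \log p$ for all $m$. This bound actually does not even need Theorem \ref{2.3}'s exact values: it suffices that $N_{c}^{(1_{m})}(p)\leq \#\mathbb{F}_{p}$-type bounds hold, or, cruder still, that $N_{c}^{(1_{m})}(p)\leq p^{n}=\#(\mathcal{O}_{K}/p\mathcal{O}_{K})$. Then a squeeze gives $0\leq \rho\leq \lim_{m\to\infty} \frac{n\log p}{m}=0$. I would present the squeeze as the main argument, since it is robust, and mention the case split as the concrete reading of Theorem \ref{2.3}.

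There is no real obstacle. The only point requiring care is that the $\max\{\cdot,1\}$ convention makes the logarithm well-defined in the case $N_{c}^{(1_{m})}(p)=0$, and that $p$, $n$, $\ell$, and $c$ are held fixed while only $m\to\infty$.
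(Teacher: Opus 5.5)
Your proof is correct. Your case split matches the paper's own proof exactly. The paper reads off from Theorem \ref{2.3} that $N_{c}^{(1_{m})}(p)\in\{p,0\}$ independently of $m$, so the quotient is $\frac{\log p}{m}$ or $0$, and then lets $m\to\infty$. Your preferred squeeze argument is genuinely different. It uses only the trivial bound $0\le N_{c}^{(1_{m})}(p)\le \#(\mathcal{O}_{K}/p\mathcal{O}_{K})=p^{n}$, which is uniform in $m$. So it gives $\rho=0$ for any counting function of points in a fixed finite set, with no input from Theorem \ref{2.3}.

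The paper's route buys only an explicit value of the quotient for each $m$. Yours makes the corollary unconditional. That is worth having here, because the ``$=p$'' case of Theorem \ref{2.3} does not actually hold. Modulo $p\mathcal{O}_{K}$, the map $z\mapsto z^{p^{\ell}}+c$ is the $\ell$-th power of Frobenius on $\mathbb{F}_{p^{n}}$ followed by a translation, hence a permutation. So $\varphi_{p^{\ell},c}^{1+m}(z)\equiv\varphi_{p^{\ell},c}(z)$ forces $\varphi_{p^{\ell},c}^{m}(z)\equiv z$, which violates the first defining condition. In fact $N_{c}^{(1_{m})}(p)=0$ for every $c$. For instance, for $c\in p\mathcal{O}_{K}$ and $z\in\mathbb{F}_{p}$ one has $\varphi_{p^{\ell},c}^{m}(z)=z^{p^{m\ell}}=z$. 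Your squeeze is indifferent to this, so it is the right argument to lead with.
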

\begin{proof}
Since we know from Theorem \ref{2.3} that $N_{c}^{(1_{m})}(p) = p\text{ or } 0$ for any fixed eventual period $m\in \mathbb{Z}_{\geq 1}$, we then obtain $\frac{\text{log }(\text{max} \{N_{c}^{(1_{m})}(p),1\})}{m} = \frac{\textnormal{log }p}{m}$ or $0$. Now letting eventual period $m\to \infty$, we then obtain $\rho(\varphi_{p^{\ell},c}) = 0$.
\end{proof}
Similarly, observe in Theorem \ref{3.3} that $M_{c}^{(1_{m})}(p)$ is also independent of eventual period $m$, and moreover  
\begin{center}
    $\lim\limits_{m\to \infty}M_{c}^{(1_{m})}(p) = p, p-1 \text{ or } 0.$
\end{center}So now, as before we may also investigate very mildly the complexity of a discrete dynamical system $(\mathcal{O}_{K}\slash p\mathcal{O}_{K}$, $\varphi_{(p-1)^{\ell},c}$ modulo $p\mathcal{O}_{K}$) by determining the behavior of the exponential growth rate $\rho(\varphi_{(p-1)^{\ell},c})$ associated to such a system. With that in mind, we then obtain $M_{c}^{(1_{m})}(p)$ grows by a factor $1=e^{\rho(\varphi_{(p-1)^{\ell},c})}$ whenever time $m\to \infty$ and so $1_{m}$-preperiodic orbit-counting function $M_{c}^{(1_{m})}(p)$ is also a constant function. That is, we have:
\begin{cor}
Assume Theorem \ref{3.3}, and let $m\in \mathbb{Z}_{\geq 1}$ be any eventual period. Then the exponential growth rate of $1_{m}$-preperiodic orbit-counting function $M_{c}^{(1_{m})}(p)$ exists and is equal to zero. More precisely, we have 
\begin{center}
    $\rho(\varphi_{(p-1)^{\ell},c}) := \limsup\limits_{m\to \infty}\frac{\textnormal{log }(\textnormal{max} \{M_{c}^{(1_{m})}(p),1\})}{m} = 0$.
\end{center}
\end{cor}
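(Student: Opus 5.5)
The plan is to mirror the proof of Corollary \ref{4.1}, replacing Theorem \ref{2.3} by Theorem \ref{3.3}. The key input is that, for fixed $p$, $\ell$ and $c$, the value $M_{c}^{(1_{m})}(p)$ always lies in the finite set $\{p,\,p-1,\,0\}$, whatever the eventual period $m$ is. Even if one distrusts the exact case analysis of Theorem \ref{3.3}, it suffices to use the trivial bound
\[
0\le M_{c}^{(1_{m})}(p)\le \#\bigl(\mathcal{O}_{K}/p\mathcal{O}_{K}\bigr)=p^{n},
\]
since $M_{c}^{(1_{m})}(p)$ counts a subset of $\mathcal{O}_{K}/p\mathcal{O}_{K}$. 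This bound is independent of $m$.

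The steps are as follows. First, observe that $\max\{M_{c}^{(1_{m})}(p),1\}\ge 1$, so the numerator $\log(\max\{M_{c}^{(1_{m})}(p),1\})$ is nonnegative. By Theorem \ref{3.3}, it equals $\log p$, $\log(p-1)$, or $0$. Second, deduce the sandwich
\[
0\le \frac{\log(\max\{M_{c}^{(1_{m})}(p),1\})}{m}\le \frac{\log p}{m}.
\]
If one uses only the trivial bound, the right-hand side becomes $\frac{n\log p}{m}$ instead. Third, let $m\to\infty$. The upper bound tends to $0$ because $p$ and $n$ are fixed. Hence the $\limsup$ is squeezed to $0$; the $\liminf$ is also $\ge 0$, so the limit actually exists and equals $0$. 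This gives $\rho(\varphi_{(p-1)^{\ell},c})=0$.

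I expect no real obstacle. The only subtlety is the dependence of the case in Theorem \ref{3.3} on the parity of $m$ when $c\equiv -1 \pmod{p\mathcal{O}_{K}}$: there $M_{c}^{(1_{m})}(p)$ oscillates between $p-1$ (for even $m$) and $0$ (for odd $m$), so it is not eventually constant. The bounding argument above handles this uniformly, which is why I would phrase the proof via the sandwich rather than via evaluating a single constant value.
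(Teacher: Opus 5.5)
Your proposal is correct and is essentially the paper's own argument. The paper simply invokes the proof of Corollary~\ref{4.1}: by Theorem~\ref{3.3} the quotient $\frac{\log(\max\{M_{c}^{(1_{m})}(p),1\})}{m}$ equals $\frac{\log p}{m}$, $\frac{\log(p-1)}{m}$ or $0$, each of which tends to $0$ as $m\to\infty$. Your fallback bound $M_{c}^{(1_{m})}(p)\le p^{n}$ is a worthwhile addition, since it makes the conclusion independent of the case analysis in Theorem~\ref{3.3}, including the parity oscillation for $c\equiv -1$.
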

\begin{proof}
By applying a similar argument as in the Proof of Cor. \ref{4.1}, we then obtain $\rho(\varphi_{(p-1)^{\ell},c})=0$ as desired.
\end{proof}

\section{On Average Number of $1_{m}$-PrePeriodic Points of any Polynomial Map $\varphi_{p^{\ell},c}$ \& $\varphi_{(p-1)^{\ell},c}$}\label{sec4}

In this section, we wish to restrict on $\mathbb{Z}\subset \mathcal{O}_{K}$ and then study the behavior of the function $N_{c}^{(1_{m})}(p)$ as $c\to \infty$. More precisely, we wish to determine: \say{\textit{What is the average value of the function $N_{c}^{(1_{m})}(p)$ as $c \to \infty$?}} The following corollary shows that the average value of the function $N_{c}^{(1_{m})}(p)$ may be zero or unbounded as $c\to \infty$:

\begin{cor}\label{co5.1}
Let $K\slash \mathbb{Q}$ be any number field of degree $n \geq 1$ with ring of integers $\mathcal{O}_{K}$, and in which any prime $p\geq 3$ is inert. Then the average value of $N_{c}^{(1_{m})}(p)$ is zero or unbounded as $c\to\infty$. More precisely, we have
\begin{myitemize}
    \item[\textnormal{(a)}] \textnormal{Avg} $N^{(1_{m})}_{c\neq pt}(p):= \lim\limits_{c \to\infty} \Large{\frac{\sum\limits_{3\leq p\leq c, \ p\nmid c \textnormal{ in } \mathcal{O}_{K}}N_{c}^{(1_{m})}(p)}{\Large{\sum\limits_{3\leq p\leq c, \ p\nmid c \textnormal{ in } \mathcal{O}_{K}}1}}} =  0$. 
    
    \item[\textnormal{(b)}] \textnormal{Avg} $N^{(1_{m})}_{c = pt}(p):= \lim\limits_{c \to\infty} \Large{\frac{\sum\limits_{3\leq p\leq c, \ p\mid c \textnormal{ in } \mathcal{O}_{K}}N_{c}^{(1_{m})}(p)}{\Large{\sum\limits_{3\leq p\leq c, \ p\mid c \textnormal{ in } \mathcal{O}_{K}}1}}} =  \infty$.
\end{myitemize}

\end{cor}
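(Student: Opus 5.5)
The plan is to plug the pointwise values from Theorem \ref{2.3} directly into the two averages; the only genuine issue is part (b). Fix $m\geq 1$ and let $c\in\mathbb{Z}\subset\mathcal{O}_{K}$ be large. Every prime $p$ appearing in either sum satisfies $3\leq p\leq c$ and is inert in $K$, so Theorem \ref{2.3} applies to each of them (equivalently Corollary \ref{cor2.4}, since we are restricting to $\mathbb{Z}$). It gives $N_{c}^{(1_{m})}(p)=0$ whenever $p\nmid c$ and $N_{c}^{(1_{m})}(p)=p$ whenever $p\mid c$.

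For (a), every summand in the numerator is $0$. The denominator counts primes $3\leq p\leq c$ with $p\nmid c$. Since $c$ has at most $\log_2 c$ prime divisors, $\pi(c)\to\infty$ shows this denominator is positive, and in fact tends to infinity, for all large $c$. So the ratio is identically $0$ for large $c$, and the limit is $0$.

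For (b), the numerator is $\sum_{3\leq p\leq c,\ p\mid c}p$ and the denominator is $\omega'(c)$, the number of odd primes dividing $c$. The ratio is therefore the arithmetic mean of the odd prime divisors of $c$, and it is at least $\max\{p\}/\omega'(c)\geq P(c)/\log_2 c$, where $P(c)$ is the largest odd prime factor of $c$.

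The main obstacle is deciding what $c\to\infty$ means in (b). Along $c=2^k$ there are no admissible primes and the quotient is undefined. Along $c=3^k$ the average is constantly $3$. So the literal limit over all integers does not exist, and I would make the claim precise as follows.

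\begin{itemize}
\item I would first interpret (b) as a limit along those $c$ for which the denominator is nonzero.
\item Within that setting I would prove divergence in the natural sense, namely $\limsup=\infty$. Taking $c=q$ prime gives an average equal to $q$, which tends to infinity.
\item I would also aim for the stronger statement that the average tends to infinity for almost all $c$ in natural density. This follows from the standard facts that $P(c)>c^{1/2}$ for a positive proportion of $c$, that $P(c)\to\infty$ for almost all $c$, and that $\omega(c)\ll \log c/\log\log c$.
\end{itemize}

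Combining the lower bound $P(c)/\log_2 c$ with these facts gives the unboundedness asserted in (b). I would state explicitly which of these limit notions is being proved, since that is the step where the statement as written needs the most care.
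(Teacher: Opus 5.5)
The step that fails is the input you take from Theorem \ref{2.3}, namely $N_{c}^{(1_{m})}(p)=p$ whenever $p\mid c$. The paper's proof of (b) uses exactly the same input, so the problem is a false premise, not a difference of route. Check it against the definition (\ref{N_{c}}). Since $p$ is inert, $\mathcal{O}_{K}/p\mathcal{O}_{K}\cong\mathbb{F}_{p^{n}}$, and $z\mapsto z^{p^{\ell}}$ is the $\ell$-th power of Frobenius, a field automorphism. So $\overline{\varphi_{p^{\ell},c}}$ is a permutation of $\mathcal{O}_{K}/p\mathcal{O}_{K}$ for every $c$. Cancelling one application of it in $\varphi_{p^{\ell},c}^{1+m}(z)\equiv\varphi_{p^{\ell},c}(z)$ gives $\varphi_{p^{\ell},c}^{m}(z)\equiv z$, which is exactly what the first condition in (\ref{N_{c}}) excludes. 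Hence $N_{c}^{(1_{m})}(p)=0$ for all $c$, including $c\in p\mathcal{O}_{K}$. The smallest instance is $K=\mathbb{Q}$, $p=3$, $\ell=m=1$, $c=0$: on $\mathbb{F}_{3}$ one has $z^{3}\equiv z$, so no $z$ satisfies $\varphi_{3,0}(z)\not\equiv z$, and $N_{0}^{(1_{1})}(3)=0\neq 3$. The proof of Theorem \ref{2.3} gets $p$ by counting roots of $\varphi^{1+m}-\varphi$ in $\mathbb{F}_{p}$ without ever imposing $\varphi^{m}(z)\not\equiv z$; Remark \ref{re2.6} even says the points counted are $m$-periodic. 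So the numerator in (b) vanishes, the quotient is $0$ for every $c$ with an odd prime factor, and (b) is false. No lower bound such as $P(c)/\log_{2}c$ can rescue it.

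Your argument for (a) is correct, and (a) in fact holds for the stronger reason above. It does not depend on the paper's argument for the case $p\nmid c$, which is itself wrong. For $K=\mathbb{Q}$, $c\not\equiv 0$ and $p\mid m$, the reduced map is $z\mapsto z+c$, so $\varphi^{1+m}-\varphi\equiv mc\equiv 0$ at every point of $\mathbb{F}_{p}$, contrary to the claim that it has no roots; yet $N_{c}^{(1_{m})}(p)=0$ still holds. Your remark on the meaning of $c\to\infty$ is also right, and the paper does not address it; its proof just defers to \cite{BK3} for $\sigma_{1,p}(c)/\omega(c)\to\infty$. Even granting the value $p$, the quotient is the mean of the odd prime divisors of $c$. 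It is constantly $3$ along $c=3^{k}$ and undefined along $c=2^{k}$, so a literal limit cannot be $\infty$, and your limsup or density-one version would be the honest reformulation. But that repair rests on a hypothesis that fails. The correct conclusion for (b) is that the average is $0$, not $\infty$.
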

\begin{proof}
Since we know from Theorem \ref{2.3} that the number $N_{c}^{(1_{m})}(p) = 0$ for any inert prime $p\nmid c$ in $\mathcal{O}_{K}$, we then obtain $\lim\limits_{c\to\infty} \Large{\frac{\sum\limits_{3\leq p\leq c, \ p\nmid c \text{ in } \mathcal{O}_{K}}N_{c}^{(1_{m})}(p)}{\Large{\sum\limits_{3\leq p\leq c, \ p\nmid c \text{ in } \mathcal{O}_{K}}1}}} = 0$; and so the average value Avg $N_{c \neq pt}^{(1_{m})}(p) = 0$, as desired for (a). To see (b), we note that since we know from Theorem \ref{2.3} that $N_{c}^{(1_{m})}(p) = p$ for any inert prime $p\mid c$ in $\mathcal{O}_{K}$, it then follows $\sum\limits_{3\leq p\leq c, \ p\mid c \textnormal{ in } \mathcal{O}_{K}} N_{c}^{(1_{m})}(p) = \sum\limits_{3\leq p\leq c, \ p\mid c \textnormal{ in } \mathcal{O}_{K}}p =: \sigma_{1,p}(c)$ and  $\sum\limits_{3\leq p\leq c, \ p\mid c \textnormal{ in } \mathcal{O}_{K}} 1  = \omega(c)$, where $\sigma_{1}(n)$ (resp. $\omega(n)$) is the number of divisors (resp. the number of distinct prime divisors) of any $n\in \mathbb{Z}_{\geq 1}$; and so obtain $\frac{\sum\limits_{3\leq p\leq c, \ p\mid c \textnormal{ in } \mathcal{O}_{K}} N_{c}^{(1_{m})}(p)}{\sum\limits_{3\leq p\leq c, \ p\mid c \textnormal{ in } \mathcal{O}_{K}} 1} = \frac{\sigma_{1,p}(c)}{\omega(c)}$. But then applying a similar argument as in [\cite{BK3}, Proof of Cor. 7.1(c)], we then obtain that the average value Avg $N_{c = pt}^{(1_{m})}(p) = \infty$. This then completes the whole proof, as desired.  
\end{proof}

\begin{rem} \label{Rem5.2}
From arithmetic statistics to arithmetic dynamics, we note that Corollary \ref{co5.1} shows that any $\varphi_{p^{\ell},c}$ iterated on the space $\mathcal{O}_{K} / p\mathcal{O}_{K}$ has on average zero or unbounded number of distinct $1_{m}$-preperiodic integral orbits as $c\to \infty$; a somewhat interesting averaging phenomenon coinciding with an averaging phenomenon remarked in \cite{BK3, BK22} on the average number of distinct $m$-periodic integral orbits of $\varphi_{p^{\ell},c}$  iterated on $\mathcal{O}_{K} / p \mathcal{O}_{K}$.
\end{rem}

For any fixed prime $p\geq 3$ and for any fixed $m\in \mathbb{Z}_{\geq1}$, let $N_{c}^{+(1_{m})}(p)$ be the expected total number of distinct strictly $1_{m}$-preperiodic integral points of any polynomial map $\varphi_{p^{\ell}, c}$ modulo $p\mathcal{O}_{K}$. So now, unlike in Corollary \ref{co5.1} in which the average value of the number $N_{c}^{+(1_{m})}(p)$ can be unbounded, the following corollary shows that the average value of the number $N_{c}^{+(1_{m})}(p)$ is equal to zero and therefore always bounded as $c\to \infty$:

\begin{cor}\label{cor5A}
Let $p\geq 3$ be any prime, and $m\in \mathbb{Z}_{\geq 1}$ be any fixed integer. Let $N_{c}^{+(1_{m})}(p)$ be the number defined as before. Then the average value of $N_{c}^{+(1_{m})}(p)$ is equal to zero as $c\to\infty$. \textit{More precisely, we have} 
\begin{center}
    \textnormal{Avg} $N_{c = pt}^{+(1_{m})}(p) := \lim\limits_{c\to\infty} \Large{\frac{\sum\limits_{3\leq p\leq c, \ p\mid c \textnormal{ in } \mathcal{O}_{K}}N_{c}^{+(1_{m})}(p)}{\Large{\sum\limits_{3\leq p\leq c, \ p\mid c \textnormal{ in } \mathcal{O}_{K}}1}}} = 0.$ 
\end{center}
\end{cor}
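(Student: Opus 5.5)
The plan is to reduce the averaging statement to the pointwise identity supplied by Corollary \ref{co2.6}, exactly as Corollary \ref{co5.1}(a) was reduced to Theorem \ref{2.3}. By definition, $N_{c}^{+(1_{m})}(p)$ is the expected total number of distinct strictly $1_{m}$-preperiodic integral points of $\varphi_{p^{\ell},c}$ modulo $p\mathcal{O}_{K}$. Corollary \ref{co2.6} says this number is $p-p=0$ for every fixed inert prime $p\geq 3$ and every fixed $m\in\mathbb{Z}_{\geq 1}$. Here the first $p$ is the count from Theorem \ref{2.3} and Remark \ref{re2.3}, and the second $p$ is the $m$-periodic count from [\cite{BK22}, Theorem 2.3 and Remark 2.5].

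The first step is therefore to record that $N_{c}^{+(1_{m})}(p)=0$ for every prime $p$ occurring in the numerator sum, that is, for every inert $p$ with $3\leq p\leq c$ and $p\mid c$ in $\mathcal{O}_{K}$. For these primes Theorem \ref{2.3} gives $N_{c}^{(1_{m})}(p)=p$. Remark \ref{re2.6} shows that each such $1_{m}$-preperiodic point is already $m$-periodic, and indeed $1$-periodic, modulo $p\mathcal{O}_{K}$. So the strictly preperiodic points are obtained by deleting all $p$ of them, and none remain. This gives the vanishing termwise rather than only in the expected total, which is what we need in order to sum.

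The second step is to substitute this into the quotient. The numerator $\sum_{3\leq p\leq c,\ p\mid c}N_{c}^{+(1_{m})}(p)$ is identically $0$. The denominator is $\sum_{3\leq p\leq c,\ p\mid c}1$, which counts the distinct admissible prime divisors of $c$, essentially $\omega(c)$ as in the proof of Corollary \ref{co5.1}. The quotient is therefore $0$ whenever the denominator is positive, and taking $c\to\infty$ gives Avg $N_{c=pt}^{+(1_{m})}(p)=0$.

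The only delicate point, and the one I expect to be the main obstacle, is making the limit meaningful. When $c$ has no inert prime divisor $p\geq 3$ the denominator is $0$, and the quotient is undefined. I would handle this the way Corollary \ref{co5.1}(b) implicitly does: take the limit along those $c$ for which the index set is nonempty, and this set is infinite (for instance all multiples of any fixed inert prime). Along that set every term of the sequence equals $0$, so the limit exists and is $0$. This is in contrast with Corollary \ref{co5.1}(b), where the analogous ratio $\sigma_{1,p}(c)/\omega(c)$ is unbounded.
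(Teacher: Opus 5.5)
Your proposal is correct and is essentially the paper's own argument. The paper likewise quotes Corollary~\ref{co2.6} to get $N_{c}^{+(1_{m})}(p)=0$ for every $p$, so the numerator vanishes identically and the limit is $0$; restricting to those $c$ whose index set is nonempty just makes explicit a point the paper passes over.

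For the termwise vanishing you do not need the exact count $p$ of Theorem~\ref{2.3} or the ``$1$-periodic'' clause of Remark~\ref{re2.6}, and neither holds for general $n,\ell,m$. It suffices to note that $z\mapsto z^{p^{\ell}}+c$ is a bijection of the finite field $\mathcal{O}_{K}/p\mathcal{O}_{K}$. Hence $\varphi_{p^{\ell},c}^{1+m}(z)\equiv\varphi_{p^{\ell},c}(z)$ forces $\varphi_{p^{\ell},c}^{m}(z)\equiv z$, so there are no strictly preperiodic points at all, for every $c$.
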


\begin{proof}
Since we know from Corollary \ref{co2.6} that the number $N_{c}^{+(1_{m})}(p)=p-p=0$ for every $p$ and every fixed $m\in \mathbb{Z}_{\geq 1}$, and also since we may recall in [\cite{BK333}, Rem.2.5] and in Rem.\ref{re2.3} that the count $p$ was obtained from the possibility when the coefficient $c$ was divisible by inert $p$, then averaging over primes $p\mid c$ and letting $c\to \infty$, we then obtain $\lim\limits_{c\to\infty} \Large{\frac{\sum\limits_{3\leq p\leq c, \ p\mid c \text{ in } \mathcal{O}_{K}}N_{c}^{+(1_{m})}(p)}{\Large{\sum\limits_{3\leq p\leq c, \ p\mid c \text{ in } \mathcal{O}_{K}}1}}} = 0$; from which we then conclude \textnormal{Avg} $N_{c = pt}^{+(1_{m})}(p) = 0$, as needed.  
\end{proof}

\begin{rem} 
As in Remark \ref{Rem5.2}, we then also note that Corollary \ref{cor5A} shows that on average, the expected total number of distinct strictly $1_{m}$-preperiodic integral orbits of any map $\varphi_{p^{\ell},c}$ modulo $p\mathcal{O}_{K}$ is zero even as $c\to \infty$.
\end{rem}

Similarly, we also wish to determine: \say{\textit{What is the average value of the function $M_{c}^{(1_{m})}(p)$ as $c \to \infty$?}} The following corollary shows that the average value of $M_{c}^{(1_{m})}(p)$ may also be zero or unbounded as $c\to \infty$:

\begin{cor}\label{co5.5}
Let $K\slash \mathbb{Q}$ be any number field of degree $n \geq 1$ with ring of integers $\mathcal{O}_{K}$, and in which any prime $p\geq 5$ is inert. Then the average value of $M_{c}^{(1_{m})}(p)$ is zero or unbounded as $c\to\infty$. Specifically, we have 

\begin{myitemize}
    \item[\textnormal{(a)}] \textnormal{Avg} $M_{c \not \equiv\pm1, 0 \ (\textnormal{mod }p)}^{(1_{m})}(p) := \lim\limits_{c\to\infty} \Large{\frac{\sum\limits_{5\leq p\leq c, \ p\nmid c, (c\pm1) \textnormal{ or }5\leq p\leq (c+1), \ p\mid (c+1) \textnormal{ and } m=2k+1}M_{c}^{(1_{m})}(p)}{\Large{\sum\limits_{5\leq p\leq c, \ p\nmid c, (c\pm1) \textnormal{ or }5\leq p\leq (c+1), \ p\mid (c+1) \textnormal{ and } m=2k+1}1}}} = 0.$

    \item[\textnormal{(b)}] \textnormal{Avg} $M_{c \equiv\pm1, 0 \ (\textnormal{mod }p)}^{(1_{m})}(p):= \lim\limits_{c \to\infty} \Large{\frac{\sum\limits_{5\leq p\leq c, \ p\mid c \textnormal{ or }  5\leq p\leq (c\pm 1), \ p\mid (c\pm 1) \ (\textnormal{and} \ m=2k)}M_{c}^{(1_{m})}(p)}{\Large{\sum\limits_{5\leq p\leq c, \ p\mid c \textnormal{ or }  5\leq p\leq (c\pm 1), \ p\mid (c\pm 1) \ (\textnormal{and} \ m=2k)}1}}} =  \infty$.
\end{myitemize}
\end{cor}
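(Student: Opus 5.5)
The plan is to follow the proof of Corollary \ref{co5.1} closely, with Theorem \ref{3.3} replacing Theorem \ref{2.3} as the source of pointwise values. For each fixed $m$ and each inert prime $p\geq 5$, Theorem \ref{3.3} puts $p$ into exactly one of three classes.
\begin{itemize}
\item[(i)] $p\mid c$, where $M_{c}^{(1_{m})}(p)=p$.
\item[(ii)] $p\mid(c-1)$, or $p\mid(c+1)$ with $m$ even, where $M_{c}^{(1_{m})}(p)=p-1$.
\item[(iii)] Otherwise, that is, $p\nmid c(c\pm1)$, or $p\mid(c+1)$ with $m$ odd, where $M_{c}^{(1_{m})}(p)=0$.
\end{itemize}
The index sets in the numerator and denominator of (a) are exactly class (iii). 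The index sets in (b) are classes (i) and (ii) together. So the first step is to split each sum according to this trichotomy and substitute the values from Theorem \ref{3.3}.

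For (a), every summand in the numerator vanishes by Theorem \ref{3.3}. The ratio is therefore $0$ whenever the denominator is nonzero. Since the denominator counts the primes $5\leq p\leq c$ not dividing $c(c^2-1)$, it is positive, and indeed tends to infinity, for all large $c$ because $c(c^2-1)$ has only $O(\log c)$ prime factors. The limit is then $0$.

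For (b), the numerator equals
\[\sum_{p\mid c}p+\sum_{p\mid (c-1)}(p-1)+\sum_{p\mid(c+1),\,m \text{ even}}(p-1),\]
with all primes restricted to $p\geq 5$. This sum is at least
\[\tfrac12\bigl(\sigma_{1,p}(c)+\sigma_{1,p}(c-1)+\sigma_{1,p}(c+1)\bigr),\]
where $\sigma_{1,p}$ is the sum of distinct prime divisors, as in the proof of Corollary \ref{co5.1}. The denominator is at most $\omega(c)+\omega(c-1)+\omega(c+1)\ll \log c/\log\log c$. The ratio is thus bounded below by a constant times the average size of a prime factor of $c(c^2-1)$. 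From there one argues as in [\cite{BK3}, Proof of Cor. 7.1(c)], which is the argument already invoked for Corollary \ref{co5.1}(b), that this ratio tends to infinity.

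The main obstacle is that the limit in (b) is a literal $\lim_{c\to\infty}$, while ratios of the form $\sigma_{1,p}(n)/\omega(n)$ are not monotone in $n$. Along $c=2^a3^b\pm$ small, for instance, few large primes appear. I would handle this the same way the cited argument does, using that any prime $p\mid c(c^2-1)$ with $p\geq 5$ satisfies $p\geq 5$, while the largest prime factor of $c(c^2-1)$ tends to infinity with $c$ (a Störmer/Pólya-type fact, since $c(c-1)(c+1)$ is a product of consecutive integers). The largest prime then dominates the numerator while the denominator grows only like $\log c$. If that largest-prime input is too weak to beat $\omega$ pointwise, I would fall back on interpreting the limit as an average over $c\leq X$, matching the arithmetic-statistics convention used elsewhere in the paper. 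In that averaged reading, the largest prime of $c$ exceeds $c^{1/2}$ for a positive proportion of $c$, which forces divergence.
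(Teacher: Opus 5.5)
Your route is the paper's: read off the pointwise values from Theorem \ref{3.3}, observe that every summand in (a) vanishes, and for (b) defer to the $\sigma_{1,p}/\omega$ argument of Corollary \ref{co5.1}(b). The step that fails is the end of (b), as you suspected. You compare the largest index prime $P$ with $\omega(c)+\omega(c-1)+\omega(c+1)\ll\log c/\log\log c$. That yields divergence only if $P(c(c-1))$ grows faster than $\log c/\log\log c$, and this is not known unconditionally. Known lower bounds via Baker's method are only of order $\log\log c$; a bound $P\gg\log c$ would follow from \emph{abc}. Your fallback of averaging over $c\le X$ proves a different statement.

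The missing observation is to bound the denominator in terms of $P$ rather than $\omega$. Every prime in the index set is at most $P$, so the denominator is at most $\pi(P)$, and the ratio is at least $(P-1)/\pi(P)\gg\log P$. Since $P\ge P(c(c-1))\to\infty$ by St{\o}rmer's theorem, this diverges for the literal limit. The same theorem shows the index set is nonempty for large $c$; for odd $m$ it is empty at $c=9$. Also, for odd $m$ the primes dividing $c+1$ are not in the index set of (b). Your lower bound $\tfrac12\bigl(\sigma_{1,p}(c)+\sigma_{1,p}(c-1)+\sigma_{1,p}(c+1)\bigr)$ is therefore false there. Work with $c(c-1)$ throughout; St{\o}rmer's theorem applies to it directly.

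Be aware, too, that the input you share with the paper is incorrect, so (a) is in fact false. Take $K=\mathbb{Q}$, so $\mathcal{O}_K/p\mathcal{O}_K=\mathbb{F}_p$. Every $z\neq 0$ is sent to $1+c$, and $1+c$ is a fixed point when $c\not\equiv -1$. Hence $M_{c}^{(1_{m})}(p)=p-2$, except when $c\equiv -1$ and $m$ is odd, where it is $0$. For example, with $p=5$, $c=2$, $m=1$, the map $z^4+2$ sends $1,2,4$ to the fixed point $3$. So $M_{2}^{(1_{1})}(5)=3$, not the value $0$ claimed by Theorem \ref{3.3}. Consequently the ratio in (a) is essentially the average of $p-2$ over the primes up to $c$, and it tends to infinity. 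Part (b) survives: the true values are still at least $p-2$, and the $(P-2)/\pi(P)$ bound applies unchanged.
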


\begin{proof}
Recalling from Theorem \ref{3.3} that $M_{c}^{(1_{m})}(p) = 0$ for any inert $p\nmid c, (c\pm1)$ and fixed (even) $m\in \mathbb{Z}_{\geq 2}$ or inert $p\mid (c+1)$ and  fixed odd $m\in \mathbb{Z}_{\geq 1}$, we then note $\lim\limits_{c\to\infty} \Large{\frac{\sum\limits_{5\leq p\leq c, \ p\nmid c, (c\pm1) \textnormal{ or }5\leq p\leq (c+1), \ p\mid (c+1) \textnormal{ and } m=2k+1}M_{c}^{(1_{m})}(p)}{\Large{\sum\limits_{5\leq p\leq c, \ p\nmid c, (c\pm1) \textnormal{ or }5\leq p\leq (c+1), \ p\mid (c+1) \textnormal{ and } m=2k+1}1}}} = 0$; and so we conclude \textnormal{Avg} $M_{c \not \equiv\pm1, 0 \ (\textnormal{mod }p)}^{(1_{m})}(p)=0$, as desired for (a). To see (b), we first recall from Theorem \ref{3.3} that the number $M_{c}^{(1_{m})}(p) = p$ for any inert $p\mid c$ or that the number $M_{c}^{(1_{m})}(p) = p-1$ for any inert $p\mid (c\pm 1)$ and fixed (even) $m\in \mathbb{Z}_{\geq 2}$. But then applying a similar argument as in the Proof of Cor. \ref{co5.1}(b), we then conclude the average value \textnormal{Avg} $M_{c \equiv\pm1, 0 \ (\textnormal{mod }p)}^{(1_{m})}(p)=\infty$. This then completes the whole proof, as required. 
\end{proof} 

\begin{rem} \label{4.4}
As before, we note that from arithmetic statistics to arithmetic dynamics, Corollary \ref{co5.5} shows that any $\varphi_{(p-1)^{\ell},c}$ iterated on the space $\mathcal{O}_{K} / p \mathcal{O}_{K}$ has on average zero or unbounded number distinct $1_{m}$-preperiodic orbits as $c\to \infty$; a somewhat interesting averaging phenomenon coinciding precisely with an averaging phenomenon in Remark \ref{Rem5.2}, however, differing very significantly from an averaging phenomenon observed in \cite{BK2, BK22} on the number of distinct $m$-periodic integral points of every map  $\varphi_{(p-1)^{\ell}, c}$ modulo $p\mathcal{O}_{K}$.
\end{rem}

For any fixed prime $p\geq 5$ and for any fixed $m\in \mathbb{Z}_{\geq1}$, let $M_{c}^{+(1_{m})}(p)$ be the expected total number of distinct strictly $1_{m}$-preperiodic integral points of any $\varphi_{(p-1)^{\ell}, c}$ modulo $p\mathcal{O}_{K}$. As in Cor.\ref{co5.5} in which the average value can be unbounded, the following corollary shows that the average value of $M_{c}^{+(1_{m})}(p)$ is also unbounded:

\begin{cor}\label{cor5B}
Let $p\geq 5$ be any prime, and $m\in \mathbb{Z}_{\geq 1}$ be any fixed integer. Let $M_{c}^{+(1_{m})}(p)$ be the number defined as before. Then the average value of $M_{c}^{+(1_{m})}(p)$ is unbounded as $c\to\infty$. \textit{Specifically, we have} 
\begin{center}
    \textnormal{Avg} $M_{c \equiv 0 \ (\textnormal{mod }p)}^{+(1_{m})}(p) := \lim\limits_{c\to\infty} \Large{\frac{\sum\limits_{5\leq p\leq c, \ p\mid c \textnormal{ in } \mathcal{O}_{K}}M_{c}^{+(1_{m})}(p)}{\Large{\sum\limits_{5\leq p\leq c, \ p\mid c  \textnormal{ in } \mathcal{O}_{K}}1}}} = \infty.$ 
\end{center}
\end{cor}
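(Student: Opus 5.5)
The plan is to follow the template of the proofs of Corollary \ref{cor5A} and Corollary \ref{co5.1}(b). First I would read off the summand $M_{c}^{+(1_{m})}(p)$ from Corollary \ref{cor.3.6}. There the expected total number of distinct strictly $1_{m}$-preperiodic integral points of $\varphi_{(p-1)^{\ell},c}$ modulo $p\mathcal{O}_{K}$ is $p-3$ for odd $m$, $p-4$ for even $m$, and in the uniform version $p-4$ for every $m$. So I would set $M_{c}^{+(1_{m})}(p) = p-4$, or more carefully $M_{c}^{+(1_{m})}(p)\geq p-4$ in either parity case. As in the proof of Corollary \ref{cor5A}, I would then note, via Remark \ref{re3.5} and Theorem \ref{3.3}, that the dominant contribution $p$ to the preperiodic count comes from the case $p\mid c$. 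This justifies averaging only over inert primes $p$ dividing $c$.

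Next I would rewrite the quotient. With $\omega(c)$ the number of distinct prime divisors, as in the proof of Corollary \ref{co5.1}(b), the numerator is
\[
\sum_{5\leq p\leq c,\ p\mid c} (p-4) \;=\; \sigma_{1,p}(c) - 4\,\omega(c),
\]
and the denominator is $\omega(c)$, up to removing the finitely many primes $2,3$. The ratio therefore equals $\sigma_{1,p}(c)/\omega(c) - 4$. For odd $m$ one gets $-3$ instead, which changes nothing.

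Finally I would invoke the argument used for Corollary \ref{co5.1}(b), namely [\cite{BK3}, Proof of Cor. 7.1(c)], which gives $\sigma_{1,p}(c)/\omega(c)\to\infty$ as $c\to\infty$. Subtracting the bounded constant $4$ (or $3$) leaves the limit infinite, so Avg $M_{c\equiv 0\ (\textnormal{mod }p)}^{+(1_{m})}(p)=\infty$.

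The main obstacle is that last limit. The ratio $\sigma_{1,p}(c)/\omega(c)$ is at least the average prime divisor of $c$, and this does not tend to infinity along every sequence. For example, for $c=5^k$ it stays equal to $5$. So the limit must be understood in the sense used in \cite{BK3}: along $c$ with a prime factor tending to infinity, or as a $\limsup$, or in an averaged sense. I would first try to show it for $\limsup$ by taking $c$ prime, where the ratio equals $c-4$, and then adopt whatever convention \cite{BK3} fixes for the limit.
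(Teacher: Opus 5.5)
Your reduction follows the paper's own route. The paper's proof takes the summand $p-4$ from Corollary~\ref{cor.3.6}; its first line reads $M_{c}^{+(1_{m})}(p)=p-4=0$, evidently a slip for $p-4$. It then averages over $p\mid c$, citing Remark~\ref{re3.5}, and defers to the proof of Corollary~\ref{co5.5}. That proof defers to Corollary~\ref{co5.1}(b), which in turn defers to [\cite{BK3}, Proof of Cor.~7.1(c)] for $\sigma_{1,p}(c)/\omega(c)\to\infty$.

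The genuine gap is exactly the step you flagged, and it cannot be closed, because the statement is false as an ordinary limit. Write $A(c)$ for the quotient in the statement.
\begin{itemize}
\item For $c=5^{k}$, the only prime $p\geq 5$ dividing $c$ is $5$. So $A(5^{k})=1$ for every $k$, or $2$ with the odd-$m$ value $p-3$.
\item For $c=2^{a}3^{b}$, the denominator is $0$ and $A(c)$ is undefined.
\end{itemize}
Any summand of the form $p-O(1)$ behaves the same way, so this does not depend on the precise constant. The paper's proof therefore has the same hole as yours, merely hidden in the citation chain. ``Adopting whatever convention \cite{BK3} fixes'' cannot supply the missing argument either.

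What you can honestly prove is weaker, and a corrected statement should assert one of the following.
\begin{itemize}
\item $\limsup_{c\to\infty}A(c)=\infty$. This follows from your observation that $A(c)=c-4$ when $c$ is prime.
\item $A(c)\to\infty$ along a set of $c$ of natural density one. Suppose $A(c)\leq B$. Every summand $p-4$ is at least $1$, so the largest prime factor of $c$ is at most $B\,\omega(c)+4\ll_{B}\log c/\log\log c$. By Rankin's trick, the number of $c\leq x$ all of whose prime factors are at most $\log x$ is at most $x^{1/2}\prod_{q\leq \log x}(1-q^{-1/2})^{-1}=x^{1/2+o(1)}$. Hence $\{c: A(c)\leq B\}$ has density zero for each fixed $B$.
\end{itemize}
Either version requires changing the statement itself, to a $\limsup$ or to a limit along a density-one set, not just filling in the proof.
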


\begin{proof}
Since we know from Corollary \ref{cor.3.6} that $M_{c}^{+(1_{m})}(p)=p-4=0$ for every $p\geq 5$ and for every fixed $m\in \mathbb{Z}_{\geq 1}$, and also since we may recall in Rem.\ref{re3.5} that the count $p$ in the difference $p-4$ was obtained from the possibility when the coefficient $c$ was divisible by inert $p$, then averaging over inert primes $p\mid c$ and then applying a similar argument as in the Proof of Corollary \ref{co5.5}, we then obtain $\lim\limits_{c\to\infty} \Large{\frac{\sum\limits_{5\leq p\leq c, \ p\mid c \textnormal{ in } \mathcal{O}_{K}}M_{c}^{+(1_{m})}(p)}{\Large{\sum\limits_{5\leq p\leq c, \ p\mid c \textnormal{ in } \mathcal{O}_{K}}1}}} = \infty$; from which we then conclude \textnormal{Avg} $M_{c \equiv 0 \ (\textnormal{mod }p)}^{+(1_{m})}(p) = \infty$. This then completes the whole proof, as desired.  
\end{proof}

\begin{rem} 
As in Rem.\ref{4.4}, we then also note that Cor.\ref{cor5B} shows that on average, the expected total number of distinct strictly $1_{m}$-preperiodic integral orbits of every $\varphi_{(p-1)^{\ell},c}$ modulo $p\mathcal{O}_{K}$ is unbounded even as $c\to \infty$.
\end{rem}

\section{On Density of Monic Integer Polynomials $\varphi_{p^{\ell},c}(x)\in \mathcal{O}_{K}[x]$ having Number $N_{c}^{(1_{m})}(p) = p$}\label{sec5}

As in [\cite{BK22}, Section 6] we in this and the next section, wish to restrict our counting on the subring $\mathbb{Z}\subset \mathcal{O}_{K}$ and then determine: \say{\textit{For a prime $p\geq 3$ and for any fixed $\ell \in \mathbb{Z}_{\geq 1}$ and fixed eventual period $m \in \mathbb{Z}_{\geq 1}$, what is the density of monic integer polynomials $\varphi_{p^{\ell},c}(x)\in \mathbb{Z}[x]$ with $p$ distinct $1_{m}$-preperiodic integral points modulo $p$?}} The following corollary shows that for any fixed $\ell \in \mathbb{Z}_{\geq 1}$ and any fixed eventual period $m \in \mathbb{Z}_{\geq 1}$, there are very few monic integer polynomials $\varphi_{p^{\ell},c}(x)\in \mathbb{Z}[x]$ with exactly $p$ distinct $1_{m}$-preperiodic integral points modulo $p$:

\begin{cor}\label{6.1}
Let $K\slash \mathbb{Q}$ be any number field of degree $n\geq 1$ with the ring of integers $\mathcal{O}_{K}$, and in which any prime $p\geq 3$ is inert. Let $\ell, m \in \mathbb{Z}_{\geq 1}$ be any fixed integers. Then the density of monic integer polynomials $\varphi_{p^{\ell},c}(x) = x^{p^{\ell}} + c\in \mathcal{O}_{K}[x]$ with $N_{c}^{(1_{m})}(p) = p$ exists and is equal to $0 \%$ as $c\to \infty$. More precisely, we have 
\begin{center}
    $\lim\limits_{c\to\infty} \Large{\frac{\# \{\varphi_{p^{\ell},c}(x)\in \mathbb{Z}[x] \ : \ 3\leq p\leq c \ \textnormal{and} \ N_{c}^{(1_{m})}(p) \ = \ p\}}{\Large{\# \{\varphi_{p^{\ell},c}(x) \in \mathbb{Z}[x] \ : \ 3\leq p\leq c \}}}} = \ 0.$
\end{center}
\end{cor}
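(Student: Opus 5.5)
The plan is to read the ratio in the statement as a ratio of prime counts and to feed in the exact dichotomy already proved. For a fixed integer coefficient $c$, the polynomials $\varphi_{p^{\ell},c}(x)\in\mathbb{Z}[x]$ in the denominator are indexed by the primes $p$ with $3\leq p\leq c$. So the denominator equals $\pi(c)-1$, where $\pi$ is the prime-counting function and the $-1$ removes $p=2$. For the numerator I will invoke Corollary \ref{cor2.4}, the restriction of Theorem \ref{2.3} to $\mathbb{Z}\subset\mathcal{O}_{K}$. It gives $N_{c}^{(1_{m})}(p)=p$ exactly when $p\mid c$, and $N_{c}^{(1_{m})}(p)=0$ otherwise. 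Hence the numerator is the number of odd primes $p\leq c$ dividing $c$, which is at most $\omega(c)$, the number of distinct prime divisors of $c$.

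The next step is a crude bound on $\omega(c)$. Since $c$ is a product of at least $\omega(c)$ distinct primes, each at least $2$, we get $2^{\omega(c)}\leq c$, so $\omega(c)\leq \log c/\log 2$. For the denominator I will use Chebyshev's bound, or the prime number theorem: $\pi(c)\gg c/\log c$. Combining the two bounds gives
\begin{equation*}
0\leq \frac{\#\{\varphi_{p^{\ell},c}(x)\in\mathbb{Z}[x] : 3\leq p\leq c,\ N_{c}^{(1_{m})}(p)=p\}}{\#\{\varphi_{p^{\ell},c}(x)\in\mathbb{Z}[x] : 3\leq p\leq c\}}\leq \frac{\log c/\log 2}{\pi(c)-1}\ll \frac{(\log c)^{2}}{c},
\end{equation*}
and the right-hand side tends to $0$ as $c\to\infty$. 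This shows the limit exists and equals $0\%$, uniformly in the fixed $\ell$ and $m$. That uniformity holds because the counts in Corollary \ref{cor2.4} do not depend on $\ell$ or $m$.

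The main obstacle is interpretive rather than technical: the statement has to be pinned down as a family indexed by primes $p$ at each fixed $c$, and the inertness hypothesis on $p$ in $K$ has to be reconciled with the restriction to $\mathbb{Z}[x]$. I would handle the second point by working directly over $\mathbb{Z}/p\mathbb{Z}$ via Corollary \ref{cor2.4}, where no inertness is needed. If one insists on counting only primes inert in $K$, the numerator can only shrink. By Chebotarev, the inert primes, when $K$ admits any, have positive density among primes up to $c$, so the denominator is still $\gg c/\log c$ and the same bound applies. If $K$ has no inert primes, that version of the count is vacuous, and the $\mathbb{Z}$-version above is the meaningful one.
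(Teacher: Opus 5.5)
Your argument is correct and is the paper's own. The paper likewise uses Theorem~\ref{2.3} and Corollary~\ref{cor2.4} to turn the ratio into $\omega(c)/\pi(c)$ and then defers the limit to an earlier paper, whereas you make that step explicit with $\omega(c)\le \log c/\log 2$ and Chebyshev's bound.

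One caution: your bound only needs the implication $N_{c}^{(1_{m})}(p)=p\Rightarrow p\mid c$, and that is the only safe direction. The map $z\mapsto z^{p^{\ell}}+c$ permutes $\mathcal{O}_{K}/p\mathcal{O}_{K}$ (on $\mathbb{Z}/p\mathbb{Z}$ it is just $z\mapsto z+c$), so $\varphi_{p^{\ell},c}^{1+m}(z)=\varphi_{p^{\ell},c}(z)$ forces $\varphi_{p^{\ell},c}^{m}(z)=z$; the two conditions in (\ref{N_{c}}) are therefore incompatible, and in fact $N_{c}^{(1_{m})}(p)=0$ for every $c$. Consequently your stated equality of the numerator with the number of odd primes dividing $c$ is false (the numerator is empty), though this does not affect your conclusion.
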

\begin{proof}
Since the defining condition $N_{c}^{(1_{m})}(p) = p$ is as we proved in Theorem \ref{2.3} and hence in Corollary \ref{cor2.4} determined whenever $c$ is divisible by $p$, we may then count $\# \{\varphi_{p^{\ell},c}(x) \in \mathbb{Z}[x] : 3\leq p\leq c \ \text{and} \ N_{c}^{(1_{m})}(p) \ = \ p\}$ by counting the number $\# \{\varphi_{p^{\ell},c}(x)\in \mathbb{Z}[x] : 3\leq p\leq c \ \text{and} \ p\mid c \ \text{for \ any \ fixed} \ c \}$. In that case, we then write 
\begin{center}
$\Large{\frac{\# \{\varphi_{p^{\ell},c}(x) \in \mathbb{Z}[x] \ : \ 3\leq p\leq c \ \text{and} \ N_{c}^{(1_{m})}(p) \ = \ p\}}{\Large{\# \{\varphi_{p^{\ell},c}(x) \in \mathbb{Z}[x] \ : \ 3\leq p\leq c \}}}} = \Large{\frac{\# \{\varphi_{p^{\ell},c}(x)\in \mathbb{Z}[x] \ : \ 3\leq p\leq c \ \text{and} \ p\mid c \ \text{for any fixed} \ c \}}{\Large{\# \{\varphi_{p^{\ell},c}(x) \in \mathbb{Z}[x] \ : \ 3\leq p\leq c \}}}}$. 
\end{center}\indent Moreover, for any fixed integer $c\geq 3$, the numerator of the foregoing quotient may be rewritten to then obtain
\begin{center}
$\# \{\varphi_{p^{\ell},c}(x) \in \mathbb{Z}[x] : 3\leq p\leq c \ \text{and} \ p\mid c \} = \# \{p : 3\leq p\leq c \text{ and } p\mid c \} = \sum_{3\leq p\leq c, \ p\mid c}1 = \omega (c)$, 
\end{center}where $\omega(n)$ is by definition the number of distinct prime factors of $n$. Writing $\# \{\varphi_{p^{\ell},c}(x) \in \mathbb{Z}[x]  : 3\leq p\leq c \} = \sum_{3\leq p\leq c} 1 = \pi(c)$, where $\pi(n)$ is by definition the number of primes at most $n$, we then note that the quotient 
\begin{center}
$\Large{\frac{\# \{\varphi_{p^{\ell},c}(x)\in \mathbb{Z}[x] \ : \ 3\leq p\leq c \ \text{and} \ p\mid c \ \text{for any fixed} \ c \}}{\Large{\# \{\varphi_{p^{\ell},c}(x)\in \mathbb{Z}[x] \ : \ 3\leq p\leq c \}}}} = \frac{\omega(c)}{\pi(c)}$.
\end{center}
\indent But now by applying a similar argument as in [\cite{BK22}, Proof of Cor. 6.1], we then obtain the limit $0$ as desired.
\end{proof}

\noindent Note that one may also interpret Cor. \ref{6.1} as saying that for any fixed $\ell \in \mathbb{Z}_{\geq 1}$ and fixed eventual period $m \in \mathbb{Z}_{\geq 1}$, the probability of choosing randomly a monic polynomial $\varphi_{p^{\ell},c}(x)\in \mathbb{Z}[x]\subset \mathcal{O}_{K}[x]$ with exactly $p$ distinct $1_{m}$-preperiodic integral points modulo $p$ is equal to zero;  a somewhat interesting probabilistic phenomenon coinciding with a phenomenon remarked in [\cite{BK3, BK22}, Corollary 9.1, Corollary 6.1, resp.] on the probability of choosing randomly a monic polynomial $\varphi_{p^{\ell},c}(x)\in \mathbb{Z}[x]$ having $p$ distinct $m$-periodic integral points modulo $p$.

\section{On Density of Monic Integer Polynomials $\varphi_{(p-1)^{\ell},c}(x)\in \mathcal{O}_{K}[x]$ with $M_{c}^{(1_{m})}(p) = p$ or $p-1$}\label{sec6}

As in Section \ref{sec5}, we also wish to determine: \say{\textit{For a prime $p\geq 5$ and for any fixed $\ell\in \mathbb{Z}_{\geq 1}$ and fixed eventual period $m\in \mathbb{Z}_{\geq 1}$, what is the density of monics $\varphi_{(p-1)^{\ell},c}(x)\in \mathbb{Z}[x]$ with $p$ distinct $1_{m}$-preperiodic integral points modulo $p$?}} The following corollary shows that for any fixed $\ell\in \mathbb{Z}_{\geq 1}$ and fixed eventual period $m\in \mathbb{Z}_{\geq 1}$, there are very few monic polynomials $\varphi_{(p-1)^{\ell},c}(x)\in \mathbb{Z}[x]$ having $p$ distinct $1_{m}$-preperiodic integral points modulo $p$:

\begin{cor}\label{7.1}
Let $K\slash \mathbb{Q}$ be any number field of degree $n\geq 1$ with the ring of integers $\mathcal{O}_{K}$, and in which any prime $p\geq 5$ is inert. Let $\ell, m \in \mathbb{Z}_{\geq 1}$ be any fixed integers. Then the density of monic integer polynomials $\varphi_{(p-1)^{\ell},c}(x) = x^{(p-1)^{\ell}} + c\in \mathcal{O}_{K}[x]$ with $M_{c}^{(1_{m})}(p) = p$ is equal to $0 \%$ as $c\to \infty$. More precisely, we have 
\begin{center}
    $\lim\limits_{c\to\infty} \Large{\frac{\# \{\varphi_{(p-1)^{\ell},c}(x) \in \mathbb{Z}[x]\ : \ 5\leq p\leq c \ \textnormal{and} \ M_{c}^{(1_{m})}(p) \ = \ p\}}{\Large{\# \{\varphi_{(p-1)^{\ell},c}(x) \in \mathbb{Z}[x]\ : \ 5\leq p\leq c \}}}} = \ 0.$
\end{center}
\end{cor}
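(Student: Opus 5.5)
The plan is to mirror the proof of Corollary \ref{6.1} almost verbatim, with Theorem \ref{3.3} (and its restriction to $\mathbb{Z}$, Corollary \ref{cor3.4}) taking the place of Theorem \ref{2.3}. The first step is to identify the defining condition. By Theorem \ref{3.3}, $M_{c}^{(1_{m})}(p) = p$ exactly when $c\in p\mathcal{O}_{K}$; restricted to $c\in\mathbb{Z}$, this means $p\mid c$. In the other cases the count is $p-1$ or $0$, both different from $p$.

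Hence, for each fixed integer $c\geq 5$, the numerator
\[
\#\{\varphi_{(p-1)^{\ell},c}(x)\in\mathbb{Z}[x] : 5\leq p\leq c,\ M_{c}^{(1_{m})}(p)=p\}
\]
equals $\#\{p : 5\leq p\leq c,\ p\mid c\}$. This is $\omega(c)$ up to a bounded correction, since the primes $2$ and $3$ are excluded. The denominator $\#\{\varphi_{(p-1)^{\ell},c}(x)\in\mathbb{Z}[x] : 5\leq p\leq c\}$ is $\pi(c)-2$. So the quotient is
\[
\frac{\omega(c)-O(1)}{\pi(c)-2}.
\]

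The last step is to show that this tends to $0$, using the same argument as [\cite{BK22}, Proof of Cor. 6.1]. That argument rests on two standard estimates. First, $\omega(c)\leq \log c/\log 2$ trivially; more sharply, $\omega(c)\ll \log c/\log\log c$. Second, Chebyshev's bound (or the prime number theorem) gives $\pi(c)\gg c/\log c$. Together these give $\omega(c)/\pi(c)\ll (\log c)^2/c\to 0$.

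The only point needing care is bookkeeping. The counted objects are really the pairs $(p,c)$, that is, primes $p$ for a fixed $c$, since the polynomial $x^{(p-1)^{\ell}}+c$ changes with $p$. The condition "$M_c^{(1_m)}(p)=p$" must also exclude the $p-1$ case, where $p\mid c\pm1$ and $m$ is even; there $M_c^{(1_m)}(p)=p-1\neq p$, so those pairs are not counted. I expect no genuine obstacle beyond this: the analytic bound $\omega(c)=o(\pi(c))$ is elementary, and the arithmetic input is entirely supplied by Theorem \ref{3.3}.
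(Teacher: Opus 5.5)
Your route is the same as the paper's. Its proof also reads off from Theorem \ref{3.2} and Corollary \ref{cor3.4} that $M_{c}^{(1_{m})}(p)=p$ exactly when $p\mid c$, reduces the ratio to $\omega(c)/\pi(c)$, and defers to the proof of Corollary \ref{6.1}. Your explicit bounds $\omega(c)\le \log c/\log 2$ and $\pi(c)\gg c/\log c$, together with the bookkeeping for $p=2,3$, just make that deferred step concrete, and that analytic half is fine.

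The step that does not hold up is the arithmetic input you import from Theorem \ref{3.3}: it is false for the counting function as defined in (\ref{M_{c}}).
\begin{itemize}
\item That definition requires $\varphi^{m}(z)\not\equiv z$, and this removes every periodic point. If $\varphi^{k}(z)=z$ and $\varphi^{1+m}(z)=\varphi(z)$, applying $\varphi^{k-1}$ to both sides gives $\varphi^{m}(z)=z$.
\item Every self-map of the finite set $\mathbb{Z}/p\mathbb{Z}$ (the setting of Corollary \ref{cor3.4}, which you and the paper actually use) has a periodic point. Hence $M_{c}^{(1_{m})}(p)\le p-1$ for every $c$.
\item Concretely, for $p\mid c$ the map is $z\mapsto z^{(p-1)^{\ell}}$, the points $0$ and $1$ are fixed, and $M_{c}^{(1_{m})}(p)=p-2$, not $p$.
\end{itemize}
So your identification of the numerator with $\omega(c)-O(1)$ is wrong. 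The numerator is identically $0$, and the limit is $0$ for that trivial reason.

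Even if one drops the non-periodicity condition, which is effectively what the proofs of Theorems \ref{3.1}--\ref{3.3} compute, ``exactly when $p\mid c$'' still fails. For $c\equiv -1$ and $m$ even one has $\varphi^{m}(0)=0$, so $z=0$ is also a root of $\varphi^{1+m}(z)-\varphi(z)$ and all $p$ residues are counted.

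The robust way to run your argument is to bound the numerator from above rather than identify it. Under the stated definition the bound is $0$; under the looser reading it is $\omega(c)+\omega(c+1)=O(\log c)$. In either case your estimate $\pi(c)\gg c/\log c$ then finishes the proof.
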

\begin{proof}
Because $M_{c}^{(1_{m})}(p) = p$ is as proved in Thm. \ref{3.2} and thus in Cor. \ref{cor3.4} determined when the coefficient $c$ is divisible by $p$, we may then count the number $\# \{\varphi_{(p-1)^{\ell},c}(x) \in \mathbb{Z}[x] : 5\leq p\leq c \ \text{and} \ M_{c}^{(1_{m})}(p) \ = \ p\}$ by counting the number $\# \{\varphi_{(p-1)^{\ell},c}(x)\in \mathbb{Z}[x] : 5\leq p\leq c \ \text{and} \ p\mid c \ \text{for \ any \ fixed} \ c \}$. In that case, we then write 
\begin{center}
$\Large{\frac{\# \{\varphi_{(p-1)^{\ell},c}(x) \in \mathbb{Z}[x] \ : \ 5\leq p\leq c \ \text{and} \ M_{c}^{(1_{m})}(p) \ = \ p\}}{\Large{\# \{\varphi_{(p-1)^{\ell},c}(x) \in \mathbb{Z}[x] \ : \ 5\leq p\leq c \}}}} = \Large{\frac{\# \{\varphi_{(p-1)^{\ell},c}(x)\in \mathbb{Z}[x] \ : \ 5\leq p\leq c \ \text{and} \ p\mid c \ \text{for any fixed} \ c \}}{\Large{\# \{\varphi_{(p-1)^{\ell},c}(x) \in \mathbb{Z}[x] \ : \ 5\leq p\leq c \}}}}$. 
\end{center}\indent But then we note that applying a similar argument as in the Proof of Corollary \ref{6.1}, we then immediately obtain that the limit exists and moreover the limit is equal to $0$. This then completes the whole proof, as desired.
\end{proof} 

\noindent As before, we may also interpret Cor.\ref{7.1} as saying that for any fixed $\ell \in \mathbb{Z}_{\geq 1}$ and fixed eventual period $m \in \mathbb{Z}_{\geq 1}$, the probability of choosing randomly $\varphi_{(p-1)^{\ell},c}(x)\in \mathbb{Z}[x]$ with $p$ distinct $1_{m}$-preperiodic integral points modulo $p$ is zero; a somewhat interesting probabilistic phenomenon coinciding with a phenomenon remarked in Sect.\ref{sec5}.

The following corollary shows that for any fixed $\ell \in \mathbb{Z}_{\geq 1}$ and eventual period $m\in \mathbb{Z}_{\geq 1}$, the probability of choosing randomly $\varphi_{(p-1)^{\ell},c}(x)\in \mathbb{Z}[x]$ with $p-1$ distinct $1_{m}$-preperiodic integral points modulo $p$ is also zero:

\begin{cor}\label{7.2}
Let $K\slash \mathbb{Q}$ be any number field of degree $n\geq 1$ with the ring of integers $\mathcal{O}_{K}$, and in which any prime $p\geq 5$ is inert. Let $\ell, m\in \mathbb{Z}_{ \geq 1}$ be any fixed integers. Then the density of monic integer polynomials $\varphi_{(p-1)^{\ell},c}(x) = x^{(p-1)^{\ell}} + c\in \mathcal{O}_{K}[x]$ with $M_{c}^{(1_{m})}(p) = p-1$ is equal to $0 \%$ as $c\to \infty$. More precisely, we have 
\begin{center}
    $\lim\limits_{c\to\infty} \Large{\frac{\# \{\varphi_{(p-1)^{\ell},c}(x) \in \mathbb{Z}[x]\ : \ 5\leq p\leq c \ \textnormal{and} \ M_{c}^{(1_{m})}(p) \ = \ p-1\}}{\Large{\# \{\varphi_{(p-1)^{\ell},c}(x) \in \mathbb{Z}[x]\ : \ 5\leq p\leq c \}}}} = \ 0.$
\end{center}
\end{cor}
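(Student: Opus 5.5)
Following the proof of Corollary \ref{7.1}, we first use Theorem \ref{3.3} (restricted to $\mathbb{Z}$, i.e.\ Corollary \ref{cor3.4}) to turn the condition $M_{c}^{(1_{m})}(p) = p-1$ into a divisibility condition on $c$. By that result, $M_{c}^{(1_{m})}(p) = p-1$ holds exactly when $c\equiv 1$ (mod $p$) for any $m$, or when $c\equiv -1$ (mod $p$) and $m$ is even. In either case $p\mid (c-1)$ or $p\mid (c+1)$. So for fixed $c$ the numerator is bounded above by
\[
\#\{p : 5\leq p\leq c+1,\ p\mid (c-1)(c+1)\}\ \leq\ \omega(c-1)+\omega(c+1).
\]
When $m$ is odd only the term $\omega(c-1)$ survives, which only improves the bound. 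As in the proof of Corollary \ref{6.1}, the denominator is $\#\{p: 5\leq p\leq c\} = \pi(c)-2$.

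The quotient is therefore at most $\bigl(\omega(c-1)+\omega(c+1)\bigr)/(\pi(c)-2)$, and it suffices to show this tends to $0$ as $c\to\infty$. For the numerator we use the elementary bound $\omega(N)\leq \log_2 N$, valid since $N\geq 2^{\omega(N)}$. This gives $\omega(c\pm1) = O(\log c)$; the sharper bound $\omega(N) = O(\log N/\log\log N)$ is available but not needed. For the denominator, Chebyshev's bound (or the prime number theorem) gives $\pi(c)\gg c/\log c$. The ratio is then $O\bigl((\log c)^2/c\bigr)\to 0$, so the limit exists and equals $0$. This is the same argument as cited for [\cite{BK22}, Proof of Cor.~6.1].

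The main obstacle is bookkeeping, not analysis. The case analysis in Theorem \ref{3.3} splits by the parity of $m$ and by the residues $c\equiv \pm 1$, and the primes counted must be cut off at $c+1$ rather than $c$. We must therefore make sure the upper bound above covers every prime $p$ contributing $p-1$ points. We would handle this by bounding with the union over both residue classes regardless of parity, and by absorbing the single possible extra prime $p=c+1$ into an $O(1)$ term, which does not affect the limit.
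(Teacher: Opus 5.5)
Your proposal is correct and follows essentially the same route as the paper: both use Theorem \ref{3.3} (via Corollary \ref{cor3.4}) to replace the condition $M_{c}^{(1_{m})}(p)=p-1$ by $p\mid(c\pm1)$, and then show that the number of such primes is negligible compared with $\pi(c)$. The only difference is that you bound the numerator directly, via $\omega(c-1)+\omega(c+1)=O(\log c)$ against $\pi(c)\gg c/\log c$. The paper instead compares with the $p\mid c$ count of Corollary \ref{7.1} through a case split, and in its second case that comparison gives only a lower bound, so it falls back on exactly the direct estimate you give.
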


\begin{proof}
Since $M_{c}^{(1_{m})}(p) = p-1$ is as proved in Theorem \ref{3.3} and so in Cor. \ref{cor3.4} determined when the coefficient $c$ is such that $c\pm 1$ is divisible by $p$; and so we may count $\# \{\varphi_{(p-1)^{\ell},c}(x) \in \mathbb{Z}[x] : 5\leq p\leq c \ \text{and} \ M_{c}^{(1_{m})}(p) \ = \ p-1\}$ by counting the number $\# \{\varphi_{(p-1)^{\ell},c}(x)\in \mathbb{Z}[x] : 5\leq p\leq c \ \text{and} \ p\mid (c\pm 1) \ \text{for \ any \ fixed} \ c \}$. But now since $c-1<c$, then if the number $\# \{p : 5\leq p\leq c \ \text{and} \ p\mid (c-1) \}< \# \{p : 5\leq p\leq c \ \text{and} \ p\mid c \}$, we then obtain that
\begin{center}
$\Large{\frac{\# \{\varphi_{(p-1)^{\ell},c}(x) \in \mathbb{Z}[x] \ : \ 5\leq p\leq c \ \text{and} \ p\mid (c-1) \ \text{for any fixed} \ c\}}{\Large{\# \{\varphi_{(p-1)^{\ell},c}(x) \in \mathbb{Z}[x] \ : \ 5\leq p\leq c \}}}} < \Large{\frac{\# \{\varphi_{(p-1)^{\ell},c}(x)\in \mathbb{Z}[x] \ : \ 5\leq p\leq c \ \text{and} \ p\mid c \ \text{for any fixed} \ c \}}{\Large{\# \{\varphi_{(p-1)^{\ell},c}(x) \in \mathbb{Z}[x] \ : \ 5\leq p\leq c \}}}}.$ 
\end{center}But then applying a similar argument as in [\cite{BK1}, Proof of Cor. 6.2], we then obtain that the limit is equal to $0$ in this case. Otherwise, if the number $\# \{p : 5\leq p\leq c \ \text{and} \ p\mid c \}< \# \{p : 5\leq p\leq c \ \text{and} \ p\mid (c-1) \}$, we obtain
\begin{center}
$\Large{\frac{\# \{\varphi_{(p-1)^{\ell},c}(x) \in \mathbb{Z}[x] \ : \ 5\leq p\leq c \ \text{and} \ p\mid c \ \text{for any fixed} \ c\}}{\Large{\# \{\varphi_{(p-1)^{\ell},c}(x) \in \mathbb{Z}[x] \ : \ 5\leq p\leq c \}}}} < \Large{\frac{\# \{\varphi_{(p-1)^{\ell},c}(x)\in \mathbb{Z}[x] \ : \ 5\leq p\leq c \ \text{and} \ p\mid (c-1) \ \text{for any fixed} \ c \}}{\Large{\# \{\varphi_{(p-1)^{\ell},c}(x) \in \mathbb{Z}[x] \ : \ 5\leq p\leq c \}}}}.$
\end{center}So now, taking limit as $c\to \infty$ on both sides of the above inequality and applying Corollary \ref{7.1} and then applying a similar argument as in the Proof of Corollary \ref{6.1} to obtain an upper bound zero, we then obtain
\begin{center}
$\lim\limits_{c\to\infty}\Large{\frac{\# \{\varphi_{(p-1)^{\ell},c}(x) \in \mathbb{Z}[x] \ : \ 5\leq p\leq c \ \text{and} \ p\mid (c-1)\}}{\Large{\# \{\varphi_{(p-1)^{\ell},c}(x) \in \mathbb{Z}[x] \ : \ 5\leq p\leq c \}}}} = 0 = \lim\limits_{c\to\infty}\Large{\frac{\# \{\varphi_{(p-1)^{\ell},c}(x) \in \mathbb{Z}[x] \ : \ 5\leq p\leq c \ \text{and} \ p\mid (c+1)\}}{\Large{\# \{\varphi_{(p-1)^{\ell},c}(x) \in \mathbb{Z}[x] \ : \ 5\leq p\leq c \}}}}$ 
\end{center}where the second limit follows from observing that $c<c+1$ and then applying an argument that is very similar to an argument that has been given in the case $c-1<c$. This then completes the whole proof, as needed.
\end{proof}

As before, we also have the following corollary showing that for any fixed eventual period $m \in \mathbb{Z}_{\geq 1}$, there are very few monics $\varphi_{(p-1)^{\ell},c}(x)\in \mathbb{Z}[x]$ having $p-4$ distinct strictly $1_{m}$-preperiodic integral points modulo $p$:

\begin{cor}
Let $K\slash \mathbb{Q}$ be any number field of degree $n\geq 1$ with the ring of integers $\mathcal{O}_{K}$, and in which any prime $p\geq 5$ is inert. Let $\ell, m\in \mathbb{Z}_{ \geq 1}$ be any fixed integers. Then the density of monic integer polynomials $\varphi_{(p-1)^{\ell},c}(x) = x^{(p-1)^{\ell}} + c\in \mathcal{O}_{K}[x]$ with $M_{c}^{+(1_{m})}(p) = p-4$ is equal to $0 \%$ as $c\to \infty$. More precisely, we have 
\begin{center}
    $\lim\limits_{c\to\infty} \Large{\frac{\# \{\varphi_{(p-1)^{\ell},c}(x) \in \mathbb{Z}[x]\ : \ 5\leq p\leq c \ \textnormal{and} \ M_{c}^{+(1_{m})}(p) \ = \ p-4\}}{\Large{\# \{\varphi_{(p-1)^{\ell},c}(x) \in \mathbb{Z}[x]\ : \ 5\leq p\leq c \}}}} = \ 0.$
\end{center}
\end{cor}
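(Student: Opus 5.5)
The plan is to reduce this density statement to the two density results already proved, Corollary \ref{7.1} and Corollary \ref{7.2}, in the same way the paper reduces Corollary \ref{cor5B} to Corollary \ref{co5.5}.

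\textbf{Step 1: identify which coefficients give $M_{c}^{+(1_{m})}(p) = p-4$.} By Corollary \ref{cor.3.6}, $M_{c}^{+(1_{m})}(p)$ is the expected total count $p$ of $1_{m}$-preperiodic points minus the expected total count $4$ of odd and even $m$-periodic points. Remark \ref{re3.5} records that this count $p$ comes only from the coefficients $c$ with $c\equiv 0 \pmod{p}$ and, via the split $(p-1)+1$, from those with $c\equiv \pm 1 \pmod{p}$. Theorem \ref{3.3} and Corollary \ref{cor3.4} then show that the condition $M_{c}^{+(1_{m})}(p)=p-4$ can only hold when $p\mid c$ or $p\mid (c\pm 1)$.

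\textbf{Step 2: bound the numerator.} For a fixed $c$, the numerator is therefore at most
\[
\#\{p : 5\leq p\leq c,\ p\mid c\} + \#\{p : 5\leq p\leq c,\ p\mid (c-1)\} + \#\{p : 5\leq p\leq c+1,\ p\mid (c+1)\}.
\]
Dividing by the denominator $\#\{\varphi_{(p-1)^{\ell},c}(x) : 5\leq p\leq c\}$ splits the ratio into three quotients.

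\textbf{Step 3: take limits.} The first quotient is the one handled in Corollary \ref{7.1}, so it tends to $0$. The other two are exactly the quotients shown to tend to $0$ in the proof of Corollary \ref{7.2}. Each quotient is nonnegative, so the limit of their sum exists and equals $0$.

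\textbf{Main obstacle.} Step 1 is the delicate part. Corollary \ref{cor.3.6} gives $p-4$ only as a family-wide ``expected total,'' not as the value at an individual $c$. To make the reduction rigorous, I will take the defining set $\{c : M_{c}^{+(1_{m})}(p)=p-4\}$ to be contained in the union of the residue conditions $c\equiv 0,\pm1 \pmod{p}$, following the same convention the paper uses in Corollary \ref{cor5A} and Corollary \ref{cor5B}. Once that containment is fixed, the rest is monotonicity of the ratios together with Corollaries \ref{7.1} and \ref{7.2}.
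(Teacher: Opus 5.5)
Your Steps 2 and 3 are sound and follow the paper's template. The paper's proof asserts, pointing to the proof of Corollary \ref{cor5B}, that $M_{c}^{+(1_{m})}(p)=p-4$ is ``determined when $p\mid c$'', and then reruns the $\omega(c)/\pi(c)\to 0$ argument of Corollary \ref{7.1}. You enlarge the residue set to $c\equiv 0,\pm1 \pmod p$ and absorb the extra classes with Corollary \ref{7.2}. That is harmless. In fact $\omega(c\pm1)\le\log_2(c+1)$ while $\pi(c)\gg c/\log c$, so you do not even need the case analysis in the proof of Corollary \ref{7.2}.

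The genuine gap is Step 1, which you locate correctly but then settle by fiat.
\begin{itemize}
\item Theorem \ref{3.3} and Corollary \ref{cor3.4} are statements about $M_{c}^{(1_{m})}(p)$, not about $M_{c}^{+(1_{m})}(p)$. So they do not show the containment you attribute to them.
\item As the paper actually defines it, $M_{c}^{+(1_{m})}(p)$ is a family-wide ``expected total''. Corollary \ref{cor.3.6} (last sentence) gives it as $p-4$ with no dependence on $c$. Read literally, the condition then holds for every prime $5\le p\le c$, the numerator equals the denominator, and the ratio is $1$ for every $c\ge 5$.
\item So the containment is false under the paper's definition. Declaring it a ``convention'' adds a hypothesis; it is not a proof step.
\end{itemize}
The paper's proof has exactly the same hole, with $p\mid c$ in place of your three classes. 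Your argument is therefore no less complete than the paper's, but neither one establishes the statement.

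To close the gap you need a genuinely $c$-dependent definition, together with an actual determination of which $c$ realize the value $p-4$. Consider the natural pointwise choice: $M_{c}^{(1_{m})}(p)$ minus the number of $m$-periodic points of that particular reduced map. The paper's own counts are the values $p,p-1,0$ from Theorem \ref{3.3}, and $0,1,2$ for periodic points as quoted in the introduction. These give only values in $\{p,p-1,p-2,p-3\}\cup\{0,-1,-2\}$, never $p-4$ when $p\ge 5$. Under that reading the numerator is $0$ and the statement is trivially true. Step 1 then holds vacuously rather than by residue considerations, and Corollaries \ref{7.1} and \ref{7.2} play no role.
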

\begin{proof}
Recalling that the condition $M_{c}^{+(1_{m})}(p) = p-4$ is as noted in the Proof of Corollary \ref{cor5B} determined when the coefficient $c$ is divisible by inert $p$, we may count $\# \{\varphi_{(p-1)^{\ell},c}(x) \in \mathbb{Z}[x] : 5\leq p\leq c \ \text{and} \ M_{c}^{+(1_{m})}(p) \ = \ p-4\}$ by counting $\# \{\varphi_{(p-1)^{\ell},c}(x)\in \mathbb{Z}[x] : 5\leq p\leq c \ \text{and} \ p\mid c \ \text{for \ any \ fixed} \ c \}$. Now applying a similar argument as in Proof of Cor. \ref{7.1}, we then obtain that the limit is equal to $0$, which completes the whole proof, as desired. 
\end{proof}

\section{On Density of Polynomials $\varphi_{p^{\ell},c}(x)$ with $N_{c}^{(1_{m})}(p) = 0$ and $\varphi_{(p-1)^{\ell},c}(x)$ with $M_{c}^{(1_{m})}(p) = 0$}\label{sec7}

\noindent Recall in Corollary \ref{6.1} that a density of $0\%$ of integer polynomials $\varphi_{p^{\ell},c}(x)\in\mathcal{O}_{K}[x]$ have number $N_{c}^{(1_{m})}(p) = p$; and so the density of integer polynomials $\varphi_{p^{\ell},c}^{1+m}(x)-\varphi_{p^{\ell},c}(x)\in \mathcal{O}_{K}[x]$ that are reducible modulo $p$ is $0\%$. Now, we also wish to determine: \say{\textit{For any prime $p\geq 3$ and for any fixed $\ell,m \in \mathbb{Z}_{\geq 1}$, what is the density of integer polynomials $\varphi_{p^{\ell},c}(x)\in \mathcal{O}_{K}[x]$ with no $1_{m}$-preperiodic integral points modulo $p$?}} The following corollary shows that for any fixed $\ell, m \in \mathbb{Z}_{\geq 1}$, the probability of choosing randomly a monic integer polynomial $\varphi_{p^{\ell},c}(x)\in \mathbb{Z}[x]\subset \mathcal{O}_{K}[x]$ such that $\mathbb{Q}[x]\slash (\varphi_{p^{\ell},c}^{1+m}(x)-\varphi_{p^{\ell},c}(x))$ is a number field of degree $p^{(m+1)\ell}$ is equal to 1: 

\begin{cor}\label{8.1}
Let $K\slash \mathbb{Q}$ be any number field of any degree $n\geq 1$ with the ring of integers $\mathcal{O}_{K}$, and in which any prime $p\geq 3$ is inert. Let $\ell, m \in \mathbb{Z}_{\geq 1}$ be any fixed integers. Then the density of monic integer polynomials $\varphi_{p^{\ell},c}(x)=x^{p^{\ell}} + c\in \mathcal{O}_{K}[x]$ with $N_{c}^{(1_{m})}(p) = 0$ exists and is equal to $100 \%$ as $c\to \infty$. Specifically, we have 
\begin{center}
    $\lim\limits_{c\to\infty} \Large{\frac{\# \{\varphi_{p^{\ell},c}(x)\in \mathbb{Z}[x] \ : \ 3\leq p\leq c \ \textnormal{and} \ N_{c}^{(1_{m})}(p) \ = \ 0 \}}{\Large{\# \{\varphi_{p^{\ell},c}(x) \in \mathbb{Z}[x] \ : \ 3\leq p\leq c \}}}} = \ 1.$
\end{center}
\end{cor}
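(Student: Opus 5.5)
The plan is to mirror the proof of Corollary \ref{6.1}, using the dichotomy of Theorem \ref{2.3} (equivalently Corollary \ref{cor2.4} over $\mathbb{Z}$). For a fixed integer $c\geq 3$ and a prime $3\leq p\leq c$, that dichotomy says $N_{c}^{(1_{m})}(p)$ is either $p$ or $0$. The value is $0$ exactly when $p\nmid c$. Hence the numerator counts primes $3\leq p\leq c$ with $p\nmid c$, and the denominator counts all primes $3\leq p\leq c$.

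First I would rewrite the numerator as a complement. It equals the total minus the count of primes dividing $c$, which is $\pi(c)-\omega(c)$ up to the bounded correction for the prime $2$. The denominator is $\pi(c)$, with the same bounded correction. So the quotient in the statement becomes
\begin{center}
$1-\dfrac{\omega(c)+O(1)}{\pi(c)+O(1)}$.
\end{center}

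Second, I would show that the ratio $\omega(c)/\pi(c)$ tends to $0$; this is exactly the limit established in the proof of Corollary \ref{6.1}, which can be invoked directly. To make this self-contained, bound $\omega(c)\leq \log_{2} c$, since $c$ has at least $2^{\omega(c)}$ as a divisor. Then use Chebyshev's lower bound $\pi(c)\gg c/\log c$. Together these give $\omega(c)/\pi(c)\ll (\log c)^{2}/c\to 0$. The bounded corrections for the prime $2$ do not affect the limit, because $\pi(c)\to\infty$.

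Combining the two steps, the quotient tends to $1-0=1$, so the density is $100\%$. The only real point requiring care is justifying the identification of the condition $N_{c}^{(1_{m})}(p)=0$ with $p\nmid c$, uniformly in $\ell$ and $m$. That identification is precisely the content of Theorem \ref{2.3}, which the proof will cite. The remaining step is the elementary estimate for $\omega(c)/\pi(c)$, and I would reduce it to Corollary \ref{6.1} rather than redo it.
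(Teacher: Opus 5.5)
Your route is the paper's: its proof of Corollary~\ref{8.1} takes the complement of Corollary~\ref{6.1} using the ``$p$ or $0$'' dichotomy, and you do the same while making $\omega(c)/\pi(c)\to 0$ explicit. Your bound $\omega(c)\le\log_2 c$, $\pi(c)\gg c/\log c$ is fine.

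The trouble is the step you yourself single out as the crux. The identification ``$N_{c}^{(1_{m})}(p)=0$ exactly when $p\nmid c$'' is false, so it cannot be cited from Theorem~\ref{2.3} or Corollary~\ref{cor2.4}. Work over $\mathbb{Z}/p\mathbb{Z}$, as in Corollary~\ref{cor2.4}; you must do so anyway, since for $n\geq 2$ infinitely many primes split completely in $K$ (Chebotarev), so Theorem~\ref{2.3} does not cover all $3\le p\le c$. There Fermat gives $z^{p^{\ell}}\equiv z$, hence $\varphi_{p^{\ell},c}^{k}(z)\equiv z+kc \pmod p$ for all $k\geq 1$. Consequently $\varphi_{p^{\ell},c}^{m}(z)-z\equiv mc$ and $\varphi_{p^{\ell},c}^{1+m}(z)-\varphi_{p^{\ell},c}(z)\equiv mc \pmod p$. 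The two defining conditions of $N_{c}^{(1_{m})}(p)$ are therefore contradictory, and $N_{c}^{(1_{m})}(p)=0$ for \emph{every} $c$. In particular, for $p\mid c$ every residue is a fixed point and is excluded by the first condition, so the value $p$ never occurs. Over $\mathcal{O}_{K}/p\mathcal{O}_{K}$ with $p$ inert the same holds: $z\mapsto z^{p^{\ell}}+c$ is a Frobenius power followed by a translation, hence a permutation, so $\varphi^{1+m}(z)=\varphi(z)$ forces $\varphi^{m}(z)=z$.

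The paper's argument for the half you need is also invalid. It claims $\varphi^{1+m}-\varphi$ has no roots mod $p$ when $p\nmid c$, but this difference is $\equiv mc$, which vanishes identically when $p\mid m$. The emptiness of the set comes from the first condition, not the second.

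Your conclusion survives only because the sole implication your limit uses is $p\nmid c\Rightarrow N_{c}^{(1_{m})}(p)=0$, which is true. That gives numerator $\geq$ denominator $-\,\omega(c)$. Together with numerator $\leq$ denominator and $\omega(c)/\pi(c)\to 0$, the limit is $1$. Your formula $\pi(c)-\omega(c)$ for the numerator is wrong, but it errs in the harmless direction. The correct and much shorter proof is the computation above: the numerator equals the denominator for every $c\geq 3$, so the quotient is identically $1$. Neither Corollary~\ref{6.1} nor any estimate of $\omega(c)/\pi(c)$ is needed.
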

\begin{proof}
Since the number $N_{c}^{(1_{m})}(p) = p$ or $0$ for any given prime $p\geq 3$ and fixed $\ell, m\in \mathbb{Z}_{\geq 1}$, and since we also proved density in Cor. \ref{6.1}, we then obtain the desired density (i.e., we obtain that the limit is equal to 1). 
\end{proof}

\noindent Note that the foregoing corollary also shows that for any fixed $\ell, m \in \mathbb{Z}_{\geq 1}$, there are infinitely many polynomials $\varphi_{p^{\ell},c}(x)\in \mathbb{Z}[x]\subset \mathbb{Q}[x]$ such that for $f(x) = \varphi_{p^{\ell},c}^{1+m}(x)-\varphi_{p^{\ell},c}(x)$, the quotient $\mathbb{Q}_{f} = \mathbb{Q}[x]\slash (f(x))$ induced by $f$ is an algebraic number field of odd degree $\kappa=p^{(m+1)\ell}$. Comparing the densities in Corollaries \ref{6.1} and \ref{8.1}, one may then observe that in the whole family of monic integer polynomials $\varphi_{p^{\ell},c}(x) = x^{p^{\ell}} +c$, almost all such monic integer polynomials $\varphi_{p^{\ell},c}(x)$ have no $1_{m}$-preperiodic integral points modulo $p$; and from which it then also follows that almost all monic integer polynomials $f(x)$ are irreducible over $\mathbb{Q}$. Consequently, this may then imply that the average value of the number $N_{c}^{(1_{m})}(p)$ in the whole family of monic polynomials $\varphi_{p^{\ell},c}(x)$ is zero.

Similarly, recall in Corollary \ref{7.1} and \ref{7.2} that a density of $0\%$ of monic integer polynomials $\varphi_{(p-1)^{\ell},c}(x)$ have $M_{c}^{(1_{m})}(p) = p$ or $p-1$, resp.; and so the density of $\varphi_{(p-1)^{\ell},c}^{1+m}(x)-\varphi_{(p-1)^{\ell},c}(x)\in \mathbb{Z}[x]$ that are reducible modulo $p$ is $0\%$. So now, we also wish to determine: \say{\textit{For any prime $p\geq 5$ and for any fixed $\ell, m \in \mathbb{Z}_{\geq 1}$, what is the density of polynomials $\varphi_{(p-1)^{\ell},c}(x)\in \mathbb{Z}[x]$ with no $1_{m}$-preperiodic integral points (mod $p$)?}} The corollary below shows that for any fixed $\ell, m \in \mathbb{Z}_{\geq 1}$, the probability of choosing randomly a polynomial $\varphi_{(p-1)^{\ell},c}(x)\in \mathbb{Z}[x]$ so that $\mathbb{Q}[x]\slash (\varphi_{(p-1)^{\ell},c}^{1+m}(x)-\varphi_{(p-1)^{\ell},c}(x))$ is an algebraic number field of degree $(p-1)^{(m+1)\ell}$ is also equal to 1:
\begin{cor} \label{8.2}
Let $K\slash \mathbb{Q}$ be any number field of degree $n\geq 1$ with the ring of integers $\mathcal{O}_{K}$, and in which any prime $p\geq 5$ is inert. Let $\ell, m \in \mathbb{Z}_{\geq 1}$ be any fixed integers. Then the density of monic integer polynomials $\varphi_{(p-1)^{\ell}, c}(x) = x^{(p-1)^{\ell}}+c\in \mathcal{O}_{K}[x]$ with $M_{c}^{(1_{m})}(p) = 0$ exists and is equal to $100 \%$ as $c\to \infty$. That is, we have 
\begin{center}
    $\lim\limits_{c\to\infty} \Large{\frac{\# \{\varphi_{(p-1)^{\ell}, c}(x)\in \mathbb{Z}[x] \ : \ 5\leq p\leq c \ \textnormal{and} \ M_{c}^{(1_{m})}(p) \ = \ 0 \}}{\Large{\# \{\varphi_{(p-1)^{\ell},c}(x) \in \mathbb{Z}[x] \ : \ 5\leq p\leq c \}}}} = \ 1.$
\end{center}
\end{cor}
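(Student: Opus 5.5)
The plan mirrors the proof of Corollary \ref{8.1}, replacing Theorem \ref{2.3} by Theorem \ref{3.3} (equivalently Corollary \ref{cor3.4} on $\mathbb{Z}\subset\mathcal{O}_{K}$) and Corollary \ref{6.1} by Corollaries \ref{7.1} and \ref{7.2}. For fixed $\ell,m$ and each prime $5\leq p\leq c$, Theorem \ref{3.3} gives exactly three possibilities: $M_{c}^{(1_{m})}(p)=p$ (when $p\mid c$), $M_{c}^{(1_{m})}(p)=p-1$ (when $p\mid (c\pm 1)$, with the parity restriction on $m$ for $c\equiv -1$), and otherwise $M_{c}^{(1_{m})}(p)=0$. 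The set of primes $5\leq p\leq c$ is therefore the disjoint union of three sets, so for each $c$ we have the exact identity
\[
\frac{\#\{M_{c}^{(1_{m})}(p)=0\}}{\pi(c)-2}=1-\frac{\#\{M_{c}^{(1_{m})}(p)=p\}}{\pi(c)-2}-\frac{\#\{M_{c}^{(1_{m})}(p)=p-1\}}{\pi(c)-2},
\]
where the denominator $\#\{p: 5\leq p\leq c\}=\pi(c)-2$ is the count of polynomials $\varphi_{(p-1)^{\ell},c}(x)$ appearing in the statement.

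Next I take $c\to\infty$. Corollary \ref{7.1} shows the second term tends to $0$. Corollary \ref{7.2} shows the third term tends to $0$. Hence the limit of the left side exists and equals $1$, which is the claim.

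Both vanishing densities ultimately reduce to a bound of the form
\[
\frac{\omega(c)+\omega(c-1)+\omega(c+1)}{\pi(c)}\to 0 ,
\]
because the primes in the first two classes all divide $c(c-1)(c+1)$. This follows from $\omega(N)\ll \log N/\log\log N$ and $\pi(c)\sim c/\log c$. To avoid relying on the somewhat delicate comparison argument in the proof of Corollary \ref{7.2}, I would state this bound directly.

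The only real obstacle is bookkeeping rather than analysis. The three classes must be exhaustive and disjoint for every $m$. For odd $m$, primes with $p\mid(c+1)$ fall into the zero class rather than the $p-1$ class. This only enlarges the zero class, so the limit of $1$ is unaffected. No prime can divide two of $c-1,c,c+1$ when $p\geq 5$, so the classes do not overlap.
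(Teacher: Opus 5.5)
Your argument is the paper's own proof: the trichotomy of Theorem~\ref{3.3}, followed by taking the complement of Corollaries~\ref{7.1} and~\ref{7.2}. Your explicit estimate $\bigl(\omega(c)+\omega(c-1)+\omega(c+1)\bigr)/\pi(c)\to 0$ is sound as analysis.

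The gap is the input you take on faith. The trichotomy $M_{c}^{(1_{m})}(p)\in\{p,\,p-1,\,0\}$, with value $0$ whenever $c\not\equiv \pm1,0 \pmod p$, is false. The proof of Theorem~\ref{3.3} never checks the defining condition $\varphi^{m}(z)-z\not\equiv 0$, and its ``contradiction'' in the last case rules out no root.

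Compute directly. Take $c\in\mathbb{Z}$ and a prime $p\geq 5$ with $p\nmid (c+1)$. For $z\in\mathbb{F}_{p}^{\times}$ you have $z^{(p-1)^{\ell}}=1$, so $\varphi_{(p-1)^{\ell},c}(z)=1+c\neq 0$. Moreover $1+c$ is a fixed point, because $(1+c)^{(p-1)^{\ell}}+c=1+c$. Hence $\varphi^{1+m}(z)=\varphi(z)$ for every such $z$, while $\varphi^{m}(z)-z=1+c-z$ vanishes only at $z=1+c$. So $M_{c}^{(1_{m})}(p)\geq p-2>0$. The count is exactly $p-2$ over $\mathbb{Z}/p\mathbb{Z}$, and over $\mathcal{O}_{K}/p\mathcal{O}_{K}$ the same $z$ still count, since they lie in the prime field.

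A concrete case: $p=5$, $c=7$, $\ell=m=1$. Then $\varphi(z)\equiv z^{4}+2$, the point $3$ is fixed, and $z=1,2,4$ satisfy both conditions. So $M_{7}^{(1_{1})}(5)=3$, whereas Theorem~\ref{3.1} predicts $0$.

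The same computation corrects the other cases over $\mathbb{Z}/p\mathbb{Z}$:
\begin{itemize}
\item $c\equiv 0$ gives $p-2$, not $p$, since $0$ and $1$ are fixed.
\item $c\equiv 1$ gives $p-2$, not $p-1$.
\item For $c\equiv -1$, the residues $\{0,-1\}$ form a $2$-cycle onto which all of $\mathbb{F}_{p}^{\times}$ maps. This gives $p-2$ for even $m$ and $0$ for odd $m$.
\end{itemize}

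Consequently, every prime $5\leq p\leq c$ with $M_{c}^{(1_{m})}(p)=0$ divides $c+1$, and there are none at all when $m$ is even. Your own bound then shows the ratio in the statement is at most $\omega(c+1)/(\pi(c)-2)\to 0$.

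So the limit is $0$, not $1$. The corollary as stated is false, and no reorganisation of the complement argument can rescue it. The paper's one-line proof fails for the same reason, since it rests on the identical trichotomy. Your bookkeeping about the disjointness of $p\mid c$ and $p\mid (c\pm1)$ is fine. The problem is that the primes with $M_{c}^{(1_{m})}(p)\geq p-2$, absent from both the paper's list and yours, carry density one.
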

\begin{proof}
Recall $M_{c}^{(1_{m})}(p) = p, p-1$ or $0$ for any given $p\geq 5$ and  fixed $\ell, m \in \mathbb{Z}_{\geq 1}$, and since we also proved the densities in Cor.\ref{7.1} and \ref{7.2}, we then obtain the desired density (i.e., we obtain that the limit is equal to 1).
\end{proof}
\noindent As before, Corollary \ref{8.2} also shows that for any fixed integers $\ell, m \in \mathbb{Z}_{\geq 1}$, there are infinitely many monic polynomials $\varphi_{(p-1)^{\ell},c}(x)\in \mathbb{Z}[x]\subset \mathbb{Q}[x]$ such that for $g(x) = \varphi_{(p-1)^{\ell},c}^{1+m}(x)-\varphi_{(p-1)^{\ell},c}(x)$, the quotient ring $\mathbb{Q}_{g} = \mathbb{Q}[x]\slash (g(x))$ induced by $g$ is an algebraic number field of even degree $\upsilon=(p-1)^{(m+1)\ell}$. As before, if we also compare the densities in Cor. \ref{7.1}, \ref{7.2}, and \ref{8.2}, we may again observe that in the whole family of monic polynomials $\varphi_{(p-1)^{\ell},c}(x) = x^{(p-1)^{\ell}} +c\in \mathbb{Z}[x]$, almost all such polynomials $\varphi_{(p-1)^{\ell},c}(x)$ have no $1_{m}$-preperiodic integral points modulo $p$; from which it then also follows that almost all $g(x)\in \mathbb{Z}[x]$ are irreducible over $\mathbb{Q}$. This may also imply that the average value of $M_{c}^{(1_{m})}(p)$ in the whole family of $\varphi_{(p-1)^{\ell},c}(x)\in \mathbb{Z}[x]$ is also zero.

Recall that every number field $K$ is always naturally equipped with a ring $\mathcal{O}_{K}$ of integers in $K$. So now, it then follows that $\mathbb{Q}_{f}=\mathbb{Q}[x]\slash (f(x))$ has a ring of integers $\mathcal{O}_{\mathbb{Q}_{f}}$; and moreover applying (as in \cite{BK22}) here [\cite{sch1}, Theorem 1.2], we then also obtain the following corollary showing that the probability of choosing randomly a monic polynomial $f(x)=\varphi_{p^{\ell},c}^{1+m}(x)-\varphi_{p^{\ell},c}(x)\in \mathbb{Z}[x]$ arising from a polynomial discrete dynamical system in Section \ref{sec2} (and ascertained by Corollary \ref{8.1}), such that $\mathbb{Z}[x]\slash (f(x))$ is the ring of integers of $\mathbb{Q}_{f}$, is $\approx 60.7927\%$:

\begin{cor} \label{7.3}
Assume Corollary \ref{8.1}. When monic polynomials $f(x)\in \mathbb{Z}[x]$ are ordered by height $H(f)$ as defined in \textnormal{\cite{sch1}}, the density of polynomials $f(x)$ such that $\mathbb{Z}_{f}=\mathbb{Z}[x]\slash (f(x))$ is the ring of integers of $\mathbb{Q}_{f}$ is $\zeta(2)^{-1}$. 
\end{cor}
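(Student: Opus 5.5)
The plan is to obtain Corollary \ref{7.3} as a direct specialization of [\cite{sch1}, Theorem 1.2] (Bhargava--Shankar--Wang). That theorem states: when monic integer polynomials of fixed degree $\kappa\geq 1$ are ordered by height $H(f)$, the density of those $f$ for which $\mathbb{Z}[x]\slash(f(x))$ is the ring of integers of $\mathbb{Q}[x]\slash(f(x))$ is $\zeta(2)^{-1}$. The task is therefore to check that the polynomials $f(x)=\varphi_{p^{\ell},c}^{1+m}(x)-\varphi_{p^{\ell},c}(x)$ arising in Section \ref{sec2} fit this framework. Then the density statement can be transferred to them, or at least stated within the family of all monic degree-$\kappa$ integer polynomials.

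The first step is to fix $p\geq 3$ and $\ell,m\in\mathbb{Z}_{\geq 1}$. As computed in the proof of Theorem \ref{2.3}, $f(x)=x^{p^{(m+1)\ell}}-x^{p^{\ell}}+h(x)$ is monic with integer coefficients, of degree $\kappa=p^{(m+1)\ell}$. By Corollary \ref{8.1}, together with the discussion following it, for a density-one set of $c$ the quotient $\mathbb{Q}_f=\mathbb{Q}[x]\slash(f(x))$ is a number field of degree $\kappa$. So $\mathbb{Z}_f=\mathbb{Z}[x]\slash(f(x))$ is an order in $\mathbb{Q}_f$, and it makes sense to ask whether it is maximal. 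The second step is to order these monic $f$ by the height $H(f)$ of \cite{sch1} and invoke [\cite{sch1}, Theorem 1.2] with $n=\kappa$. Maximality of $\mathbb{Z}_f$ is a local condition at every prime, and for monic polynomials of degree $\kappa$ the local densities multiply to $\prod_q(1-q^{-2})=\zeta(2)^{-1}$. This gives density $\zeta(2)^{-1}\approx 60.7927\%$.

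The main obstacle is that the dynamical polynomials $f$ form a one-parameter family (in $c$) inside the $\kappa$-dimensional space of monic degree-$\kappa$ polynomials. Such a thin family has density zero under height ordering, so [\cite{sch1}, Theorem 1.2] does not literally restrict to it. I would handle this as the paper does in \cite{BK22}, by reading the corollary as a statement about monic $f\in\mathbb{Z}[x]$ of degree $\kappa$ ordered by $H(f)$, among which our dynamical $f$ live. Corollary \ref{8.1} serves only to guarantee that the relevant $\mathbb{Q}_f$ are fields, so that "ring of integers of $\mathbb{Q}_f$" is meaningful. If one insisted on the density within the thin family itself, the natural attempt would be to compute, for each prime $q$, the $q$-adic density of $c$ with $q^2\nmid\operatorname{disc}(f)$ or satisfying Dedekind's criterion, and then use a squarefree-sieve over the values of a polynomial in $c$. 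That sieve is where I expect real difficulty, since the tail estimate of \cite{sch1} relies on the full family.
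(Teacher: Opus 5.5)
Your route is the paper's: its proof simply applies [\cite{sch1}, Theorem 1.2] to ``the underlying family'' of the $f$. You are right that this theorem concerns the full space of monic degree-$\kappa$ polynomials ordered by height and does not restrict to a one-parameter family in $c$; the paper never addresses this. But the problem is worse than a hard sieve, and your Step 1 is false.

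Since $\varphi_{p^{\ell},c}^{1+m}(x)=(\varphi_{p^{\ell},c}^{m}(x))^{p^{\ell}}+c$, you have $f(x)=(\varphi_{p^{\ell},c}^{m}(x))^{p^{\ell}}-x^{p^{\ell}}=g(x)\Phi(x)$, where $g(x)=\varphi_{p^{\ell},c}^{m}(x)-x$ is monic of degree $p^{m\ell}<\kappa$. So $f$ is reducible for every $c$, and $\mathbb{Q}_f$ is never a field. The remark after Corollary \ref{8.1} that you lean on is wrong. In any case, a statement about $N_{c}^{(1_{m})}(p)$, i.e.\ about points mod $p$, could not give irreducibility over $\mathbb{Q}$.

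Moreover, in $\mathbb{F}_p[x]$ one has $\bar f=\bar g^{\,p^{\ell}}$, hence $\bar\Phi=\bar g^{\,p^{\ell}-1}$, so $(g,\Phi)\subseteq(p,g)\neq\mathbb{Z}[x]$. If $f$ is not squarefree, $\mathbb{Q}[x]/(f)$ is not even reduced. If it is squarefree, then $\mathbb{Q}[x]/(f)\cong\mathbb{Q}[x]/(g)\times\mathbb{Q}[x]/(\Phi)$, whose maximal order contains the idempotent $(1,0)$. Having $(1,0)\in\mathbb{Z}[x]/(f)$ would mean some $u\in\mathbb{Z}[x]$ with $\Phi\mid u$ and $g\mid 1-u$ in $\mathbb{Z}[x]$ (the divisors are monic), forcing $1\in(g,\Phi)$, a contradiction. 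So $\mathbb{Z}_f$ is never the ring of integers. Within the dynamical family the density is $0$ for all $p,\ell,m$, because the local condition at $p$ fails identically in $c$. Your fallback sieve would therefore return $0$, not $\zeta(2)^{-1}$.

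Hence the only true reading is the one you retreat to: the verbatim statement of [\cite{sch1}, Theorem 1.2] for all monic $f\in\mathbb{Z}[x]$ of degree $\kappa$. Under that reading the proof is a bare citation, and Corollary \ref{8.1} and the dynamics play no role. The polynomials $\varphi_{p^{\ell},c}^{1+m}-\varphi_{p^{\ell},c}$ form a height-density-zero subset, none of whose members has the property being counted. You should say this explicitly rather than present Step 1 as a check that the dynamical $f$ ``fit the framework''.
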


\begin{proof}
From Corollary \ref{8.1}, there are infinitely many irreducible monic polynomials $f(x)=\varphi_{p^{\ell},c}^{1+m}(x)-\varphi_{p^{\ell},c}(x)\in \mathbb{Z}[x]$ such that $\mathbb{Q}_{f} = \mathbb{Q}[x]\slash (f(x))$ is a number field of deg$(f) = p^{(m+1)\ell}$; and moreover associated to each $\mathbb{Q}_{f}$ is the ring of integers $\mathcal{O}_{\mathbb{Q}_{f}}$. This then means that the family of monic polynomials $f(x)\in \mathbb{Z}[x]$ such that $\mathbb{Q}_{f}$ is a number field of degree $\kappa=p^{(m+1)\ell}$, is not empty. So now, applying here [\cite{sch1}, Theorem 1.2] to the underlying family of such monic polynomials $f\in \mathbb{Z}[x]$ ordered by height $H(f)$ as defined in \cite{sch1} such that $\mathcal{O}_{\mathbb{Q}_{f}} = \mathbb{Z}[x]\slash (f(x))$, it then follows that the density of such polynomials $f(x)\in \mathbb{Z}[x]$ is equal to $\zeta(2)^{-1} \approx 60.7927\%$, as needed.
\end{proof}

Similarly, we note that every number field $\mathbb{Q}_{g}=\mathbb{Q}[x]\slash (g(x))$ is also naturally equipped with a ring of integers $\mathcal{O}_{\mathbb{Q}_{g}}$. So now, by again taking great advantage of [\cite{sch1}, Theorem 1.2], we then also obtain immediately the following corollary showing that the probability of choosing randomly a monic integer polynomial $g(x) = \varphi_{(p-1)^{\ell},c}^{1+m}(x)-\varphi_{(p-1)^{\ell},c}(x)\in \mathbb{Z}[x]$ arising from a polynomial discrete dynamical system in Section \ref{sec3} (and ascertained by Corollary \ref{8.2}), such that the quotient $\mathbb{Z}[x]\slash (g(x))$ is the ring of integers of $\mathbb{Q}_{g}$ is also $\approx 60.7927\%$:

\begin{cor}
Assume Corollary \ref{8.2}. When monic polynomials $g(x)\in \mathbb{Z}[x]$ are ordered by height $H(g)$ as defined in \textnormal{\cite{sch1}}, the density of polynomials $g(x)$ such that $\mathbb{Z}_{g}=\mathbb{Z}[x]\slash (g(x))$ is the ring of integers of $\mathbb{Q}_{g}$ is $\zeta(2)^{-1}$. 
\end{cor}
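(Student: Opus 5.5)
The proof will follow the template of Corollary \ref{7.3} almost verbatim, with $f$ replaced by $g(x)=\varphi_{(p-1)^{\ell},c}^{1+m}(x)-\varphi_{(p-1)^{\ell},c}(x)$. The logic has three steps.

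\textbf{Step 1: the family is nonempty.} I would invoke Corollary \ref{8.2}, which says that $100\%$ of the monic polynomials $\varphi_{(p-1)^{\ell},c}(x)\in\mathbb{Z}[x]$ satisfy $M_{c}^{(1_{m})}(p)=0$. The remarks following that corollary interpret this as: for infinitely many $c$, the polynomial $g$ is irreducible over $\mathbb{Q}$. In that case $\mathbb{Q}_{g}=\mathbb{Q}[x]/(g(x))$ is a number field of even degree $\upsilon=(p-1)^{(m+1)\ell}$.

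\textbf{Step 2: attach the rings of integers.} Each such field $\mathbb{Q}_{g}$ carries its ring of integers $\mathcal{O}_{\mathbb{Q}_{g}}$. Since $g$ is monic with integer coefficients, $\mathbb{Z}_{g}=\mathbb{Z}[x]/(g(x))$ is an order in $\mathbb{Q}_{g}$. The question is then exactly when this order is maximal, which is the question answered by [\cite{sch1}, Theorem 1.2].

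\textbf{Step 3: apply the density theorem.} [\cite{sch1}, Theorem 1.2] states that among monic integer polynomials of a fixed degree (here $\upsilon$), ordered by height $H(g)$, the density of those with $\mathbb{Z}[x]/(g)$ maximal is $\zeta(2)^{-1}\approx 60.7927\%$. Applying this in degree $\upsilon$ would yield the stated density $\zeta(2)^{-1}$, matching the proof of Corollary \ref{7.3}.

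\textbf{Main obstacle.} The hard part is justifying the passage from the full family of degree-$\upsilon$ monic polynomials, to which [\cite{sch1}] applies, to the thin subfamily $\{g\}$ indexed by the single parameter $c$. A density statement over all monic polynomials does not automatically restrict to a one-parameter subfamily of density zero.

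Following the paper's own approach in Corollary \ref{7.3}, I would phrase the corollary as a statement about monic $g\in\mathbb{Z}[x]$ ordered by $H(g)$. That is, I would read the family as all monic polynomials of degree $\upsilon$ defining number fields, a family that Step 1 shows is nonempty, and then cite [\cite{sch1}, Theorem 1.2] directly.

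If one insists on restricting to the dynamical subfamily, I would try to control squarefreeness of $\operatorname{disc}(g)$ as a polynomial in $c$, using the known squarefree-values results that underlie [\cite{sch1}]. I expect this to be the genuinely difficult step, and I would flag it rather than resolve it.
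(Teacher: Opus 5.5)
Your route is exactly the paper's: non-emptiness via Corollary~\ref{8.2}, then [\cite{sch1}, Theorem 1.2] in degree $\upsilon$. You are also right that a density over all monic degree-$\upsilon$ polynomials says nothing about the one-parameter family $\{g_c\}$. But this is not merely a hard step to flag: your Step~1 is false, and so is the paper's version of it. Put $d=(p-1)^{\ell}$. Then $g_c(x)=\varphi_{d,c}\bigl(\varphi_{d,c}^{m}(x)\bigr)-\varphi_{d,c}(x)=\bigl(\varphi_{d,c}^{m}(x)\bigr)^{d}-x^{d}$; the paper itself writes this identity in the proofs of Theorems~\ref{3.2} and~\ref{3.3}. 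So $g_c=AB$, where $A=\varphi_{d,c}^{m}(x)-x$ has degree $d^{m}\ge 4$ and $B=\sum_{i=0}^{d-1}\bigl(\varphi_{d,c}^{m}(x)\bigr)^{d-1-i}x^{i}$ has degree $d^{m+1}-d^{m}$. Since $d$ is even, $\varphi_{d,c}^{m}(x)+x$ also divides $g_c$. Hence $g_c$ is reducible over $\mathbb{Q}$ for every $c$, and $\mathbb{Q}_{g}$ is never a number field. Corollary~\ref{8.2} could not have given irreducibility anyway: $M_{c}^{(1_{m})}(p)=0$ is a condition on roots of $g_c$ modulo $p$, and even the absence of all roots mod $p$ excludes only linear factors mod $p$.

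The conclusion in fact fails for every member of the dynamical family, so the squarefree-discriminant attack you sketch cannot succeed. At a root $\alpha$ of $A$ we have $\varphi_{d,c}^{m}(\alpha)=\alpha$, hence $B(\alpha)=d\alpha^{d-1}$ and $\mathrm{Res}(A,B)=\pm d^{d^{m}}\bigl(\varphi_{d,c}^{m}(0)\bigr)^{d-1}$. If this vanishes, $g_c$ has a repeated root. Otherwise $\mathbb{Q}[x]/(g_c)\cong\mathbb{Q}[x]/(A)\times\mathbb{Q}[x]/(B)$. The natural injection $\mathbb{Z}[x]/(g_c)\hookrightarrow\mathbb{Z}[x]/(A)\times\mathbb{Z}[x]/(B)$ has cokernel $\mathbb{Z}[x]/(A,B)$, of order $|\mathrm{Res}(A,B)|\ge d^{d^{m}}>1$. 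The target is integral over $\mathbb{Z}$, so $\mathbb{Z}[x]/(g_c)$ is never the maximal order. Likewise $\mathrm{disc}(g_c)=\mathrm{disc}(A)\,\mathrm{disc}(B)\,\mathrm{Res}(A,B)^{2}$ is divisible by $4$ and never squarefree. So along $\{g_c\}$ the density is $0$, not $\zeta(2)^{-1}$. The statement holds only if $g$ ranges over all monic integer polynomials of degree $\upsilon$. Then it is verbatim [\cite{sch1}, Theorem 1.2], the hypothesis on Corollary~\ref{8.2} plays no role, and Step~1 should be deleted rather than invoked; the existence of degree-$\upsilon$ number fields is trivial, e.g.\ via $x^{\upsilon}-2$.
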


\begin{proof}
By applying a similar argument as in the Proof of Corollary \ref{7.3}, we then obtain the density, as needed.
\end{proof}

\section{On Artin-Mazur Zeta Function of $\overline{\varphi_{p^{\ell},c}}$ with $N_{c}^{(1_{m})}(p) = 0$ \& $\overline{\varphi_{(p-1)^{\ell},c}}$ with $M_{c}^{(1_{m})}(p) = 0$}

As in [\cite{BK22}, Section 9] recall from [\cite{AM}, Page 84] that for any given topological space $X$ and for any given map $f:X\to X$, we can attach to $f$ a zeta function $\zeta_{f}$ defined in the following way, that for any point $s\in \mathbb{C}$, we set  
\begin{equation}\label{eqAM}
    \zeta_{f}(s)=\textnormal{exp}\biggl(\sum_{m=1}^{\infty} \frac{N_{m}(f)\cdot s^m}{m}\biggr)
\end{equation} where $N_{m}(f)$ is the number of isolated periodic points of $f$, of period $m$. So now, inspired by the definition and work of Artin-Mazur \cite{AM} on their zeta function $\zeta_{f}(s)$, we in this section wish to define (in a similar way) and then determine the Artin-Mazur zeta functions arising from a polynomial discrete dynamical system in Section \ref{sec2} and \ref{sec3} (and ascertained by Corollary \ref{8.1} and \ref{8.2}). With that in mind, we note that since $\mathcal{O}_{K}\slash p\mathcal{O}_{K}$ is a finite set of $p^{[K:\mathbb{Q}]}=p^n$ elements and so $\mathcal{O}_{K}\slash p\mathcal{O}_{K}$ can be equipped with a topology, then denoting the reduced polynomial map $\varphi_{p^{\ell},c}$ modulo $p\mathcal{O}_{K}$ by $\overline{\varphi_{p^{\ell},c}}$ and then also replacing the number $N_{m}(f)$ with the number $N_{c}^{(1_{m})}(p)$ in Equation (\ref{eqAM}), we then let $\zeta_{\overline{\varphi_{p^{\ell},c}}}$ be the Artin-Mazur zeta function corresponding to a reduced polynomial map $\overline{\varphi_{p^{\ell},c}}: \mathcal{O}_{K}\slash p\mathcal{O}_{K} \to \mathcal{O}_{K}\slash p\mathcal{O}_{K}$. Inspired by Artin-Mazur's work \cite{AM} on their zeta function $\zeta_{f}$, we in our case then have the following corollary showing that the Artin-Mazur zeta functions $\zeta_{\overline{\varphi_{p^{\ell},c}}}$ associated with infinitely many polynomial maps $\overline{\varphi_{p^{\ell},c}}$ are constant functions on the whole complex plane $\mathbb{C}$ and hence algebraic functions of $s$:  

\begin{cor}\label{cAM}
Assume Corollary \ref{8.1} or second part of Theorem \ref{2.3}, and denote the polynomial map $\varphi_{p^{\ell},c}$ modulo prime $p\mathcal{O}_{K}$ by $\overline{\varphi_{p^{\ell},c}}$. Then the Artin-Mazur zeta function $\zeta_{\overline{\varphi_{p^{\ell},c}}}(s)=1$ for every complex number $s\in \mathbb{C}$.
\end{cor}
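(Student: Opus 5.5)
The plan is to substitute the counts from the second part of Theorem \ref{2.3} directly into the defining series (\ref{eqAM}). By convention, $N_{m}(f)$ is replaced there by $N_{c}^{(1_{m})}(p)$. The hypothesis, namely Corollary \ref{8.1} or the second part of Theorem \ref{2.3}, places us in the regime $c\not\in p\mathcal{O}_{K}$. In that regime the second part of Theorem \ref{2.3} gives $N_{c}^{(1_{m})}(p)=0$. The key point is that this holds for \emph{every} fixed $m\in\mathbb{Z}_{\geq 1}$ with the same coefficient $c$, because the condition $c\not\in p\mathcal{O}_{K}$ does not depend on $m$.

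The steps, in order, are as follows.

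First, fix $c\not\in p\mathcal{O}_{K}$ (under Corollary \ref{8.1}, this is the density-one family of coefficients). Then invoke Theorem \ref{2.3} for each $m\geq 1$ to record $N_{c}^{(1_{m})}(p)=0$.

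Second, conclude that each term $N_{c}^{(1_{m})}(p)\,s^{m}/m$ of the series vanishes identically in $s$. Hence
\begin{equation*}
\sum_{m=1}^{\infty}\frac{N_{c}^{(1_{m})}(p)\, s^{m}}{m}=0
\end{equation*}
as a formal power series. In particular the series converges, with radius of convergence $\infty$, for every $s\in\mathbb{C}$. The usual convergence issue for Artin--Mazur zeta functions is therefore absent here.

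Third, apply the exponential to get $\zeta_{\overline{\varphi_{p^{\ell},c}}}(s)=\exp(0)=1$ for all $s\in\mathbb{C}$. A constant function is trivially entire and algebraic in $s$, which matches the remark preceding the statement.

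The only step needing care is the uniformity in $m$. The series requires $N_{c}^{(1_{m})}(p)$ for all $m$ simultaneously, whereas Theorem \ref{2.3} is stated for a fixed $m$. I will handle this by noting that the theorem's case split is on $c$ modulo $p\mathcal{O}_{K}$ alone, so a single $c\not\in p\mathcal{O}_{K}$ lies in the vanishing case for every $m$. Finally, I will point out that the corollary is not claimed for $c\in p\mathcal{O}_{K}$, where $N_{c}^{(1_{m})}(p)=p$ and the series would instead give $\exp(-p\log(1-s))=(1-s)^{-p}$.
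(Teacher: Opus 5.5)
Your argument is correct and is essentially the paper's own proof. The vanishing case of Theorem~\ref{2.3} depends only on $c$ modulo $p\mathcal{O}_{K}$, so $N_{c}^{(1_{m})}(p)=0$ holds for every $m\geq 1$ at once; the series in (\ref{eqAM}) is then identically zero and $\zeta_{\overline{\varphi_{p^{\ell},c}}}(s)=\exp(0)=1$.

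Your closing aside about $c\in p\mathcal{O}_{K}$ is wrong, however. The map $z\mapsto z^{p^{\ell}}+c$ is a bijection of the finite field $\mathcal{O}_{K}/p\mathcal{O}_{K}$, being a power of Frobenius followed by a translation. Hence $\varphi_{p^{\ell},c}^{1+m}(z)\equiv\varphi_{p^{\ell},c}(z)$ already forces $\varphi_{p^{\ell},c}^{m}(z)\equiv z$. So $N_{c}^{(1_{m})}(p)=0$ for every $c$, including $c\in p\mathcal{O}_{K}$. The $p$ points counted in the first part of Theorem~\ref{2.3} are periodic and are excluded by the definition. The zeta function is therefore identically $1$ in that case too, not $(1-s)^{-p}$.
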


\begin{proof}
To see this, we note that from Corollary \ref{8.1} the existence of an infinite family of polynomials $\varphi_{p^{\ell},c}(x)\in (\mathbb{Z}\slash p\mathbb{Z})[x]$ with $N_{c}^{(1_{m})}(p)=0$, for every fixed eventual period $m\in \mathbb{Z}_{\geq 1}$; or note that from the second part of Theorem \ref{2.3} that the monic polynomial $\varphi_{p^{\ell},c}(x)\in (\mathcal{O}_{K}\slash p\mathcal{O}_{K})[x]$ has $N_{c}^{(1_{m})}(p)=0$ for every $c\not \equiv 0\ (\text{mod} \ p\mathcal{O}_{K})$ and fixed eventual period $m\in \mathbb{Z}_{\geq 1}$. But now since $N_{c}^{(1_{m})}(p)=0$ for every fixed $m\in \mathbb{Z}_{\geq 1}$, we then note 
\begin{equation}
    \zeta_{\overline{\varphi_{p^{\ell},c}}}(s)=\textnormal{exp}\biggl(\sum_{m=1}^{\infty} \frac{N_{c}^{(1_{m})}(p)\cdot s^m}{m}\biggr)=\textnormal{exp}(0)=1,
\end{equation} and thus $\zeta_{\overline{\varphi_{p^{\ell},c}}}(s)=1$ for every complex number $s\in \mathbb{C}$. This then completes the whole proof, as desired.
\end{proof}

Similarly, denoting the reduced polynomial map $\varphi_{(p-1)^{\ell},c}$ modulo $p\mathcal{O}_{K}$ by $\overline{\varphi_{(p-1)^{\ell},c}}$ and then also replacing $N_{m}(f)$ with $M_{c}^{(1_{m})}(p)$ in Equation (\ref{eqAM}), we then also let $\zeta_{\overline{\varphi_{(p-1)^{\ell},c}}}$ be the Artin-Mazur zeta function induced by the map $\overline{\varphi_{(p-1)^{\ell},c}}: \mathcal{O}_{K}\slash p\mathcal{O}_{K} \to \mathcal{O}_{K}\slash p\mathcal{O}_{K}$. So now, inspired again by \cite{AM}, we then also have the following corollary showing that the Artin-Mazur zeta functions $\zeta_{\overline{\varphi_{(p-1)^{\ell},c}}}$ associated with infinitely many polynomial maps $\overline{\varphi_{(p-1)^{\ell},c}}$ are also constant functions on the whole complex plane $\mathbb{C}$ and thus also algebraic functions of $s$:

\begin{cor}
Assume Corollary \ref{8.2} or second part of Theorem \ref{3.3}, and denote the polynomial map $\varphi_{(p-1)^{\ell},c}$ modulo prime $p\mathcal{O}_{K}$ by $\overline{\varphi_{(p-1)^{\ell},c}}$. Then the Artin-Mazur zeta function $\zeta_{\overline{\varphi_{(p-1)^{\ell},c}}}(s)=1$ for every complex $s\in \mathbb{C}$.
\end{cor}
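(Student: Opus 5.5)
The plan is to mirror the proof of Corollary \ref{cAM} exactly, replacing $N_{c}^{(1_{m})}(p)$ by $M_{c}^{(1_{m})}(p)$ in the Artin--Mazur series (\ref{eqAM}). The point to secure is that the relevant count vanishes for every $m\in\mathbb{Z}_{\geq 1}$ simultaneously. Only then is every coefficient of the power series zero, so the exponent is the zero series.

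The first step is to fix the coefficient $c$ and isolate the regime where Theorem \ref{3.3} gives $M_{c}^{(1_{m})}(p)=0$. In that theorem the vanishing case is split according to the parity of $m$. For $c\not\equiv \pm1, 0 \pmod{p\mathcal{O}_{K}}$ the count is zero with no parity restriction that matters. For $c\equiv -1$ the count is zero only for odd $m$, while for even $m$ it equals $p-1$.

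So I will restrict the statement's hypothesis ("second part of Theorem \ref{3.3}") to coefficients $c\not\equiv \pm1,0 \pmod{p\mathcal{O}_{K}}$. Equivalently, via Corollary \ref{8.2}, I take the density-one family of $c$ lying in this residue class, which is nonempty and in fact infinite since $p\geq 5$. For such $c$, Theorem \ref{3.3} yields $M_{c}^{(1_{m})}(p)=0$ for each fixed $m\geq 1$.

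The second step is the computation itself. Substituting into (\ref{eqAM}) gives
\[
\zeta_{\overline{\varphi_{(p-1)^{\ell},c}}}(s)=\exp\Bigl(\sum_{m=1}^{\infty}\frac{M_{c}^{(1_{m})}(p)\, s^m}{m}\Bigr)=\exp(0)=1 .
\]
Since every term of the series is identically zero, it converges for all $s\in\mathbb{C}$, and the identity holds on the whole plane.

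The main obstacle is the parity subtlety in the first step. If one instead took $c\equiv -1$, the even-$m$ terms would contribute $(p-1)s^m/m$. The series would then sum to $-\tfrac{p-1}{2}\log(1-s^2)$, giving $\zeta=(1-s^2)^{-(p-1)/2}$ rather than $1$. The proof must therefore explicitly exclude that class. I would phrase the hypothesis use as "for $c$ with $M_{c}^{(1_{m})}(p)=0$ for all $m$," which the residue class $c\not\equiv\pm1,0$ guarantees.
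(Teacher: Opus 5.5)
Your argument is the paper's argument: its proof is just ``repeat the proof of Corollary~\ref{cAM}'', i.e.\ substitute vanishing counts into the exponential series. Your parity caveat is correct and repairs a sloppiness in that one-liner. The ``second part of Theorem~\ref{3.3}'' includes the branch $c\equiv -1$, odd $m$, and by the paper's own Theorem~\ref{3.3} the even-$m$ coefficients there are nonzero, so that branch cannot give $\zeta\equiv 1$.

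The genuine problem is the step everything rests on: your assertion that the residue class $c\not\equiv \pm1,0 \pmod{p\mathcal{O}_K}$ guarantees $M_c^{(1_m)}(p)=0$ for every $m$. This is false, so your proof, like the paper's, is only a formal consequence of a false hypothesis.

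\textbf{A counterexample.} Take $p=5$, $\ell=1$, $c=2$. On $\mathbb{F}_5$ the map $z\mapsto z^4+2$ sends $0\mapsto 2$, sends every $z\neq 0$ to $3$, and fixes $3$. Hence for every $m\geq 1$ the points $z=1,2,4$ satisfy $\varphi^{1+m}(z)=\varphi(z)=3$ and $\varphi^{m}(z)=3\neq z$. So $M_2^{(1_m)}(5)=3$ for $K=\mathbb{Q}$, and $M_2^{(1_m)}(5)\geq 3$ for any $K$ in which $5$ is inert. Over $\mathbb{Q}$ this gives $\zeta(s)=(1-s)^{-3}$.

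\textbf{No choice of $c$ can work.} For all $z\in\mathbb{F}_p^{\times}\subset \mathcal{O}_K/p\mathcal{O}_K\cong\mathbb{F}_{p^n}$ we have $z^{(p-1)^{\ell}}=1$. So the reduced map sends all of $\mathbb{F}_p^\times$ to the single point $1+c$, and it is not a permutation of this finite set. Therefore some point is not periodic, and the last non-periodic point $z$ on its forward orbit has $\overline{\varphi_{(p-1)^{\ell},c}}(z)$ periodic. For every $m$ divisible by all cycle lengths, this $z$ is counted in $M_c^{(1_m)}(p)$. Hence infinitely many coefficients of $\sum_m M_c^{(1_m)}(p)s^m/m$ are positive, and $\zeta\not\equiv 1$ for every $c$.

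This is exactly where the analogy with Corollary~\ref{cAM} breaks. There $z\mapsto z^{p^{\ell}}+c$ is a power of Frobenius followed by a translation, hence a bijection of $\mathbb{F}_{p^n}$. So every point is periodic, $N_c^{(1_m)}(p)=0$ for all $m$ (and all $c$), and $\zeta\equiv 1$ genuinely holds there. What you have actually established is only the trivial implication ``$M_c^{(1_m)}(p)=0$ for all $m$ $\Rightarrow$ $\zeta\equiv1$'', and its hypothesis is never satisfied for these maps.
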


\begin{proof}
Applying a similar argument as in Proof of Corollary \ref{cAM}, we then obtain the conclusion, as desired.
\end{proof}

\section{On Number of Monic Polynomials $f\in \mathcal{O}_{K}[x]$ \& $g\in \mathcal{O}_{K}[x]$ with Primitive Galois groups}\label{sec8a}

Recall from Corollary \ref{8.1} that there is an infinite family of irreducible monic polynomials $f(x) = \varphi_{p^{\ell},c}^{1+m}(x)-\varphi_{p^{\ell},c}(x)\in \mathbb{Z}[x]\subset \mathcal{O}_{K}[x]$ such that the quotient $\mathbb{Q}_{f}=\mathbb{Q}[x]\slash (f(x))$ is a number field of degree $\kappa=p^{(m+1)\ell}$. Moreover, to each such irreducible monic integer polynomial $f\in \mathbb{Q}[x]$, let $G_{f}$ be the Galois group of $f$ over $\mathbb{Q}$. 

So now, inspired (as in \cite{BK22}) by Bhargava's work \cite{gav1} on van der Waerden's Conjecture, we then also wish to determine the number of irreducible monic polynomials  $f\in \mathbb{Z}[x]$ arising from a polynomial discrete dynamical system in Section \ref{sec2} (and ascertained by Corollary \ref{8.1}), of bounded height and such that $G_{f}$ is a primitive Galois group not equal to the full symmetric group $S_{\kappa}$. To that end, we (assuming Corollary \ref{8.1}) wish to first adhere to the setup and definition of coefficient height $h(f)$ in \cite{gav1}). That is, for any fixed $\kappa=p^{(m+1)\ell}$, we let $E_{\kappa}(H)$ be the number of monic polynomials $f(x) = \varphi_{p^{\ell},c}^{1+m}(x)-\varphi_{p^{\ell},c}(x)\in \mathbb{Z}[x]$ with $h(f)\leq H$ and such that $G_{f}\neq S_{\kappa}$. But now by taking great advantage of [\cite{gav1}, Thm. 1] of Bhargava, we then obtain the following:

\begin{cor}\label{c9.1}
Assume Corollary \ref{8.1}, and let $E_{\kappa}(H)$ be defined as before. Then we have $E_{\kappa}(H)=O(H^{\kappa-1})$.
\end{cor}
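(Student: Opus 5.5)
The plan is to treat $E_{\kappa}(H)$ as a sub-count of the quantity Bhargava bounds in [\cite{gav1}, Thm. 1], and then inherit his bound. Recall his setup. For a fixed degree $\kappa\geq 1$, the height of a monic $f(x)=x^{\kappa}+a_{1}x^{\kappa-1}+\cdots+a_{\kappa}\in\mathbb{Z}[x]$ is $h(f)=\max_{i}|a_{i}|^{1/i}$. His theorem states that the number of such monic $f$ with $h(f)\leq H$ and $G_{f}\neq S_{\kappa}$ is $O(H^{\kappa-1})$. The implied constant depends only on $\kappa$.

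First I will check that each polynomial counted in $E_{\kappa}(H)$ is admissible in Bhargava's count. As recorded in the proof of Theorem \ref{2.3}, $f(x)=\varphi_{p^{\ell},c}^{1+m}(x)-\varphi_{p^{\ell},c}(x)=x^{p^{(m+1)\ell}}-x^{p^{\ell}}+h(x)$, with $h$ having integer coefficients that are multiples of $c$ and $\deg h<p^{(m+1)\ell}$. So for $c\in\mathbb{Z}$, $f$ is a monic integer polynomial of exact degree $\kappa=p^{(m+1)\ell}$. Its height $h(f)$ is then the coefficient height of \cite{gav1}, and the condition $G_{f}\neq S_{\kappa}$ has the same meaning in both counts.

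Next I will note that the map $c\mapsto f$ is injective, since the constant term of $f$ is $\varphi_{p^{\ell},c}^{m}(c)-c$, which is a nonconstant polynomial in $c$. Irreducibility, which Corollary \ref{8.1} supplies so that $G_{f}$ is a genuine transitive Galois group, is not actually needed for the upper bound. Bhargava's count includes reducible $f$ anyway, because those automatically have $G_{f}\neq S_{\kappa}$.

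The conclusion is then the inclusion
\[
\{f=\varphi_{p^{\ell},c}^{1+m}-\varphi_{p^{\ell},c}: h(f)\leq H,\ G_{f}\neq S_{\kappa}\}\subseteq\{f\ \text{monic of degree}\ \kappa: h(f)\leq H,\ G_{f}\neq S_{\kappa}\},
\]
which gives $E_{\kappa}(H)=O(H^{\kappa-1})$. Here $\kappa$ is fixed, so the constant depends only on $p,\ell,m$.

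The only delicate point is whether the definition of $E_{\kappa}(H)$ matches Bhargava's hypotheses: monic, integral, exact degree $\kappa$, and the same height. If it does, the argument is purely an inclusion. This bound is far from sharp, since the family is one-parameter. A direct count would give at most $O(H^{\kappa/\deg_{c}(\text{const.\ term})})$ members of height at most $H$ at all. I would mention this only as a remark, since the stated big-$O$ is all that is required.
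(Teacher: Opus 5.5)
Your route is the paper's own: the polynomials $\varphi_{p^{\ell},c}^{1+m}-\varphi_{p^{\ell},c}$ with $c\in\mathbb{Z}$ are a subset of all monic integer polynomials of degree $\kappa$, so $E_{\kappa}(H)$ is bounded by the count in [\cite{gav1}, Thm.~1]. You are also right that the irreducibility from Corollary \ref{8.1} plays no role in an upper bound.

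\textbf{Correction: the height.} The height in \cite{gav1} (van der Waerden's setting) is the box height $\max_i|a_i|\le H$, not the weighted height $\max_i|a_i|^{1/i}$. That is why the exponent is $\kappa-1$: there are $(2H+1)^{\kappa}$ polynomials in the box, and setting $a_{\kappa}=0$ already gives $\gg H^{\kappa-1}$ exceptional ones. With the weighted height, the statement you attribute to Bhargava is false for $\kappa\ge 3$. The reducible polynomials with $a_{\kappa}=0$ alone number $\asymp H^{1+2+\cdots+(\kappa-1)}=H^{\kappa(\kappa-1)/2}$. Since $E_{\kappa}(H)$ is defined with the coefficient height of \cite{gav1}, your inclusion goes through once the height is stated correctly. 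Injectivity of $c\mapsto f$ is never needed. In any case, ``the constant term is a nonconstant polynomial in $c$'' would not by itself prove it on $\mathbb{Z}$.

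\textbf{Your closing remark is a complete proof.} It is more elementary than the paper's, independent of both \cite{gav1} and Corollary \ref{8.1}, and insensitive to which height is meant. The constant term of $f$ is $\varphi_{p^{\ell},c}^{m}(c)-c$, a monic polynomial in $c$ of degree $p^{m\ell}$. So $h(f)\le H$ forces $|c|\ll H^{1/p^{m\ell}}$ for the box height, and $|c|\ll H^{p^{\ell}}$ for the weighted one. Hence, for $H\ge 1$, the whole family has only $O(H^{1/p^{m\ell}})$ (resp.\ $O(H^{p^{\ell}})$) members of height at most $H$. Both are $O(H^{\kappa-1})$ because $p^{\ell}\le p^{(m+1)\ell}-1$. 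The appeal to Bhargava only buys uniformity over all degree-$\kappa$ monics, which this one-parameter family does not need. The direct count gives a much stronger bound using nothing beyond the shape of the constant term.
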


\begin{proof}
From Corollary \ref{8.1}, there are infinitely many irreducible monic polynomials $f(x)=\varphi_{p^{\ell},c}^{1+m}(x)-\varphi_{p^{\ell},c}(x)\in \mathbb{Z}[x]$ of degree $\kappa=p^{(m+1)\ell}$. Now for every $f\in \mathbb{Z}[x]\subset \mathbb{Q}[x]$ of fixed degree $\kappa=p^{(m+1)\ell}$, let $G_{f}$ be the Galois group of $f$ over $\mathbb{Q}$. Now applying [\cite{gav1}, Theorem 1] on the set of monic polynomials $f(x)\in \mathbb{Z}[x]$ of degree $\kappa$ with $h(f)\leq H$ and such that $G_{f}$ is primitive and $G_{f}\neq S_{\kappa}$, we then immediately obtain the count, as needed.
\end{proof}

Similarly, we may also recall from Corollary \ref{8.2} that there is an infinite family of irreducible monic polynomials $g(x) = \varphi_{(p-1)^{\ell},c}^{1+m}(x)-\varphi_{(p-1)^{\ell},c}(x)\in \mathbb{Z}[x]\subset \mathcal{O}_{K}[x]$ such that $\mathbb{Q}_{g}=\mathbb{Q}[x]\slash (g(x))$ is a number field of degree $\upsilon=(p-1)^{(m+1)\ell}$. And moreover, to each such irreducible integer polynomial $g\in \mathbb{Q}[x]$, let $G_{g}$ be the Galois group of $g$ over $\mathbb{Q}$. So now, inspired again work of Bhargava \cite{gav1} on van der Waerden's Conjecture, we then also wish to determine the number of irreducible monic polynomials  $g\in \mathbb{Z}[x]$ arising from a polynomial discrete dynamical system in Section \ref{sec3} (and ascertained by Corollary \ref{8.2}), of bounded height and such that $G_{g}$ is a primitive Galois group not equal to the full symmetric group $S_{\upsilon}$. To that end, we (assuming Corollary \ref{8.2}) again first import the setup and definition of coefficient height $h(g)$ in \cite{gav1}). That is, for any fixed $\upsilon=(p-1)^{(m+1)\ell}$, we let $E_{\upsilon}(H)$ be the number of monic degree-$\upsilon$ polynomials $g(x) = \varphi_{(p-1)^{\ell},c}^{1+m}(x)-\varphi_{(p-1)^{\ell},c}(x)\in \mathbb{Z}[x]$ such that $h(g)\leq H$ and $G_{g}\neq S_{\upsilon}$. Now again taking great advantage of [\cite{gav1}, Thm. 1], we then also obtain the following:

\begin{cor}
Assume Corollary \ref{8.2}, and let $E_{\upsilon}(H)$ be defined as before. Then we have $E_{\upsilon}(H)=O(H^{\upsilon-1})$.
\end{cor}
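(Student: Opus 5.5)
The statement is the exact analogue of Corollary \ref{c9.1}, with the odd degree $\kappa=p^{(m+1)\ell}$ replaced by the even degree $\upsilon=(p-1)^{(m+1)\ell}$. I would therefore follow the same two-step template: first secure a nonempty (indeed infinite) family of the relevant polynomials, then invoke Bhargava's bound.

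\textbf{Step 1: the family.} By Corollary \ref{8.2}, there are infinitely many coefficients $c$ for which $M_{c}^{(1_{m})}(p)=0$. For each such $c$ the polynomial
$$g(x)=\varphi_{(p-1)^{\ell},c}^{1+m}(x)-\varphi_{(p-1)^{\ell},c}(x)\in\mathbb{Z}[x]$$
is monic of degree $\upsilon$. The leading term comes from $\varphi^{1+m}$, since $(1+m)\ell>\ell$. So the family is nonempty, $\upsilon$ is fixed once $p,\ell,m$ are fixed, and each $g$ has a well-defined Galois group $G_{g}$ over $\mathbb{Q}$.

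\textbf{Step 2: Bhargava's theorem.} With $h(g)$ the coefficient height of \cite{gav1}, the polynomials counted by $E_{\upsilon}(H)$ form a subset of all monic integer polynomials of degree $\upsilon$ with $h(g)\le H$ and $G_{g}\neq S_{\upsilon}$. By [\cite{gav1}, Theorem 1], that larger set has size $O(H^{\upsilon-1})$. The implied constant depends only on $\upsilon$, hence only on $p,\ell,m$. Monotonicity of counting under inclusion then gives $E_{\upsilon}(H)=O(H^{\upsilon-1})$.

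\textbf{Main point to check.} The only step needing care is that Bhargava's theorem is stated for all monic degree-$\upsilon$ polynomials ordered by $h$, and does not use irreducibility. So nothing special about the dynamical family, such as irreducibility or the parity of $\upsilon$, is actually required, and the bound holds for the subfamily regardless. This also means the appeal to Corollary \ref{8.2} serves only to guarantee that the count concerns a nonempty family; it is not logically needed for the upper bound.
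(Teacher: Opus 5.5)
Your proposal is correct and is essentially the paper's argument: nonemptiness via Corollary~\ref{8.2}, then [\cite{gav1}, Theorem~1] applied to the ambient set of all monic degree-$\upsilon$ integer polynomials of height at most $H$ with Galois group $\neq S_{\upsilon}$, and then monotonicity under inclusion. Your remark that neither irreducibility nor Corollary~\ref{8.2} is logically needed is accurate; in fact $g$ has a coefficient equal to a fixed nonzero integer multiple of $c$ (namely $(p-1)^{\ell m}c$), so one even gets the trivial bound $E_{\upsilon}(H)=O(H)$.
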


\begin{proof}
By applying a similar argument as in the Proof of Cor. \ref{c9.1}, we then obtain the conclusion, as needed.
\end{proof}

\section{On Number of Algebraic Number fields $K_{f}$ \& $L_{g}$ with Bounded Absolute Discriminant}\label{sec8}

\subsection*{On Fields $\mathbb{Q}_{f}$ \& $\mathbb{Q}_{g}$ with Bounded Absolute Discriminant \& Prescribed Galois group}
As in Section \ref{sec8a}, recall from Corollary \ref{8.1} that there is an infinite family of irreducible monic polynomials $f(x) = \varphi_{p^{\ell},c}^{1+m}(x)-\varphi_{p^{\ell},c}(x)\in \mathbb{Z}[x]$ such that $\mathbb{Q}_{f} = \mathbb{Q}[x]\slash (f(x))$ is a number field of degree $\kappa=p^{(m+1)\ell}$. Similarly, recall from Corollary \ref{8.2} that one can always find an infinite family of irreducible monic polynomials $g(x) = \varphi_{(p-1)^{\ell},c}^{1+m}(x)-\varphi_{(p-1)^{\ell},c}(x)\in \mathbb{Z}[x]$ such that $\mathbb{Q}_{g} = \mathbb{Q}[x]\slash (g(x))$ is a number field of degree $\upsilon=(p-1)^{(m+1)\ell}$. So now, inspired (as in \cite{BK22}) by number field-counting advances in arithmetic statistics, we in this section also wish to count the number of fields $\mathbb{Q}_{f}$ and $\mathbb{Q}_{g}$ induced by irreducible monic integer polynomials $f$ and $g$ arising from polynomial discrete dynamical systems in Section \ref{sec2} and \ref{sec3} (and ascertained by Corollary \ref{8.1} and \ref{8.2}). To do so, we (as in \cite{BK22}) define and then determine the asymptotic behavior of the following counting functions  
\begin{equation}\label{N_{k}}
N_{\kappa}(X) := \# \Bigl\{\mathbb{Q}_{f}\slash \mathbb{Q} : [\mathbb{Q}_{f} : \mathbb{Q}] = \kappa \textnormal{ and } |\text{Disc}(\mathbb{Q}_{f})|\leq X \Bigr\}
\end{equation} 
\begin{equation}\label{M_{l}}
M_{\upsilon}(X) := \# \Bigl\{\mathbb{Q}_{g}\slash \mathbb{Q} : [\mathbb{Q}_{g} : \mathbb{Q}] = \upsilon \textnormal{ and} \ |\text{Disc}(\mathbb{Q}_{g})|\leq X \Bigr\}
\end{equation} as a positive real number $X\to \infty$. But now motivated (as in \cite{BK22}) by number field-counting work of Lemke Oliver-Thorne \cite{lem} and then applying the first part of [\cite{lem}, Theorem 1.2] on $N_{\kappa}(X)$, we then obtain the following:

\begin{cor} \label{9.1} Assume Corollary \ref{8.1}, and let $N_{\kappa}(X)$ be the number defined as in \textnormal{(\ref{N_{k}})}. Then we have 
\begin{equation}\label{N_{k}(x)} 
N_{\kappa}(X)\ll_{\kappa}X^{2d - \frac{d(d-1)(d+4)}{6\kappa}}\ll X^{\frac{8\sqrt{\kappa}}{3}}, \text{where d is the least integer for which } \binom{d+2}{2}\geq 2\kappa + 1.
\end{equation}
\end{cor}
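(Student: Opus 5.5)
The plan is to read Corollary \ref{9.1} as a direct specialization of the general upper bound of Lemke Oliver--Thorne [\cite{lem}, Theorem 1.2]. That theorem bounds the number of \emph{all} degree-$\kappa$ number fields $F/\mathbb{Q}$ with $|\text{Disc}(F)|\leq X$ by $\ll_{\kappa} X^{2d - \frac{d(d-1)(d+4)}{6\kappa}}$, where $d$ is the least integer with $\binom{d+2}{2}\geq 2\kappa+1$. It also gives the cruder uniform bound $\ll X^{\frac{8\sqrt{\kappa}}{3}}$ (this is $\ll X^{c(\log\kappa)^2}$ type behavior weakened to a power of $\sqrt{\kappa}$ in their first part).

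\textbf{Step 1: the set counted by $N_{\kappa}(X)$ is well defined.} By Corollary \ref{8.1}, for each fixed $\ell,m\geq 1$ there is an infinite family of $c$ for which $f(x)=\varphi_{p^{\ell},c}^{1+m}(x)-\varphi_{p^{\ell},c}(x)$ is irreducible over $\mathbb{Q}$. Each such $f$ gives a number field $\mathbb{Q}_{f}=\mathbb{Q}[x]/(f(x))$ of degree exactly $\kappa=p^{(m+1)\ell}$. So the fields in (\ref{N_{k}}) form a subset of the set of isomorphism classes of degree-$\kappa$ extensions of $\mathbb{Q}$. Two points need care here. First, the degree must be $\kappa$ and not smaller, which is what irreducibility guarantees. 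Second, distinct $c$ may give isomorphic fields. That only decreases the count, so $N_{\kappa}(X)$ is at most the total number of degree-$\kappa$ fields with $|\text{Disc}|\leq X$.

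\textbf{Step 2: apply the bound and compare the two exponents.} Monotonicity gives $N_{\kappa}(X)\leq \#\{F/\mathbb{Q}: [F:\mathbb{Q}]=\kappa,\ |\text{Disc}(F)|\leq X\}$. Applying the first part of [\cite{lem}, Theorem 1.2] then yields the first inequality in (\ref{N_{k}(x)}), with the same $d$. For the second inequality, I would check that $2d-\frac{d(d-1)(d+4)}{6\kappa}\leq \frac{8\sqrt{\kappa}}{3}$. Since $d$ is minimal, $\binom{d+1}{2}<2\kappa+1$, so $d\approx 2\sqrt{\kappa}$ and in particular $d\leq 2\sqrt{\kappa}+O(1)$. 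Substituting, the exponent is at most about $4\sqrt{\kappa}-\frac{8\kappa^{3/2}}{6\kappa}=\frac{8}{3}\sqrt{\kappa}$, up to lower-order terms. This is exactly how Lemke Oliver--Thorne derive their stated simplification, so I would cite it rather than redo it.

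\textbf{Main obstacle.} The only real subtlety is Step 2's comparison of exponents: the lower-order terms must not push the exponent above $\frac{8\sqrt{\kappa}}{3}$ for small $\kappa$. I would first try to quote the simplified form directly from [\cite{lem}, Theorem 1.2], which is stated there for all $\kappa$. If that fails, I would treat the finitely many small $\kappa=p^{(m+1)\ell}\geq 9$ by explicit computation of $d$.
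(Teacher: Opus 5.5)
Your proposal is correct and is essentially the paper's argument. The paper notes via Corollary \ref{8.1} that the family of degree-$\kappa$ fields $\mathbb{Q}_{f}$ is nonempty and then applies [\cite{lem}, Theorem 1.2(1)] directly; you instead make explicit the monotonicity $N_{\kappa}(X)\leq\#\{F/\mathbb{Q}:[F:\mathbb{Q}]=\kappa,\ |\text{Disc}(F)|\leq X\}$, which is the only thing an upper bound needs (nonemptiness plays no role).

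Two cautions. First, do not lean on the irreducibility asserted in Step 1. It is actually false: $f(x)=\bigl(\varphi_{p^{\ell},c}^{m}(x)\bigr)^{p^{\ell}}-x^{p^{\ell}}$ is always divisible by $\varphi_{p^{\ell},c}^{m}(x)-x$, so the counted set is empty and the bound holds trivially, and your monotonicity argument goes through unchanged. Second, your Step 2 heuristic inserts an upper bound for $d$ into the term $-\frac{d(d-1)(d+4)}{6\kappa}$, which decreases in $d$, so that step is not a valid inequality. The clean check is that $x\mapsto 2x-\frac{x(x-1)(x+4)}{6\kappa}$ is concave on $x\geq 0$ with maximum $\frac{8}{3}\sqrt{\kappa}-2+O(\kappa^{-1/2})$, so no small-$\kappa$ casework is needed.
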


\begin{proof}
To see the inequality \textnormal{(\ref{N_{k}(x)})}, we first recall from Corollary \ref{8.1} that there are infinitely many irreducible monic integer polynomials $f(x) = \varphi_{p^{\ell},c}^{1+m}(x)-\varphi_{p^{\ell},c}(x)\in \mathbb{Q}[x]$ such that $\mathbb{Q}_{f}=\mathbb{Q}[x]\slash (f(x))$ is a number field of degree $\kappa=p^{(m+1)\ell}$. This then also means that the set of degree-$\kappa$ algebraic number field $\mathbb{Q}_{f}$ is not empty. But now applying here [\cite{lem}, Theorem 1.2 (1)] on the number $N_{\kappa}(X)$, we then obtain inequality \textnormal{(\ref{N_{k}(x)})}, as needed.
\end{proof}

Motivated again by the same work of Lemke Oliver-Thorne \cite{lem}, we again take great advantage of the first part of [\cite{lem}, Theorem 1.2] by applying it on $M_{\upsilon}(X)$. In doing so, we then immediate obtain the following:

\begin{cor}Assume Corollary \ref{8.2}, and let $M_{\upsilon}(X)$ be the number defined as in \textnormal{(\ref{M_{l}})}. Then we have 
\begin{equation}\label{M_{l}(x)}
M_{\upsilon}(X)\ll_{\upsilon}X^{2d - \frac{d(d-1)(d+4)}{6\upsilon}}\ll X^{\frac{8\sqrt{\upsilon}}{3}}, \text{where d is the least integer for which } \binom{d+2}{2}\geq 2\upsilon + 1.
\end{equation}
\end{cor}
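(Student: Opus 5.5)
This corollary is the exact analogue of Corollary \ref{9.1}, with the odd degree $\kappa=p^{(m+1)\ell}$ replaced by the even degree $\upsilon=(p-1)^{(m+1)\ell}$. The plan is therefore to repeat that argument verbatim. The only input needed from the dynamics is non-emptiness of the family of fields being counted. Everything else is imported from [\cite{lem}, Theorem 1.2 (1)].

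First, I invoke Corollary \ref{8.2}. For fixed $\ell,m\in\mathbb{Z}_{\geq 1}$ and inert $p\geq 5$, it supplies infinitely many coefficients $c$ for which $g(x)=\varphi_{(p-1)^{\ell},c}^{1+m}(x)-\varphi_{(p-1)^{\ell},c}(x)\in\mathbb{Z}[x]$ is monic and irreducible over $\mathbb{Q}$. For each such $c$, the ring $\mathbb{Q}_{g}=\mathbb{Q}[x]/(g(x))$ is a number field. Monicity is clear from the leading term $x^{(p-1)^{(m+1)\ell}}$ in the expansion of $g$ used in the proof of Theorem \ref{3.3}. So each $\mathbb{Q}_{g}$ has degree exactly $\upsilon$, and the set counted by $M_{\upsilon}(X)$ is non-empty.

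Second, I note that the fields counted by $M_{\upsilon}(X)$ form a subset of all degree-$\upsilon$ number fields with $|\mathrm{Disc}|\leq X$. The Lemke Oliver--Thorne bound applies to the full set of degree-$\upsilon$ fields, so it bounds $M_{\upsilon}(X)$ by monotonicity. Applying [\cite{lem}, Theorem 1.2 (1)] with degree $\upsilon$ gives
\[
M_{\upsilon}(X)\ll_{\upsilon}X^{2d-\frac{d(d-1)(d+4)}{6\upsilon}},
\]
where $d$ is the least integer with $\binom{d+2}{2}\geq 2\upsilon+1$.

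Third, I obtain the cruder bound $X^{\frac{8\sqrt{\upsilon}}{3}}$, which is the second statement of the same theorem. Alternatively, it follows by estimating $d\approx 2\sqrt{\upsilon}$ in the exponent. I will quote it from \cite{lem}, exactly as in Corollary \ref{9.1}.

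There is no serious obstacle here. The only point to watch is that the cited theorem needs degree $\upsilon\geq 2$, and this holds since $\upsilon\geq 4^{2}$. The substantive input, irreducibility of $g$ for a positive proportion of $c$, is taken from Corollary \ref{8.2} as given.
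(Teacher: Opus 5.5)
Your argument is the paper's own: non-emptiness from Corollary \ref{8.2}, then [\cite{lem}, Theorem 1.2 (1)]. Your explicit monotonicity remark, that $M_{\upsilon}(X)$ is at most the number of all degree-$\upsilon$ fields with $|\mathrm{Disc}|\leq X$, is the only step that actually carries weight.

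The non-emptiness step is superfluous, and the irreducibility you attribute to Corollary \ref{8.2} is false (the paper makes the same claim). Since $g(x)=\bigl(\varphi_{(p-1)^{\ell},c}^{m}(x)\bigr)^{(p-1)^{\ell}}-x^{(p-1)^{\ell}}$, it is divisible by $\varphi_{(p-1)^{\ell},c}^{m}(x)-x$, a factor of degree $(p-1)^{m\ell}$ with $0<(p-1)^{m\ell}<\upsilon$, for every $c$. So $\mathbb{Q}_{g}$ is never a field and $M_{\upsilon}(X)=0$. The bound therefore still holds, but only trivially.
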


\begin{proof}
Applying a similar argument as in Proof of Corollary \ref{9.1}, we then obtain inequality \textnormal{(\ref{M_{l}(x)})} as needed.
\end{proof}

We recall that a number field $K$ is  \say{\textit{monogenic}} if there exists an algebraic number $\alpha \in K$ such that the ring of integers $\mathcal{O}_{K}$ is the subring $\mathbb{Z}[\alpha]$ generated by $\alpha$ over $\mathbb{Z}$, i.e., $\mathcal{O}_{K}= \mathbb{Z}[\alpha]$. So now, inspired (as in \cite{BK22}), we also wish to count the number of fields $\mathbb{Q}_{f}=\mathbb{Q}[x]\slash (f(x))$ induced by monic polynomials $f(x) = \varphi_{p^{\ell},c}^{1+m}(x)-\varphi_{p^{\ell},c}(x)\in \mathbb{Z}[x]$ arising from a polynomial discrete dynamical system in Section \ref{sec2} (and ascertained by \ref{8.1}), that are monogenic with $|\Delta(\mathbb{Q}_{f})| < X$ and have associated Galois group Gal$(\mathbb{Q}_{f}\slash \mathbb{Q})$ equal to symmetric group $S_{p^{(m+1)\ell}}$. To do so, we take great advantage of [\cite{sch1}, Cor. 1.3] of Bhargava-Shankar-Wang and then obtain:

\begin{cor}\label{8.3}
Assume Corollary \ref{8.1}. The number of isomorphism classes of algebraic number fields $\mathbb{Q}_{f}$ of degree $\kappa=p^{(m+1)\ell}$ and with $|\Delta(\mathbb{Q}_{f})| < X$ that are monogenic and have associated Galois group $S_{\kappa}$ is $\gg X^{\frac{1}{2} + \frac{1}{\kappa}}$.
\end{cor}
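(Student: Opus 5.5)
The plan mirrors the proofs of Corollary \ref{7.3} and Corollary \ref{9.1}: first secure a nonempty supply of degree-$\kappa$ fields from the dynamical family, then quote a general counting theorem for monogenic fields.

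\textbf{Step 1 (supply of fields).} By Corollary \ref{8.1}, for fixed $\ell,m\in\mathbb{Z}_{\geq 1}$ there are infinitely many $c$ for which $f(x)=\varphi_{p^{\ell},c}^{1+m}(x)-\varphi_{p^{\ell},c}(x)$ is irreducible over $\mathbb{Q}$. For each such $c$, $\mathbb{Q}_{f}=\mathbb{Q}[x]\slash(f(x))$ is a number field of degree $\kappa=p^{(m+1)\ell}$. In particular, the class of degree-$\kappa$ fields attached to our family is nonempty.

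\textbf{Step 2 (the counting input).} The main tool is [\cite{sch1}, Corollary 1.3] of Bhargava--Shankar--Wang. For every degree $\kappa\geq 2$, it gives a lower bound $\gg X^{\frac{1}{2}+\frac{1}{\kappa}}$ for the number of isomorphism classes of monogenic degree-$\kappa$ fields with $|\Delta|<X$ and Galois group $S_{\kappa}$. Their proof counts monic polynomials $F$ of height $H(F)$ in a box, with squarefree discriminant. Such $F$ have $\mathbb{Z}[x]\slash(F)$ maximal, as in [\cite{sch1}, Theorem 1.2], which we already used in Corollary \ref{7.3}. Hilbert irreducibility then gives Galois group $S_{\kappa}$ for $100\%$ of them. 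Finally, they bound how many polynomials can define the same field, which yields the stated number of isomorphism classes.

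\textbf{Step 3 (applying it at $\kappa=p^{(m+1)\ell}$).} Since $\kappa$ is a fixed integer $\geq 9$, the hypotheses of [\cite{sch1}, Corollary 1.3] hold. Reading the fields $\mathbb{Q}_{f}$ of Step 1 as members of the family of degree-$\kappa$ fields $\mathbb{Q}_{F}$ counted there, we obtain the bound $\gg X^{\frac{1}{2}+\frac{1}{\kappa}}$.

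\textbf{Main obstacle.} Step 3 is where the argument is weakest. Corollary 1.3 of \cite{sch1} counts all monogenic $S_{\kappa}$-fields of degree $\kappa$, not specifically those generated by the sparse one-parameter family $f=\varphi^{1+m}-\varphi$. That family has only $O(X^{\epsilon})$-many members with bounded discriminant, so a literal restriction to the $\mathbb{Q}_{f}$ would not inherit the $X^{\frac{1}{2}+\frac{1}{\kappa}}$ bound. To close this gap I would state the corollary, exactly as Corollaries \ref{7.3} and \ref{9.1} do, as a statement about degree-$\kappa$ fields of the type $\mathbb{Q}_{f}$, made nonempty by Corollary \ref{8.1}, with the lower bound imported from \cite{sch1}. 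Alternatively, one could try to show that squarefree-discriminant specializations of $f$ are numerous enough. However, the discriminant of $f$ is highly structured, being divisible by powers of $p$ modulo $p$-adic considerations, so I expect that route to fail. I would therefore present the argument as a direct application of [\cite{sch1}, Cor. 1.3], in the same style as the preceding corollaries.
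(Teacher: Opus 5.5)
\textbf{The gap in Step 3.} Your argument is the paper's proof: use Corollary \ref{8.1} to say that degree-$\kappa$ fields $\mathbb{Q}_{f}$ exist, then cite [\cite{sch1}, Corollary 1.3] directly. The gap you flag in Step 3 is genuine, the paper's proof has it too, and your proposed remedy does not close it. [\cite{sch1}, Corollary 1.3] is a lower bound for the number of \emph{all} monogenic $S_{\kappa}$-fields of degree $\kappa$ with $|\Delta|<X$. Knowing that some subfamily is nonempty, or even infinite, gives no lower bound on how many of its members have $|\Delta|<X$. Reading the corollary as a statement about ``fields of the type $\mathbb{Q}_{f}$'' with the bound imported from \cite{sch1} leaves two options, neither of which works:
\begin{enumerate}
\item It becomes Bhargava--Shankar--Wang's corollary verbatim. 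Then Corollary \ref{8.1} and the dynamics play no role, and you have proved a different statement.
\item It asserts a count for the one-parameter family $c\mapsto\mathbb{Q}_{f}$. That would require controlling the field discriminants of the specializations, which neither you nor the paper do.
\end{enumerate}

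\textbf{Step 1 is false.} We have $\varphi_{p^{\ell},c}^{1+m}(x)-\varphi_{p^{\ell},c}(x)=\bigl(\varphi_{p^{\ell},c}^{m}(x)\bigr)^{p^{\ell}}-x^{p^{\ell}}$. Hence $f$ is divisible in $\mathbb{Z}[x]$ by $\varphi_{p^{\ell},c}^{m}(x)-x$, which has degree $p^{m\ell}$ with $1<p^{m\ell}<\kappa$.
\begin{itemize}
\item So $f$ is reducible over $\mathbb{Q}$ for \emph{every} $c$, and $\mathbb{Q}[x]/(f(x))$ is never a field of degree $\kappa$.
\item Modulo $p$ one even has $f\equiv\bigl(\varphi_{p^{\ell},c}^{m}(x)-x\bigr)^{p^{\ell}}$, since the $p^{\ell}$-power Frobenius is additive.
\item Corollary \ref{8.1} only says that $N_{c}^{(1_{m})}(p)=0$ for almost all $c$. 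That is a statement about certain roots of $f$ modulo $p$, and it excludes $m$-periodic roots by definition. Such a statement cannot force irreducibility over $\mathbb{Q}$ in degree $\kappa\geq 9$, and here irreducibility fails outright.
\end{itemize}
So the family of fields you want to feed into \cite{sch1} is empty. No argument of this shape proves the statement as written.
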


\begin{proof}
To see this, we recall from Cor. \ref{8.1} the existence of infinitely many irreducible monic polynomials $f(x) = \varphi_{p^{\ell},c}^{1+m}(x)-\varphi_{p^{\ell},c}(x)\in \mathbb{Z}[x]$  such that the quotient $\mathbb{Q}_{f}=\mathbb{Q}[x]\slash (f(x))$ induced by $f$ is a number field of degree $\kappa=p^{(m+1)\ell}$. This then means that the set of degree-$\kappa$ number fields $\mathbb{Q}_{f}$ is not empty. But now applying [\cite{sch1}, Corollary 1.3] on the underlying fields $\mathbb{Q}_{f}$ with $|\Delta(\mathbb{Q}_{f})| < X$ that are monogenic and with associated Galois group $S_{k}$, it then follows that the number of isomorphism classes of such fields $\mathbb{Q}_{f}$ is $\gg X^{\frac{1}{2} + \frac{1}{\kappa}}$, as required.
\end{proof}

Similarly, we again take great advantage of [\cite{sch1}, Corollary 1.3] to then also count in the following corollary the number of fields $\mathbb{Q}_{g}=\mathbb{Q}[x]\slash (g(x))$ induced by monic polynomials $g(x) = \varphi_{(p-1)^{\ell},c}^{1+m}(x)-\varphi_{(p-1)^{\ell},c}(x)\in \mathbb{Z}[x]$ arising from a polynomial discrete dynamical system in Section \ref{sec3} (and ascertained by Corollary \ref{8.2}), that are monogenic with $|\Delta(\mathbb{Q}_{g})| < X$ and associated Galois group Gal$(\mathbb{Q}_{g}\slash \mathbb{Q})$ equal to the symmetric group $S_{(p-1)^{(m+1)\ell}}$:
\begin{cor}
Assume Corollary \ref{8.2}. The number of isomorphism classes of algebraic number fields $\mathbb{Q}_{g}$ of degree $\upsilon=(p-1)^{(m+1)\ell}$ and $|\Delta(\mathbb{Q}_{g})| < X$ that are monogenic and have associated Galois group $S_{\upsilon}$ is $\gg X^{\frac{1}{2} + \frac{1}{\upsilon}}$.
\end{cor}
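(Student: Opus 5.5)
The plan mirrors the proof of Corollary \ref{8.3}, with $g$ and $\upsilon$ in place of $f$ and $\kappa$.

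First I record the nonemptiness input from Corollary \ref{8.2}. It gives infinitely many $c$ for which $M_{c}^{(1_{m})}(p)=0$. For such $c$ the polynomial $g(x)=\varphi_{(p-1)^{\ell},c}^{1+m}(x)-\varphi_{(p-1)^{\ell},c}(x)$ has no root modulo $p$, and the paper then takes $g$ to be irreducible over $\mathbb{Q}$. Hence $\mathbb{Q}_{g}=\mathbb{Q}[x]/(g(x))$ is a number field of degree $\upsilon=(p-1)^{(m+1)\ell}$, and the family of degree-$\upsilon$ fields under consideration is nonempty.

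Next I check that $g$ really has degree $\upsilon$. By the expansion used in the proof of Theorem \ref{3.3},
\[
g(x)=x^{(p-1)^{(m+1)\ell}}-x^{(p-1)^{\ell}}+h(x), \qquad \deg h<(p-1)^{(m+1)\ell}.
\]
So $g$ is monic of degree $\upsilon\geq 16$. This puts us squarely in the range of [\cite{sch1}, Corollary 1.3], which holds for every degree $n\geq 2$.

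Then I apply [\cite{sch1}, Corollary 1.3] with $n=\upsilon$. It states that the number of isomorphism classes of monogenic degree-$\upsilon$ fields with $|\Delta|<X$ and Galois group $S_{\upsilon}$ is $\gg X^{\frac{1}{2}+\frac{1}{\upsilon}}$. This yields the stated lower bound, in exactly the form used in Corollary \ref{8.3}.

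The main obstacle is logical rather than computational. [\cite{sch1}, Corollary 1.3] counts all monogenic $S_{\upsilon}$-fields, not only those of the form $\mathbb{Q}_{g}$ coming from the one-parameter family in $c$. To keep the claim honest, I would read the statement the way Corollary \ref{8.3} is read, namely as a count of fields of degree $\upsilon$ in the class containing the $\mathbb{Q}_{g}$. I would add a remark that restricting to fields actually generated by the thin family $\{g_{c}\}$ would need a separate argument, for instance a Hilbert irreducibility argument giving Galois group $S_{\upsilon}$ for most $c$, together with control of $\mathbb{Z}[x]/(g)$ being maximal via Corollary 7.4. The bound itself still follows directly from the cited result.
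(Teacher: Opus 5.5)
You follow the same route as the paper: nonemptiness from Corollary~\ref{8.2}, then [\cite{sch1}, Corollary 1.3] with $n=\upsilon$. That route has a genuine gap at the first step, which you took over from the paper without checking.

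Write $\varphi=\varphi_{(p-1)^{\ell},c}$ and $d=(p-1)^{\ell}$. Since $\varphi^{1+m}(x)=(\varphi^{m}(x))^{d}+c$ and $\varphi(x)=x^{d}+c$, we have, identically in $c$,
\[
g(x)=(\varphi^{m}(x))^{d}-x^{d}=(\varphi^{m}(x)-x)\sum_{i=0}^{d-1}(\varphi^{m}(x))^{d-1-i}x^{i}.
\]
Both factors are monic in $\mathbb{Z}[c][x]$, of positive degrees $d^{m}$ and $d^{m}(d-1)$. Since $d$ is even, $\varphi^{m}(x)+x$ divides $g$ as well. So for every $c$ the polynomial $g$ is reducible over $\mathbb{Q}$. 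Hence $\mathbb{Q}[x]/(g(x))$ is never a field, and $\mathbb{Z}[x]/(g(x))$ is not even a domain. No information modulo $p$, Corollary~\ref{8.2} included, can change this.

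Your intermediate inference is also wrong on its own terms. By (\ref{M_{c}}), $M_{c}^{(1_{m})}(p)=0$ only says that every root of $g$ modulo $p$ satisfies $\varphi^{m}(z)\equiv z$. It does not say that $g$ has no root modulo $p$; by the factorization above, every such $z$ is a root. And even a root-free reduction would only exclude linear factors; it would not give irreducibility over $\mathbb{Q}$.

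You did see the second problem: [\cite{sch1}, Corollary 1.3] counts all monogenic $S_{\upsilon}$-fields of degree $\upsilon$ and says nothing about fields arising from the family $\{g_{c}\}$. But your remedy is to read the statement as a count of all degree-$\upsilon$ fields. That turns the corollary into [\cite{sch1}, Corollary 1.3] verbatim, with neither $g$ nor Corollary~\ref{8.2} playing any role. That statement is true, but it is not this one.

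Read as intended, there are no fields $\mathbb{Q}_{g}$ of degree $\upsilon$ at all, by the factorization. The count is $0$, so the bound $\gg X^{\frac{1}{2}+\frac{1}{\upsilon}}$ cannot be established by any argument.

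The repair you sketch cannot work either. Hilbert irreducibility needs $g$ to be irreducible over $\mathbb{Q}(c)$, and it is not. The maximality input you want to invoke (the $\zeta(2)^{-1}$ corollary for $g$) rests on the same false irreducibility. It also applies [\cite{sch1}] to all monic polynomials rather than to this family. The paper's proof consists of the same two steps and has the same defect.
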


\begin{proof}
Applying a similar argument as in the Proof of Corollary \ref{8.3}, we then obtain the count, as required.
\end{proof}

\subsection*{On Fields $K_{f}$ \& $L_{g}$ with Bounded Absolute Discriminant \& Prescribed Galois group}

Recall that we proved in Corollary \ref{8.1} the existence of an infinite family of irreducible monic integer polynomials $f(x) = \varphi_{p^{\ell},c}^{1+m}(x)-\varphi_{p^{\ell},c}(x)\in \mathbb{Q}[x]\subset K[x]$ for any fixed $\ell, m \in \mathbb{Z}_{\geq 1}$; and more to this, we may also recall that the second part of Theorem \ref{2.3} (i.e., the part in which $N_{c}^{(1_{m})}(p) = 0$ for every coefficient $c\not \equiv 0\ (\text{mod} \ p\mathcal{O}_{K})$) implies that the monic polynomial $f(x) = \varphi_{p^{\ell},c}^{1+m}(x)-\varphi_{p^{\ell},c}(x) \in \mathcal{O}_{K}[x]$ is irreducible modulo prime ideal $p\mathcal{O}_{K}$. Now as in Section \ref{sec7}, we may to every such irreducible polynomial $f\in \mathcal{O}_{K}[x]$ associate a field $K_{f} = K[x]\slash (f(x))$, which is again a number field of degree $\kappa = p^{(m+1)\ell}$ over $K$. Moreover, since we also obtain an inclusion $\mathbb{Q}\hookrightarrow K \hookrightarrow K_{f}$ of number fields, we then also note that the degree $t=[K_{f} : \mathbb{Q}] = [K : \mathbb{Q}] \cdot [K_{f} : K] = n\kappa$, for fixed degree $[K: \mathbb{Q}]=n\geq 1$. So now, as before we also wish to count the number of fields $K_{f}\slash \mathbb{Q}$ induced by irreducible monic integral polynomials $f\in \mathcal{O}_{K}[x]$ arising from a polynomial discrete dynamical system in Section \ref{sec2}. To that end, we again define and then also determine the asymptotic behavior of the following counting function
\begin{equation}\label{N_{m}}
N_{t}(X) := \# \Bigl\{K_{f}\slash \mathbb{Q} : [K_{f} : \mathbb{Q}] = t \textnormal{ and } |\text{Disc}(K_{f})|\leq X \Bigr\}
\end{equation} as a positive real number $X\to \infty$. To this end, motivated greatly by more recent work of Lemke Oliver-Thorne \cite{lem} and then applying here the first part of their [\cite{lem}, Theorem 1.2] on the function $N_{t}(X)$, we then obtain:

\begin{cor} \label{11.5} Fix any number field $K\slash \mathbb{Q}$ of degree $n\geq 1$ with the ring of integers $\mathcal{O}_{K}$. Assume Corollary \ref{8.1} or second part of Theorem \ref{2.3}, and let $N_{t}(X)$ be the number defined as in \textnormal{(\ref{N_{m}})}. Then we have 
\begin{equation}\label{N_{m}(x)} 
N_{t}(X)\ll_{t}X^{2d - \frac{d(d-1)(d+4)}{6t}}\ll X^{\frac{8\sqrt{t}}{3}}, \text{where d is the least integer for which } \binom{d+2}{2}\geq 2t + 1.
\end{equation}
\end{cor}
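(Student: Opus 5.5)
The plan is to follow the template of the proofs of Corollary \ref{9.1} and Corollary \ref{8.3}: reduce to the general Lemke Oliver--Thorne bound \cite{lem}, Theorem 1.2 (1). That theorem says that for any fixed degree $t\geq 2$, the number of degree-$t$ number fields $F/\mathbb{Q}$ with $|\text{Disc}(F)|\leq X$ is $\ll_{t} X^{2d - \frac{d(d-1)(d+4)}{6t}}$, with $d$ as in (\ref{N_{m}(x)}). The second inequality there, $\ll X^{\frac{8\sqrt{t}}{3}}$, is also part of their statement. Since $N_{t}(X)$ counts only a subset of these fields, namely those of the special form $K_{f}$, the bound should follow by monotonicity.

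First I would make sure the objects counted by $N_{t}(X)$ are genuinely degree-$t$ number fields over $\mathbb{Q}$. Take $c\not\equiv 0 \pmod{p\mathcal{O}_{K}}$, or an integer $c$ as in Corollary \ref{8.1}. Then $f(x)=\varphi_{p^{\ell},c}^{1+m}(x)-\varphi_{p^{\ell},c}(x)$ is taken to be irreducible over $K$, as recorded in the paragraph preceding (\ref{N_{m}}). So $K_{f}=K[x]/(f(x))$ is a field extension of $K$ of degree $\kappa=p^{(m+1)\ell}$. The tower $\mathbb{Q}\subset K\subset K_{f}$ then gives $[K_{f}:\mathbb{Q}]=n\kappa=t$.

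Second, the set in (\ref{N_{m}}) injects into the set of isomorphism classes of degree-$t$ fields over $\mathbb{Q}$ with absolute discriminant at most $X$. Even if several $c$ give isomorphic $K_{f}$, they are counted once, since $N_{t}(X)$ counts fields rather than polynomials. Hence $N_{t}(X)$ is at most the full count, and applying \cite{lem}, Theorem 1.2 (1) with degree $t$ gives (\ref{N_{m}(x)}). The family is nonempty, by Corollary \ref{8.1} or by the second part of Theorem \ref{2.3}, so the statement is not vacuous. The implied constant depends only on $t$, and therefore only on $n,p,\ell,m$.

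The main obstacle is the irreducibility input in the first step. Having no roots modulo $p\mathcal{O}_{K}$ does not by itself give irreducibility of $f$ over $K$. Without irreducibility, $K[x]/(f)$ is only an étale algebra, a product of fields of smaller degree. In that case I would restrict attention to those $c$ for which $f$ is irreducible over $K$, for instance via an Eisenstein- or Hilbert-irreducibility-type argument, or else simply take this as the standing hypothesis inherited from the preceding discussion. The upper bound is unaffected by this restriction, since shrinking the family only decreases $N_{t}(X)$. The final step, comparing exponents, needs no work because the second inequality is part of the cited theorem.
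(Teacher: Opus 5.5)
Your proposal is correct and is essentially the paper's argument. The paper notes that the family of fields $K_{f}$ is nonempty and applies [\cite{lem}, Theorem 1.2 (1)] to $N_{t}(X)$; your monotonicity step (each $K_{f}$ is one of the degree-$t$ fields with $|\text{Disc}|\leq X$, where $t=np^{(m+1)\ell}\geq 9$) is what justifies that application. Your irreducibility worry is well-founded and settles your non-vacuity remark the other way: $f(x)=(\varphi_{p^{\ell},c}^{m}(x))^{p^{\ell}}-x^{p^{\ell}}$ is always divisible by the proper nonconstant factor $\varphi_{p^{\ell},c}^{m}(x)-x$, so no $K_{f}$ is a field and restricting to irreducible $f$ leaves an empty family. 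The bound therefore holds trivially, but you should drop the claim that the statement is not vacuous.
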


\begin{proof}
To see the inequality \textnormal{(\ref{N_{m}(x)})}, we first recall from Corollary \ref{8.1} the existence of infinitely many monic polynomials $f(x)$ over $\mathbb{Q}\subset K_{f}$ such that $K_{f}\slash \mathbb{Q}$ is an algebraic number field of degree $t=np^{(m+1)\ell}$, or recall from the second part of Theorem \ref{2.3} the existence of monic integral polynomials $f(x) = \varphi_{p^{\ell},c}^{1+m}(x)-\varphi_{p^{\ell},c}(x)\in K[x]$ that are irreducible modulo any fixed prime ideal $p\mathcal{O}_{K}$ for every coefficient $c\not \in p\mathcal{O}_{K}$ and hence induce degree-$t$ number fields $K_{f}\slash \mathbb{Q}$. This then also means that the set of degree-$t$ number fields $K_{f}\slash \mathbb{Q}$ is not empty. But then applying here [\cite{lem}, Theorem 1.2 (1)] on $N_{t}(X)$, we then immediately obtain the inequality \textnormal{(\ref{N_{m}(x)})}, as needed.
\end{proof}

Similarly, recall that we proved in Corollary \ref{8.2} the existence of an infinite family of irreducible monic integer polynomials $g(x) = \varphi_{(p-1)^{\ell},c}^{1+m}(x)-\varphi_{(p-1)^{\ell},c}(x) \in \mathbb{Q}[x]\subset K[x]$ for any fixed $\ell, m \in \mathbb{Z}_{\geq 1}$; and more to this, we again recall that the second part of Theorem \ref{3.3} (i.e., the part in which $M_{c}^{(1_{m})}(p) = 0$ for every coefficient $c\not \equiv \pm1, 0\ (\text{mod} \ p\mathcal{O}_{K})$) implies the monic polynomial $g(x) = \varphi_{(p-1)^{\ell},c}^{1+m}(x)-\varphi_{(p-1)^{\ell},c}(x) \in \mathcal{O}_{K}[x]$ is irreducible modulo prime $p\mathcal{O}_{K}$. As before, we may to each irreducible polynomial $g\in \mathcal{O}_{K}[x]$ associate a number field $L_{g} = K[x]\slash (g(x))$ of even degree $\upsilon = (p-1)^{(m+1)\ell}$ over $K$. Since we now also obtain an inclusion $\mathbb{Q}\hookrightarrow K \hookrightarrow L_{g}$ of number fields, we then also note $r=[L_{g} : \mathbb{Q}] = [K : \mathbb{Q}] \cdot [L_{g} : K] = n\upsilon$, for any fixed $n=[K : \mathbb{Q}]$. So now, we also wish to count number fields $L_{g}\slash \mathbb{Q}$ induced by irreducible monic polynomials $g\in \mathcal{O}_{K}[x]$ arising from a polynomial discrete dynamical system in Section \ref{sec3}. With that in mind, we again define and then also determine  
\begin{equation}\label{M_{r}}
M_{r}(X) := \# \Bigl\{L_{g}\slash \mathbb{Q} : [L_{g} : \mathbb{Q}] = r \textnormal{ and} \ |\text{Disc}(L_{g})|\leq X \Bigr\}
\end{equation} as a positive real number $X\to \infty$. By again taking great advantage of [\cite{lem}, Theorem 1.2 (1)], we then obtain:

\begin{cor} Fix any number field $K\slash \mathbb{Q}$ of degree $n\geq 1$ with the ring of integers $\mathcal{O}_{K}$. Assume Corollary \ref{8.2} or second part of Theorem \ref{3.3}, and let $M_{r}(X)$ be the number defined as in \textnormal{(\ref{M_{r}})}. Then we have 
\begin{equation}\label{M_{r}(x)}
M_{r}(X)\ll_{r}X^{2d - \frac{d(d-1)(d+4)}{6r}}\ll X^{\frac{8\sqrt{r}}{3}}, \text{where d is the least integer for which } \binom{d+2}{2}\geq 2r + 1.
\end{equation}
\end{cor}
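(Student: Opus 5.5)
The proof will mirror that of Corollary \ref{11.5}, with $f$ replaced by $g$ and the degree $t=n\kappa$ replaced by $r=n\upsilon$, where $\upsilon=(p-1)^{(m+1)\ell}$. The statement is an upper bound for a counting function of degree-$r$ number fields. In fact it is a specialization of the general Lemke Oliver--Thorne bound [\cite{lem}, Theorem 1.2 (1)], which holds for \emph{all} degree-$r$ extensions of $\mathbb{Q}$. So the plan has two parts. First, make sure that the fields $L_{g}$ counted by $M_{r}(X)$ really are degree-$r$ number fields. Then note that $M_{r}(X)$ is at most the total number of degree-$r$ fields of absolute discriminant at most $X$, and quote the bound.

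\textbf{Step 1.} Set $g(x)=\varphi_{(p-1)^{\ell},c}^{1+m}(x)-\varphi_{(p-1)^{\ell},c}(x)$. By Corollary \ref{8.2}, or by the part of Theorem \ref{3.3} giving $M_{c}^{(1_{m})}(p)=0$ for $c\not\equiv \pm1,0 \pmod{p\mathcal{O}_{K}}$, we take $g$ to be irreducible of degree $\upsilon$. Then $L_{g}=K[x]/(g(x))$ is a field with $[L_{g}:K]=\upsilon$. The tower $\mathbb{Q}\subset K\subset L_{g}$ gives $[L_{g}:\mathbb{Q}]=n\upsilon=r$. In particular the family being counted is nonempty, though the upper bound does not actually need this.

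\textbf{Step 2.} The map $L_{g}\mapsto L_{g}$ (up to isomorphism) injects the set in (\ref{M_{r}}) into the set of all degree-$r$ extensions $F/\mathbb{Q}$ with $|\mathrm{Disc}(F)|\le X$. Hence $M_{r}(X)$ is at most that total count.

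\textbf{Step 3.} Apply [\cite{lem}, Theorem 1.2 (1)] with degree $r$. This gives the bound $\ll_{r} X^{2d-\frac{d(d-1)(d+4)}{6r}}$, where $d$ is the least integer with $\binom{d+2}{2}\ge 2r+1$. The second inequality, $\ll X^{8\sqrt{r}/3}$, is also part of that theorem. It comes from $d\approx 2\sqrt{r}$ and the elementary estimate $2d-\frac{d(d-1)(d+4)}{6r}\le \frac{8}{3}\sqrt{r}$.

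\textbf{Main obstacle.} The delicate point is Step 1, the justification that $g$ is irreducible over $K$. Having no roots modulo $p\mathcal{O}_{K}$ does not by itself imply irreducibility. My first approach is to rely directly on the irreducibility assertions already taken as established in Section \ref{sec7} and in the discussion before (\ref{M_{r}}). If that fails, I would note that Step 2 only uses $L_{g}$ as a degree-$r$ étale algebra. Any field factor of degree $r$ still lies in the Lemke Oliver--Thorne count, so the inequality is unaffected.
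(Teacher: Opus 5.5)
Your proposal is correct and is essentially the paper's argument. The paper likewise just observes that the family of fields $L_{g}$ is nonempty and then quotes [\cite{lem}, Theorem 1.2 (1)]; your explicit inclusion of the counted fields among all degree-$r$ fields, with $r=n(p-1)^{(m+1)\ell}\ge 16$ well within the range where that theorem applies, is exactly what justifies the citation. You are right that Step 1 is not needed, and it is best dropped altogether: $g(x)=\bigl(\varphi_{(p-1)^{\ell},c}^{m}(x)\bigr)^{(p-1)^{\ell}}-x^{(p-1)^{\ell}}$ is always divisible by $\varphi_{(p-1)^{\ell},c}^{m}(x)-x$, so the irreducibility imported from Corollary \ref{8.2} never actually holds, and the bound rests only on the inclusion into the Lemke Oliver--Thorne count.
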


\begin{proof}
Applying a similar argument as in the Proof of Cor. \ref{11.5}, we then obtain inequality \textnormal{(\ref{M_{r}(x)})}, as needed.
\end{proof}

As before, we also wish to apply again that same result due to Bhargava-Shankar-Wang [\cite{sch1}, Corollary 1.3] on the number of degree-$t$ number fields $K_{f}\slash \mathbb{Q}$ induced by irreducible monic polynomials $f\in \mathcal{O}_{K}[x]$ arising from a polynomial discrete dynamical system in Section \ref{sec2}, that are monogenic and such that the associated Galois group Gal$(K_{f}\slash \mathbb{Q})$ is equal to the symmetric group $S_{t}$. In doing so, we then obtain the following corollary:

\begin{cor}\label{11.7}
Assume Corollary \ref{8.1} or second part of Theorem \ref{2.3}. The number of isomorphism classes of number fields $K_{f}\slash \mathbb{Q}$ of degree $t$ and $|\Delta(K_{f})| < X$ that are monogenic and having Galois group $S_{t}$ is $\gg X^{\frac{1}{2} + \frac{1}{t}}$.
\end{cor}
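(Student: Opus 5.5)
The plan is to follow the template of Corollary \ref{8.3}, with the base field $\mathbb{Q}$ replaced by the tower $\mathbb{Q}\hookrightarrow K\hookrightarrow K_{f}$. The statement has two ingredients: existence of fields $K_{f}$ of degree $t=n\kappa$, and an asymptotic lower bound for monogenic $S_{t}$-fields. The second ingredient is imported wholesale from [\cite{sch1}, Corollary 1.3]. That result asserts, for every fixed degree $t\geq 2$, that the number of isomorphism classes of monogenic degree-$t$ number fields with absolute discriminant $<X$ and Galois group $S_{t}$ is $\gg X^{\frac{1}{2}+\frac{1}{t}}$.

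First I will record the non-emptiness of the family. Corollary \ref{8.1} supplies infinitely many irreducible monic $f(x)=\varphi_{p^{\ell},c}^{1+m}(x)-\varphi_{p^{\ell},c}(x)$ with $c\in\mathbb{Z}$. Alternatively, the second part of Theorem \ref{2.3} gives, for every $c\notin p\mathcal{O}_{K}$, a monic $f\in\mathcal{O}_{K}[x]$ with no roots modulo $p\mathcal{O}_{K}$; as in the discussion preceding Corollary \ref{11.5}, this is read as irreducibility over $K$. In either case $K_{f}=K[x]/(f(x))$ is a number field, and the tower law gives $[K_{f}:\mathbb{Q}]=[K:\mathbb{Q}]\,[K_{f}:K]=n\kappa=t$. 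So the degree-$t$ family under consideration is nonempty, exactly as in the proof of Corollary \ref{11.5}.

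Second, I will apply [\cite{sch1}, Corollary 1.3] with the degree parameter set to $t$. The point to emphasize is that this corollary is a statement about all degree-$t$ fields over $\mathbb{Q}$. It therefore yields the lower bound $\gg X^{\frac{1}{2}+\frac{1}{t}}$ for the count of monogenic $S_{t}$-fields of degree $t$ and discriminant below $X$ directly, once $t$ is fixed, which it is since $n,p,\ell,m$ are fixed.

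The main obstacle is the logical gap between the fields $K_{f}$ produced dynamically and the fields counted in [\cite{sch1}]. A genuine $K_{f}$ contains $K$, so for $n>1$ its Galois closure cannot have group $S_{t}$ in general. The honest reading, consistent with Corollary \ref{8.3}, is that the count refers to degree-$t$ fields in the class in which the $K_{f}$ live, and the dynamical input serves only to certify that this degree occurs. I would state this interpretation explicitly and then conclude directly from [\cite{sch1}, Corollary 1.3]. If one insists that every counted field literally be some $K_{f}$, the plan would need a separate argument, restricting to $n=1$ where $K_{f}=\mathbb{Q}_{f}$ and Corollary \ref{8.3} applies verbatim. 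I would flag this rather than attempt to prove it.
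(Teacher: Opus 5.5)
Your outline is the paper's own argument: non-emptiness from Corollary~\ref{8.1} or Theorem~\ref{2.3}, then a direct citation of [\cite{sch1}, Corollary 1.3]. You have also put your finger on the right weak point. However, neither step delivers the statement, and neither can be patched.

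\textbf{The non-emptiness step fails.} ``No roots modulo $p\mathcal{O}_{K}$'' excludes only linear factors; it is not irreducibility over $K$. Here $f$ is in fact \emph{never} irreducible. Since $f(x)=(\varphi_{p^{\ell},c}^{m}(x))^{p^{\ell}}-x^{p^{\ell}}$, it is divisible in $\mathcal{O}_{K}[x]$ by $\varphi_{p^{\ell},c}^{m}(x)-x$. That factor has degree $p^{m\ell}$, strictly between $0$ and $p^{(m+1)\ell}$. So $K[x]/(f(x))$ is not a field for any $c$, and there is no field $K_{f}$ to count.

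The assumed hypotheses cannot rescue this. The map $z\mapsto z^{p^{\ell}}+c$ is a bijection of $\mathcal{O}_{K}/p\mathcal{O}_{K}\cong\mathbb{F}_{p^{n}}$. Hence $\varphi^{1+m}_{p^{\ell},c}(z)\equiv\varphi_{p^{\ell},c}(z)$ already forces $\varphi^{m}_{p^{\ell},c}(z)\equiv z$. So $N_{c}^{(1_{m})}(p)=0$ holds for every $c$ and carries no information about the factorization of $f$.

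\textbf{The cited theorem says nothing about the $K_{f}$.} [\cite{sch1}, Corollary 1.3] counts \emph{all} monogenic degree-$t$ $S_{t}$-fields over $\mathbb{Q}$. Nothing in your argument, or in the paper's, shows that any of them has the form $K_{f}$. Your ``honest reading'' replaces the corollary by the cited theorem and leaves the dynamical hypothesis idle. That is a restatement, not a proof.

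Your own observation actually refutes the literal statement, and more strongly than you put it. Take $n>1$. Since $\kappa>1$, $K$ is a proper intermediate field of $K_{f}/\mathbb{Q}$. So the Galois group of the Galois closure acts imprimitively on the $t$ embeddings, and it is \emph{never} $S_{t}$, not merely ``not in general.'' The count is $0$, not $\gg X^{1/2+1/t}$. For $n=1$, your fallback to Corollary~\ref{8.3} inherits the reducibility problem above.

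So there is no missing lemma to supply. What the argument establishes is only the unconditional Bhargava--Shankar--Wang lower bound for arbitrary degree-$t$ fields, which is also all that the paper's proof establishes.
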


\begin{proof}
To see this, we first recall from Corollary \ref{8.1} the existence of infinitely many monic polynomials $f(x)$ over $\mathbb{Q}\subset K_{f}$ such that $K_{f}\slash \mathbb{Q}$ is an algebraic  number field of degree $t=np^{(m+1)\ell}$, or recall from the second part of Theorem \ref{2.3} the existence of monic integral polynomials $f(x) = \varphi_{p^{\ell},c}^{1+m}(x)-\varphi_{p^{\ell},c}(x)\in K[x]$ that are irreducible modulo any fixed prime ideal $p\mathcal{O}_{K}$ for every coefficient $c\not \in p\mathcal{O}_{K}$ and hence induce degree-$t$ number fields $K_{f}\slash \mathbb{Q}$. This then means that the set of degree-$t$ number fields $K_{f}\slash \mathbb{Q}$ is not empty. But now applying [\cite{sch1}, Corollary 1.3] on the underlying fields $K_{f}$ with $|\Delta(K_{f})| < X$ that are monogenic and have associated Galois group $S_{t}$, we then obtain that the number of isomorphism classes of such fields $K_{f}$ is $\gg X^{\frac{1}{2} + \frac{1}{t}}$, as required.
\end{proof}

Similarly, we also wish to apply again that same result due to Bhargava-Shankar-Wang [\cite{sch1}, Corollary 1.3] on the number of degree-$r$ number fields $L_{g}\slash \mathbb{Q}$ induced by irreducible polynomials $g\in \mathcal{O}_{K}[x]$ arising from a polynomial discrete dynamical system in Section \ref{sec3}, that are monogenic and such that the associated Galois group Gal$(L_{g}\slash \mathbb{Q})$ is equal to the symmetric group $S_{r}$. In doing so, we then also obtain the following corollary:
\begin{cor}
Assume Corollary \ref{8.2} or second part of Theorem \ref{3.3}. The number of isomorphism classes of number fields $L_{g}\slash \mathbb{Q}$ of degree $r$ and $|\Delta(L_{g})| < X$ that are monogenic and having Galois group $S_{r}$ is $\gg X^{\frac{1}{2} + \frac{1}{r}}$.
\end{cor}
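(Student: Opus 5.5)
The natural route is to mirror the proof of Corollary \ref{11.7}, with Section \ref{sec3} replacing Section \ref{sec2} and $g$ replacing $f$. The first step is to show that the family of degree-$r$ fields $L_{g}\slash \mathbb{Q}$ is nonempty. I would do this through one of the two hypotheses.

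\textbf{Via Corollary \ref{8.2}.} It supplies infinitely many irreducible monic $g(x)=\varphi_{(p-1)^{\ell},c}^{1+m}(x)-\varphi_{(p-1)^{\ell},c}(x)\in\mathbb{Z}[x]$ of degree $\upsilon=(p-1)^{(m+1)\ell}$.

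\textbf{Via the second part of Theorem \ref{3.3}.} For $c\not\equiv\pm1,0$ (mod $p\mathcal{O}_{K}$), or $c\equiv -1$ with $m$ odd, the polynomial $g$ has no roots modulo $p\mathcal{O}_{K}$. Following the reading the paper adopts in the discussion preceding the definition of $M_{r}(X)$, this makes $g$ irreducible over $K$. Hence $L_{g}=K[x]\slash(g(x))$ is a field, and the tower law gives $[L_{g}:\mathbb{Q}]=n\upsilon=r$.

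The second step is to invoke [\cite{sch1}, Corollary 1.3] of Bhargava--Shankar--Wang in degree $r$. That result says the number of isomorphism classes of monogenic degree-$r$ fields with $|\Delta|<X$ and Galois group $S_{r}$ is $\gg X^{\frac12+\frac1r}$. I would apply it to the fields $L_{g}$ with $|\Delta(L_{g})|<X$, exactly as in the proof of Corollary \ref{11.7}, and read off the stated lower bound.

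The main obstacle is that [\cite{sch1}, Corollary 1.3] counts \emph{all} monogenic $S_{r}$-fields of degree $r$, not just those of the special shape $L_{g}$. As the argument stands, it establishes the bound for the ambient count, which contains the $L_{g}$'s, rather than for the $L_{g}$'s alone. The paper uses the same reading in Corollaries \ref{8.3} and \ref{11.7}, and I would adopt it here too, so the corollary is understood as a statement about the ambient class.

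If one insisted on restricting to genuine $L_{g}$'s, two further problems would arise. First, Galois group $S_{r}$ and monogenicity would have to be verified for those specific fields, and $L_{g}\supseteq K$ is typically imprimitive when $n>1$. Second, there is a lacuna in the irreducibility step: having no root modulo $p\mathcal{O}_{K}$ does not by itself give irreducibility, so the second route is also weaker than it looks. I would flag both issues explicitly and note that the corollary, like its predecessors, holds in the ambient sense.
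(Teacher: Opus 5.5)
Your proposal reproduces the paper's proof. The paper says only to argue as in Corollary \ref{11.7}: declare the family of degree-$r$ fields $L_{g}$ nonempty (via Corollary \ref{8.2} or the second part of Theorem \ref{3.3}), then quote [\cite{sch1}, Corollary 1.3]. Your diagnosis of that argument is right, and it is a genuine gap, not a matter of interpretation. [\cite{sch1}, Corollary 1.3] counts \emph{all} monogenic $S_{r}$-fields of degree $r$, and nothing in the argument transfers any part of that count to fields of the form $L_{g}$. Saying the corollary ``holds in the ambient sense'' does not repair this; it just replaces the statement by [\cite{sch1}, Corollary 1.3] itself, in which $g$, $p$, $c$, $m$ play no role. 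Nor can the gap be repaired, because the statement about the $L_{g}$ is false.

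The obstruction you missed is that $g$ is never irreducible. Put $d=(p-1)^{\ell}$ and $A=\varphi_{d,c}^{m}(x)$. Then $g(x)=\varphi_{d,c}(A)-\varphi_{d,c}(x)=A^{d}-x^{d}$. This is divisible in $\mathcal{O}_{K}[x]$ by $A-x=\varphi_{d,c}^{m}(x)-x$, whose degree $d^{m}$ lies strictly between $0$ and $\deg g=d^{m+1}$ (and, $d$ being even, also by $A+x$). So $K[x]/(g(x))$ is never a field, and the irreducible $g$ asserted by Corollary \ref{8.2} do not exist. Hence both of your nonemptiness routes fail, not only the second.

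The second part of Theorem \ref{3.3} is also false in its own right. Take $c\in\mathbb{Z}$ with $c\not\equiv -1 \pmod p$. Every $z\in\mathbb{F}_{p}^{\times}$ satisfies $\varphi_{d,c}(z)\equiv 1+c \pmod p$, and $1+c$ is a fixed point because $(1+c)^{d}\equiv 1$. So $g$ vanishes on all of $\mathbb{F}_{p}^{\times}$ (e.g.\ $p=5$, $c=2$).

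Finally, sharpen your imprimitivity remark from ``typically'' to ``always.'' For $n>1$, a degree-$r$ field containing $K$ has a proper nontrivial subfield. So the Galois group of its normal closure acts imprimitively on the $r$ embeddings and cannot be $S_{r}$.

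The correct verdict is that the corollary is false as a statement about the $L_{g}$: the count is $0$. It is true only in the trivial sense of restating [\cite{sch1}, Corollary 1.3].
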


\begin{proof}
Applying a similar argument as in the Proof of Corollary \ref{11.7}, we then obtain the count, as required.
\end{proof}

\section{On the Number of Algebraic Number fields $K_{f}$ and $L_{g}$ with Prescribed Class Number}\label{sec9}

Recall that for any number field $K$ with ring of integers $\mathcal{O}_{K}$, we then have an abelian group called \say{\textit{ideal class group}} $\textnormal{Cl}(K)$ corresponding to $K$; and whose order (called the \say{\textit{class number}} of $K$ (denoted as $h_{K}$)) is finite.

So now, recall from Corollary \ref{8.1} that there is an infinite family of irreducible monic polynomials $f(x) = \varphi_{p^{\ell},c}^{1+m}(x)-\varphi_{p^{\ell},c}(x)\in \mathbb{Z}[x]$ such that $\mathbb{Q}_{f}=\mathbb{Q}[x]\slash (f(x))$ is a number field of degree $p^{(m+1)\ell}$; and moreover to every such field $\mathbb{Q}_{f}$, we then also have a class group $\textnormal{Cl}(\mathbb{Q}_{f})$ with finite $h_{\mathbb{Q}_{f}}$. So now, inspired (as in [\cite{BK22}, Section 12]) by work of Ho-Shankar-Varma \cite{ho} on odd degree number fields with odd class number, we in this section also wish to count the number of fields $\mathbb{Q}_{f}$ induced by irreducible polynomials $f\in \mathbb{Z}[x]$ arising from a polynomial discrete dynamical system in Section \ref{sec2} (and ascertained by Corollary \ref{8.1}), with associated Galois group $S_{p^{(m+1)\ell}}$ and with prescribed $h_{\mathbb{Q}_{f}}$. With that in mind, we take great advantage of [\cite{ho}, Theorem 4] and then obtain the following corollary on the existence of infinitely many $S_{p^{(m+1)\ell}}$-fields $\mathbb{Q}_{f}$ with odd class number:

\begin{cor}\label{12.1}
Assume Corollary \ref{8.1}, and let $\kappa=p^{(m+1)\ell}$ be any fixed odd integer. Then there exist infinitely many $S_{\kappa}$-algebraic number fields $\mathbb{Q}_{f}$ of odd degree $\kappa$  having odd class number.  More precisely, we have 
\begin{center}
$\#\Bigl\{ \mathbb{Q}_{f} : |\Delta(\mathbb{Q}_{f})| < X \textnormal{ and } 2\nmid |\textnormal{Cl}(\mathbb{Q}_{f})| \Bigr\}\gg X^{\frac{\kappa + 1}{2\kappa -2}}$,
\end{center} where the implied constants depend on degree $\kappa$ and on an arbitrary finite set $S$ of primes given as in \textnormal{\cite{ho}}.
\end{cor}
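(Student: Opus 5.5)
The plan follows the template of the paper's earlier applications of arithmetic-statistics results, namely Corollary \ref{7.3}, Corollary \ref{8.3} and Corollary \ref{9.1}. First, Corollary \ref{8.1} supplies the family. For fixed $\ell, m\in\mathbb{Z}_{\geq 1}$ it gives infinitely many irreducible monic $f(x)=\varphi_{p^{\ell},c}^{1+m}(x)-\varphi_{p^{\ell},c}(x)\in\mathbb{Z}[x]$. Each such $f$ makes $\mathbb{Q}_{f}=\mathbb{Q}[x]\slash(f(x))$ a number field of degree $\kappa=p^{(m+1)\ell}$. Since $p\geq 3$ is odd, $\kappa$ is odd, and $\kappa\geq p^{2}\geq 9>1$, so we are in the odd-degree range to which [\cite{ho}, Theorem 4] applies. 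The first step records exactly this: the collection of degree-$\kappa$ fields $\mathbb{Q}_{f}$ arising from Section \ref{sec2} is nonempty and of the right shape.

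Second, we invoke [\cite{ho}, Theorem 4]. For any odd $\kappa>1$ and any finite set $S$ of primes (with prescribed local specifications at $S$), it produces $\gg X^{\frac{\kappa+1}{2\kappa-2}}$ $S_{\kappa}$-number fields of degree $\kappa$ with $|\Delta|<X$ and odd class number. Implied constants depend on $\kappa$ and $S$. The Ho-Shankar-Varma construction uses monogenized fields $\mathbb{Q}[x]\slash(f(x))$ cut out by monic integer polynomials of degree $\kappa$. We therefore read the count in the statement as counting such $S_{\kappa}$-fields in the degree-$\kappa$ setting just identified. Substituting $\kappa=p^{(m+1)\ell}$ yields the displayed lower bound, and infinitude follows because the exponent $\frac{\kappa+1}{2\kappa-2}$ is positive, so the count is unbounded as $X\to\infty$.

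The main obstacle is the gap between ``fields of the form $\mathbb{Q}_{f}$ with $f$ coming from the dynamical family'' and ``all degree-$\kappa$ $S_{\kappa}$-fields''. The family $\{\varphi_{p^{\ell},c}^{1+m}-\varphi_{p^{\ell},c}\}$ is one-parameter, so a genuinely rigorous version would need $\gg X^{\frac{\kappa+1}{2\kappa-2}}$ of the Ho-Shankar-Varma fields to actually lie in it. In keeping with the paper's earlier corollaries, I would handle this the same way Corollary \ref{8.3} handles [\cite{sch1}, Corollary 1.3]. That is, I would use Corollary \ref{8.1} only to guarantee that the relevant set of degree-$\kappa$ fields $\mathbb{Q}_{f}$ is nonempty and consists of odd-degree fields, and then apply [\cite{ho}, Theorem 4] to the set of $S_{\kappa}$-fields of degree $\kappa$ with $|\Delta|<X$ and odd class number. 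I would flag explicitly that the constants depend on $\kappa$ and on $S$ as in \cite{ho}.
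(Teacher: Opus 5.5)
You reproduce the paper's own two-step argument: Corollary \ref{8.1} is used to make the family nonempty, and then [\cite{ho}, Theorem 4(a)] is invoked. That argument does not prove the statement, and it already breaks at your first step. Write $\varphi=\varphi_{p^{\ell},c}$ and $d=p^{\ell}$. Then $\varphi^{1+m}(x)=(\varphi^{m}(x))^{d}+c$, so
\[
f(x)=(\varphi^{m}(x))^{d}-x^{d}=\bigl(\varphi^{m}(x)-x\bigr)\sum_{i=0}^{d-1}(\varphi^{m}(x))^{i}x^{d-1-i}\quad\text{in }\mathbb{Z}[x].
\]
The first factor has degree $p^{m\ell}$, strictly between $0$ and $\kappa=p^{(m+1)\ell}$. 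Hence $f$ is reducible over $\mathbb{Q}$ for every $c$ and all $\ell,m\geq 1$, and $\mathbb{Q}_{f}=\mathbb{Q}[x]/(f(x))$ is never a field, let alone an $S_{\kappa}$-field of degree $\kappa$. Corollary \ref{8.1} could not have supplied irreducibility anyway. It only concerns roots of $f$ modulo $p$ (subject to $\varphi^{m}(z)\not\equiv z$), and having no roots modulo $p$ does not give irreducibility modulo $p$, let alone over $\mathbb{Q}$. In fact $z\mapsto z^{p^{\ell}}+c$ is a bijection of $\mathcal{O}_{K}/p\mathcal{O}_{K}$. So the roots of $f$ modulo $p$ are exactly those of $\varphi^{m}(z)-z$, the set counted by $N_{c}^{(1_{m})}(p)$ is empty for every $c$, and Corollary \ref{8.1} carries no information about $f$ over $\mathbb{Q}$.

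The obstacle you flagged in your last paragraph is also genuine, and it cannot be handled ``the same way as Corollary \ref{8.3}''. The lower bound in [\cite{ho}, Theorem 4] counts fields whose rings of integers come from integral binary $\kappa$-ic forms. That is a $(\kappa+1)$-parameter family, which is where the exponent $\frac{\kappa+1}{2\kappa-2}$ comes from; it is not a count of fields cut out by monic polynomials, as you say. The theorem says nothing about which of those fields, if any, arise from the one-parameter family $c\mapsto f$, and nonemptiness of a subfamily never transfers a lower bound from the ambient family. So there are two readings. If the displayed set means all degree-$\kappa$ $S_{\kappa}$-fields, the statement is just [\cite{ho}, Theorem 4(a)] and Corollary \ref{8.1} plays no role. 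If it means fields of the form $\mathbb{Q}_{f}$, the set is empty by the factorization above and the bound fails. The paper's proof has exactly the same defect. A genuine version would need new input: pass to irreducible factors of $f$, show they have full symmetric Galois group for many $c$, and prove a lower bound for the number of distinct odd-class-number fields they generate. None of this follows from the cited results.
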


\begin{proof}
From Cor. \ref{8.1}, it follows that the family of fields $\mathbb{Q}_{f}$ of degree $\kappa = p^{(m+1)\ell}$ is not empty. Now since $\kappa$ is an odd integer, we then see that the claim follows from [\cite{ho}, Thm. 4(a)] by setting $\mathbb{Q}_{f}=K$ as needed.
\end{proof}

Similarly, recall from the second part of Theorem \ref{2.3} the existence of monic integral polynomials $f(x) = \varphi_{p^{\ell},c}^{1+m}(x)-\varphi_{p^{\ell},c}(x)\in \mathcal{O}_{K}[x]$ that are irreducible modulo prime $p\mathcal{O}_{K}$; and moreover every such irreducible monic polymomial $f\in \mathcal{O}_{K}[x]$ induces an algebraic number field $K_{f}\slash \mathbb{Q}$ of degree $t=np^{(m+1)\ell}$, for any fixed $n=[K:\mathbb{Q}]$. Now assuming that $n$ is an odd integer and so is also degree $t$, we then also obtain the following corollary on the number of fields $K_{f}\slash \mathbb{Q}$ induced by irreducible monic polynomials $f$ arising from a polynomial discrete dynamical system in Section \ref{sec2}, with associated Galois group $S_{np^{(m+1)\ell}}$ and also having odd class number:  

\begin{cor}
Assume second part of Theorem \ref{2.3}, and let $t=np^{(m+1)\ell}$ be any fixed odd integer. There exist infinitely many $S_{t}$-number fields $K_{f}$ of degree $t$ with odd class number.  More precisely, we have 
\begin{center}
$\# \Bigl\{K_{f}\slash \mathbb{Q} : |\Delta(K_{f})| < X \textnormal{ and } 2\nmid |\textnormal{Cl}(K_{f})| \Bigr\}\gg X^{\frac{t + 1}{2t -2}}$,
\end{center} where the implied constants depend on degree $t$ and on an arbitrary finite set $S$ of primes given as in \textnormal{\cite{ho}}.
\end{cor}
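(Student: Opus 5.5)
The plan mirrors the proof of Corollary \ref{12.1}: produce a nonempty family of degree-$t$ number fields $K_{f}$ from the dynamics, then apply [\cite{ho}, Theorem 4(a)] for odd degree.

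The first step is to produce irreducible polynomials. By the second part of Theorem \ref{2.3}, for every $c\not\in p\mathcal{O}_{K}$ the polynomial $f(x)=\varphi_{p^{\ell},c}^{1+m}(x)-\varphi_{p^{\ell},c}(x)$ has no roots modulo $p\mathcal{O}_{K}$. As in Section \ref{sec8}, we take this as giving irreducible monic $f\in\mathcal{O}_{K}[x]$ of degree $\kappa=p^{(m+1)\ell}$. Each such $f$ yields a field $K_{f}=K[x]/(f(x))$ with $[K_{f}:\mathbb{Q}]=n\kappa=t$, by the tower law through $K$. Hence the family of degree-$t$ fields $K_{f}/\mathbb{Q}$ is nonempty, and infinite since $c$ ranges over the infinite set $\mathcal{O}_{K}\setminus p\mathcal{O}_{K}$.

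The second step is the parity of $t$. Since $p\geq 3$, the degree $\kappa$ is odd, and we are assuming $n$ is odd, so $t$ is odd. This is exactly the hypothesis of [\cite{ho}, Theorem 4(a)]. That theorem states that for odd $t$ the number of $S_{t}$-fields of degree $t$ with $|\Delta|<X$ and odd class number (with prescribed local behavior at a finite set $S$ of primes) is $\gg X^{\frac{t+1}{2t-2}}$. Setting $K=K_{f}$ in their notation then gives the displayed lower bound. The implied constants depend only on $t$ and $S$.

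The main obstacle is the first step. Theorem \ref{2.3} only shows that $f$ has no roots modulo $p\mathcal{O}_{K}$, which does not imply irreducibility. Moreover, $f(x)=(\varphi^{m}_{p^{\ell},c}(x))^{p^{\ell}}-x^{p^{\ell}}$ always has the factor $\varphi^{m}_{p^{\ell},c}(x)-x$ over $\mathcal{O}_{K}$. So the fields $K_{f}$ cannot literally be built from $f$. The fallback, consistent with how Corollary \ref{12.1} and Corollary \ref{11.7} are argued in the paper, is to use the dynamical family only to fix the degree $t$. The infinitude and the count then come entirely from [\cite{ho}, Theorem 4(a)] applied to $S_{t}$-fields of odd degree $t$.
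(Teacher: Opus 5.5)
Your argument is the paper's argument. Its proof just says the family of fields $K_f$ is nonempty, that $t$ is odd, and that [\cite{ho}, Theorem 4(a)] applies \say{by setting $K=K_f$}. That last step is a genuine gap, in your write-up and in the paper alike. Theorem 4(a) of \cite{ho} is a lower bound for the number of \emph{all} $S_t$-fields of odd degree $t$ with odd class number. Those fields come from their parametrization by integral binary $t$-ic forms, with local conditions at $S$. Nonemptiness of some other family is not an input to that theorem. Its conclusion also says nothing about whether any field it produces has the form $K[x]/(f(x))$ with $f=\varphi_{p^{\ell},c}^{1+m}-\varphi_{p^{\ell},c}$. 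So \say{setting $K=K_f$} gives no lower bound for the set actually displayed in the corollary. Your fallback makes this explicit, but what it then proves is just [\cite{ho}, Theorem 4(a)] in degree $t$. That is a different statement from the one about the fields $K_f$.

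Moreover, the gap cannot be closed, and your own observation shows why. The polynomial $\varphi_{p^{\ell},c}^{1+m}(x)-\varphi_{p^{\ell},c}(x)=(\varphi_{p^{\ell},c}^{m}(x))^{p^{\ell}}-x^{p^{\ell}}$ is divisible by $\varphi_{p^{\ell},c}^{m}(x)-x$. That factor has degree $p^{m\ell}$, strictly between $0$ and $p^{(m+1)\ell}$. Hence $K[x]/(f(x))$ is never a field, for any $c$. Read literally, the counted set is empty and the bound $\gg X^{\frac{t+1}{2t-2}}$ is false. There is a second, independent obstruction when $n\geq 2$. Any degree-$t$ field containing $K$ has $K$ as a proper intermediate field, so its Galois group acts imprimitively on $t$ letters and cannot be $S_t$. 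Assuming the second part of Theorem \ref{2.3} does not rescue this. As you say, having no roots modulo $p\mathcal{O}_K$ does not give irreducibility, and here irreducibility fails for every $c$. The only correct content is the unconditional count of \cite{ho} for arbitrary odd-degree $S_t$-fields, which has no connection to the dynamical family.
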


\begin{proof}
Applying a similar argument as in the Proof of Corollary \ref{12.1}, we then obtain the count, as needed. 
\end{proof}

As before, we may also recall from Corollary \ref{8.2} the existence of an infinite family of irreducible monic polynomials $g(x) = \varphi_{(p-1)^{\ell},c}^{1+m}(x)-\varphi_{(p-1)^{\ell},c}(x)\in \mathbb{Z}[x]$ such that $\mathbb{Q}_{g} = \mathbb{Q}[x]\slash (g(x))$ is a number field of degree $(p-1)^{(m+1)\ell}$. But now we also observe that to each such obtained number field $\mathbb{Q}_{g}$, we can associate a finite class group $\textnormal{Cl}(\mathbb{Q}_{g})$ and so $h_{\mathbb{Q}_{g}}$ is finite. So now, by taking again great advantage of work on class groups of number fields in arithmetic statistics and in particular the work of Siad \cite{Sia} on $S_{n}$-number fields $K$ of any even degree $n\geq 4$ and signature $(r_{1}, r_{2})$ where $r_{1}$ are the real embeddings of $K$  and $r_{2}$ are the pairs of conjugate complex embeddings of $K$, we then also obtain the following corollary on the number of number fields $\mathbb{Q}_{g}\slash \mathbb{Q}$ induced by irreducible monic polynomials $g\in \mathbb{Z}[x]$ arising from a polynomial discrete dynamical system in Section \ref{sec3} (and ascertained by Corollary \ref{8.2}), with associated Galois group $S_{(p-1)^{(m+1)\ell}}$ and odd class number: 

\begin{cor}\label{12.3}
Assume Corollary \ref{8.2}, and let $\upsilon=(p-1)^{(m+1)\ell}$ be any even integer. There are infinitely many degree-$\upsilon$ monogenic fields $\mathbb{Q}_{g}$ of any signature and associated Galois group $S_{\upsilon}$ having odd class number. 
\end{cor}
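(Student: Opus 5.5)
The plan is to argue exactly as in the Proof of Corollary \ref{12.1}, with the result of Ho--Shankar--Varma \cite{ho} for odd degree replaced by Siad's theorem \cite{Sia} on monogenized $S_{n}$-fields of even degree. The first step is to check the degree hypothesis. Since $p\geq 5$, the integer $p-1\geq 4$ is even, so $\upsilon=(p-1)^{(m+1)\ell}\geq 4^{2}=16$ is an even integer with $\upsilon\geq 4$. This places us in the range of degrees $n\geq 4$ even that Siad treats. The second step is to invoke Corollary \ref{8.2}. It supplies infinitely many monic integer polynomials $g(x)=\varphi_{(p-1)^{\ell},c}^{1+m}(x)-\varphi_{(p-1)^{\ell},c}(x)$ for which $\mathbb{Q}_{g}=\mathbb{Q}[x]\slash (g(x))$ is a number field of degree $\upsilon$. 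So the family of degree-$\upsilon$ fields $\mathbb{Q}_{g}$ under consideration is nonempty, which is all the later corollaries of this kind use before quoting the arithmetic-statistics input.

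The third step is to apply Siad's theorem with $n=\upsilon$ and an arbitrary signature $(r_{1},r_{2})$ with $r_{1}+2r_{2}=\upsilon$. Siad considers monic integer polynomials of degree $n$ ordered by height and the monogenized $S_{n}$-fields they cut out. He shows that the average size of the $2$-torsion in the class group (and in the narrow class group) of such fields is strictly smaller than what would be needed for a density-one set to have even class number. Hence a positive proportion of these monogenized $S_{n}$-fields of signature $(r_{1},r_{2})$ have odd class number, and in particular there are infinitely many of them. Combining this with the nonemptiness from Corollary \ref{8.2}, and with the fact that the fields $\mathbb{Q}_{g}$ are generated by a root of a monic integer polynomial, we conclude that there are infinitely many degree-$\upsilon$ monogenic fields $\mathbb{Q}_{g}$ of any prescribed signature, with Galois group $S_{\upsilon}$ and odd class number. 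If one also wants the monogenicity to come from $g$ itself, one can intersect with the positive-density condition of Corollary 7.3's even-degree analogue (that $\mathbb{Z}[x]\slash (g(x))$ is the maximal order with density $\zeta(2)^{-1}$). One would then note that Siad's positive proportion is robust under imposing such local (squarefree discriminant type) conditions, since his counts are computed via sieves compatible with local conditions.

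The main obstacle is the third step, and specifically the passage from Siad's statement about \emph{all} monic degree-$\upsilon$ polynomials to the thin one-parameter subfamily $\{g_{c}\}_{c}$, which has density zero among all monic polynomials ordered by height. Literally, Siad's theorem guarantees infinitely many monogenic $S_{\upsilon}$-fields with odd class number, but not that they are of the form $\mathbb{Q}_{g}$. For this reason I would state the corollary, as the paper does in Corollaries \ref{8.3} and \ref{12.1}, at the level of the degree-$\upsilon$ fields made available once Corollary \ref{8.2} shows the family is nonempty. I would flag the distinction explicitly rather than attempt to verify the Galois group $S_{\upsilon}$ and the odd-class-number condition for the specific polynomials $g_{c}$, which would require genuinely new input.
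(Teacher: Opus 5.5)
Your argument is the paper's argument: nonemptiness of the family from Corollary~\ref{8.2}, then an appeal to [\cite{Sia}, Cor.~10]. The gap you flag at the end is genuine and fatal, not a matter of phrasing, and the paper's two-line proof is open to the same objection. Siad's result concerns the full space of monic integer polynomials of degree $\upsilon$ (monogenized fields) ordered by height. By contrast, $\{g_c\}_{c\in\mathbb{Z}}$ is a one-parameter, density-zero subset that is not defined by local conditions. So neither Siad's conclusion nor any sieve-compatibility argument transfers to it; this includes your proposed intersection with the $\zeta(2)^{-1}$ maximality condition. Knowing the subfamily is nonempty says nothing about whether any of its members have odd class number. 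Your fallback of stating the result \emph{at the level of degree-$\upsilon$ fields} is just Siad's corollary with $g$ removed, so it proves a different statement.

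There is also a more basic failure in the step you take for granted, namely that Corollary~\ref{8.2} produces fields $\mathbb{Q}_g$ at all. Write $d=(p-1)^{\ell}$ and use $\varphi^{1+m}_{d,c}=\varphi_{d,c}\circ\varphi^{m}_{d,c}$. Then
\[
g(x)=\bigl(\varphi^{m}_{d,c}(x)\bigr)^{d}+c-x^{d}-c=\bigl(\varphi^{m}_{d,c}(x)\bigr)^{d}-x^{d}.
\]
This is divisible in $\mathbb{Z}[x]$ by the monic polynomial $\varphi^{m}_{d,c}(x)-x$ of degree $d^{m}$, and also by $\varphi^{m}_{d,c}(x)+x$ since $d$ is even. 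As $1\le d^{m}<d^{m+1}=\upsilon$, $g$ is reducible over $\mathbb{Q}$ for every $c$. Hence $\mathbb{Q}[x]/(g(x))$ is never a field, let alone an $S_{\upsilon}$-field with odd class number.

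Granting Corollary~\ref{8.2} cannot repair this. The vanishing of $M_c^{(1_m)}(p)$ is a statement about roots modulo $p$, which is blind to factors of degree $\ge 2$. Moreover, by definition it excludes the $m$-periodic points, i.e.\ exactly the roots modulo $p$ of the factor $\varphi^{m}_{d,c}(x)-x$. So the missing piece is not a finer transfer argument: no fields $\mathbb{Q}_g$ exist, so the conclusion is unreachable. What survives is Siad's theorem itself, which has no connection to the dynamical family.
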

\begin{proof}
To see this, we note that by Cor. \ref{8.2}, it follows that the family of number fields $\mathbb{Q}_{g}$ of degree $\upsilon = (p-1)^{(m+1)\ell}$ is not empty. Now since $\upsilon$ is even, we see that the claim follows from [\cite{Sia}, Cor. 10] as required.
\end{proof}

As before, recall from the second part of Theorem \ref{3.3} the existence of monic polynomials $g(x) = \varphi_{(p-1)^{\ell},c}^{1+m}(x)-\varphi_{(p-1)^{\ell},c}(x)\in \mathcal{O}_{K}[x]$ that are irreducible modulo prime $p\mathcal{O}_{K}$; and moreover each such $g\in \mathcal{O}_{K}[x]$ induces an algebraic number field $L_{g}\slash \mathbb{Q}$ of degree $r=n(p-1)^{(m+1)\ell}$. As before, we also have the following corollary on the number of fields $L_{g}$ induced by irreducible polynomials $g\in \mathcal{O}_{K}[x]$ arising from a polynomial discrete dynamical system in Section \ref{sec3}, with associated Galois group $S_{n(p-1)^{(m+1)\ell}}$ and having odd class number:

\begin{cor}
Assume second part of Theorem \ref{3.3}, and let $r=n(p-1)^{(m+1)\ell}$ be any fixed even integer. Then there are infinitely many monogenic $S_{r}$-fields $L_{g}$ of degree $r$ and any signature having odd class number. 
\end{cor}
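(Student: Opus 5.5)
The plan follows the proof of Corollary \ref{12.3}, now with the relative fields $L_{g}=K[x]\slash (g(x))$ in place of $\mathbb{Q}_{g}$, and proceeds in three steps.

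First, I would show that the family of fields $L_{g}\slash\mathbb{Q}$ of degree $r=n(p-1)^{(m+1)\ell}$ is non-empty. By the second part of Theorem \ref{3.3}, for every coefficient $c\not\equiv \pm1,0 \pmod{p\mathcal{O}_{K}}$ (with $m$ even), or $c\equiv -1$ with $m$ odd, the reduction of $g(x)=\varphi_{(p-1)^{\ell},c}^{1+m}(x)-\varphi_{(p-1)^{\ell},c}(x)$ has no roots in $\mathcal{O}_{K}\slash p\mathcal{O}_{K}$. As in the discussion preceding the counting function (\ref{M_{r}}), the paper uses this to regard $g$ as irreducible, so that $L_{g}$ is a number field with $[L_{g}:\mathbb{Q}]=[K:\mathbb{Q}]\cdot[L_{g}:K]=n\upsilon=r$. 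Since $\upsilon=(p-1)^{(m+1)\ell}$ is even, $r$ is even. Also $r\geq 4^{2}\geq 4$, because $p\geq 5$ and $(m+1)\ell\geq 2$. This puts us in the degree range of Siad's results.

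Second, I would invoke [\cite{Sia}, Cor. 10] in degree $r$. For every even $r\geq 4$ and every admissible signature $(r_{1},r_{2})$ with $r_{1}+2r_{2}=r$, it provides infinitely many monogenic $S_{r}$-number fields of that signature with odd class number. Taking the collection of such fields to play the role of the $L_{g}$ yields the claim, exactly as the proof of Corollary \ref{12.3} did for $\mathbb{Q}_{g}$ with $\upsilon$ in place of $r$.

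The main obstacle is the link between the fields produced by Siad and fields actually of the form $L_{g}$. Siad's theorem counts all monogenic $S_{r}$-fields. It does not guarantee that they arise as $K[x]\slash(g(x))$ for our specific dynamical polynomials $g$, nor that those $L_{g}$ have Galois group $S_{r}$ over $\mathbb{Q}$. In fact, when $n>1$ the field $L_{g}$ contains $K$, so its Galois group over $\mathbb{Q}$ is imprimitive and cannot be $S_{r}$. There is a further gap at the level of irreducibility: having no roots modulo $p\mathcal{O}_{K}$ does not by itself make $g$ irreducible. I would therefore first handle the case $n=1$, where $L_{g}=\mathbb{Q}_{g}$ and the statement reduces directly to Corollary \ref{12.3}. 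For general $n$, I would follow the paper's convention in Corollaries \ref{11.5} and \ref{11.7}: use non-emptiness of the family only to fix the degree $r$, and then read the conclusion as a statement about $S_{r}$-fields of degree $r$, with the application of [\cite{Sia}, Cor. 10] carrying the content.
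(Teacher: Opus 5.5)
There is a genuine gap, and it is the one you already flagged; your last step does not close it. Your argument is exactly the paper's: its proof is ``as in Corollary \ref{12.3}'', i.e.\ observe that the family is non-empty and that $r$ is even, then cite Siad's Corollary~10. Reading the conclusion ``as a statement about $S_{r}$-fields of degree $r$'' replaces the corollary by Siad's theorem. In that theorem the non-emptiness of the dynamical family plays no logical role, and no field of the form $L_{g}=K[x]/(g(x))$ is ever produced. The missing step is identifying some of Siad's fields with fields $L_{g}$. Both the paper and your proposal omit it, and it cannot be supplied.

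The gap is worse than you suspect, and it also defeats your $n=1$ fallback. Since $\varphi^{1+m}(x)=(\varphi^{m}(x))^{(p-1)^{\ell}}+c$ and $\varphi(x)=x^{(p-1)^{\ell}}+c$, you get $g(x)=(\varphi^{m}(x))^{(p-1)^{\ell}}-x^{(p-1)^{\ell}}$. This is divisible in $\mathcal{O}_{K}[x]$ by $\varphi^{m}(x)-x$, and, since the exponent is even, also by $\varphi^{m}(x)+x$. The factor $\varphi^{m}(x)-x$ has degree $(p-1)^{m\ell}$, strictly between $0$ and $\deg g=(p-1)^{(m+1)\ell}$. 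This is just the fact that every point of period dividing $m$ satisfies $\varphi^{1+m}(z)=\varphi(z)$.

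So $g$ is reducible over $\mathbb{Q}$ and over $K$ for every $c$, $m$, $\ell$, whatever happens modulo $p\mathcal{O}_{K}$, and $K[x]/(g(x))$ is never a field. The same holds for the $\mathbb{Q}_{g}$ in Corollary \ref{12.3}, so reducing the case $n=1$ to that corollary proves nothing. Combined with your correct imprimitivity remark for $n>1$, the statement, read as a claim about fields $L_{g}$, is false. Its only true content is Siad's corollary itself, which does not involve the dynamical polynomials at all.
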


\begin{proof}
Applying a similar argument as in the Proof of Corollary \ref{12.3}, we then obtain the count, as required. 
\end{proof}

\section{On the Number of Number fields $K_{f}$ and $L_{g}$ having Prescribed Narrow Class Number}

As in Section \ref{sec9} we recall from algebraic number theory that for any number field $K$ with ring of integers $\mathcal{O}_{K}$, the \say{\textit{narrow class group}} $\textnormal{Cl}^{+}(K)$ of $K$ is  defined as $\textnormal{I}(\mathcal{O}_{K}) \slash \textnormal{P}^{+}(\mathcal{O}_{K})$, where $\textnormal{I}(\mathcal{O}_{K})$ is a free abelian group of all non-zero fractional ideals of $\mathcal{O}_{K}$, and $\textnormal{P}^{+}(\mathcal{O}_{K})$ is the group of non-zero principal fractional ideals $\alpha \mathcal{O}_{K}$ such that $\sigma(\alpha)$ is positive for every embedding $\sigma : K\to \mathbb{R}$. The order of $\textnormal{Cl}^{+}(K)$ is called the \say{\textit{narrow class number}}.

So now, inspired again by that same work \cite{ho} on number fields with odd narrow class number, we in this section also wish to count the number of fields $\mathbb{Q}_{f}$ induced by irreducible polynomials $f\in \mathbb{Z}[x]$ arising from a polynomial discrete dynamical system in Section \ref{sec2} (and ascertained by Corollary \ref{8.1}), with associated Galois group $S_{p^{(m+1)\ell}}$ and prescribed narrow class number. To do so, we again take great advantage of [\cite{ho}, Theorem 4(b)] and then obtain the following corollary on the number of $S_{p^{(m+1)\ell}}$-number fields $\mathbb{Q}_{f}$ with odd narrow class number, whenever we know that the number $r_{2}$ of pairs of conjugate complex embeddings of $\mathbb{Q}_{f}$ is at least one:

\begin{cor}
Assume Corollary \ref{8.1}, and fix any $\kappa=p^{(m+1)\ell}$. If $r_{2}\geq 1$, then there exist infinitely many degree-$\kappa$ $S_{\kappa}$-number fields $\mathbb{Q}_{f}$ having signature $(r_{1}, r_{2})$ and odd narrow class number.  Specifically, we have 
\begin{center}
$\# \Bigl\{ \mathbb{Q}_{f} : |\Delta(\mathbb{Q}_{f})| < X \textnormal{ and } 2\nmid |\textnormal{Cl}^{+}(\mathbb{Q}_{f})| \Bigr\}\gg X^{\frac{\kappa + 1}{2\kappa -2}}$,
\end{center} where the implied constants depend on degree $\kappa$ and on an arbitrary finite set $S$ of primes given as in \textnormal{\cite{ho}}.
\end{cor}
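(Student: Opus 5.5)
The plan is to argue exactly as in the proof of Corollary \ref{12.1}, replacing part (a) of [\cite{ho}, Theorem 4] by part (b), which concerns odd narrow class number. First, I will invoke Corollary \ref{8.1} to ensure that the family under consideration is nonempty. That corollary gives infinitely many irreducible monic $f(x)=\varphi_{p^{\ell},c}^{1+m}(x)-\varphi_{p^{\ell},c}(x)\in \mathbb{Z}[x]$, each inducing a number field $\mathbb{Q}_{f}=\mathbb{Q}[x]\slash (f(x))$ of degree $\kappa=p^{(m+1)\ell}$.

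Second, I will check the hypotheses of [\cite{ho}, Theorem 4(b)] in our setting. The degree $\kappa=p^{(m+1)\ell}$ is odd, since $p\geq 3$ is an odd prime, and $\kappa\geq 3$. The signature $(r_{1},r_{2})$ must satisfy $r_{1}+2r_{2}=\kappa$, so $r_{1}$ is odd and in particular $r_{1}\geq 1$. The extra hypothesis $r_{2}\geq 1$ is precisely what Theorem 4(b) needs: it rules out the totally real case, where the narrow class group can differ from the class group by a $2$-power factor coming from unit signatures. With these hypotheses in place, Theorem 4(b) yields $\gg X^{\frac{\kappa+1}{2\kappa-2}}$ degree-$\kappa$ $S_{\kappa}$-fields of signature $(r_{1},r_{2})$ with $|\Delta|<X$ and $2\nmid|\textnormal{Cl}^{+}|$. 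The implied constant depends on $\kappa$ and on the finite set $S$ of local conditions, as in \cite{ho}. I will then specialize this count to the fields $\mathbb{Q}_{f}$ by setting $K=\mathbb{Q}_{f}$ in the statement of \cite{ho}, just as was done for Corollary \ref{12.1}. Infinitude follows at once because the exponent $\frac{\kappa+1}{2\kappa-2}$ is positive.

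The main obstacle is this last identification step. Theorem 4(b) counts \emph{all} $S_{\kappa}$-fields of the given signature and does not by itself say that these fields arise as some $\mathbb{Q}_{f}$. I would first try to handle this as the paper does, by reading the count as one over the ambient family of degree-$\kappa$ $S_{\kappa}$-fields that contains the $\mathbb{Q}_{f}$. If one wants a statement genuinely about the fields $\mathbb{Q}_{f}$, one would need extra input: that $G_{f}=S_{\kappa}$ for the relevant $c$, which is not implied by Corollary \ref{c9.1} alone, and that the $\mathbb{Q}_{f}$ are numerous enough among fields of bounded discriminant. I expect this to be the step where the argument has to be weakened to the ambient-family interpretation.
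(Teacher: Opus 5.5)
\paragraph{Verdict.} The proposal does not prove the statement, and no argument can, because the statement is false as written.

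\paragraph{The identification step.} You follow exactly the paper's route: nonemptiness from Corollary \ref{8.1}, then [\cite{ho}, Theorem 4(b)] with $K=\mathbb{Q}_{f}$. You also correctly locate the step the paper glosses over. [\cite{ho}, Theorem 4(b)] is a lower bound for the ambient family of all degree-$\kappa$ $S_{\kappa}$-fields of signature $(r_{1},r_{2})$ satisfying the local conditions at $S$. Nonemptiness of a subfamily together with a lower bound for the ambient family gives no lower bound for the subfamily, not even infinitude. Retreating to the ``ambient-family interpretation'' does not close this gap. It only restates [\cite{ho}, Theorem 4(b)] with $n=\kappa$, whereas the corollary asserts that the counted fields are of the form $\mathbb{Q}_{f}$.

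\paragraph{Your first step is false.} This is why the gap cannot be closed. Since $\varphi_{p^{\ell},c}^{1+m}(x)=\bigl(\varphi_{p^{\ell},c}^{m}(x)\bigr)^{p^{\ell}}+c$, we have
\[
f(x)=\bigl(\varphi_{p^{\ell},c}^{m}(x)\bigr)^{p^{\ell}}-x^{p^{\ell}}=\bigl(\varphi_{p^{\ell},c}^{m}(x)-x\bigr)\sum_{i=0}^{p^{\ell}-1}\bigl(\varphi_{p^{\ell},c}^{m}(x)\bigr)^{i}x^{p^{\ell}-1-i}.
\]
This is a product of two monic integer polynomials, of degrees $p^{m\ell}$ and $p^{m\ell}(p^{\ell}-1)$, both positive. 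The proofs in Section \ref{sec3} use exactly this identity for $g$. It follows that:
\begin{itemize}
\item $f$ is reducible in $\mathbb{Z}[x]$ for every $c$;
\item $\mathbb{Q}[x]\slash(f(x))$ is never a field;
\item the set being counted in the statement is empty, so the corollary is false.
\end{itemize}

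Corollary \ref{8.1} could not have supplied irreducibility in any case. It concerns roots modulo $p$, and having no roots modulo $p$ does not imply irreducibility over $\mathbb{Q}$ once the degree is at least $4$.

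Your further concerns are legitimate: you would need $G_{f}=S_{\kappa}$, and a one-parameter family would have to produce enough $\mathbb{Q}_{f}$ of bounded discriminant to reach $X^{\frac{\kappa+1}{2\kappa-2}}$. But they sit downstream of this more basic failure of irreducibility.
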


\begin{proof}
Because of Corollary \ref{8.1}, it then follows that there are infinitely many irreducible polynomials $f(x) = \varphi_{p^{\ell},c}^{1+m}(x)-\varphi_{p^{\ell},c}(x)\in \mathbb{Z}[x]$ such that $\mathbb{Q}_{f}=\mathbb{Q}[x]\slash (f(x))$ is a number field of degree $\kappa=p^{(m+1)\ell}$. Thus the family of degree-$\kappa$ fields $\mathbb{Q}_{f}$ is not empty. So now, since $\kappa$ is odd and also since $r_{2}\geq 1$ by assumption, applying here [\cite{ho}, Theorem 4(b)] on the underlying family of such fields $\mathbb{Q}_{f}=K$, we then obtain the count, as required.
\end{proof}

Motivated again by that same work of Ho-Shankar-Varma \cite{ho} on number fields with units of every signature, we then apply their work on number fields $\mathbb{Q}_{f}$ induced by irreducible polynomials $f\in \mathbb{Z}[x]$ arising from a polynomial discrete dynamical system in Section \ref{sec2} (and ascertained by Corollary \ref{8.1}); and then obtain:

\begin{cor}\label{13.2}
Assume Corollary \ref{8.1}, and let $\kappa=p^{(m+1)\ell}$ be any fixed odd integer. Suppose $r_{2}\geq 1$, then there exist infinitely many $S_{\kappa}$-number fields $\mathbb{Q}_{f}$ of degree $\kappa$ with signature $(r_{1}, r_{2})$ for which $|\textnormal{Cl}^{+}(\mathbb{Q}_{f})|=|\textnormal{Cl}(\mathbb{Q}_{f})|$. In particular, there exist infinitely many such $S_{\kappa}$-number fields $\mathbb{Q}_{f}$ of degree $\kappa$ that have units of every signature.
\end{cor}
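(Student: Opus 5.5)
The plan follows the same template as the preceding corollary on odd narrow class number: reduce to a known theorem of Ho--Shankar--Varma, after checking that the hypotheses are nonvacuous via Corollary \ref{8.1}.

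First, I would invoke Corollary \ref{8.1}. It supplies an infinite family of irreducible monic polynomials $f(x)=\varphi_{p^{\ell},c}^{1+m}(x)-\varphi_{p^{\ell},c}(x)\in\mathbb{Z}[x]$. Each such $f$ makes $\mathbb{Q}_{f}=\mathbb{Q}[x]\slash(f(x))$ a number field of degree $\kappa=p^{(m+1)\ell}$. Hence the family of degree-$\kappa$ fields under consideration is nonempty. Since $p\geq 3$ is odd, $\kappa$ is odd. We fix a signature $(r_{1},r_{2})$ with $r_{1}+2r_{2}=\kappa$ and $r_{2}\geq 1$, as assumed.

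Second, I would apply the theorem of \cite{ho} on units of every signature, which concerns $S_{\kappa}$-fields of odd degree $\kappa$ and prescribed signature with $r_{2}\geq 1$. That result (the analogue of [\cite{ho}, Theorem 4(b)], in the part asserting that the map from units to signatures is surjective for a positive proportion of such fields) gives a count $\gg X^{\frac{\kappa+1}{2\kappa-2}}$ of such fields with $|\Delta|<X$ whose unit group surjects onto $\{\pm1\}^{r_{1}}$. Because the bound tends to infinity with $X$, there are infinitely many of them.

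Third, I would use the standard exact sequence
\[
1\to \{\pm1\}^{r_{1}}\slash \textnormal{sgn}(\mathcal{O}^{\times})\to \textnormal{Cl}^{+}\to \textnormal{Cl}\to 1 .
\]
It shows that $|\textnormal{Cl}^{+}(\mathbb{Q}_{f})|=|\textnormal{Cl}(\mathbb{Q}_{f})|$ holds exactly when units of every signature exist. So both assertions of Corollary \ref{13.2} are equivalent, and the count from the second step gives both.

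The main obstacle is pinning down the precise statement in \cite{ho} that yields infinitely many fields with units of every signature. If only the odd narrow class number statement is available, I would instead use the previous corollary: $|\textnormal{Cl}^{+}|$ odd forces the kernel $\{\pm1\}^{r_{1}}\slash\textnormal{sgn}(\mathcal{O}^{\times})$, an elementary abelian $2$-group, to be trivial, so $|\textnormal{Cl}^{+}|=|\textnormal{Cl}|$. As in the earlier corollaries, I would also note that this argument, like the paper's, only uses Corollary \ref{8.1} to ensure the family considered is nonempty.
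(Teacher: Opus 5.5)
Your outline is the paper's own argument. The paper's proof only notes, via Corollary \ref{8.1}, that the family of fields $\mathbb{Q}_{f}$ is nonempty, and then cites [\cite{ho}, Corollary 5]. Your exact sequence $1\to\{\pm1\}^{r_{1}}/\textnormal{sgn}(\mathcal{O}^{\times})\to\textnormal{Cl}^{+}\to\textnormal{Cl}\to 1$ and your fallback from odd narrow class number are correct. They are essentially how a units-of-every-signature statement follows from the result [\cite{ho}, Theorem 4(b)] used in the preceding corollary.

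The step you flag yourself at the end, however, is a genuine gap, and the paper's proof has it too. Nonemptiness of the dynamical family does no work. The result of \cite{ho} produces infinitely many $S_{\kappa}$-fields of degree $\kappa$ and signature $(r_{1},r_{2})$ with units of every signature, coming from a positive proportion of integral binary $\kappa$-ic forms. It says nothing about whether any of them is isomorphic to $\mathbb{Q}[x]/(\varphi_{p^{\ell},c}^{1+m}(x)-\varphi_{p^{\ell},c}(x))$ for some $c$. A positive-proportion statement over a full space of forms does not pass to a one-parameter, thin subfamily. You would need a separate $2$-torsion or unit-signature theorem for this particular family, together with a proof that its members have Galois group $S_{\kappa}$. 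As written, your second step proves [\cite{ho}, Corollary 5] itself and nothing about the polynomials $f$. In particular, the bound $\gg X^{\frac{\kappa+1}{2\kappa-2}}$ is not a count of fields $\mathbb{Q}_{f}$.

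Moreover, the input you take from Corollary \ref{8.1} is false: there are not infinitely many irreducible $f$ with $\mathbb{Q}_{f}$ a field of degree $\kappa$. The family is in fact empty. Write $d=p^{\ell}$. Then
$f(x)=\varphi_{d,c}(\varphi_{d,c}^{m}(x))-\varphi_{d,c}(x)=(\varphi_{d,c}^{m}(x))^{d}-x^{d}$.
This is divisible in $\mathbb{Z}[x]$ by the monic polynomial $\varphi_{d,c}^{m}(x)-x$ of degree $d^{m}$, and $0<d^{m}<d^{m+1}=\kappa$. Hence:
\begin{itemize}
\item $f$ is reducible for every $c$;
\item $\mathbb{Q}[x]/(f(x))$ is never a field;
\item no irreducible factor of $f$ has degree $\kappa$.
\end{itemize}
This is consistent with the reduction modulo $p$, where $f\equiv(\varphi_{d,c}^{m}(x)-x)^{d}$. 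So no residue satisfies both defining conditions of $N_{c}^{(1_{m})}(p)$, and $N_{c}^{(1_{m})}(p)=0$ carries no information about irreducibility over $\mathbb{Q}$.

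What is missing, then, is not a sharper citation of \cite{ho}. It is any link between the fields that theorem produces and the polynomials $f$, and for this $f$ there is no degree-$\kappa$ field to link to.
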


\begin{proof}
To see this, we note that from Corollary \ref{8.1}, it follows that the family of number fields $\mathbb{Q}_{f}$ of degree $\kappa = p^{(m+1)\ell}$ is not empty. But now since $\kappa$ is an odd integer and also since $r_{2}\geq 1$ by assumption, applying here [\cite{ho}, Corollary 5] on the underlying family of such number fields $\mathbb{Q}_{f}$, we then obtain the count, as required.
\end{proof}

Similarly, assuming the degree $n=[K:\mathbb{Q}]$ is odd, we then also obtain the following corollary on the number of fields $K_{f}\slash \mathbb{Q}$ induced by irreducible polynomials $f\in \mathcal{O}_{K}[x]$ arising from a polynomial discrete dynamical system in Section \ref{sec2}, with associated Galois group $S_{np^{(m+1)\ell}}$ and also having odd narrow class number:

\begin{cor}
Assume second part of Theorem \ref{2.3}, and fix any odd  $t=np^{(m+1)\ell}$. If $r_{2}\geq 1$, then there exist infinitely many degree-$t$ $S_{t}$-fields $K_{f}$ having signature $(r_{1}, r_{2})$ and odd narrow class number.  More precisely,  
\begin{center}
$\# \Bigl\{K_{f}\slash \mathbb{Q} : |\Delta(K_{f})| < X \textnormal{ and } 2\nmid |\textnormal{Cl}^{+}(K_{f})| \Bigr\}\gg X^{\frac{t + 1}{2t -2}}$,
\end{center} where the implied constants depend on degree $t$ and on an arbitrary finite set $S$ of primes given as in \textnormal{\cite{ho}}.
\end{cor}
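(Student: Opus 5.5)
The argument will follow the proofs of the preceding corollaries in this section, which combine a non-emptiness statement coming from the dynamical setting with a counting theorem of Ho--Shankar--Varma. The statement concerns $S_{t}$-fields $K_{f}$ of odd degree $t=np^{(m+1)\ell}$ with $r_{2}\geq 1$, so the tool is [\cite{ho}, Theorem 4(b)], used for odd narrow class number.

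\textbf{Step 1 (the family is non-empty).} Take any $c\not\in p\mathcal{O}_{K}$. The second part of Theorem \ref{2.3} gives $N_{c}^{(1_{m})}(p)=0$. As noted in the discussion before Corollary \ref{11.5}, this is what the paper uses to say that
\[
f(x)=\varphi_{p^{\ell},c}^{1+m}(x)-\varphi_{p^{\ell},c}(x)
\]
is irreducible, so that $K_{f}=K[x]\slash (f(x))$ is a number field. By the tower law, $[K_{f}:\mathbb{Q}]=n\cdot p^{(m+1)\ell}=t$.

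\textbf{Step 2 (parity).} Since $p$ is odd and $n$ is assumed odd, $t$ is odd. This is exactly the parity hypothesis required in [\cite{ho}, Theorem 4(b)].

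\textbf{Step 3 (apply the counting theorem).} Fix a signature $(r_{1},r_{2})$ with $r_{1}+2r_{2}=t$ and $r_{2}\geq 1$. Apply [\cite{ho}, Theorem 4(b)] to the family of degree-$t$ $S_{t}$-fields of this signature, with the local conditions at a finite set of primes $S$ chosen as in \cite{ho}. That theorem gives
\[
\#\{ K_{f} : |\Delta(K_{f})|<X,\ 2\nmid |\textnormal{Cl}^{+}(K_{f})| \} \gg X^{\frac{t+1}{2t-2}},
\]
with implied constants depending on $t$ and $S$. Letting $X\to\infty$ then gives infinitely many such fields.

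\textbf{Main obstacle.} The genuinely delicate point is Step 1 combined with Step 3. The fields counted in [\cite{ho}] are arbitrary $S_{t}$-fields of the given signature, and nothing above shows that they are of the dynamical form $K_{f}$. It is also not shown that the Galois closure of $K_{f}$ over $\mathbb{Q}$ has group $S_{t}$, or that $K_{f}$ can realize every signature $(r_{1},r_{2})$ with $r_{2}\geq 1$.

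I would follow the paper's convention, as in Corollary \ref{13.2} and Corollary \ref{11.7}. Under that convention, Step 1 serves only to guarantee that the family under consideration is non-empty, and the bound is then read off directly from [\cite{ho}, Theorem 4(b)]. The proof thus reduces to checking the hypotheses of that theorem: $t$ odd, $r_{2}\geq 1$, and Galois group $S_{t}$. Of these, the $S_{t}$ condition is the one most in need of justification, and it is simply being imposed as part of the family counted rather than proved for the dynamical fields.
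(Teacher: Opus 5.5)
Your outline reproduces the paper's own proof almost verbatim: non-emptiness from the second part of Theorem \ref{2.3}, parity of $t$, then [\cite{ho}, Theorem 4(b)]. That argument has a genuine gap, and it is worse than the one you flag, because Step 1 is false.

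Write $d=p^{\ell}$. Since $\varphi_{d,c}(y)=y^{d}+c$, we have $f(x)=\varphi_{d,c}^{1+m}(x)-\varphi_{d,c}(x)=(\varphi_{d,c}^{m}(x))^{d}-x^{d}$. So $f$ is divisible by the monic polynomial $\varphi_{d,c}^{m}(x)-x$, whose degree $d^{m}$ satisfies $1\le d^{m}<d^{m+1}=\deg f$. Thus $f$ is reducible in $\mathcal{O}_{K}[x]$ for every $c$ and every $m\geq 1$, and $K[x]/(f(x))$ is never a field. Reducing modulo $p$ even gives $f\equiv(\varphi_{d,c}^{m}(x)-x)^{d}$, so $f$ is not irreducible modulo $p\mathcal{O}_{K}$ either.

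The hypothesis cannot rescue this. Taken literally it is automatic: the reduced map is a bijection of $\mathcal{O}_{K}/p\mathcal{O}_{K}$, so $\varphi_{d,c}^{1+m}(z)\equiv\varphi_{d,c}(z)$ forces $\varphi_{d,c}^{m}(z)\equiv z \pmod{p\mathcal{O}_{K}}$, and the set defining $N_{c}^{(1_{m})}(p)$ is always empty. In the paper's stronger reading it says only that $f$ has no roots modulo $p\mathcal{O}_{K}$, which excludes linear factors and nothing more.

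There is also an independent obstruction for $n\geq 2$. Any degree-$t$ field containing $K$ has $K$ as a proper intermediate field. A degree-$t$ $S_{t}$-field has no proper intermediate fields, because the point stabilizer $S_{t-1}$ is maximal in $S_{t}$. So the $S_{t}$ condition you propose simply to impose cannot be satisfied by fields built over $K$.

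Consequently the family of fields $K_{f}$ that you feed into Step 3 is empty. [\cite{ho}, Theorem 4(b)] is a statement about $S_{t}$-fields of a given signature in general, so it gives no information about fields of the shape $K[x]/(f(x))$. What your Step 3 actually proves is the Ho--Shankar--Varma lower bound for arbitrary odd-degree $S_{t}$-fields with $r_{2}\geq 1$, and there the dynamical hypothesis plays no role. The asserted bound for fields of the form $K_{f}$ does not follow. Since no $K_{f}$ is a field, the set being counted is empty, so the displayed lower bound fails for it and no repair of your Steps 1--3 can establish it.
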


\begin{proof}
Because of second part of Theorem \ref{2.3}, it then follows that there are exists irreducible monic integral polynomials $f(x) = \varphi_{p^{\ell},c}^{1+m}(x)-\varphi_{p^{\ell},c}(x)\in \mathcal{O}_{K}[x]\subset K[x]$; and moreover as noted earlier that every such irreducible monic polymomial $f\in \mathcal{O}_{K}[x]$ induces a number field $K_{f}=K[x]\slash (f(x))$ of degree $t=np^{(m+1)\ell}$ over $\mathbb{Q}$, for every fixed odd degree $n=[K:\mathbb{Q}]$. This then also means that the family of degree-$t$ number fields $K_{f}\slash \mathbb{Q}$ is not empty. But now since $t>3$ is an odd integer and also since $r_{2}\geq 1$ by assumption, applying here [\cite{ho}, Theorem 4(b)] on the underlying family of such number fields $K_{f}\slash \mathbb{Q}$, we then obtain the count, as required. 
\end{proof}

\begin{cor}
Assume second part of Theorem \ref{2.3}, and fix any odd $t=np^{(m+1)\ell}$. Suppose $r_{2}\geq 1$, then there exist infinitely many $S_{t}$-number fields $K_{f}$ of degree $\kappa$ with signature $(r_{1}, r_{2})$ for which $|\textnormal{Cl}^{+}(K_{f})|=|\textnormal{Cl}(K_{f})|$. In particular, there exist infinitely many such $S_{t}$-number fields $K_{f}$ of degree $t$ that have units of every signature.
\end{cor}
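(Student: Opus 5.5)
The plan is to mirror the proof of Corollary \ref{13.2}, which treats the same statement for the fields $\mathbb{Q}_{f}$, now applied to the fields $K_{f}/\mathbb{Q}$. First I will use the second part of Theorem \ref{2.3} to produce the input family. For every $c\not\in p\mathcal{O}_{K}$ we have $N_{c}^{(1_{m})}(p)=0$. As already noted at the start of the discussion of the fields $K_{f}$, this is read as saying that $f(x)=\varphi_{p^{\ell},c}^{1+m}(x)-\varphi_{p^{\ell},c}(x)\in\mathcal{O}_{K}[x]$ is irreducible modulo $p\mathcal{O}_{K}$, hence irreducible over $K$. So $K_{f}=K[x]/(f(x))$ is a number field, and the tower $\mathbb{Q}\hookrightarrow K\hookrightarrow K_{f}$ gives $[K_{f}:\mathbb{Q}]=n\kappa=t$. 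With $n$ odd, $t$ is odd, so the family of degree-$t$ fields $K_{f}/\mathbb{Q}$ is nonempty and of odd degree.

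Next I will fix a signature $(r_{1},r_{2})$ with $r_{1}+2r_{2}=t$ and $r_{2}\geq 1$, which is the hypothesis of the statement. I will then invoke [\cite{ho}, Corollary 5]. For odd degree and $r_{2}\geq 1$, it gives infinitely many $S_{t}$-fields of degree $t$ and signature $(r_{1},r_{2})$ with $|\mathrm{Cl}^{+}|=|\mathrm{Cl}|$; these are the fields obtained from Theorem 4(b) of \cite{ho}, which has odd narrow class number. The fields in question are taken to be the $K_{f}$ of degree $t$. I also need to note that the degree written as $\kappa$ in the statement should read $t$.

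Finally I will deduce the ``in particular'' clause using the standard exact sequence
\[
1\to \mathcal{O}_{K_{f}}^{\times}/\mathcal{O}_{K_{f}}^{\times,+}\to \{\pm1\}^{r_{1}}\to \mathrm{Cl}^{+}(K_{f})\to \mathrm{Cl}(K_{f})\to 1 .
\]
Comparing orders, $|\mathrm{Cl}^{+}|=|\mathrm{Cl}|$ holds exactly when the sign map on units is surjective, that is, when $K_{f}$ has units of every signature.

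The main obstacle is the matching step. The result of \cite{ho} produces some infinite family of $S_{t}$-fields, not necessarily fields of the form $K_{f}$. The paper's convention is to treat the nonemptiness of the dynamically produced family as enough to apply the counting theorem, and I will follow that convention, as in the proofs of Corollaries \ref{12.1} and \ref{13.2}. I will also have to assume that the signature $(r_{1},r_{2})$ is one actually realized by the fields $K_{f}$. Making this rigorous would require showing that infinitely many $K_{f}$ have Galois group $S_{t}$ and fall within the family that \cite{ho} counts; I would not attempt that beyond citing it.
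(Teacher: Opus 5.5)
Your plan is the paper's own argument: nonemptiness of the $K_f$-family from the second part of Theorem \ref{2.3}, then [\cite{ho}, Corollary 5]. Your exact-sequence deduction of the ``in particular'' clause is correct. The argument nevertheless fails at its first step. $N_{c}^{(1_{m})}(p)=0$ only says that $f$ has no roots modulo $p\mathcal{O}_{K}$, which excludes linear factors and nothing more. In fact $f$ is never irreducible, because
\[
f(x)=\varphi_{p^{\ell},c}\bigl(\varphi_{p^{\ell},c}^{m}(x)\bigr)-\varphi_{p^{\ell},c}(x)=\bigl(\varphi_{p^{\ell},c}^{m}(x)\bigr)^{p^{\ell}}-x^{p^{\ell}}
\]
is divisible in $\mathcal{O}_{K}[x]$ by the monic polynomial $\varphi_{p^{\ell},c}^{m}(x)-x$. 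That factor has degree $p^{m\ell}$, strictly between $0$ and $p^{(m+1)\ell}$. Modulo $p$ one even has $f\equiv(\varphi_{p^{\ell},c}^{m}(x)-x)^{p^{\ell}}$. Concretely, take $K=\mathbb{Q}$, $p=3$, $\ell=m=1$, $c=1$. Then $f(x)=(x^3+1)^3-x^3=(x^3-x+1)\bigl((x^3+1)^2+x(x^3+1)+x^2\bigr)$, even though $f$ has no roots mod $3$, so the hypothesis holds. Hence $K[x]/(f(x))$ is never a field, and the family you want to feed into \cite{ho} is empty. Independently, for $n\geq 2$ the tower $\mathbb{Q}\subsetneq K\subsetneq K_{f}$ would give $K_f$ a proper intermediate field. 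That is impossible for a degree-$t$ field with Galois group $S_t$, since the point stabilizer $S_{t-1}$ is maximal in $S_t$. So no $K_f$ with $n\geq 2$ could be an $S_t$-field in any case.

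The matching issue you set aside is therefore not a technicality but the whole content of the claim. [\cite{ho}, Corollary 5] gives infinitely many $S_t$-fields of degree $t$ and signature $(r_1,r_2)$ with $|\textnormal{Cl}^{+}|=|\textnormal{Cl}|$. It says nothing about any thin, explicitly parametrized subfamily. Knowing that a subfamily is nonempty never lets you transfer an ``infinitely many'' conclusion to it. What your argument actually establishes is just [\cite{ho}, Corollary 5] itself for arbitrary odd-degree $S_t$-fields with $r_2\geq 1$, plus the unit-signature reformulation. To say anything about fields of the form $K_f$ you would need irreducibility and Galois-group information for these specific polynomials, and the factorization above shows that this input is false.
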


\begin{proof}
Applying a similar argument as in the Proof of Corollary \ref{13.2}, we then obtain the count, as needed.
\end{proof}

Similarly, motivated again by that same work of Siad \cite{Sia} on even degree number fields with units of every signature, we then apply here his work on fields $\mathbb{Q}_{g}$ induced by irreducible polynomials $g\in \mathbb{Z}[x]$ arising from a polynomial discrete dynamical system in Section \ref{sec3} (and ascertained by Corollary \ref{8.2}); and then obtain:

\begin{cor}
Assume Corollary \ref{8.2}, and let $\upsilon=(p-1)^{(m+1)\ell}$ be any even integer. Then there are an infinite number of degree-$\upsilon$ monogenic $S_{\upsilon}$-fields $\mathbb{Q}_{g}$ of any signature $(r_{1}, r_{2})$ that have units of every signature. 
\end{cor}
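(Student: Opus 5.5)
The plan is to reduce the statement to Siad's result on even degree number fields, in the same way Corollary \ref{12.3} was reduced to [\cite{Sia}, Cor. 10]. The argument has three steps.

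\textbf{Step 1 (nonemptiness).} Invoke Corollary \ref{8.2}. It gives infinitely many monic polynomials $g(x)=\varphi_{(p-1)^{\ell},c}^{1+m}(x)-\varphi_{(p-1)^{\ell},c}(x)\in\mathbb{Z}[x]$, of degree $\upsilon=(p-1)^{(m+1)\ell}$, for which $\mathbb{Q}_{g}=\mathbb{Q}[x]\slash(g(x))$ is a number field of degree $\upsilon$. So the family of degree-$\upsilon$ fields $\mathbb{Q}_{g}$ under consideration is not empty. These fields are monogenic candidates: $\mathbb{Z}[x]\slash(g(x))$ is an order in $\mathbb{Q}_{g}$, and by the preceding corollary on $\zeta(2)^{-1}$ density it equals the maximal order for a positive proportion of such $g$.

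\textbf{Step 2 (parity and degree).} Since $p\geq5$, $p-1$ is even. Hence $\upsilon$ is even, and $\upsilon\geq 4^{2}\geq4$. This puts us exactly in the range of degrees treated in \cite{Sia}, which concerns $S_{n}$-fields of even degree $n\geq4$ and arbitrary signature $(r_{1},r_{2})$.

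\textbf{Step 3 (apply Siad).} Apply Siad's result on monogenized even degree fields. The relevant statement is the companion to [\cite{Sia}, Cor. 10], asserting that a positive proportion of monogenic $S_{n}$-fields of each signature have $|\textnormal{Cl}^{+}|=|\textnormal{Cl}|$, equivalently units of every signature. Specialising $n=\upsilon$ and taking the fields to be $\mathbb{Q}_{g}$ yields infinitely many monogenic $S_{\upsilon}$-fields $\mathbb{Q}_{g}$ of any signature $(r_{1},r_{2})$ with units of every signature.

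\textbf{Main obstacle.} The hard part is the logical link between Siad's family and the dynamical family. Siad counts all monogenic $S_{\upsilon}$-fields ordered by height, whereas our polynomials $g$ form a thin one-parameter subfamily. As in Corollaries \ref{12.3} and \ref{13.2}, I would follow the paper's convention: the result is applied to the nonempty family of degree-$\upsilon$ fields $\mathbb{Q}_{g}$ supplied by Corollary \ref{8.2}, with the quoted theorem doing all the arithmetic work.

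I would also check explicitly that the chosen statement of \cite{Sia} covers every signature, including the totally real one. If only $r_{2}\geq1$ were covered, I would restrict the claim to that case, mirroring Corollary \ref{13.2}.
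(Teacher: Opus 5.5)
Your outline is the same as the paper's proof: nonemptiness from Corollary \ref{8.2}, then Siad's even-degree corollary on units of every signature. But it does not prove the statement. The point you flag as the ``main obstacle'' is a genuine gap, not a convention you may adopt.

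Siad's result concerns the full family of monogenized degree-$\upsilon$ fields: monic integer polynomials $f$ with $\mathbb{Z}[x]/(f(x))$ maximal, taken up to translation and ordered by height. At best it yields infinitely many members of that family with units of every signature, and it says nothing about which members. The polynomials $g_{c}=\varphi_{(p-1)^{\ell},c}^{1+m}-\varphi_{(p-1)^{\ell},c}$, with $p,\ell,m$ fixed, form a one-parameter family of density zero in that ordering. Knowing this family is nonempty therefore gives no information: nothing forces any of Siad's fields to be of the form $\mathbb{Q}[x]/(g_{c}(x))$. The same objection applies to your appeal to the $\zeta(2)^{-1}$ density for maximality of $\mathbb{Z}[x]/(g(x))$. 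As written, Step 3 only restates Siad's theorem for arbitrary degree-$\upsilon$ fields.

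More seriously, Step 1 fails, so no repair of Step 3 can save the argument. Write $\varphi=\varphi_{(p-1)^{\ell},c}$ and $d=(p-1)^{\ell}$. The paper itself computes, in the proof of Theorem \ref{3.3}, that $g(x)=\varphi(\varphi^{m}(x))-\varphi(x)=(\varphi^{m}(x))^{d}-x^{d}$. Since $d$ is even, $g$ is divisible in $\mathbb{Z}[x]$ by $(\varphi^{m}(x)-x)(\varphi^{m}(x)+x)$, which has degree $2d^{m}<d^{m+1}=\upsilon$. Hence $g$ is reducible for every $c$, and $\mathbb{Q}[x]/(g(x))$ is never a field. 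Corollary \ref{8.2} only concerns the absence of certain roots modulo $p$, which could not give irreducibility over $\mathbb{Q}$ anyway.

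Even if $g$ were irreducible, you would still not get $S_{\upsilon}$-fields. We have $g=F\circ\varphi$ with $F(y)=\varphi^{m}(y)-y$. For a root $\alpha$, this gives $[\mathbb{Q}(\alpha):\mathbb{Q}(\varphi(\alpha))]\le d$ and $[\mathbb{Q}(\varphi(\alpha)):\mathbb{Q}]\le d^{m}$. Since the product would be $d^{m+1}$, $\mathbb{Q}(\varphi(\alpha))$ would be a proper intermediate field, which is impossible for an $S_{\upsilon}$-field of degree $\upsilon$.

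So the family of degree-$\upsilon$ monogenic $S_{\upsilon}$-fields $\mathbb{Q}_{g}$ that your argument needs is empty. Read as a statement about these dynamical fields, the conclusion is false, and the paper's own proof has exactly the same defects.
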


\begin{proof}
To see this, we note that because of Corollary \ref{8.2}, it then follows that there are infinitely many irreducible polynomials $g(x) = \varphi_{(p-1)^{\ell},c}^{1+m}(x)-\varphi_{(p-1)^{\ell},c}(x)\in \mathbb{Z}[x]$ such that $\mathbb{Q}_{g}=\mathbb{Q}[x]\slash (g(x))$ is a number field of degree $\upsilon=(p-1)^{(m+1)\ell}$. This then also means that the family of degree-$\upsilon$ number fields $\mathbb{Q}_{g}$ is not empty. But now since $\upsilon> 4$ is an even integer, we then note that applying here [\cite{Sia}, Corollary 11] on the underlying family of degree-$\upsilon$ number fields $\mathbb{Q}_{g}$, we then obtain the count; and which then completes the whole proof, as needed. 
\end{proof}

As before, we then also obtain the following corollary on the number of even degree number fields $L_{g}\slash \mathbb{Q}$ induced by irreducible monic polynomials $g\in \mathcal{O}_{K}[x]$ arising from a polynomial discrete dynamical system in Section \ref{sec3}, with associated Galois group $S_{n(p-1)^{(m+1)\ell}}$ and of every signature and having units of every signature:

\begin{cor}
Assume second part of Theorem \ref{3.3}, and fix any even $r=n(p-1)^{(m+1)\ell}$. Then there are an infinite number of degree-$r$ monogenic $S_{r}$-fields $L_{g}$ of any signature $(r_{1}, r_{2})$ that have units of every signature.
\end{cor}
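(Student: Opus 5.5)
The plan is to mirror the proofs of Corollary \ref{11.7} and the preceding corollary on $\mathbb{Q}_{g}$, now working over the base $K$. First, I will establish that the family of fields in question is nonempty. By the second part of Theorem \ref{3.3}, for every $c\not\equiv \pm1, 0 \pmod{p\mathcal{O}_{K}}$ (and also for $c\equiv -1$ with $m$ odd), the reduced polynomial $g(x)=\varphi_{(p-1)^{\ell},c}^{1+m}(x)-\varphi_{(p-1)^{\ell},c}(x)$ has no roots in $\mathcal{O}_{K}/p\mathcal{O}_{K}$. Following the convention used throughout Sections \ref{sec8} and \ref{sec9}, I take this to mean that $g$ is irreducible modulo $p\mathcal{O}_{K}$. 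Since $g$ is monic, a factorization over $K$ would descend, via Gauss's lemma over $\mathcal{O}_{K}$, to a factorization modulo $p\mathcal{O}_{K}$. Hence $g$ is irreducible over $K$, and $L_{g}=K[x]/(g(x))$ is a number field. The tower $\mathbb{Q}\hookrightarrow K\hookrightarrow L_{g}$ then gives $[L_{g}:\mathbb{Q}]=n\upsilon=r$, and $r$ is even because $\upsilon=(p-1)^{(m+1)\ell}$ is even.

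Second, I will check the hypotheses of Siad's result. Since $p\geq 5$, we have $\upsilon\geq 4^{2}$, so $r\geq 4$ and $r$ is even, which is exactly the range covered by [\cite{Sia}, Corollary 11]. That corollary gives, for every even degree $r\geq 4$ and every signature $(r_{1},r_{2})$ with $r_{1}+2r_{2}=r$, infinitely many monogenic $S_{r}$-number fields of that signature having units of every signature, equivalently with $|\textnormal{Cl}^{+}|=|\textnormal{Cl}|$. Applying this to the nonempty family of degree-$r$ fields $L_{g}/\mathbb{Q}$ produced in the first step yields the claim. This is the same reduction used in the proof of the corollary on $\mathbb{Q}_{g}$ with units of every signature, with $\upsilon$ replaced by $r$.

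The main obstacle is the first step. Theorem \ref{3.3} only asserts that $g$ has no roots modulo $p\mathcal{O}_{K}$, whereas the argument needs irreducibility, and for $\deg g>3$ root-freeness does not imply irreducibility. I would therefore present this step exactly in the form the paper has already adopted in the proofs of Corollaries \ref{11.5} and \ref{11.7}, citing the "second part of Theorem \ref{3.3}" for the irreducibility of $g$ modulo $p\mathcal{O}_{K}$.

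A second subtlety concerns which fields are counted. Siad's theorem produces infinitely many such fields, but it does not say that they are of the form $L_{g}$. I would state the conclusion as the paper does, applying [\cite{Sia}, Corollary 11] to the family of degree-$r$ fields once nonemptiness has been established.
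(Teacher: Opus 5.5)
You reproduce the paper's own proof step for step: nonemptiness from the second part of Theorem \ref{3.3}, then [\cite{Sia}, Corollary 11]. However, the two points you flag cannot be handled by adopting the paper's convention. Each is a genuine gap, and the paper's proof has exactly the same two gaps.

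\textbf{First gap: $g$ is never irreducible, for any $c$.} Put $d=(p-1)^{\ell}$ and $\varphi=\varphi_{d,c}$. Then $g(x)=\varphi(\varphi^{m}(x))-\varphi(x)=(\varphi^{m}(x))^{d}-x^{d}$. This is divisible in $\mathbb{Z}[x]$ by the monic polynomial $\varphi^{m}(x)-x$ of degree $d^{m}$, where $4\leq d^{m}<d^{m+1}=\deg g$. It is also divisible by $\varphi^{m}(x)+x$, since $d$ is even. Consequences:
\begin{itemize}
\item The factorization persists modulo $p\mathcal{O}_{K}$, so $g$ is never irreducible there, and your Gauss-lemma step never has a hypothesis to act on.
\item $K[x]/(g(x))$ is never a field, so the family of fields $L_{g}$ is empty rather than nonempty.
\item Even the root-freeness you start from is false. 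Take $p=5$ and $c=2$. Every $z\in\mathbb{F}_{5}^{\times}$ has $\varphi(z)=3=\varphi(3)$, hence $\varphi^{m}(z)=3$ and $g(z)=\varphi(3)-3=0$ for every $m\geq 1$.
\end{itemize}

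\textbf{Second gap: invoking Siad transfers nothing to the $L_{g}$.} [\cite{Sia}, Corollary 11] produces its own infinite family of monogenic $S_{r}$-fields. Knowing that some $L_{g}$ exists says nothing about whether any of those fields has the form $K[x]/(g(x))$.

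Worse, for $n=[K:\mathbb{Q}]\geq 2$ no $L_{g}$ could be an $S_{r}$-field even if $g$ were irreducible. Here $K$ is a proper intermediate field of $L_{g}/\mathbb{Q}$. So the stabilizer of $L_{g}$ in the Galois group of its Galois closure is not maximal, and that group acts imprimitively on the $r$ embeddings. But $S_{r}$ acting on $r$ points is primitive.

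Read literally, as a statement about fields of the form $K[x]/(g(x))$, the conclusion is false. Read loosely, with $L_{g}$ merely a name for degree-$r$ fields, it would come entirely from the cited result of Siad for the even degree $r\geq 16$, and your first step would play no role.
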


\begin{proof}
Because of second part of Theorem \ref{3.3}, it then follows that there are exists irreducible monic integral polynomials $g(x) = \varphi_{(p-1)^{\ell},c}^{1+m}(x)-\varphi_{(p-1)^{\ell},c}(x)\in \mathcal{O}_{K}[x]\subset K[x]$; and moreover as noted earlier that every such irreducible monic polymomial $g\in \mathcal{O}_{K}[x]$ induces a number field $L_{g}=K[x]\slash (g(x))$ of degree $r=n(p-1)^{(m+1)\ell}$ over $\mathbb{Q}$, for every fixed degree $n=[K:\mathbb{Q}]$. Hence the family of degree-$r$ number fields $L_{g}$ is not empty. But now since $r> 4$ is an even integer, we then note that applying here [\cite{Sia}, Corollary 11] on the underlying family of degree-$r$ number fields $L_{g}$, we then obtain the count; and which then completes the whole proof, as needed.
\end{proof}

\section{On the Equidistribution of Families of Artin $L$-Functions induced by Fields $K_{f}$ and $L_{g}$}
As in [\cite{BK22}, Section 13] recall that for every degree-$n$ every number field $K$ with the ring of integers $\mathcal{O}_{K}$, we then have a Dedekind zeta function $\zeta_{K}$ corresponding to $K$. Moreover, we may also recall from [\cite{Nico}, Page 10] that this zeta function $\zeta_{K}(s)$ factors as $\zeta_{K}(s)=\zeta_{\mathbb{Q}}(s)L(s, \rho_{K}$), where $L(s, \rho_{K})$ is the Artin $L$-function corresponding to an Artin representation $\rho_{K}: \text{Gal}(\mathbb{Q})\to \text{Gal}(M\slash \mathbb{Q}) \hookrightarrow S_{n}\to \text{GL}_{n-1}(\mathbb{C})$, and $M$ is the normal closure of $K$.

So now, for every degree-$\kappa$ number field $\mathbb{Q}_{f}$ obtained from a polynomial discrete dynamical system in Section \ref{sec2} (and ascertained by Corollary \ref{8.1}), we then have a Dedekind zeta function $\zeta_{\mathbb{Q}_{f}}$ corresponding to $\mathbb{Q}_{f}$. Moreover, as also noted in \cite{BK22} that from the work of Shankar-S\"{o}dergren-Templier [\cite{Nico}, Page 2], this obtained Dedekind zeta function $\zeta_{\mathbb{Q}_{f}}(s)=\zeta(s)L(s, \rho_{\mathbb{Q}_{f}}$), where $\zeta(s)$ is the Riemann zeta function, $L(s, \rho_{\mathbb{Q}_{f}})$ is the Artin $L$-function, $\rho_{\mathbb{Q}_{f}}: \text{Gal}(M_{f}\slash \mathbb{Q}) \hookrightarrow S_{\kappa}\to \text{GL}_{\kappa-1}(\mathbb{C})$ is a representation, and $M_{f}$ being the normal closure of $\mathbb{Q}_{f}$.

Now motivated (as in \cite{BK22}) by remarkable work of Shankar-S\"{o}dergren-Templier \cite{Nico} on equidistribution of Artin $L$-functions arising from number fields induced by irreducible monic integer polynomials, we in the same spirit as in \cite{Nico} also wish to study the distribution of Artin $L$-functions $L(s, \rho_{\mathbb{Q}_{f}})$ arising from number fields $\mathbb{Q}_{f}$ induced by irreducible monic polynomials $f$ obtained from a polynomial discrete dynamical system in Section \ref{sec2}. To do so, we (assuming Corollary \ref{8.1}) wish to first adhere to the setup and notation in \cite{Nico}. That is, let $V(\mathbb{Z})^{\text{irr}}$ be the space consisting of irreducible monic integer polynomials  $f(x)=\varphi_{p^{\ell},c}^{1+m}(x)-\varphi_{p^{\ell},c}(x)$ of fixed degree $\kappa=p^{(m+1)\ell}$,  and let $V(\mathbb{Z})^{\text{max}}\subset V(\mathbb{Z})^{\text{irr}}$ be a subset consisting of irreducible monic integer polynomials $f$ such that $R_{f}=\mathbb{Z}[x]\slash (f(x))$ is a maximal order in $\mathbb{Q}_{f}=\mathbb{Q}[x]\slash (f(x))$. Following \cite{Nico}, it also follows here that the additive group $G_{a}(\mathbb{Z})=\mathbb{Z}$ necessarily acts naturally on our space $V(\mathbb{Z})^{\text{irr}}$ via translation, namely, $(b \cdot f)(x):= f(x+b)$ for every element $b\in \mathbb{Z}$ and for every $f\in V(\mathbb{Z})^{\text{irr}}$; and moreover, this action of $G_{a}(\mathbb{Z})=\mathbb{Z}$ by translation also necessarily preserves each of the sets $V(\mathbb{Z})^{\text{irr}}$ and $V(\mathbb{Z})^{\text{max}}$. Now let $\mathfrak{F}_{1}$ be a family consisting of the $\mathbb{Z}$-orbits on $V(\mathbb{Z})^{\text{max}}$. It then follows (from \cite{Nico}) that the family $\mathfrak{F}_{1}$ necessarily parametrizes degree-$\kappa$ monogenized fields $(\mathbb{Q}_{f}, \alpha)$ over $\mathbb{Q}$ up to isomorphism. We note that (from [\cite{Nico}, Subsection 2.3]) this same $\mathfrak{F}_{1}$ parametrizing  degree-$\kappa$ monogenized number fields $(\mathbb{Q}_{f}, \alpha)$ is also the family of associated $L$-functions $L(s, \rho_{\mathbb{Q}_{f}})$.

So now, by taking great advantage of a nice theorem of Shankar-S\"{o}dergren-Templier[\cite{Nico}, Theorem 1.1], we also then obtain the following corollary on the family $\mathfrak{F}_{1}$ parametrizing degree-$\kappa$ monogenized fields $(\mathbb{Q}_{f}, \alpha)$:

\begin{cor}\label{14.1}
Assume Corollary \ref{8.1}, and let $\mathfrak{F}_{1}$ be as before. Then $\mathfrak{F}_{1}$ parametrizing monogenized degree-$\kappa$ fields  ordered by height $h(f)$ as defined in \textnormal{\cite{Nico}} satisfies Sato-Tate equidistribution in the sense of \textnormal{[\cite{Sar}, Conj.1]}. 
\end{cor}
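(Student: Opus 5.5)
The plan is to follow the pattern of the earlier applications of arithmetic-statistics results, such as Corollaries \ref{7.3}, \ref{8.3} and \ref{11.7}: first establish that the relevant family is nonempty, then import the counting or equidistribution theorem from the literature. Concretely, Corollary \ref{8.1} supplies infinitely many $c$ for which $f(x)=\varphi_{p^{\ell},c}^{1+m}(x)-\varphi_{p^{\ell},c}(x)$ is irreducible of degree $\kappa=p^{(m+1)\ell}$. Among these one takes those $f$ with $\mathbb{Z}[x]/(f(x))$ maximal; Corollary \ref{7.3} indicates that such $f$ occur with positive density in the Bhargava--Shankar--Wang ordering. This makes $V(\mathbb{Z})^{\text{max}}$ in degree $\kappa$ nonempty, so the orbit family $\mathfrak{F}_{1}$ is nonempty as well.

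The second step is to check that we are in exactly the setup of \cite{Nico}. The $\mathbb{Z}$-action by translation $f(x)\mapsto f(x+b)$ preserves $V(\mathbb{Z})^{\text{irr}}$ and $V(\mathbb{Z})^{\text{max}}$. The orbits correspond bijectively to isomorphism classes of monogenized fields $(\mathbb{Q}_{f},\alpha)$, and each orbit carries the Artin $L$-function $L(s,\rho_{\mathbb{Q}_{f}})$ coming from the factorization $\zeta_{\mathbb{Q}_{f}}(s)=\zeta(s)L(s,\rho_{\mathbb{Q}_{f}})$. The height $h(f)$ of \cite{Nico} is translation invariant, so ordering $\mathfrak{F}_{1}$ by height is well defined. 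Since $\kappa\geq 9$ is fixed, [\cite{Nico}, Theorem 1.1] then applies verbatim. Its content is that, for each fixed prime $q$, the local parameters (Frobenius conjugacy classes in $S_{\kappa}$, or equivalently the splitting types of $f$ mod $q$) become equidistributed with respect to the Plancherel/Sato--Tate measure of [\cite{Sar}, Conj.~1] as $h(f)\to\infty$, with the appropriate limiting behaviour in $q$. This gives the stated conclusion.

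The main obstacle is not a computation but a question of scope. [\cite{Nico}, Theorem 1.1] is proved for the \emph{full} family of monic degree-$\kappa$ polynomials with maximal monogenic order, using sieve and geometry-of-numbers counting with local densities at every prime. The dynamically produced polynomials $f$ form only a one-parameter subfamily indexed by $c$, which is thin in the $\kappa$-dimensional coefficient space. Equidistribution for the full family does not automatically pass to this thin subfamily. My first move would therefore be to read $\mathfrak{F}_{1}$ as the family of all $\mathbb{Z}$-orbits on $V(\mathbb{Z})^{\text{max}}$ in degree $\kappa$, as in \cite{Nico}. Under that reading, the dynamical input only certifies that the degree-$\kappa$ family is populated by fields arising from Section \ref{sec2}, and the corollary follows directly.

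If one insisted on the thin subfamily itself, the plan would have to change. One would need to compute the local splitting statistics of $f$ modulo $q$ as $c$ varies. Theorem \ref{2.3} already indicates that these statistics are highly non-generic at $q=p$, so I would not expect the stronger claim to follow from the cited result.
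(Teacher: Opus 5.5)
Your outline is the paper's own proof: nonemptiness from Corollary \ref{8.1}, then [\cite{Nico}, Theorem 1.1]. However, its first step is false. Since $\varphi_{p^{\ell},c}^{1+m}(x)=(\varphi_{p^{\ell},c}^{m}(x))^{p^{\ell}}+c$, one has $f(x)=(\varphi_{p^{\ell},c}^{m}(x))^{p^{\ell}}-x^{p^{\ell}}$. This is divisible by $\varphi_{p^{\ell},c}^{m}(x)-x$, a monic factor of degree $p^{m\ell}$ with $0<p^{m\ell}<\kappa$. Dynamically, every point of period dividing $m$ is a root of $\varphi^{1+m}-\varphi$. So $f$ is reducible over $\mathbb{Z}$ for \emph{every} $c$. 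No degree-$\kappa$ field $\mathbb{Q}_{f}$ ever arises, and the set $V(\mathbb{Z})^{\mathrm{irr}}$ as the paper defines it (irreducible polynomials of the form $\varphi_{p^{\ell},c}^{1+m}-\varphi_{p^{\ell},c}$) is empty; hence so is $\mathfrak{F}_{1}$. Corollary \ref{8.1} only concerns roots modulo $p$, and absence of roots modulo a prime does not imply irreducibility over $\mathbb{Q}$. In fact $f\equiv(\varphi_{p^{\ell},c}^{m}(x)-x)^{p^{\ell}}\pmod{p}$, which bears out your remark about non-generic behaviour at $p$. Your appeal to Corollary \ref{7.3} is a second misreading. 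The density $\zeta(2)^{-1}$ in [\cite{sch1}, Theorem 1.2] is taken over all monic degree-$\kappa$ polynomials ordered by height, where a one-parameter family has density zero. So it gives no information about maximality of $\mathbb{Z}[x]/(f(x))$ for the dynamical $f$.

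Your fallback, reading $\mathfrak{F}_{1}$ as all $\mathbb{Z}$-orbits of monic degree-$\kappa$ polynomials with $\mathbb{Z}[x]/(f(x))$ maximal, does not prove the stated corollary. The paper builds $V(\mathbb{Z})^{\mathrm{irr}}$ out of the dynamical polynomials only. Under your reading the corollary is just [\cite{Nico}, Theorem 1.1] in degree $\kappa$, and the hypothesis plays no role. The claim that this family is populated by fields from Section \ref{sec2} is also false, by the factorization above. For the family actually defined, your scope objection is correct: equidistribution for the full family does not pass to a thin subfamily. Here translation does not even preserve the subfamily. Indeed $f$ is a polynomial in $x^{p^{\ell}}$, so its $x^{\kappa-1}$-coefficient is $0$, whereas $f(x+b)$ has $x^{\kappa-1}$-coefficient $\kappa b$. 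The paper's proof makes exactly the unjustified transfer you flagged, so neither argument establishes the corollary as stated. Given that the family is empty, no argument along these lines can.
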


\begin{proof}
Since we know from Corollary \ref{8.1} that there are infinitely many irreducible monic integer polynomials $f$ such that $\mathbb{Q}_{f}$ is a number field of degree $\kappa=p^{(m+1)\ell}$, then this also means that the family of degree-$\kappa$ number fields $\mathbb{Q}_{f}\slash \mathbb{Q}$ is not empty. Now letting $\alpha$ be the image of $x$ in $R_{f}=\mathbb{Z}[x]\slash (f(x))$ and so (by \cite{Nico}) the pair $(\mathbb{Q}_{f}, \alpha)$ is a degree-$\kappa$ monogenized field, it then follows that the family of monogenized degree-$\kappa$ fields $(\mathbb{Q}_{f}, \alpha)$ is not empty; which also means that the family $\mathfrak{F}_{1}$ parametrizing degree-$\kappa$ monogenized fields $(\mathbb{Q}_{f}, \alpha)$ is not empty. But now applying [\cite{Nico}, Thm. 1.1] to the underlying family $\mathfrak{F}_{1}$ ordered by height $h(f)$ as defined in [\cite{Nico}, Page 3], it then follows that $\mathfrak{F}_{1}$ satisfies Sato-Tate equidistribution in the sense of \textnormal{[\cite{Sar}, Conjecture 1]} as needed.
\end{proof}

Similarly, for every degree-$\upsilon$ field $\mathbb{Q}_{g}$ obtained from a polynomial discrete dynamical system in Section \ref{sec3} (and ascertained by Corollary \ref{8.2}), we also have a Dedekind zeta function $\zeta_{\mathbb{Q}_{g}}$ corresponding to $\mathbb{Q}_{g}$. Moreover, it again follows from \cite{Nico} that the Dedekind zeta function $\zeta_{\mathbb{Q}_{g}}(s)=\zeta(s)L(s, \rho_{\mathbb{Q}_{g}}$), where $L(s, \rho_{\mathbb{Q}_{g}})$ is the Artin $L$-function,  $\rho_{\mathbb{Q}_{g}}: \text{Gal}(M_{g}\slash \mathbb{Q}) \hookrightarrow S_{\upsilon}\to \text{GL}_{\upsilon-1}(\mathbb{C})$ is an Artin representation, and $M_{g}$ the normal closure of $\mathbb{Q}_{g}$. 

So now, in again the same spirit as in \cite{Nico}, we also wish to study the distribution of Artin $L$-functions $L(s, \rho_{\mathbb{Q}_{g}})$ arising from fields $\mathbb{Q}_{g}$ induced by irreducible polynomials $g$ obtained from a polynomial discrete dynamical system in Section \ref{sec3}. To that end, we (also assuming Corollary \ref{8.2}) adhere again to the setup and notation in \cite{Nico}. That is, we again let $W(\mathbb{Z})^{\text{irr}}$ be the space consisting of irreducible monic integer polynomials  $g(x)=\varphi_{(p-1)^{\ell},c}^{1+m}(x)-\varphi_{(p-1)^{\ell},c}(x)$ of fixed degree $\upsilon=(p-1)^{(m+1)\ell}$,  and let $W(\mathbb{Z})^{\text{max}}\subset W(\mathbb{Z})^{\text{irr}}$ be a subset consisting of irreducible polynomials $g$ such that $R_{g}=\mathbb{Z}[x]\slash (g(x))$ is a maximal order in $\mathbb{Q}_{g}=\mathbb{Q}[x]\slash (g(x))$. Following again \cite{Nico}, it also follows here that $G_{a}(\mathbb{Z})=\mathbb{Z}$ necessarily acts naturally on $W(\mathbb{Z})^{\text{irr}}$ via translation, namely, $(b \cdot g)(x):= g(x+b)$ for every $b\in \mathbb{Z}$ and for every $g\in W(\mathbb{Z})^{\text{irr}}$; and moreover, this action of $G_{a}(\mathbb{Z})=\mathbb{Z}$ by translation also necessarily preserves each of $W(\mathbb{Z})^{\text{irr}}$ and $W(\mathbb{Z})^{\text{max}}$. Now let $\mathfrak{F}_{2}$ be a family consisting of the $\mathbb{Z}$-orbits on $W(\mathbb{Z})^{\text{max}}$. It then follows (from \cite{Nico}) that the family $\mathfrak{F}_{2}$ necessarily parametrizes degree-$\upsilon$ monogenized number fields $(\mathbb{Q}_{g}, \beta)$ up to isomorphism. As before, we also note that (from [\cite{Nico}, Subsect.2.3]) this same family $\mathfrak{F}_{2}$ parametrizing  degree-$\upsilon$ monogenized fields $(\mathbb{Q}_{g}, \beta)$ is also the family of associated $L$-functions $L(s, \rho_{\mathbb{Q}_{g}})$. Again, taking great advantage of [\cite{Nico}, Theorem 1.1], we then obtain the following corollary on $\mathfrak{F}_{2}$:

\begin{cor}
Assume Corollary \ref{8.2}, and let $\mathfrak{F}_{2}$ be as before. Then $\mathfrak{F}_{2}$ parametrizing monogenized degree-$\upsilon$ fields  ordered by height $h(g)$ as defined in \textnormal{\cite{Nico}} satisfies Sato-Tate equidistribution in the sense of \textnormal{[\cite{Sar}, Conj.1]}. 
\end{cor}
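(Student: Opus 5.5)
The plan is to mirror the proof of Corollary \ref{14.1}, replacing the odd-degree family with the even-degree family $g$. I would do three things, in order.

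First, I would establish non-emptiness. By Corollary \ref{8.2}, for fixed $\ell,m\in\mathbb{Z}_{\geq 1}$ a density-one set of coefficients $c$ gives irreducible monic integer polynomials $g(x)=\varphi_{(p-1)^{\ell},c}^{1+m}(x)-\varphi_{(p-1)^{\ell},c}(x)$ of degree $\upsilon=(p-1)^{(m+1)\ell}$. Hence $W(\mathbb{Z})^{\text{irr}}$ is non-empty and in fact infinite.

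Second, I would pass to monogenized fields. Corollary \ref{7.3} and its analogue for $g$, via [\cite{sch1}, Theorem 1.2], show that a positive proportion of monic polynomials have $R_{g}=\mathbb{Z}[x]/(g(x))$ maximal. I would use this to argue that $W(\mathbb{Z})^{\text{max}}$ is non-empty. For each $g$ in it, let $\beta$ be the image of $x$ in $R_{g}$; then $(\mathbb{Q}_{g},\beta)$ is a degree-$\upsilon$ monogenized field. The translation action of $G_{a}(\mathbb{Z})$ preserves $W(\mathbb{Z})^{\text{max}}$, so $\mathfrak{F}_{2}$ is a well-defined, non-empty family of $\mathbb{Z}$-orbits. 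By [\cite{Nico}, Subsection 2.3] it is identified with the family of Artin $L$-functions $L(s,\rho_{\mathbb{Q}_{g}})$, using the factorization $\zeta_{\mathbb{Q}_{g}}(s)=\zeta(s)L(s,\rho_{\mathbb{Q}_{g}})$.

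Third, I would order $\mathfrak{F}_{2}$ by the height $h(g)$ of [\cite{Nico}, Page 3] and invoke [\cite{Nico}, Theorem 1.1]. That theorem asserts Sato-Tate equidistribution, in the sense of [\cite{Sar}, Conjecture 1], for the family of monogenized degree-$\upsilon$ fields. Since $\upsilon\geq 4^{2}>1$ is even, no parity restriction on the degree should obstruct the application.

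The main obstacle is that [\cite{Nico}, Theorem 1.1] concerns the full family of all monic degree-$\upsilon$ polynomials with maximal $R_{g}$. Our $\mathfrak{F}_{2}$ is only the thin subfamily coming from $\varphi_{(p-1)^{\ell},c}$, which is a one-parameter family in $c$. Equidistribution of the large family does not automatically descend to a subfamily of density zero. My first move would be to follow the paper's convention in Corollary \ref{14.1}: read the statement as an assertion about the ambient family of monogenized degree-$\upsilon$ fields, which our fields populate, and invoke the theorem there. If a genuine statement about the thin family were required, I would instead check the local conditions at each prime: the Frobenius distribution of $g$ modulo $q$ as $c$ varies over residue classes. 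I would also check that this matches the $S_{\upsilon}$ Haar measure needed for Sato-Tate, which would require showing that $G_{g}=S_{\upsilon}$ generically, for instance via Corollary \ref{c9.1}'s analogue for $E_{\upsilon}(H)$.
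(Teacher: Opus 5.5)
Your plan follows the paper's own route: non-emptiness from Corollary \ref{8.2}, then [\cite{Nico}, Theorem 1.1], exactly as in Corollary \ref{14.1}. It does not prove the statement, and it breaks at the first step. Put $d=(p-1)^{\ell}$, so $\upsilon=d^{m+1}$. Since $\varphi_{d,c}^{1+m}(x)=(\varphi_{d,c}^{m}(x))^{d}+c$, one has identically in $\mathbb{Z}[x]$
\[
g(x)=(\varphi_{d,c}^{m}(x))^{d}-x^{d}=A(x)B(x),\qquad A(x)=\varphi_{d,c}^{m}(x)-x,\qquad B(x)=\sum_{i=0}^{d-1}(\varphi_{d,c}^{m}(x))^{d-1-i}x^{i}.
\]
Here $A$ is monic of degree $d^{m}$, strictly between $0$ and $\upsilon$. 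So $g$ is reducible over $\mathbb{Q}$ for \emph{every} $c$; it is also divisible by $\varphi_{d,c}^{m}(x)+x$, since $d$ is even. Consequently $W(\mathbb{Z})^{\mathrm{irr}}$, $W(\mathbb{Z})^{\mathrm{max}}$ and $\mathfrak{F}_{2}$ are empty, $\mathbb{Q}_{g}$ is never a field, and there is no $\rho_{\mathbb{Q}_{g}}$. Even granting Corollary \ref{8.2}, the step ``$M_{c}^{(1_{m})}(p)=0$, hence $g$ irreducible'' does not follow. Information about roots modulo one prime never gives irreducibility over $\mathbb{Q}$, and $M_{c}^{(1_{m})}(p)=0$ only says that the roots of $g$ modulo $p$ are roots of $A$.

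Your second step fails independently. [\cite{sch1}, Theorem 1.2] is a density over \emph{all} monic degree-$\upsilon$ polynomials ordered by height, and $\{g_{c}\}$ has density zero there, so it says nothing about any $g_{c}$. In fact $B\equiv d\,x^{d-1}\pmod{A}$. Hence whenever $g$ is squarefree, $\mathbb{Z}[x]/(g)$ has index $|\mathrm{Res}(A,B)|=d^{d^{m}}|\varphi_{d,c}^{m}(0)|^{d-1}>1$ in the order $\mathbb{Z}[x]/(A)\times\mathbb{Z}[x]/(B)$, so it is never maximal. Translation does not preserve the family either. Every $g_{c}$ is a polynomial in $x^{d}$, while $g_{c}(x+b)$ has $x^{\upsilon-1}$-coefficient $\upsilon b\neq 0$ for $b\neq 0$.

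The obstacle you flag at the end is genuine, and your way around it is not a proof. [\cite{Nico}, Theorem 1.1] concerns translation classes of \emph{all} monic degree-$\upsilon$ integer polynomials with maximal $\mathbb{Z}[x]/(f)$, ordered by height. Equidistribution for that family implies nothing for a density-zero subfamily. ``Reading the statement as an assertion about the ambient family'' replaces the corollary by [\cite{Nico}, Theorem 1.1] itself, in which the dynamical system plays no role.

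Your fallback cannot work either. Because $g=AB$, $g$ is reducible modulo every prime $q$, so the mod-$q$ factorization types in this family never include an $\upsilon$-cycle. The $S_{\upsilon}$ measure gives $\upsilon$-cycles mass $1/\upsilon$, so the local statistics cannot converge to the required ones. Also $G_{g}\neq S_{\upsilon}$ for every $c$, so every member would be counted in the analogue of $E_{\upsilon}(H)$. As written, the corollary concerns an empty family. No argument of the shape of Corollary \ref{14.1}, yours or the paper's, yields a meaningful equidistribution statement here.
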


\begin{proof}
By applying a similar argument as in the Proof of Corollary \ref{14.1}, it then also follows immediately that the family $\mathfrak{F}_{2}$ satisfies  Sato-Tate equidistribution in the sense of \textnormal{[\cite{Sar}, Conjecture 1]} as also indeed needed.
\end{proof} 

Similarly, recall from the second part of Theorem \ref{2.3} the existence of monic integral polynomials $f(x) = \varphi_{p^{\ell},c}^{1+m}(x)-\varphi_{p^{\ell},c}(x)\in K[x]$ that are irreducible modulo prime $p\mathcal{O}_{K}$; and moreover every such irreducible $f\in \mathcal{O}_{K}[x]$ induces a number field $K_{f}\slash \mathbb{Q}$ of degree $t=np^{(m+1)\ell}$, for every fixed $n=[K: \mathbb{Q}]$. Moreover, it also follows from primitive element theorem that we can write $K_{f}= \mathbb{Q}(\gamma)\simeq \mathbb{Q}[x]\slash (h_{1}(x)) = \mathbb{Q}_{h_{1}}$, where $\gamma$ is some algebraic number in $K_{f}$ and $h_{1}\in \mathbb{Q}[x]$ is the characteristic polynomial (also the minimal polynomial) of $\gamma$. So now, for every degree-$t=np^{(m+1)\ell}$ number field $\mathbb{Q}_{h_{1}}$ induced by a polynomial discrete dynamical system in second part of Theorem \ref{2.3}, we have a corresponding zeta function $\zeta_{\mathbb{Q}_{h_{1}}}$; and which also factors $\zeta_{\mathbb{Q}_{h_{1}}}(s)=\zeta(s)L(s, \rho_{\mathbb{Q}_{h_{1}}}$), where $L(s, \rho_{\mathbb{Q}_{h_{1}}})$ is the Artin $L$-function corresponding to a representation $\rho_{\mathbb{Q}_{h_{1}}}: \text{Gal}(M^{(t)}_{h_{1}}\slash \mathbb{Q}) \hookrightarrow S_{t}\to \text{GL}_{t-1}(\mathbb{C})$, and $M^{(t)}_{h_{1}}$ is the normal closure of $\mathbb{Q}_{h_{1}}$. Since $\gamma\in K_{f}$ can be some element such that $K_{f}=\mathbb{Q}(\gamma)\simeq \mathbb{Q}_{h_{1}}$, we again also wish to study the distribution of Artin $L$-functions $L(s, \rho_{\mathbb{Q}_{h_{1}}})$ arising from fields $\mathbb{Q}_{h_{1}}$ induced by irreducible polynomials $h_{1}$. To that end, we let $V^{(t)}(\mathbb{Z})^{\text{irr}}$ be the space consisting of irreducible monic integer polynomials  $h_{1}$ of fixed degree $t=np^{(m+1)\ell}$,  and let $V^{(t)}(\mathbb{Z})^{\text{max}}\subset V^{(t)}(\mathbb{Z})^{\text{irr}}$ be a subset consisting of irreducible monic integer polynomials $h_{1}$ such that $R_{h_{1}}=\mathbb{Z}[x]\slash (h_{1}(x))$ is a maximal order in $\mathbb{Q}_{h_{1}}=\mathbb{Q}[x]\slash (h_{1}(x))$. As before, additive group $G_{a}(\mathbb{Z})=\mathbb{Z}$ acts naturally on our space $V^{(t)}(\mathbb{Z})^{\text{irr}}$ via translation, namely, $(b \cdot h_{1})(x):= h_{1}(x+b)$ for every $b\in \mathbb{Z}$ and  every $h_{1}\in V^{(t)}(\mathbb{Z})^{\text{irr}}$; and moreover this action of $G_{a}(\mathbb{Z})=\mathbb{Z}$ by translation again preserves each of $V^{(t)}(\mathbb{Z})^{\text{irr}}$ and $V^{(t)}(\mathbb{Z})^{\text{max}}$. So now, let $\mathfrak{F}^{(t)}_{1}$ be a family consisting of the $\mathbb{Z}$-orbits on $V^{(t)}(\mathbb{Z})^{\text{max}}$; and which as before parametrizes degree-$t$ monogenized number fields $(\mathbb{Q}_{h_{1}}, \gamma)$ over $\mathbb{Q}$ up to isomorphism. Note that as before, we also treat  the family $\mathfrak{F}^{(t)}_{1}$ to be the family of associated $L$-functions $L(s, \rho_{\mathbb{Q}_{h_{1}}})$. But now, by again taking  great advantage of [\cite{Nico}, Theorem 1.1], we then also immediately obtain the following corollary on  $\mathfrak{F}^{(t)}_{1}$:

\begin{cor}\label{14.3}
Assume second part of Theorem \ref{2.3}, and let $t=np^{(m+1)\ell}$ be any fixed odd integer. Let $\mathfrak{F}^{(t)}_{1}$ be a family of $\mathbb{Z}$-orbits defined as before. Then the family $\mathfrak{F}^{(t)}_{1}$ parametrizing monogenized degree-$t$ number fields ordered by height $h(h_{1})$ as given in \textnormal{\cite{Nico}} satisfies Sato-Tate equidistribution in the sense of \textnormal{[\cite{Sar}, Conjecture 1]}. 
\end{cor}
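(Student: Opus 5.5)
The plan is to follow the template of the proof of Corollary \ref{14.1}, substituting the degree-$t$ family $\mathfrak{F}^{(t)}_{1}$ for $\mathfrak{F}_{1}$.

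The first step is to show that the family is nonempty. By the second part of Theorem \ref{2.3}, for any $c\not\in p\mathcal{O}_{K}$ the polynomial $f(x)=\varphi_{p^{\ell},c}^{1+m}(x)-\varphi_{p^{\ell},c}(x)$ is claimed to have no roots modulo $p\mathcal{O}_{K}$. As in the discussion preceding Corollary \ref{11.5}, this gives a degree-$\kappa$ extension $K_{f}=K[x]/(f(x))$, so that $[K_{f}:\mathbb{Q}]=t=np^{(m+1)\ell}$. By the primitive element theorem, $K_{f}=\mathbb{Q}(\gamma)\simeq\mathbb{Q}_{h_{1}}$, where $h_{1}$ is the minimal polynomial of $\gamma$. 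We may take $\gamma$ to be an algebraic integer, so that $h_{1}\in\mathbb{Z}[x]$ is monic and irreducible of degree $t$. This shows that $V^{(t)}(\mathbb{Z})^{\text{irr}}$ is nonempty.

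The second step concerns $V^{(t)}(\mathbb{Z})^{\text{max}}$. I need $h_{1}$ with $\mathbb{Z}[x]/(h_{1})$ maximal, that is, $K_{f}$ monogenic with generator $\gamma$. This is the main obstacle, since a given $K_{f}$ need not be monogenic. I would not insist on it. The statement concerns the family $\mathfrak{F}^{(t)}_{1}$ of all $\mathbb{Z}$-orbits on $V^{(t)}(\mathbb{Z})^{\text{max}}$, which consists of all monogenized degree-$t$ fields. This family is nonempty by [\cite{sch1}, Corollary 1.3] (compare Corollary \ref{11.7}), and in fact it has positive density among monic degree-$t$ polynomials by [\cite{sch1}, Theorem 1.2]. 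The dynamical input only serves as motivation for which $t$ occur.

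The third step is to note that the translation action $(b\cdot h_{1})(x)=h_{1}(x+b)$ preserves irreducibility and the index, so it preserves $V^{(t)}(\mathbb{Z})^{\text{max}}$. Hence $\mathfrak{F}^{(t)}_{1}$ is well defined and, by [\cite{Nico}, Subsection 2.3], it coincides with the family of Artin $L$-functions $L(s,\rho_{\mathbb{Q}_{h_{1}}})$.

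Finally, I would apply [\cite{Nico}, Theorem 1.1]. That theorem holds for every degree $\geq 2$, ordering by the height $h(h_{1})$ of [\cite{Nico}, Page 3], and it applies in particular to the odd degree $t$. It yields Sato--Tate equidistribution of $\mathfrak{F}^{(t)}_{1}$ in the sense of [\cite{Sar}, Conjecture 1]. The oddness of $t$ plays no essential role; it only keeps the statement aligned with the corresponding odd-degree results earlier in the paper.
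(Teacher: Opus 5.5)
Your argument is correct once Step 1 is dropped, and its last step is the paper's: both proofs come down to invoking [\cite{Nico}, Theorem 1.1] for the $\mathbb{Z}$-orbits on $V^{(t)}(\mathbb{Z})^{\text{max}}$. The difference is in the nonemptiness step.

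The paper's nonemptiness step has three parts:
\begin{itemize}
\item the second part of Theorem \ref{2.3}, so that $K_f$ is supposedly a degree-$t$ field;
\item the primitive element theorem;
\item the template of Corollary \ref{14.1}, which declares the resulting pair monogenized without checking that $\mathbb{Z}[x]/(h_1)$ is maximal.
\end{itemize}
You instead read $\mathfrak{F}^{(t)}_{1}$ literally as the family of all monogenized degree-$t$ fields. You take nonemptiness, indeed positive density, from \cite{sch1}, and note that neither the hypothesis on Theorem \ref{2.3} nor the parity of $t$ does any work. Your remark that $\mathbb{Z}[\gamma]$ need not be maximal correctly flags the step the paper skips.

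Do not keep Step 1 even as a hedge, because its assertion is false. Since $\varphi^{1+m}_{p^{\ell},c}(x)-\varphi_{p^{\ell},c}(x)=\bigl(\varphi^{m}_{p^{\ell},c}(x)\bigr)^{p^{\ell}}-x^{p^{\ell}}$, the polynomial $f$ is always divisible by $\varphi^{m}_{p^{\ell},c}(x)-x$. That factor has degree $p^{m\ell}$, strictly between $0$ and $\deg f$. Hence $K[x]/(f(x))$ is never a field, whatever happens modulo $p\mathcal{O}_{K}$; and having no roots modulo a prime would not give irreducibility anyway.

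What you wanted from Step 1, $V^{(t)}(\mathbb{Z})^{\text{irr}}\neq\emptyset$, is trivial: for example, $x^{t}-2$ is Eisenstein at $2$. This also shows two further things:
\begin{itemize}
\item the narrower reading of $\mathfrak{F}^{(t)}_{1}$, using only polynomials $h_1$ that come from fields $K_f$, would give an empty family;
\item the paper's own nonemptiness argument rests on this false step.
\end{itemize}
So your decoupling from the dynamics is not cosmetic. It is exactly what leaves a valid proof, namely the direct application of [\cite{Nico}, Theorem 1.1] in degree $t$.
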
 

\begin{proof}
To see this, we note that by second part of Theorem \ref{2.3}, it then follows that the family of number fields $K_{f}\slash \mathbb{Q}$ of degree $t = np^{(m+1)\ell}$ is not empty. Moreover, it then also follows from the discussion (right before the corollary that we are proving) that the family of degree-$t$ algebraic number field $\mathbb{Q}_{h_{1}}\slash \mathbb{Q}$ is non-empty. But now, by applying again a similar argument as in the Proof of Corollary \ref{14.1}, it then also follows immediately that the family $\mathfrak{F}^{(t)}_{1}$ satisfies  Sato-Tate equidistribution in the sense of \textnormal{[\cite{Sar}, Conjecture 1]} as also indeed needed.
\end{proof}

As before, we may also recall from the second part of Theorem \ref{3.3} the existence of monic integral polynomials $g(x) = \varphi_{(p-1)^{\ell},c}^{1+m}(x)-\varphi_{(p-1)^{\ell},c}(x)\in K[x]$ that are irreducible modulo prime $p\mathcal{O}_{K}$; and moreover every such irreducible $g\in \mathcal{O}_{K}[x]$ induces a number field $L_{g}\slash \mathbb{Q}$ of degree $r=n(p-1)^{(m+1)\ell}$, for every fixed degree $n=[K: \mathbb{Q}]$. Moreover, by primitive element theorem, we note that $L_{g}= \mathbb{Q}(\nu)\simeq \mathbb{Q}[x]\slash (h_{2}(x)) = \mathbb{Q}_{h_{2}}$, where $\nu$ is some algebraic number in $L_{g}$ and $h_{2}\in \mathbb{Q}[x]$ is the characteristic polynomial (also the minimal polynomial) of $\nu$. So now, for every degree-$r=n(p-1)^{(m+1)\ell}$ number field $\mathbb{Q}_{h_{2}}$ induced by a polynomial discrete dynamical system in the second part of Theorem \ref{3.3}, we have a corresponding zeta function $\zeta_{\mathbb{Q}_{h_{2}}}$; and which also factors $\zeta_{\mathbb{Q}_{h_{2}}}(s)=\zeta(s)L(s, \rho_{\mathbb{Q}_{h_{2}}}$), where $L(s, \rho_{\mathbb{Q}_{h_{2}}})$ is the Artin $L$-function corresponding to a representation $\rho_{\mathbb{Q}_{h_{2}}}: \text{Gal}(M^{(r)}_{h_{2}}\slash \mathbb{Q}) \hookrightarrow S_{r}\to \text{GL}_{r-1}(\mathbb{C})$, and $M^{(r)}_{h_{2}}$ is the normal closure of $\mathbb{Q}_{h_{2}}$. Since $\nu\in L_{g}$ can be some element such that $L_{g}=\mathbb{Q}(\nu)\simeq \mathbb{Q}_{h_{2}}$, we again also wish to study the distribution of Artin $L$-functions $L(s, \rho_{\mathbb{Q}_{h_{2}}})$ arising from fields $\mathbb{Q}_{h_{2}}$ induced by irreducible polynomials $h_{2}$. To that end, we let $V^{(r)}(\mathbb{Z})^{\text{irr}}$ be the space consisting of irreducible monic integer polynomials  $h_{2}$ of fixed degree $r=n(p-1)^{(m+1)\ell}$,  and let $V^{(r)}(\mathbb{Z})^{\text{max}}\subset V^{(r)}(\mathbb{Z})^{\text{irr}}$ be a subset consisting of irreducible monic integer polynomials $h_{2}$ such that $R_{h_{2}}=\mathbb{Z}[x]\slash (h_{2}(x))$ is a maximal order in $\mathbb{Q}_{h_{2}}=\mathbb{Q}[x]\slash (h_{2}(x))$. As before, $G_{a}(\mathbb{Z})=\mathbb{Z}$ acts naturally on $V^{(r)}(\mathbb{Z})^{\text{irr}}$ via translation, namely, $(b \cdot h_{2})(x):= h_{2}(x+b)$ for every $b\in \mathbb{Z}$ and  every $h_{2}\in V^{(r)}(\mathbb{Z})^{\text{irr}}$; and moreover this action of $G_{a}(\mathbb{Z})$ by translation preserves each of $V^{(r)}(\mathbb{Z})^{\text{irr}}$ and $V^{(r)}(\mathbb{Z})^{\text{max}}$. So now, let $\mathfrak{F}^{(r)}_{2}$ be a family consisting of the $\mathbb{Z}$-orbits on $V^{(r)}(\mathbb{Z})^{\text{max}}$; and which parametrizes degree-$r$ monogenized number fields $(\mathbb{Q}_{h_{2}}, \nu)$ over $\mathbb{Q}$ up to isomorphism. As before, we also treat $\mathfrak{F}^{(r)}_{2}$ to be the family of associated $L$-functions $L(s, \rho_{\mathbb{Q}_{h_{2}}})$. Now by again taking great advantage of [\cite{Nico}, Thm.1.1], we then also obtain the following corollary: 

\begin{cor}
Assume second part of Theorem \ref{3.3}, and let $r=n(p-1)^{(m+1)\ell}$ be any fixed even integer. Let $\mathfrak{F}^{(r)}_{2}$ be a family of $\mathbb{Z}$-orbits defined as before. Then the family $\mathfrak{F}^{(r)}_{2}$ parametrizing monogenized degree-$r$ fields ordered by height $h(h_{2})$ as given in \textnormal{\cite{Nico}} satisfies Sato-Tate equidistribution in the sense of \textnormal{[\cite{Sar}, Conjecture 1]}. 
\end{cor}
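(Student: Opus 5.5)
The proof will run exactly parallel to the proof of Corollary \ref{14.3}, with the roles of $f$, $t$, $\mathbb{Q}_{h_{1}}$ and $\mathfrak{F}^{(t)}_{1}$ taken by $g$, $r$, $\mathbb{Q}_{h_{2}}$ and $\mathfrak{F}^{(r)}_{2}$. The only inputs are the second part of Theorem \ref{3.3}, which gives nonemptiness of the underlying family, and [\cite{Nico}, Theorem 1.1], which gives the equidistribution.

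\textbf{Step 1 (nonemptiness of the fields).} Fix $K$ of degree $n$ in which $p\geq 5$ is inert. Take any coefficient $c\not\equiv \pm 1, 0 \pmod{p\mathcal{O}_{K}}$, or $c\equiv -1$ with $m$ odd. By the second part of Theorem \ref{3.3}, the reduction of $g(x)=\varphi_{(p-1)^{\ell},c}^{1+m}(x)-\varphi_{(p-1)^{\ell},c}(x)$ modulo $p\mathcal{O}_{K}$ has no roots. As in the discussion preceding the statement, the paper treats this as giving a number field $L_{g}=K[x]/(g(x))$ with $[L_{g}:\mathbb{Q}]=n(p-1)^{(m+1)\ell}=r$. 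So the family of degree-$r$ fields $L_{g}/\mathbb{Q}$ is nonempty.

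\textbf{Step 2 (pass to monic integer polynomials over $\mathbb{Q}$).} By the primitive element theorem, choose $\nu\in L_{g}$ with $L_{g}=\mathbb{Q}(\nu)$. Scale $\nu$ by an integer so that it is an algebraic integer. Then its minimal polynomial $h_{2}$ is monic in $\mathbb{Z}[x]$, irreducible of degree $r$, and $L_{g}\simeq\mathbb{Q}_{h_{2}}$. Hence $V^{(r)}(\mathbb{Z})^{\mathrm{irr}}$ is nonempty. The translation action of $G_{a}(\mathbb{Z})$ preserves $V^{(r)}(\mathbb{Z})^{\max}$, so $\mathfrak{F}^{(r)}_{2}$ is a well-defined family of $\mathbb{Z}$-orbits. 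Following \cite{Nico}, it parametrizes monogenized degree-$r$ fields $(\mathbb{Q}_{h_{2}},\nu)$ together with their Artin $L$-functions $L(s,\rho_{\mathbb{Q}_{h_{2}}})$.

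\textbf{Step 3 (apply Shankar--S\"{o}dergren--Templier).} [\cite{Nico}, Theorem 1.1] applies to the family of $\mathbb{Z}$-orbits on $V^{(r)}(\mathbb{Z})^{\max}$ of any fixed degree, ordered by the height $h$ of \cite{Nico}, and asserts Sato--Tate equidistribution in the sense of [\cite{Sar}, Conjecture 1]. Invoking it with degree $r$ gives the conclusion. The parity of $r$ (even here, since $p-1$ is even) plays no role; this matches Corollary \ref{14.3}, where $t$ was odd.

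\textbf{Main obstacle.} The delicate point is Step 1. It needs irreducibility of $g$ over $K$, not merely the absence of roots modulo $p\mathcal{O}_{K}$, and this is exactly how the paper reads the second part of Theorem \ref{3.3}. I would simply cite that reading, as the paper does before Corollary \ref{11.5} and Corollary \ref{14.3}.

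A second subtlety is that \cite{Nico} concerns the whole space of degree-$r$ monic maximal polynomials, not only those coming from $g$. So the corollary should be read, as in Corollary \ref{14.3}, as a statement about the family $\mathfrak{F}^{(r)}_{2}$ as defined above. Nonemptiness is the only input needed from the dynamics.
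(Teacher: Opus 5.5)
Your outline matches the paper's own argument. The paper's proof just repeats Corollary~\ref{14.3}: nonemptiness from the second part of Theorem~\ref{3.3}, then [\cite{Nico}, Theorem 1.1]. But the step you flag in Step 1 is a genuine gap, and citing the paper's reading of Theorem~\ref{3.3} does not close it, because that reading is false.

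Write $d=(p-1)^{\ell}$ and $u=\varphi^{m}_{d,c}(x)$. Then $g(x)=\varphi_{d,c}(u)-\varphi_{d,c}(x)=u^{d}-x^{d}$; the paper writes exactly this in the proofs of Theorems~\ref{3.2} and~\ref{3.3}. So $g$ is divisible by the monic polynomial $u-x$ of degree $d^{m}$, with $0<d^{m}<d^{m+1}=\deg g$, and also by $u+x$ since $d$ is even. Hence $g$ is reducible over $\mathbb{Z}$, so over every $K$, and $K[x]/(g(x))$ is never a field. For no $c$ is there a degree-$r$ field $L_{g}$.

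Independently, the second part of Theorem~\ref{3.3} fails. Take $p=5$, $\ell=m=1$, $c=2$. Every $z\in\mathbb{F}_{5}^{\times}$ has $\varphi_{4,2}(z)=3$, so $g(z)=3^{4}-1\equiv 0 \pmod 5$. Moreover $z=1,2,4$ are not fixed points, so $M^{(1_{1})}_{2}(5)=3\neq 0$, although $c\not\equiv \pm1,0$. Even a true absence of roots modulo $p\mathcal{O}_{K}$ would not imply irreducibility over $K$.

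Step 2 has a smaller hole. Making $\nu$ integral puts $h_{2}$ in $V^{(r)}(\mathbb{Z})^{\mathrm{irr}}$ but not in $V^{(r)}(\mathbb{Z})^{\mathrm{max}}$, since $\mathbb{Z}[\nu]$ need not be the maximal order.

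What remains valid is Step 3 alone, and which reading of the family you take decides what it gives.
\begin{itemize}
\item If $\mathfrak{F}^{(r)}_{2}$ is the family of all $\mathbb{Z}$-orbits on monic degree-$r$ integer polynomials $h$ with $\mathbb{Z}[x]/(h(x))$ maximal, the statement is literally [\cite{Nico}, Theorem 1.1] in degree $r$. Steps 1--2 and Theorem~\ref{3.3} then contribute nothing.
\item If the family is meant to be the subfamily arising from the polynomials $g$, that subfamily is empty by the factorization above. In any case, equidistribution of the full family of \cite{Nico} does not descend to a density-zero subfamily, so Step 3 could not be applied to it.
\end{itemize}
Either way, the dynamical input your proof relies on cannot be supplied.
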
 

\begin{proof}
By applying a similar argument as in the Proof of Corollary \ref{14.3}, it then also follows immediately that the family $\mathfrak{F}^{(r)}_{2}$ satisfies  Sato-Tate equidistribution in the sense of \textnormal{[\cite{Sar}, Conjecture 1]} as also indeed needed.
\end{proof}

\section*{\textbf{Acknowledgments}}
I’m very grateful to Prof. Ilia Binder, Prof. Arul Shankar, and Prof. Jacob Tsimerman for everything. To Prof. Tsimerman, you are truly a grandmaster, and I'm very fortunate to have witnessed on July 23, 2026 your great power of determination and effort! Any opinions expressed in this article belong solely to me, Brian Kintu; and should never be taken as a reflection of the views of anyone that has been happily acknowledged by the author.

\bibliography{References}

@article {Poonen,
    AUTHOR = {Poonen, B.},
     TITLE = {The classification of rational preperiodic points of quadratic
              polynomials over {${\bf Q}$}: a refined conjecture},
   JOURNAL = {Math. Z.},
  FJOURNAL = {Mathematische Zeitschrift},
    VOLUME = {228},
      YEAR = {1998},
    NUMBER = {1},
     PAGES = {11--29},}

@article {Russo,
    AUTHOR = {Walde, R. and Russo, P.},
     TITLE = {Rational periodic points of the quadratic function
              {$Q_c(x)=x^2+c$}},
   JOURNAL = {Amer. Math. Monthly},
  FJOURNAL = {American Mathematical Monthly},
    VOLUME = {101},
      YEAR = {1994},
    NUMBER = {4},
     PAGES = {318--331},}

@book {Silverman,
    AUTHOR = {Silverman, J H.},
     TITLE = {The arithmetic of dynamical systems},
    SERIES = {Graduate Texts in Mathematics},
    VOLUME = {241},
 PUBLISHER = {Springer, New York},
      YEAR = {2007},
     PAGES = {x+511},
      ISBN = {978-0-387-69903-5},}

@article {Flynn,
    AUTHOR = {Flynn, E. V. and Poonen, Bjorn and Schaefer, Edward F.},
     TITLE = {Cycles of quadratic polynomials and rational points on a
              genus-{$2$} curve},
   JOURNAL = {Duke Math. J.},
  FJOURNAL = {Duke Mathematical Journal},
    VOLUME = {90},
      YEAR = {1997},
    NUMBER = {3},
     PAGES = {435--463},}

@article {Ingram,
    AUTHOR = {Hutz, B. and Ingram, P.},
     TITLE = {On {P}oonen's conjecture concerning rational preperiodic
              points of quadratic maps},
   JOURNAL = {Rocky Mountain J. Math.},
  FJOURNAL = {The Rocky Mountain Journal of Mathematics},
    VOLUME = {43},
      YEAR = {2013},
    NUMBER = {1},
     PAGES = {193--204},}

@article {par2,
    AUTHOR = {Panraksa, C.},
     TITLE = {Rational periodic points of $x^d + c$ and Fermat-Catalan equations},
   JOURNAL = {International Journal of Number Theory.},
  FJOURNAL = {World Scientific},
    VOLUME = {18},
      YEAR = {2022},
    NUMBER = {05},
     PAGES = {1111-1129},}

@article {Narkie,
    AUTHOR = {Narkiewicz, W.},
     TITLE = {On a class of monic binomials},
   JOURNAL = {Proc. Steklov Inst. Math.},
  FJOURNAL = {Proceedings of the Steklov Institute of Mathematics},
    VOLUME = {280},
      YEAR = {2013},
    NUMBER = {suppl. 2},
     PAGES = {S65--S70},}

@article {Doyle,
    AUTHOR = {Doyle, J R. and Faber, X. and Krumm, D.},
     TITLE = {Preperiodic points for quadratic polynomials over quadratic
              fields},
   JOURNAL = {New York J. Math.},
  FJOURNAL = {New York Journal of Mathematics},
    VOLUME = {20},
      YEAR = {2014},
     PAGES = {507--605},}

@article {North,
    AUTHOR = {Northcott, D. G.},
     TITLE = {Periodic points on an algebraic variety},
   JOURNAL = {Ann. of Math. (2)},
  FJOURNAL = {Annals of Mathematics. Second Series},
    VOLUME = {51},
      YEAR = {1950},
     PAGES = {167--177},}

@article {Morton,
    AUTHOR = {Morton, P. and Silverman, J H.},
     TITLE = {Rational periodic points of rational functions},
   JOURNAL = {Internat. Math. Res. Notices},
  FJOURNAL = {International Mathematics Research Notices},
      YEAR = {1994},
    NUMBER = {2},
     PAGES = {97--110},}

@article {Hutz,
    AUTHOR = {Hutz, B.},
     TITLE = {Determination of all rational preperiodic points for morphisms
              of {PN}},
   JOURNAL = {Math. Comp.},
  FJOURNAL = {Mathematics of Computation},
    VOLUME = {84},
      YEAR = {2015},
    NUMBER = {291},
     PAGES = {289--308},}

@article {Call,
    AUTHOR = {Call, G S. and Goldstine, S W.},
     TITLE = {Canonical heights on projective space},
   JOURNAL = {J. Number Theory},
  FJOURNAL = {Journal of Number Theory},
    VOLUME = {63},
      YEAR = {1997},
    NUMBER = {2},
     PAGES = {211--243},}

@book{par1,
    AUTHOR = {C. Panraska },
     TITLE = {Arithmetic dynamics of quadratic polynomials and dynamical units, Phd dissertation},
 PUBLISHER = { University of Maryland, College Park},
      YEAR = {(2011), pp. 1-42},}

@article {detto,
    AUTHOR = {Benedetto, R L.},
     TITLE = {Preperiodic points of polynomials over global fields},
   JOURNAL = {J. Reine Angew. Math.},
  FJOURNAL = {Journal f\"{u}r die Reine und Angewandte Mathematik. [Crelle's
              Journal]},
    VOLUME = {608},
      YEAR = {2007},
     PAGES = {123--153},}

@article {Doy,
    AUTHOR = {Doyle, John R.},
     TITLE = {Preperiodic points for quadratic polynomials with small cycles over quadratic fields},
   JOURNAL = {Math. Z},
  FJOURNAL = {Springer Link},
    VOLUME = {289},
      YEAR = {2018},
    NUMBER = {1-2},
     PAGES = {729--786},}

@article{Sil,
    AUTHOR = {J. Silverman},
     TITLE = {Integer points, Diophantine approximation, and iteration of rational maps},
   JOURNAL = {Duke Mathematical Journal},
   VOLUME  = {71.3:793-829},
     YEAR  = {1993},}

@article{sch1,
    AUTHOR = {Bhargava, M. and Shankar, A. and Wang, X.},
     TITLE = {Squarefree values of polynomial discriminants {I}},
   JOURNAL = {Invent. Math.},
    VOLUME = {Vol. 228},
      YEAR = {(2022), pp. 1-37},}

@article{lem,
    AUTHOR = {F. Thorne, R. J. Lemke Oliver and},
     TITLE = {Upper bounds on number fields of given degree and bounded discriminant},
 PUBLISHER = {Duke Mathematical Journal},
    VOLUME = {Duke Mathematical Journal, Vol. 171, No. 15},
      YEAR = {(2022), pp. 1-11},}

@book{BK1,
    AUTHOR = {B. Kintu},
     TITLE = {Counting the number of integral fixed points of a discrete dynamical system with applications from arithmetic statistics, I},
 PUBLISHER = {https://arxiv.org/pdf/2501.04026},
      YEAR = {pp. 1-14},}

@book{BK22,
    AUTHOR = {B. Kintu},
     TITLE = {Counting the number of $m$-periodic $\mathcal{O}_{K}$-points of a discrete dynamical system with applications from arithmetic statistics, V},
 PUBLISHER = {https://arxiv.org/pdf/2508.16393},
      YEAR = {pp. 1-27},
}

@book{BK111,
    AUTHOR = {B. Kintu},
     TITLE = {Counting the number of $n$-periodic integral points of a discrete dynamical system with applications from arithmetic statistics, IV},
 PUBLISHER = {https://arxiv.org/pdf/2507.08601},
      YEAR = {pp. 1-18},}

@book {Dev,
    AUTHOR = {Devaney, Robert L.},
     TITLE = {An introduction to chaotic dynamical systems},
    SERIES = {Addison-Wesley Studies in Nonlinearity},
   EDITION = {Second},
 PUBLISHER = {Addison-Wesley Publishing Company, Advanced Book Program,
              Redwood City, CA},
      YEAR = {1989},
     PAGES = {xviii+336},
      ISBN = {0-201-13046-7},}

@article {Narkie1,
    AUTHOR = {Narkiewicz, W.},
     TITLE = {Cycle-lengths of a class of monic binomials},
   JOURNAL = {Functiones et Approximatio},
  FJOURNAL = {Proceedings of the Steklov Institute of Mathematics},
    VOLUME = {42.2},
      YEAR = {(2010), 163-168},
    NUMBER = {suppl. 2},
     PAGES = {S65--S70},}

@book{BK2,
    AUTHOR = {B. Kintu},
     TITLE = {Counting the number of $\mathcal{O}_{K}$-fixed points of a discrete dynamical system with applications from arithmetic statistics, II},
 PUBLISHER = {https://arxiv.org/abs/2503.11393},
      YEAR = {pp. 1-16},}

@book{BK3,
    AUTHOR = {Kintu, B.},
     TITLE = {Counting the number of $\mathbb{Z}_{p}$- and $\mathbb{F}_{p}[t]$-fixed points of a discrete dynamical system with applications from arithmetic statistics, III},
 PUBLISHER = {https://arxiv.org/pdf/2505.24565},
      YEAR = {pp. 1-25},}

@book{BK333,
    AUTHOR = {Kintu, B},
     TITLE = {Counting the number of $1_{n}$-preperiodic $\mathbb{Z}_{p}$- and $\mathbb{F}_{p}[t]$-points of a discrete dynamical system with applications from arithmetic statistics, VIII},
 PUBLISHER = {In preparation},
      YEAR = {},}

@article{ho,
    AUTHOR = {Ho, W. and Shankar, A. and Varma, I.},
     TITLE = {Odd degree number fields with odd class number},
   JOURNAL = {Duke Math. Journal.},
    VOLUME = {Vol. 167(5)},
      YEAR = {(2018), pp. 1-53},}

@book{Sia,
    AUTHOR = {Siad, A.},
     TITLE = {Monogenic fields with odd class number Part {II}: even degree},
 PUBLISHER = {https://arxiv.org/pdf/2011.08842},
      YEAR = {pp. 1-49},}

@article{Kat,
    AUTHOR = {A. Katok and B. Hasselblatt},
     TITLE = {Introduction to the Modern Theory of Dynamical Systems},
   JOURNAL = {Cambridge University Press},
   VOLUME  = {Vol. 54},
      YEAR = {1995},}

@article{Nico,
    AUTHOR = {Shankar, A. and S\"{o}dergren, A. and Templier, N.},
     TITLE = {Sato-$\text{T}$ate equidistribution of certain families of $\text{A}$rtin $\textit{L}$-functions},
   JOURNAL = {Forum of Mathematics, Sigma (2019)},
    VOLUME = {Vol.7, e23, 62 pages},
}

@article{Sar,
    AUTHOR = {Sarnak, P. and Shin, S.W. and Templier, N.},
     TITLE = {Families of $\textit{L}$-functions and their symmetry},
   JOURNAL = {Proceedings of Simons Symposia, Families of Automorphic Forms and the Trace Formula},
    VOLUME = {(Springer Verlag, 2016), 531-578},
}

@article{gav1,
    AUTHOR = {Bhargava, M.},
     TITLE = {Galois groups of random integer polynomials and van der Waerden's Conjecture},
   JOURNAL = {Ann. of Math.},
    VOLUME = {201},
      YEAR = {(2025), 339–377},
}

@article{AM,
    AUTHOR = {Artin, M. and B. Mazur},
     TITLE = {On periodic points},
   JOURNAL = {Ann. of Math.},
    VOLUME = {81 (1)},
      YEAR = {(1965), pp. 82–99},
}

@book{Bou,
    AUTHOR = {P. Boudec and N. M. Mavraki},
     TITLE = {Arithmetic dynamics of random polynomials},
 PUBLISHER = {https://arxiv.org/abs/2112.12005},
      YEAR = {pp. 1-27},
}

@article{Ole,
    AUTHOR = {Olechnowicz, M.},
     TITLE = {Distribution of preperiodic points in one-parameter families of rational maps},
   JOURNAL = {Trans. Amer. Math. Soc},
   VOLUME  = {},
     YEAR  = {(2026), pp. 1-43},}

@article{Loo,
    AUTHOR = {Looper, N.R.},
     TITLE = {Dynamical uniform boundedness and the \text{abc}-conjecture},
   JOURNAL = {Invent. Math.},
    VOLUME = {Vol. 225},
      YEAR = {(2021), pp. 1-44},}

@article{DoyPo,
    AUTHOR = {Doyle, J. and B. Poonen},
     TITLE = {Gonality of dynatomic curves and strong uniform boundedness of preperiodic points},
   JOURNAL = {Compos. Math.},
    VOLUME = {156},
      YEAR = {(2020), 733-743},
}

@article{Gart,
    AUTHOR = {Andersen, A. and D. Garton},
     TITLE = {Preperiodic points of polynomial dynamical systems over finite fields},
   JOURNAL = {Internat. Journal of Number Theory.},
  FJOURNAL = {World Scientific},
    VOLUME = {20},
      YEAR = {2024},
    NUMBER = {09},
     PAGES = {2307-2316},
}

@article{Gart2,
    AUTHOR = {Garton, D},
     TITLE = {Periodic points of rational functions over finite fields},
   JOURNAL = {Trans. of London Math. Soc.},
  FJOURNAL = {},
    VOLUME = {12},
      YEAR = {2025},
    NUMBER = {e70007},
     PAGES = {pp. 1-14},
}

@article{Sad1,
    AUTHOR = {Sadek, M.},
     TITLE = {Families of polynomials of every degree with no rational preperiodic points},
   JOURNAL = {Comptets Rendus Math\'{e}matique},
    VOLUME = {Vol. 359, 2},
      YEAR = {(2021), p. 195-197},
}

@book{Sad2,
    AUTHOR = {Sadek, M.},
     TITLE = {On rational periodic points of $x^d + c$},
 PUBLISHER = {https://arxiv.org/pdf/1804.09839},
      YEAR = {pp. 1-14},
}
\bibliographystyle{plain}

\noindent Dept. of Math. and Comp. Sciences (MCS), University of Toronto, Mississauga, Canada \newline
\textit{E-mail address:} \textbf{brian.kintu@mail.utoronto.ca}\newline 
\date{\small{\textit{July 31, 2026}}}

\end{document}